\numberwithin{equation}{section}
\newtheorem{theorem}{Theorem}[section] 
\newtheorem{lemma}[theorem]{Lemma}
\newtheorem{corollary}[theorem]{Corollary}
\newtheorem{proposition}[theorem]{Proposition}
\theoremstyle{definition}
\theoremstyle{remark}
\newtheorem{remark}{Remark}
\begin{document}
	
        \title[Modular representations of $\mathrm{GL}_2({\mathbb F}_q)$]{Modular representations of $\mathrm{GL}_2({\mathbb F}_q)$ using calculus}
	\author{Eknath Ghate and Arindam Jana}


        \address{School of Mathematics, Tata Institute of Fundamental Research, Homi Bhabha Road, Mumbai - 400005, India.}	
	\email{eghate@math.tifr.res.in}
	
	\email{arindamj@iiserbpr.ac.in}

	\subjclass{20C33, 20C20}
        \keywords{modular representations, induced representations, differential operators}
	\date{}

        \begin{abstract}
          We show that certain modular induced representations of 
          $\mathrm{GL}_2({\mathbb F}_q)$ can be written as cokernels of
          operators acting on symmetric power representations of  $\mathrm{GL}_2({\mathbb F}_q)$. When
          the induction is from the Borel subgroup, respectively the anisotropic torus, the operators involve multiplication 
          by newly defined twisted Dickson polynomials, respectively, twisted Serre  operators. Our 
          isomorphisms are explicitly defined using differential operators.
          As a corollary, we improve some
          periodicity results for quotients in the theta filtration.
	\end{abstract}
	\maketitle
	
	

	\section{Introduction}

        Let $G$ be the general linear group $\mathrm{GL}_2$ and let $p$ be a prime.
        Let $V_r$ for $r\geq0$ be the $r$-th symmetric power representation of the standard two-dimensional
        representation of $G(\mathbb F_p)$. It
        is modeled on homogeneous polynomials of degree $r$ over $\mathbb F_p$ in two variables $X$ and $Y$ with the usual
        action of $G(\mathbb F_p)$. Let $\theta = X^pY - XY^p$ be the theta or Dickson polynomial on which $G(\mathbb{F}_p)$ acts
        by determinant.
        Let $V_r^{(m+1)}$ for $m \geq 0$ be the sub-representation of $V_r$ consisting of polynomials
        divisible by $m+1$ copies of $\theta$. The sequence $V_r^{(m+1)}$ is
        called the theta filtration of $V_r$.
          
	\subsection{Principal series}

        It is a classical fact going back to Glover \cite[(4.2)]{glo78} that $\frac{V_r}{V_r^*}$ is periodic in $r$ with period $p-1$ where
	$V_r^* = V_r^{(1)}$. This is proved nowadays  by noting that 
	\begin{eqnarray}
          \label{ps-Glover}
          \frac{V_r}{V_r^*} \simeq \mathrm{ind}_{B(\mathbb{F}_p)}^{G(\mathbb{F}_p)} d^r
        \end{eqnarray}
	is a principal series representation of $G(\mathbb{F}_p)$ obtained by inducing the character $d^r$ of the Borel subgroup
        $B({\mathbb F}_p) = \{ \left( \begin{smallmatrix} a & b \\ 0 & d \end{smallmatrix} \right) \}$ to $G({\mathbb F}_p)$
        \cite[Lemma 2.4]{roz14}, and by
	noting that the character depends only on $r$ modulo $(p-1)$. Similar periodicity results have been investigated for
	higher quotients in the theta filtration of $V_r$.  Indeed, it was shown in \cite[Lemma 4.1]{GV22}
        that for $0 \leq m \leq p-1$,
	the quotient $\frac{V_r}{V_r^{(m+1)}}$ is periodic in $r$ modulo $p(p-1)$ by constructing an embedding
	\begin{eqnarray}
          \label{dual}
          \frac{V_r}{V_r^{(m+1)}} \hookrightarrow \mathrm{ind}_{B(\mathbb{F}_p[\epsilon])}^{G(\mathbb{F}_p[\epsilon])} d^r,
        \end{eqnarray}
	where $\mathbb{F}_p[\epsilon]$ is the ring of generalized  dual numbers (with $\epsilon^{m+1} = 0$), noting that $d^r$ only
        depends on $r$ modulo $p(p-1)$,
	and by showing that the image of \eqref{dual} is independent of $r$ modulo $p(p-1)$.
	
	The map \eqref{dual} is no longer surjective when $m > 0$. In this paper, 
	instead of working with generalized dual numbers and {\it characters} of the inducing subgroup, we work with the induction
	of {\it higher dimensional representations} of the inducing subgroup and obtain {\it isomorphisms} between $\frac{V_r}{V_r^{(m+1)}}$
        and induced spaces. We have:
        \begin{theorem}
           \label{ps}
           Let $0 \leq m \leq p-1$ and $r\geq (m+1)(p+1)-1$. Then, we have the following explicit isomorphisms:
           \begin{enumerate}
              \item If $p \nmid {r \choose m}$,  then 
                \begin{eqnarray}
                  \label{ps-non-split}
                  \frac{V_r}{V_r^{(m+1)}} \simeq \mathrm{ind}_{B(\mathbb{F}_p)}^{G(\mathbb{F}_p)} ( V_m \otimes d^{r-m} ), 
                \end{eqnarray}
                where  $V_m$ is the representation of $B(\mathbb{F}_p)$ obtained by restriction from $G(\mathbb{F}_p)$.
              \item If $p \mid {r \choose m}$ and $m = 1$, then
              	$$\frac{V_r}{V_r^{(2)}} \simeq \mathrm{ind}_{B(\mathbb{F}_p)}^{G(\mathbb{F}_p)} ( V_1^{\mathrm{ss}} \otimes d^{r-1} ),$$
                  where $V_1^{\mathrm{ss}}$ is the split representation of $B(\mathbb{F}_p)$ obtained as
                  the semi-simplification of $V_1$.
           \end{enumerate}
        \end{theorem}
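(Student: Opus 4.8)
The plan is to prove both parts by the same method: write down an explicit $G(\mathbb{F}_p)$-equivariant map from $V_r$ into the relevant induced representation, built from divided-power (Hasse--Schmidt) derivatives in $Y$, and then compute its kernel; the two cases of the theorem will correspond to the two possible behaviours of these derivatives. For $0\le k\le r$ let $\partial_Y^{[k]}\colon V_r\to V_{r-k}$ be the $k$-th divided derivative in $Y$, normalised by $\partial_Y^{[k]}(X^aY^b)=\binom{b}{k}X^aY^{b-k}$. A direct computation on monomials shows that $\partial_Y^{[k]}$ is equivariant for the Borel up to the twist $d^k$, i.e.\ it is a morphism of $B(\mathbb{F}_p)$-representations $V_r\to V_{r-k}\otimes d^{k}$. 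Realising $\mathrm{ind}_{B(\mathbb{F}_p)}^{G(\mathbb{F}_p)}W$ as the space of functions $f\colon G(\mathbb{F}_p)\to W$ with $f(bg)=b\cdot f(g)$, on which $G(\mathbb{F}_p)$ acts by right translation, the assignment $P\mapsto\bigl(g\mapsto\partial_Y^{[r-m]}(g\cdot P)\bigr)$ then defines a $G(\mathbb{F}_p)$-equivariant map $\Phi_m\colon V_r\to \mathrm{ind}_{B(\mathbb{F}_p)}^{G(\mathbb{F}_p)}(V_m\otimes d^{r-m})$; that $\Phi_m(P)$ genuinely lies in this space of functions is exactly the $B(\mathbb{F}_p)$-equivariance of $\partial_Y^{[r-m]}$ (note $\partial_Y^{[r-m]}\colon V_r\to V_{r-(r-m)}=V_m$). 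Concretely, $\Phi_m(P)(g)$ records the order-$m$ jet of $P$ at the point of $\mathbb{P}^1(\mathbb{F}_p)$ determined by $g$, so $\Phi_m$ is jet-evaluation of $P$ along all of $\mathbb{P}^1(\mathbb{F}_p)$; for $m=0$ it is the classical map underlying \eqref{ps-Glover}.

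By construction $\ker\Phi_m=\{\,P\in V_r : g\cdot P\in\ker\partial_Y^{[r-m]}\text{ for all }g\in G(\mathbb{F}_p)\,\}$, so the content is to compute $\ker\partial_Y^{[r-m]}$ and then intersect its $G(\mathbb{F}_p)$-translates. Since $\partial_Y^{[r-m]}(X^aY^{r-a})=\binom{r-a}{m-a}X^aY^{m-a}$, which is $0$ for $a>m$, one has $\ker\partial_Y^{[r-m]}\supseteq X^{m+1}V_{r-m-1}$ always, with equality exactly when $p\nmid\binom{r-a}{m-a}$ for every $0\le a\le m$. For $m\le p-1$, Lucas' theorem shows that these $m+1$ nonvanishing conditions all reduce to the single statement $r\bmod p\ge m$, i.e.\ to $p\nmid\binom{r}{m}$. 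So under the hypothesis of part (1), $\ker\partial_Y^{[r-m]}=X^{m+1}V_{r-m-1}$, the degree-$r$ polynomials divisible by $X^{m+1}$. As $g$ ranges over $G(\mathbb{F}_p)$ the form $g\cdot X$ ranges, up to scalars, through exactly the $p+1$ pairwise coprime linear factors $X,\ Y,\ X-aY$ ($a\in\mathbb{F}_p^\times$) of $\theta=X^pY-XY^p$; consequently $\ker\Phi_m=\bigcap_{g}g^{-1}\bigl(X^{m+1}V_{r-m-1}\bigr)=\theta^{m+1}V_{r-(p+1)(m+1)}=V_r^{(m+1)}$. Hence $\Phi_m$ descends to an injection $V_r/V_r^{(m+1)}\hookrightarrow \mathrm{ind}_{B(\mathbb{F}_p)}^{G(\mathbb{F}_p)}(V_m\otimes d^{r-m})$, and, both spaces having dimension $(p+1)(m+1)$ once $r+1\ge(p+1)(m+1)$, it is the asserted isomorphism (1).

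For part (2) let $p\mid r$ and $m=1$. Then $\partial_Y^{[r-1]}$ degenerates: it still sends $XY^{r-1}\mapsto X$, but $\partial_Y^{[r-1]}(Y^r)=\binom{r}{r-1}Y=rY=0$, so its image is the line $\langle X\rangle$ and $\partial_Y^{[r-1]}$ is a $B(\mathbb{F}_p)$-morphism $V_r\to ad^{r-1}$; paired with $\partial_Y^{[r]}$, the $m=0$ morphism $V_r\to V_0\otimes d^r=d^r$, it gives a $B(\mathbb{F}_p)$-morphism $V_r\to d^r\oplus ad^{r-1}=V_1^{\mathrm{ss}}\otimes d^{r-1}$ whose common kernel is $X^2V_{r-2}$. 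Running the construction and kernel computation of the first two paragraphs verbatim then yields the isomorphism $V_r/V_r^{(2)}\simeq \mathrm{ind}_{B(\mathbb{F}_p)}^{G(\mathbb{F}_p)}(V_1^{\mathrm{ss}}\otimes d^{r-1})$. Finally, since $V_r^{(m+1)}=\theta^{m+1}V_{r-(p+1)(m+1)}$ is the image of multiplication by $\theta^{m+1}$ (the relevant power of the Dickson polynomial, carrying its $\det$-twist), both statements are exactly the ``cokernel'' descriptions announced in the abstract. I expect the genuinely delicate step to be the binomial computation in the second paragraph — that a single congruence condition on $\binom{r}{m}$ controls all of $\ker\partial_Y^{[r-m]}$: this is where the hypothesis $m\le p-1$ enters essentially (it is what keeps the jet space $V_r/X^{m+1}V_{r-m-1}$ uniserial, rather than merely of the correct composition type), and its failure when $p\mid\binom{r}{m}$ is precisely what forces the semisimplification appearing in (2).
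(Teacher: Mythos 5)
Your argument is correct, and it reaches the theorem by a route that is related in spirit but genuinely different in its two technical pivots from the one in the paper. First, where the paper writes the isomorphism by hand as the tuple $\bigl(\tfrac{\binom{m}{j}}{[r]_{m-j}}\nabla^{m-j}(P)\big\vert_{(c,d)}\bigr)_j$ and then verifies $B(\mathbb{F}_p)$- and $G(\mathbb{F}_p)$-linearity through explicit binomial identities (Lemmas \ref{evaluate}, \ref{glinear} and \eqref{sum}), you package everything through Frobenius reciprocity: the single $B(\mathbb{F}_p)$-morphism $\partial_Y^{[r-m]}\colon V_r\to V_m\otimes d^{r-m}$ (a one-line check on monomials) automatically yields the $G(\mathbb{F}_p)$-map $P\mapsto\bigl(g\mapsto\partial_Y^{[r-m]}(g\cdot P)\bigr)$, and your divided-power normalisation avoids the division by $[r]_{m-j}$ altogether, the hypothesis $p\nmid\binom{r}{m}$ entering instead through the Lucas computation of $\ker\partial_Y^{[r-m]}$; this also makes the degeneration in case (2) (image collapsing to $\langle X\rangle$ when $p\mid r$) conceptually transparent, whereas the paper simply drops the constants and redoes the $B$-linearity check using $p\mid r$. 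Second, the paper identifies $\ker\psi=V_r^{(m+1)}$ by induction on $m$, using the Leibniz rule and the computation of $\nabla^l(\theta^k)$ (Lemmas \ref{nabla}, \ref{theta}); you instead observe that $\ker\Phi_m=\bigcap_g g^{-1}(X^{m+1}V_{r-m-1})$ is the set of polynomials divisible by $\ell^{m+1}$ for each of the $p+1$ pairwise non-associate linear forms $\ell$ dividing $\theta$, so unique factorisation gives $\theta^{m+1}\mid P$ directly — a cleaner, induction-free kernel computation. Both approaches then finish identically by the dimension count $(m+1)(p+1)$, with the same implicit largeness assumption on $r$, which you correctly flag as $r+1\ge(p+1)(m+1)$. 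The only loose spot is the closing heuristic about uniseriality of the jet space, which plays no role in the argument and could be dropped; everything load-bearing is sound.
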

        The map in \eqref{ps-non-split} generalizes the `evaluation of the polynomial at the second row of a matrix'
        map that is used to prove \eqref{ps-Glover} and \eqref{dual}. Additionally, it involves the use
        of a differential operator $\nabla$. Such operators have found
        sporadic use in the literature (see \cite{glo78} and \cite{bg15}), but are used systematically throughout this paper.
        We also need to divide by constants which do not
        vanish if $p \nmid {r \choose m}$ (cf. \eqref{def of psi}).
        
        Consider now the case $p | {r \choose m}$ with $1 \leq m \leq p-1$, that is, $r$ is in one of the congruence classes
        $0, 1,\ldots, m-1$ modulo $p$.
	While it is well known
	that the individual principal series $\frac{V_r^{(i)}}{V_r^{(i+1)}}$ occurring as subquotients in the theta filtration are
	extensions of two Jordan-H\"older factors (which split exactly when $r \equiv 2i$
        mod $(p-1)$), it is not as clear whether the extensions
	between consecutive principal series 
	\begin{eqnarray}
          \label{extns-of-ps}
           0 \rightarrow \dfrac{V_r^{(i+1)}}{V_r^{(i+2)}} \rightarrow \dfrac{V_r^{(i)}}{V_r^{(i+2)}} \rightarrow \dfrac{V_r^{(i)}}{V_r^{(i+1)}}
          \rightarrow 0
        \end{eqnarray}
        for $0 \leq i \leq m-1$ occurring in the theta filtration split. We show that if $p | {r \choose m}$, then there is exactly one $i$
        such that \eqref{extns-of-ps} splits.
	Indeed, since 
	$\frac{V_r^{(i)}}{V_r^{(i+2)}} \simeq \frac{V_{r'}}{V_{r'}^{(2)}} \otimes \det^{i}$ with
	$r' = r - i(p+1) \equiv r-i$ mod $p$ we are reduced to analyzing the case $m = 1$.  If $p \nmid r$, then $\frac{V_r}{V_r^{(2)}}$ does not split
        by \eqref{ps-non-split} and  by (the converse of) the exactness of induction \cite[\S 8, Lemma 6 (5)]{alp}.
        Modifying the above mentioned
	differential operator $\nabla$ by dropping the constant mentioned above (cf. \eqref{def of psi split}), in the second part of the theorem
        we show that if $p | r$, then 
	$$\frac{V_r}{V_r^{(2)}} \simeq \mathrm{ind}_{B(\mathbb{F}_p)}^{G(\mathbb{F}_p)} ad^{r-1} \oplus \mathrm{ind}_{B(\mathbb{F}_p)}^{G(\mathbb{F}_p)}  d^{r}$$
	splits.
        In other words, if $p | r$, then the two-dimensional standard representation $V_1$ in 
        \eqref{ps-non-split}  gets replaced by the
	split representation $V_1^{\mathrm{ss}} = a \oplus d$ of $B(\mathbb{F}_p)$. We deduce that the extensions
        \eqref{extns-of-ps}
        split exactly when $r \equiv i$ mod $p$; moreover, under this condition, we have: 
        \begin{eqnarray}
          \label{split}
          \frac{V_r}{V_r^{(m+1)}} \simeq  \frac{V_r}{V_r^{(i+1)}} \oplus \frac{V_r^{(i+1)}}{V_r^{(m+1)}}.
        \end{eqnarray}
        
        As a corollary of Theorem~\ref{ps}, we obtain a strengthening of the afore-mentioned periodicity result from \cite{GV22}
        in the case that $p \nmid {r \choose m}$ since again the right hand side of \eqref{ps-non-split} only depends on $r$ modulo $(p-1)$ and not
        on $r$ modulo $p$.
        Thus, to obtain periodicity in this case,
        we no longer need to restrict to $r$ in a fixed congruence class modulo $p$, only to those $r$ that avoid collectively
        the congruence classes $0,1,\ldots, m-1$ mod $p$. We obtain:
        \begin{corollary}
          Let $0 \leq m \leq p-1$ and  $r$, $s \geq (m+1)(p+1)-1$ with  $r \equiv s \mod (p-1)$.
          If $p \nmid {r \choose m}, {s \choose m}$, then
            \begin{eqnarray}
              \label{periodicity}
                \frac{V_r}{V_r^{(m+1)}} \simeq \frac{V_s}{V_s^{(m+1)}}.
            \end{eqnarray}  
          \end{corollary}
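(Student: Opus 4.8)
The plan is to read this off directly from part~(1) of Theorem~\ref{ps}. Since $p \nmid \binom{r}{m}$, that theorem supplies an isomorphism
\[
\frac{V_r}{V_r^{(m+1)}} \simeq \mathrm{ind}_{B(\mathbb{F}_p)}^{G(\mathbb{F}_p)}(V_m \otimes d^{r-m}),
\]
and likewise, since $p \nmid \binom{s}{m}$,
\[
\frac{V_s}{V_s^{(m+1)}} \simeq \mathrm{ind}_{B(\mathbb{F}_p)}^{G(\mathbb{F}_p)}(V_m \otimes d^{s-m}).
\]
So it suffices to identify the two inducing representations of $B(\mathbb{F}_p)$.

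The first observation is that $V_m$, restricted from $G(\mathbb{F}_p)$ to $B(\mathbb{F}_p)$, is literally the same representation in both cases: it does not depend on $r$ or $s$. Hence the only thing that varies is the one-dimensional twist $d^{r-m}$ versus $d^{s-m}$, where $d$ denotes the character $\left(\begin{smallmatrix} a & b \\ 0 & d\end{smallmatrix}\right) \mapsto d$ of $B(\mathbb{F}_p)$ with values in $\mathbb{F}_p^\times$. The second observation is that $\mathbb{F}_p^\times$ is cyclic of order $p-1$, so the character $d^{k}$ depends only on $k$ modulo $p-1$. From $r \equiv s \bmod (p-1)$ we get $r-m \equiv s-m \bmod (p-1)$, hence $d^{r-m} = d^{s-m}$ as characters of $B(\mathbb{F}_p)$, and therefore $V_m \otimes d^{r-m} = V_m \otimes d^{s-m}$ as representations of $B(\mathbb{F}_p)$. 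Applying the (additive, hence isomorphism-preserving) induction functor $\mathrm{ind}_{B(\mathbb{F}_p)}^{G(\mathbb{F}_p)}$ and chaining the three isomorphisms above yields \eqref{periodicity}.

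There is essentially no obstacle here; the content of the corollary is conceptual rather than computational. In contrast to the embedding \eqref{dual} into an induction from $B(\mathbb{F}_p[\epsilon])$, whose image was only shown to stabilize for $r$ modulo $p(p-1)$, the right-hand side of \eqref{ps-non-split} is an honest induction from $B(\mathbb{F}_p)$ whose inducing datum $V_m \otimes d^{r-m}$ visibly depends only on $r$ modulo $p-1$. The one mild point worth flagging in the write-up is that the hypothesis $p \nmid \binom{r}{m}$ need not survive the shift $r \mapsto r+(p-1)$, since the class of $r$ modulo $p$ can change; this is precisely why the statement asks for $p \nmid \binom{s}{m}$ as a separate hypothesis, and why the improvement over \cite{GV22} consists in replacing the restriction to a fixed class modulo $p$ by the weaker requirement that $r$ and $s$ both avoid the classes $0, 1, \ldots, m-1$ modulo $p$.
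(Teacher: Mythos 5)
Your proposal is correct and is essentially the paper's own argument: the paper deduces the corollary precisely by applying Theorem~\ref{ps}~(1) to both $r$ and $s$ and observing that the inducing datum $V_m \otimes d^{r-m}$ depends only on $r$ modulo $(p-1)$, since $d$ takes values in $\mathbb{F}_p^*$. The only tacit point, which the paper also leaves implicit, is that $r$ and $s$ must be large enough for the theorem (and the dimension count behind it) to apply.
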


	
	With future applications in mind, we equally treat the case of $G({\mathbb F}_q) = \mathrm{GL}_2({\mathbb F}_q)$ for an
	arbitrary finite field ${\mathbb F}_q$ with $q = p^f$ elements for $f \geq 1$. Indeed, we prove the following twisted version
        of the isomorphism \eqref{ps-non-split}: 
        \begin{theorem}
           \label{ps-twisted}
           Let $m = \sum_{i=0}^{f-1} m_i p^i$  with $0\leq m_i\leq p-1$ and 
           $V_m = \otimes_{i=0}^{f-1} (V_{m_i} \circ \mathrm{Fr}^i)$.
           Let $r =  \sum_{i=0}^{f-1} r_i p^i$  with $r_i\geq (m_i+1)(q+1)-1$ and $V_r = \otimes_{i=0}^{f-1} (V_{r_i} \circ \mathrm{Fr}^i)$. 
	   Let $d^{r-m}$ be the character $\otimes_{i=0}^{f-1} d^{(r_i-m_i)p^i}$ of $B({\mathbb F}_q)$.
           If $p \nmid {r \choose m} \equiv \prod_{i=0}^{f-1} {r_i \choose m_i} \mod p$, then
           \begin{eqnarray*}
           \dfrac{V_r}{\langle \theta_0^{m_0+1}, \theta_1^{m_1+1}, \cdots, \theta_{f-1}^{m_{f-1}+1} \rangle} \> \simeq \>
                            \mathrm{ind}_{B({\mathbb F}_q)}^{G({\mathbb F}_q)} \left(V_m \otimes d^{r-m} \right).
	   \end{eqnarray*}
        \end{theorem}
	Here the $V_{r_i}$ are modeled on homogeneous polynomials of degree $r_i$ over ${\mathbb F}_q$
        in the variables $X_i$ and $Y_i$ and $V_{r_i} \circ \mathrm{Fr}^i$ means that we twist the
        standard action of $G({\mathbb F}_q)$ on $V_{r_i}$ by the $i$-th power of Frobenius.  The polynomials $$\theta_i = X_{i}Y_{i-1}^p - Y_iX_{i-1}^{p}$$ for
        $0 \leq i \leq f-1$ are what we call {\it twisted Dickson polynomials} (we adopt the convention that $-1=f-1$; alternatively, one could index the
        variables by the group ${\mathbb Z}/f{\mathbb Z}$);
        they do not seem to appear in the literature.

        If $p \nmid {r \choose m}$, then the periodicity of the quotient on the left hand side
        again follows since the right hand side
        only depends on the character $d^r$ which is periodic in $r$ modulo $(q-1)$. We obtain:
        \begin{corollary}
          \label{periodicity-twisted}
          Let $m=\sum_{i=0}^{f-1}m_ip^i$ with $0 \leq m_i \leq p-1$. Let $r=\sum_{i=0}^{f-1}r_ip^i$ and
          $s=\sum_{i=0}^{f-1}s_ip^i$ with $r_i$, $s_i\geq (m_i+1)(q+1)-1$ and  $r \equiv s \mod (q-1)$.
          If $p \nmid {r \choose m}, {s \choose m}$, then
          $$ \dfrac{V_r}{\langle \theta_0^{m_0+1}, \theta_1^{m_1+1}, \cdots, \theta_{f-1}^{m_{f-1}+1} \rangle} \> \simeq \> \dfrac{V_s}{\langle \theta_0^{m_0+1}, \theta_1^{m_1+1}, \cdots, \theta_{f-1}^{m_{f-1}+1} \rangle}. $$
        \end{corollary}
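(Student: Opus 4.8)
The plan is to read off this corollary directly from Theorem~\ref{ps-twisted} by checking that the induced representation produced there depends on $r$ only through its residue modulo $q-1$. First I would fix $m$ with $0\le m\le q-1$ and its base-$p$ expansion $m=\sum_{i=0}^{f-1}m_ip^i$, $0\le m_i\le p-1$; this pins down, once and for all and independently of $r$ and $s$, both the representation $V_m=\otimes_{i=0}^{f-1}(V_{m_i}\circ\mathrm{Fr}^i)$ (regarded by restriction as a representation of $B(\mathbb F_q)$) and the sub-representations $\langle\theta_0^{m_0+1},\ldots,\theta_{f-1}^{m_{f-1}+1}\rangle$ that get quotiented out on both sides. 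Then, since $p\nmid\binom rm\equiv\prod_i\binom{r_i}{m_i}$ and $p\nmid\binom sm\equiv\prod_i\binom{s_i}{m_i}\pmod p$ by hypothesis — and since $r,s$ are taken sufficiently large, as is implicit throughout the principal series results — Theorem~\ref{ps-twisted} applies to each of $r$ and $s$ and yields $G(\mathbb F_q)$-isomorphisms
\begin{align*}
\frac{V_r}{\langle\theta_0^{m_0+1},\ldots,\theta_{f-1}^{m_{f-1}+1}\rangle}&\simeq\mathrm{ind}_{B(\mathbb F_q)}^{G(\mathbb F_q)}\bigl(V_m\otimes d^{r-m}\bigr),\\
\frac{V_s}{\langle\theta_0^{m_0+1},\ldots,\theta_{f-1}^{m_{f-1}+1}\rangle}&\simeq\mathrm{ind}_{B(\mathbb F_q)}^{G(\mathbb F_q)}\bigl(V_m\otimes d^{s-m}\bigr).
\end{align*}

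Next I would compare the two right-hand sides. The $B(\mathbb F_q)$-representation $V_m$ is literally the same in both, so it is enough to compare the characters $d^{r-m}$ and $d^{s-m}$. By its definition in Theorem~\ref{ps-twisted}, the character $d^{r-m}=\otimes_{i=0}^{f-1}d^{(r_i-m_i)p^i}$ sends $\left(\begin{smallmatrix}a&b\\0&d\end{smallmatrix}\right)$ to $\prod_{i=0}^{f-1}d^{(r_i-m_i)p^i}=d^{\,r-m}$, and because $d^{q-1}=1$ for every $d\in\mathbb F_q^\times$, this character of $B(\mathbb F_q)$ depends only on $r-m$ modulo $q-1$. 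The hypothesis $r\equiv s\pmod{q-1}$ then gives $r-m\equiv s-m\pmod{q-1}$, hence $d^{r-m}=d^{s-m}$, hence $V_m\otimes d^{r-m}\cong V_m\otimes d^{s-m}$ as representations of $B(\mathbb F_q)$. Applying $\mathrm{ind}_{B(\mathbb F_q)}^{G(\mathbb F_q)}(-)$ and then stringing together the two displayed isomorphisms produces the desired isomorphism between the two quotients.

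I do not expect a genuine obstacle in the corollary itself: essentially all of the work — the explicit construction of the isomorphism via differential operators, and in particular the non-vanishing of the scalars, which is exactly where $p\nmid\binom rm$ is used — has already been carried out in Theorem~\ref{ps-twisted}. The one conceptual point I would want to highlight in the write-up is why this yields periodicity modulo $q-1$ and not merely modulo $p(q-1)$: unlike the embedding \eqref{dual}, whose target is an induction from $B(\mathbb F_p[\epsilon])$, the right-hand side of Theorem~\ref{ps-twisted} involves the character $d\mapsto d^{r-m}$ of the genuine torus $\mathbb F_q^\times\subseteq B(\mathbb F_q)$, on which $d^{q-1}=1$, so no stray factor of $p$ appears. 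That is precisely the strengthening the corollary records.
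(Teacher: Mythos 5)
Your proposal is correct and follows essentially the same route as the paper: the authors deduce the corollary in one line by noting that the right-hand side of Theorem~\ref{ps-twisted}, namely $\mathrm{ind}_{B(\mathbb{F}_q)}^{G(\mathbb{F}_q)}(V_m\otimes d^{r-m})$, depends on $r$ only through $r=\sum r_ip^i$ modulo $q-1$, exactly as you argue via $d^{q-1}=1$ on $\mathbb{F}_q^\times$. Your additional remarks (fixing $m$ and its digits once and for all, the implicit largeness of $r,s$ so the theorem applies) are consistent with the paper's conventions and introduce nothing new.
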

              
        We end this discussion of the case of principal series by noting that the isomorphism in Theorem~\ref{ps-twisted} is expected to
        play an important role in  investigations into
        the reduction problem of two-dimensional crystalline \cite{ars21, bl22, blz04, bha20, bg15, bgr18, bgv25, bg09, bg13, gg15, gk24, gha21, gr23, GV22, gha24, np19} and semi-stable representations \cite{bll23, bm10, chi24, cg24, cgy25, gp19, lp22}
	over arbitrary $p$-adic fields $F$ with residue field ${\mathbb F}_q$ using the compatibility with respect to reduction of the (yet to be
	discovered) $p$-adic and mod $p$ local Langlands correspondences for $G(F)$ for an arbitrary finite extension of ${\mathbb Q}_p$.

        \subsection{Cuspidal case}

        Let $T(\mathbb F_p) = {\mathbb F}_{p^2}^\times \hookrightarrow G({\mathbb F}_p)$ be the anisotropic torus. 
        The theme of writing (generalized) principal series representations as cokernels of theta operators raises the question (asked by Khare)
        as to whether one may similarly write representations induced from the anisotropic torus as cokernels of symmetric power representations.

        Let $D$ be the differential operator $X^p \frac{\partial}{\partial X} + Y^p \frac{\partial}{\partial Y}$ and let
        $\omega_2$ be the identity character $T(\mathbb F_p) = {\mathbb F}_{p^2}^\times \rightarrow {\mathbb F}_{p^2}^\times$. We prove
        the following analog of \eqref{ps}:
        \begin{theorem}
          \label{cuspidal}  
          Let $2 \leq r \leq p-1$. Then there is an explicit isomorphism (cf. \eqref{explicit iso cuspidal})
          $$\frac{V_{r+p-1}}{D(V_r)} \otimes V_{p-1} \simeq \mathrm{ind}_{T(\mathbb{F}_p)}^{G(\mathbb{F}_p)} \> \omega_2^{r}$$
          defined over $\mathbb{F}_{p^2}$.
        \end{theorem}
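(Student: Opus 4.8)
The plan is to write down an explicit $\mathbb{F}_{p^2}$-linear, $G(\mathbb{F}_p)$-equivariant map out of $V_{r+p-1}\otimes V_{p-1}$ into $\mathrm{ind}_{T(\mathbb{F}_p)}^{G(\mathbb{F}_p)}\omega_2^r$, to check that it annihilates $D(V_r)\otimes V_{p-1}$, and then to promote the induced map on the cokernel to an isomorphism by a dimension count. First I would do the bookkeeping. Since $|T(\mathbb{F}_p)\backslash G(\mathbb{F}_p)| = p(p-1)$, the target has dimension $p(p-1)$. On the source side, a short monomial computation shows that $D\colon V_r\to V_{r+p-1}$ is injective for $2\le r\le p-1$: in this range the monomials produced by $X^p\partial_X$ and those produced by $Y^p\partial_Y$ occupy disjoint intervals of $X$-degree, so no cancellation can occur, and $\dim\bigl(V_{r+p-1}/D(V_r)\bigr) = p-1$. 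Hence $\dim\bigl(V_{r+p-1}/D(V_r)\bigr)\otimes V_{p-1} = (p-1)\cdot p$ agrees with $\dim\mathrm{ind}_{T(\mathbb{F}_p)}^{G(\mathbb{F}_p)}\omega_2^r$, so it is enough to produce a well-defined map that is either injective or surjective.

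To build the map I would realize the target geometrically. Over $\mathbb{F}_{p^2}$ the anisotropic torus $T(\mathbb{F}_p)=\mathbb{F}_{p^2}^*$ has a common eigenvector $v\in\mathbb{F}_{p^2}^2$ on which $t\in T$ acts through the tautological character $\omega_2(t)=t$; the Galois conjugate $\bar v$ is the other eigenvector, with character $\omega_2^p$, and $G(\mathbb{F}_p)$ acts simply transitively on the set of all $w\in\mathbb{F}_{p^2}^2$ with $w,\bar w$ linearly independent over $\mathbb{F}_{p^2}$ (equivalently $T(\mathbb{F}_p)\backslash G(\mathbb{F}_p)\cong \mathbb{P}^1(\mathbb{F}_{p^2})\setminus\mathbb{P}^1(\mathbb{F}_p)$). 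Evaluating a form $f\in V_n$ at $v$ scales by $\omega_2^n$, and at $\bar v$ by $\omega_2^{pn}$, and $(r+p-1)+p(p-1)\equiv r\pmod{p^2-1}$; so the ``two‑point evaluation''
$$\Phi(f_1\otimes f_2)(g)\;=\;f_1(v\,g)\cdot f_2(\bar v\,g),\qquad g\in G(\mathbb{F}_p),$$
suitably corrected by a differential-operator term built from $D$ (a ``twisted Serre operator'') and normalised by a constant that is a unit because $p\nmid r$ --- in the spirit of the operator $\nabla$ of Theorem~\ref{ps} --- lands in $\mathrm{ind}_{T(\mathbb{F}_p)}^{G(\mathbb{F}_p)}\omega_2^r$ and is equivariant for right translation by $G(\mathbb{F}_p)$. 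The role of the extra factor $V_{p-1}$ is exactly to absorb the discrepancy $p-1$ between the degree $r+p-1$ of the symmetric power and the exponent $r$ of the inducing character, just as tensoring by $V_m$ does in \eqref{ps-non-split}.

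The crux is to verify that $\Phi$ annihilates $D(V_r)\otimes V_{p-1}$, so that it factors through a $G(\mathbb{F}_p)$-map $\bar\Phi$ out of the cokernel. By equivariance this comes down to one algebraic identity at the base point, and the mechanism is that the $p$-th power of the eigenvector $v$ equals $\bar v$: consequently, for $h\in V_r$, evaluating $D(h)$ at $v$ computes the derivative of $h$ at $v$ in the Frobenius-transverse direction $\bar v$, and iterating, the numbers $D^k(h)(v)$ record the transverse Taylor jet of $h$ at $v$. The correction term in $\Phi$ is arranged precisely so that this jet cancels against the jet contributed by $f_1$; this uses $2\le r\le p-1$, which guarantees both the injectivity of $D$ on $V_r$ and that the relevant constants are units. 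Pinning down the exact corrected form of $\Phi$ and carrying out this cancellation is, I expect, the main technical obstacle, being the cuspidal analogue of the computation that makes the principal-series map of Theorem~\ref{ps} well defined.

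Finally, granted a nonzero $G(\mathbb{F}_p)$-map $\bar\Phi\colon \bigl(V_{r+p-1}/D(V_r)\bigr)\otimes V_{p-1}\to \mathrm{ind}_{T(\mathbb{F}_p)}^{G(\mathbb{F}_p)}\omega_2^r$ between spaces of equal finite dimension, I would conclude that it is an isomorphism. One route is to prove surjectivity: decompose $\mathrm{ind}_{T(\mathbb{F}_p)}^{G(\mathbb{F}_p)}\omega_2^r$ into its irreducible constituents (a cuspidal piece together with principal-series pieces) and exhibit, for each constituent, an explicit preimage in the source built from suitable forms $f_1,f_2$. Alternatively one checks that $\bar\Phi$ is nonzero on every irreducible submodule of the source, hence injective. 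Either way the dimension count upgrades this to the desired isomorphism, which is automatically defined over $\mathbb{F}_{p^2}$ since $v$, $\bar v$ and $\omega_2$ are.
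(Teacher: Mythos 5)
Your overall shape is the same as the paper's (an explicit differential-operator-twisted evaluation map, a check that it kills $D(V_r)\otimes V_{p-1}$, and a dimension count), but as written there are two genuine gaps, and they are exactly where the real work lies. First, the map itself is never produced: you explicitly defer ``pinning down the exact corrected form of $\Phi$''. This is not a routine correction term. Pure two-point evaluation $f_1\otimes f_2\mapsto f_1(vg)f_2(\bar v g)$ does \emph{not} annihilate $D(V_r)\otimes V_{p-1}$; what the paper uses (after replacing $r$ by $r+2$) is
\[
\psi_{P\otimes Q}\!\left(\begin{matrix} a & b\\ c& d\end{matrix}\right)
=\nabla_\alpha^{\,r}(P)\Big\vert_{(A_\alpha,\,B_\alpha)}^{(A_\alpha^p,\,B_\alpha^p)}\cdot Q(A_\alpha^p,B_\alpha^p),
\qquad \nabla_\alpha=A_\alpha\tfrac{\partial}{\partial X}+B_\alpha\tfrac{\partial}{\partial Y},\ A_\alpha=a+c\alpha,\ B_\alpha=b+d\alpha,
\]
i.e.\ a high power of a differential operator followed by a \emph{difference} of evaluations at the two Frobenius-conjugate points; verifying $T$- and $G$-equivariance and the vanishing $\nabla_\alpha^{r}(D(P))\big\vert_{(A_\alpha,B_\alpha)}^{(A_\alpha^p,B_\alpha^p)}=0$ is already a nontrivial computation (it hinges on $X^{p+1}$, $Y^{p+1}$ and $X^pY+XY^p$ taking equal values at the two points). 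Your heuristic about $D^k(h)(v)$ recording a ``transverse jet'' does not by itself produce the formula or the cancellation.

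Second, and more seriously, the dimension count only finishes the proof once you know the induced map on the cokernel is injective (equivalently, that $\ker\psi$ is \emph{exactly} $D(V_r)\otimes V_{p-1}$) or surjective; a nonzero map between equidimensional spaces need not be an isomorphism. In the paper this reverse inclusion is the bulk of the argument: one expands $\psi(P\otimes Q)=0$ in the basis $\{f_i\}$ of $\mathrm{ind}_{T(\mathbb{F}_p)}^{G(\mathbb{F}_p)}\omega_2^{r}$, rewrites the stray terms using the ``flip'' and ``flop'' relations, splits the resulting linear system by congruence classes modulo $p-1$, and shows the coefficient matrices have nonzero determinant (up to sign $(r-1)^{p-r}$ or $(r-1)^{p-r+1}$), which is where $2\le r\le p-1$ enters. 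Your two suggested shortcuts --- exhibiting explicit preimages of the constituents of $\mathrm{ind}_{T(\mathbb{F}_p)}^{G(\mathbb{F}_p)}\omega_2^r$, or checking nonvanishing of $\bar\Phi$ on every irreducible submodule of the source --- are not carried out, and the second requires knowing the socle of $(V_{r+p-1}/D(V_r))\otimes V_{p-1}$, which (short of invoking Reduzzi's non-explicit identification of $V_{r+p-1}/D(V_r)$ with $\overline{\Theta(\omega_2^r)}$, defeating the purpose of an explicit proof) is essentially the statement being proved. So the proposal is a plausible outline, but the two decisive steps are missing.
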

        One may compare Theorems~\ref{ps} and \ref{cuspidal} by noting that
        if we tensor both sides of \eqref{ps-non-split} by $V_{p-1}$, then the right hand side of that isomorphism gets replaced by
        induction from the {\it split torus} $T^\mathrm{sp}({\mathbb F}_p) \subset B({\mathbb F}_p)$ to $G({\mathbb F}_p)$ of $V_m \otimes d^{r-m}$ (Remark~\ref{split induced}).
        Theorem~\ref{cuspidal} is also true for $r = 1$ (see Remark~\ref{r=p-2,p-1}).
        We prove similar isomorphisms for other values of $r$ by twisting (Corollary~\ref{bigger range})
        using the fact that $D$ preserves the theta filtration in a strong sense (Lemma~\ref{divide}).
        We also prove similar isomorphisms when $D$ is replaced by a higher power $D^{(m+1)}$ (Corollary \ref{higher m}, note the
        analogy with \eqref{ps-non-split}).
        
        A non-explicit version of the isomorphism in Theorem~\ref{cuspidal} can be deduced from the work of Reduzzi \cite{red10}. Let us provide some background
        and explain our contribution. In the discussion that follows, we sometimes think 
        of $\omega_2$ as a character taking values in a characteristic zero field (by taking its Teichm\"uller lift).
        Recall that for each complex character $\chi$ of $T({\mathbb F}_p) = {\mathbb F}^\times_{p^2}$ (with $\chi$ not self-conjugate)
        there is an irreducible cuspidal complex representation $\Theta(\chi)$ of $G({\mathbb F}_p)$. Moreover, $\Theta(\chi)$ is
        a factor of an induced representation: we have
        \begin{eqnarray}
          \label{cuspidal tensor St}
          {\Theta(\chi)} \otimes \mathrm{St} \simeq \mathrm{ind}_{T(\mathbb{F}_p)}^{G(\mathbb{F}_p)} \> \chi,
        \end{eqnarray}
        where $\mathrm{St}$ is the $p$-dimensional complex irreducible Steinberg representation of $G({\mathbb F}_p)$ with
        reduction $\overline{\mathrm{St}} \simeq V_{p-1}$.
        While the group $G({\mathbb F}_p)$ has no mod $p$ cuspidal representations (since, for instance, the Jacquet functor is never $0$
        because there are always invariant elements under the upper unipotent subgroup of $G({\mathbb F}_p)$),
        one may still study the mod $p$ reductions of $\Theta(\omega_2^r)$.
        Following a suggestion of Serre to use the operator $D$,
        Reduzzi \cite{red10} proved that
        the mod $p$ reduction $\overline{\Theta(\omega_2^r)}$ is isomorphic to the cokernel of $D$ on an
        appropriate symmetric power representation, namely:
        \begin{eqnarray}
           \label{reduzzi}
          \frac{V_{r+p-1}}{D(V_r)} \simeq \overline{\Theta(\omega_2^r)}
        \end{eqnarray}
        for $2 \leq r \leq p-1$.
        The proof uses a specific integral model of $\Theta(\omega_2^r)$ arising from the action of
        $G({\mathbb F}_p)$ on the crystalline cohomology of the Deligne-Lusztig variety $XY^{p}-X^{p}Y = Z^{p+1}$ (see Haastert-Jantzen \cite{hj90}).
        Thus, Reduzzi's isomorphism   \eqref{reduzzi} is not at all explicit given that the right hand side involves crystalline cohomology.
        However, by tensoring  \eqref{reduzzi} with $V_{p-1}$ and using the mod $p$ reduction of \eqref{cuspidal tensor St} for $\chi = \omega_2^r$,
        one sees that the isomorphism in Theorem~\ref{cuspidal} must hold, at least abstractly.
        An immediate question that arises is whether one can make this isomorphism explicit, given that the right hand
        side of this isomorphism no longer involves crystalline cohomology. Thus the point
        of Theorem~\ref{cuspidal} is that it contains an {\it explicit} isomorphism (which was found after much computation with special cases).
        Again, the map involves a differential operator $\nabla_\alpha$, where $\alpha$ is an element of ${\mathbb F}_{p^2} \setminus {\mathbb F}_p$,
        which generalizes the operator $\nabla$ used in the principal series case.\footnote{It also involves the difference of a polynomial
        evaluated at two points, which looks like the evaluation of a direct integral in calculus.}

        In fact, Reduzzi \cite{red10} proved\footnote{Technically speaking, Reduzzi does not treat the
        case $r = \frac{p+1}{2}$, though it is covered by
        Theorems~\ref{cuspidal} and \ref{cuspidal-twisted}.} that, more generally, for $G({\mathbb F}_q)$ with $q = p^f$, and for $\omega_{2f}: {\mathbb F}^\times_{q^2} \rightarrow
      {\mathbb F}^\times_{q^2}$ the fundamental (identity) character of level $2f$, one similarly has
        $$\frac{V_{r+q-1}}{D(V_r)} \simeq \overline{\Theta(\omega_{2f}^r)},$$
        for $2 \leq r \leq p-1$, where now $D = X^q \frac{\partial}{\partial X} + Y^q \frac{\partial}{\partial Y}$.  
        We end this paper by proving the following twisted version of this result, which extends Theorem~\ref{cuspidal} to  $G({\mathbb F}_q)$:
        \begin{theorem}
          \label{cuspidal-twisted}
          Let $r=r_0+r_1p+\dots+r_{f-1}p^{f-1},$ where $2\leq r_0\leq p-1$ and $r_j=0$ for all $1\leq j\leq f-1.$
          Then there is an explicit isomorphism over  ${\mathbb F}_{q^2}$:
          \[\frac{\bigotimes_{j=0}^{f-1}V_{r_j+p-1}^{{\rm Fr}^j}}{\langle D_0,\dots, D_{f-1}\rangle} \otimes
            \otimes_{j=0}^{f-1} V_{p-1}^{{\rm Fr}^j} \> \simeq \> {\rm ind}_{T(\mathbb{F}_q)}^{G(\mathbb{F}_q)}\> \omega_{2f}^r.\]
        \end{theorem}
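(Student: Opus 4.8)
The plan is to run the proof of Theorem~\ref{cuspidal} in the multivariable setting. Recall its two pillars: first, Lemma~\ref{divide}, that $D$ preserves the theta filtration in a strong divisibility sense --- this is what makes the comparison map well defined on the cokernel; second, an explicit $G$-equivariant map built from the differential operator $\nabla_\alpha$ and the difference of two evaluations, shown bijective by a dimension count. I would carry over both, in twisted form. Fix $\alpha\in\mathbb{F}_{q^2}\setminus\mathbb{F}_q$, use the basis $\{1,\alpha\}$ of $\mathbb{F}_{q^2}/\mathbb{F}_q$ to realise $T(\mathbb{F}_q)=\mathbb{F}_{q^2}^*\hookrightarrow G(\mathbb{F}_q)$, and write $X_j,Y_j$ for the variables of the $j$-th tensor factor. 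First I would write down an explicit $G(\mathbb{F}_q)$-equivariant map
\[
\bigotimes_{j=0}^{f-1}V_{r_j+p-1}^{{\rm Fr}^j}\ \otimes\ \bigotimes_{j=0}^{f-1}V_{p-1}^{{\rm Fr}^j}\ \longrightarrow\ {\rm ind}_{T(\mathbb{F}_q)}^{G(\mathbb{F}_q)}\omega_{2f}^{r},
\]
using a multivariable version of $\nabla_\alpha$ and a difference of evaluations --- now at the points attached to $\alpha$ and $\alpha^q$ --- in exact parallel with Theorem~\ref{cuspidal}.

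Next I would check that this descends modulo $\langle D_0,\dots,D_{f-1}\rangle\otimes\bigl(\bigotimes_jV_{p-1}^{{\rm Fr}^j}\bigr)$, which requires the twisted analogue of Lemma~\ref{divide}: the twisted Serre operators $D_j$ should preserve the twisted theta filtration --- the one cut out by powers of the twisted Dickson polynomials $\theta_i=X_iY_{i-1}^p-Y_iX_{i-1}^p$ --- in the appropriate strong sense, so that the evaluations occurring in the map annihilate $D_j$-images. Equivariance is verified as in Theorem~\ref{cuspidal}, the only arithmetic entering being $x^q=x$ for $x\in\mathbb{F}_q$ together with ${\rm Fr}_q$-linearity; in the twisted setting one additionally has the Frobenius indices to track.

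It then remains to prove bijectivity. Since $\dim{\rm ind}_{T(\mathbb{F}_q)}^{G(\mathbb{F}_q)}\omega_{2f}^r=q(q-1)$ and $\dim\bigotimes_jV_{p-1}^{{\rm Fr}^j}=q$, it is enough to show that $\frac{\bigotimes_{j}V_{r_j+p-1}^{{\rm Fr}^j}}{\langle D_0,\dots,D_{f-1}\rangle}$ has dimension $q-1$, together with surjectivity (or injectivity). For the dimension I would pass to the $\mathbb{F}_q$-one-variable picture: with $D=X^q\frac{\partial}{\partial X}+Y^q\frac{\partial}{\partial Y}$, the operator $D$ is injective on $V_r$ (the elements $D(X^aY^{r-a})$, $0\le a\le r$, are nonzero with pairwise distinct leading monomials, using $0\le a\le r\le p-1$), so $\dim\frac{V_{r+q-1}}{D(V_r)}=(r+q)-(r+1)=q-1$; and this one-variable cokernel is what $\frac{\bigotimes_j V_{r_j+p-1}^{{\rm Fr}^j}}{\langle D_0,\dots,D_{f-1}\rangle}$ should be isomorphic to, via the Steinberg tensor product theorem on the trivial-digit factors ($\bigotimes_j V_{p-1}^{{\rm Fr}^j}\simeq V_{q-1}\simeq\overline{{\rm St}}$) and a suitable substitution map. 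Reduzzi's isomorphism $\frac{V_{r+q-1}}{D(V_r)}\simeq\overline{\Theta(\omega_{2f}^r)}$, combined with the mod-$p$ reduction of $\Theta(\omega_{2f}^r)\otimes{\rm St}\simeq{\rm ind}_{T(\mathbb{F}_q)}^{G(\mathbb{F}_q)}\omega_{2f}^r$, confirms that the target has been correctly identified --- the contribution of the theorem being the explicit form of the map.

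The main obstacle is the dimension bookkeeping for $\langle D_0,\dots,D_{f-1}\rangle$: in the digit configuration $r_j=0$ for $j\geq1$ most of the $D_j$ degenerate, so one must isolate exactly which relations survive and show they are independent --- equivalently, that the relevant $D_j$ has kernel of precisely the expected dimension on the pertinent multidegree component, which is where $r_0\ge 2$ is used --- and the carry $1+(p-1)=p$ at the first base-$p$ digit of $r+q-1$ further complicates the substitution-map side. The safest route to dimension $q-1$ is probably to combine surjectivity of the induced map on cokernels with the independently computed equality $\dim\frac{V_{r+q-1}}{D(V_r)}=q-1$, so that no direct kernel analysis is needed. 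The complementary difficulty, as already in Theorem~\ref{cuspidal}, is arranging the multivariable $\nabla_\alpha$ so that the vanishing on $D_j$-images and the $G(\mathbb{F}_q)$-equivariance hold simultaneously.
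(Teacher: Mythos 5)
Your outline captures the right global shape (an explicit $T(\mathbb{F}_q)$- and $G(\mathbb{F}_q)$-equivariant map built from a twisted $\nabla_\alpha$ and a difference of evaluations, vanishing on the images of the $D_j$, finished by a dimension count), but it omits exactly the part of the argument that carries the weight. The crux is proving that the kernel of $\psi$ is \emph{no larger} than $\langle D_0,\dots,D_{f-1}\rangle\otimes\bigotimes_{j=0}^{f-1}V_{p-1}^{{\rm Fr}^j}$, or equivalently pinning down the dimension of the quotient, and your plan to avoid ``direct kernel analysis'' has no working substitute. The proposed identification of $\bigl(\bigotimes_j V_{r_j+p-1}^{{\rm Fr}^j}\bigr)/\langle D_0,\dots,D_{f-1}\rangle$ with the one-variable cokernel $V_{r+q-1}/D(V_r)$ via ``Steinberg plus a substitution map'' cannot be taken off the shelf: the ambient spaces have dimensions $(r_0+p)p^{f-1}$ versus $r_0+q$, the digits of $r+q-1$ involve the carry you yourself flag, and any comparison would have to be established at the level of cokernels --- which is essentially the content of the theorem, not an input to it. Likewise, surjectivity of $\psi$ is nowhere argued (no preimages of the basis functions $f_i$ are produced), so the ``surjectivity plus known one-variable dimension'' shortcut is not available. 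The paper does this step head-on: it characterizes $\langle D_0,\dots,D_{f-1}\rangle$ by explicit linear conditions on coefficients (Lemmas~\ref{image of D_j} and~\ref{space generated by D_j's}), deduces $\dim\langle D_0,\dots,D_{f-1}\rangle=r_0p^{f-1}+1$ (Remark~\ref{dimension of the space generated by D_j's}), hence quotient dimension $q-1$, and proves $\ker\psi\subset\langle D_0,\dots,D_{f-1}\rangle\otimes\bigotimes_j V_{p-1}^{{\rm Fr}^j}$ by expanding $\psi(P\otimes Q)=0$ in the basis $\mathcal{B}_q$, converting flips and flops, and checking that the resulting coefficient matrices (one per congruence class of $i+j$ modulo $q-1$) are invertible, with determinants of the form $\pm(r_0-1)^{q-r_0}$ or $\pm(r_0-1)^{q-r_0+1}$, exactly as in the $f=1$ case of Theorem~\ref{base case cuspidal}. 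Nothing in your sketch replaces this computation.

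Two further points. First, it is not routine that $\langle D_0,\dots,D_{f-1}\rangle$ is $G(\mathbb{F}_q)$-stable: each $D_j$ is only $G(\mathbb{F}_q)$-linear \emph{modulo the images of the previous operators} (Lemma~\ref{G-linearity of D_j}), and without this the quotient in the statement is not a priori a $G(\mathbb{F}_q)$-module; ``equivariance as in Theorem~\ref{cuspidal}'' does not cover it. Second, the appeal to a twisted analogue of Lemma~\ref{divide} and the twisted theta filtration is misplaced here: in the paper that lemma serves only the corollaries in the untwisted case (Corollaries~\ref{bigger range} and~\ref{higher m}), the twisted Dickson polynomials never enter the cuspidal argument, and the inclusion ${\rm Im}\,D_j\otimes\bigotimes_l V_{p-1}^{{\rm Fr}^l}\subset\ker\psi$ is proved by a direct evaluation computation at the pairs $(A_\alpha^{p^j},B_\alpha^{p^j})$ and $(A_\alpha^{p^{f+j}},B_\alpha^{p^{f+j}})$ (the two summands cancel), not by any filtration-preservation statement. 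Your one-variable observation that $D$ is injective on $V_{r_0}$ is correct but is not the bottleneck.
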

        We remark that $\otimes_{j=0}^{f-1} V_{p-1}^{{\rm Fr}^j} \simeq \overline{\mathrm{St}}$ where $\mathrm{St}$ is
        now the $q$-dimensional Steinberg representation of $G({\mathbb F}_q)$. Also in the statement of the theorem
        we need the following twisted versions
        of Serre's differential operator $D$, namely:
        \[D_0= X_0^pX_1^{p-1}\cdots X_{f-1}^{p-1}\dfrac{\partial}{\partial X_0}+Y_0^pY_1^{p-1}\cdots Y_{f-1}^{p-1}\dfrac{\partial}{\partial Y_0},\]
        and
        \[D_j= X_0^pX_1^{p-1}\cdots X_{j-1}^{p-1}\dfrac{\partial}{\partial X_j}+Y_0^pY_1^{p-1}\cdots Y_{j-1}^{p-1}\dfrac{\partial}{\partial Y_j}, \]
        for all $1\leq j\leq f-1$. Interestingly, the $D_j$ are only $G({\mathbb F}_q)$-linear 
        modulo ${\langle D_1,\dots, D_{j-1}\rangle}$ for all $1 \leq j\leq f$
        (with the convention that $D_f$ is to be thought of as $D_0$, cf. Lemma~\ref{G-linearity of D_j}). Again, the $D_j$
        do not seem to appear in the literature and one might refer to them as {\it twisted Serre operators}. 

	\section{Principal series case}
	
	\subsection{The case of ${\rm GL}_2(\mathbb{F}_p)$}
	Recall $G(\mathbb{F}_p) = {\rm GL}_2(\mathbb{F}_p)$ and $B(\mathbb{F}_p)$ is the subgroup of upper triangular matrices of $G(\mathbb{F}_p).$
        For $r\geq 0,$ let $V_r:={\rm Sym}^r(\mathbb{F}_p^2)$ denote the $r$-th symmetric power of the standard representation of $G(\mathbb{F}_p)$
        over $\mathbb{F}_p.$ We identify $V_r$ with homogeneous polynomials $P(X,Y)$ of degree $r$ in two variables $X$ and $Y$ with coefficients
        in $\mathbb{F}_p$, with action of $g = \left(\begin{smallmatrix} a & b \\ c & d \end{smallmatrix} \right) \in G(\mathbb{F}_p)$ given by
        $$g \cdot P(X,Y) = P(aX+cY,bX+dY).$$
        Consider the Dickson polynomial 
	\[\theta(X, Y):=X^pY-XY^p.\]
	Note that $G(\mathbb{F}_p)$ acts on $\theta(X, Y)$ by the determinant character. So for each $m\geq 0$, we have
	\[V_r^{(m+1)} := \left\{f(X, Y)\in V_r \mid f(X, Y)~\text{is divisible by}~\theta(X, Y)^{m+1}\right\}\]
	is a $G(\mathbb{F}_p)$-stable subspace of $V_r.$ These spaces give a decreasing filtration of submodules of $V_r$:
	\[V_r\supset V_r^{(1)}\supset \dots\supset V_r^{(m+1)} \supset \dots\supset (0).\]
	Let $d^r:B(\mathbb{F}_p)\rightarrow\mathbb{F}_p^{\times}$ denote the character given by $\left( \begin{smallmatrix} a & b \\ 0 & d \end{smallmatrix} \right)
        \mapsto d^r$.
	For $n \geq 0$, $m \in {\mathbb Z}$, define
        $$[n]_m = \begin{cases}
                    1,                      & \text{if } m =0, \\
                    n(n-1) \cdots(n-(m-1)), & \text{if } m>0, \\
                    0,                      & \text{if } m < 0. 
                  \end{cases}	
                  $$

        \begin{lemma}
        Let $k \geq 0$. We have            
        \begin{equation}
          \label{sum}
          \sum\limits_{m=0}^k{k \choose m}[r-j]_{t-l-m}[j]_{l+m}=[r-t+k]_{k}[r-j]_{t-k-l}[j]_l.
        \end{equation}
      \end{lemma}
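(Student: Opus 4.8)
The plan is to collapse the sum in \eqref{sum} to a single application of the Chu--Vandermonde identity for falling factorials, after peeling off from each of the two factors the part that does not involve the summation index $m$. The tool is the elementary splitting rule
\[ [n]_{u+v} = [n]_u\,[n-u]_v \qquad (u,v \ge 0), \]
which just breaks a falling factorial after its first $u$ terms. It applies in the range $l \ge 0$, $t \ge k+l$ relevant to our use of \eqref{sum}; outside that range the two sides of \eqref{sum} are compared directly via the convention $[n]_m = 0$ for $m<0$.

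First I would rewrite the two falling factorials on the left of \eqref{sum}. Writing $t-l-m = (t-k-l) + (k-m)$ with both summands $\ge 0$ for $0 \le m \le k$, two applications of the splitting rule give
\[ [r-j]_{t-l-m} = [r-j]_{t-k-l}\,\bigl[r-j-(t-k-l)\bigr]_{k-m}, \qquad [j]_{l+m} = [j]_l\,[j-l]_m \]
(the second with $u=l$, $v=m$). Substituting these into the left-hand side of \eqref{sum} and factoring out the two $m$-independent quantities $[r-j]_{t-k-l}$ and $[j]_l$, the identity is reduced to
\[ \sum_{m=0}^{k} \binom{k}{m}\,\bigl[r-j-t+k+l\bigr]_{k-m}\,[j-l]_m \;=\; [r-t+k]_k. \]

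This last identity is precisely Chu--Vandermonde in falling-factorial form, $\sum_{m=0}^k \binom{k}{m}[x]_{k-m}[y]_m = [x+y]_k$, applied with $x = r-j-t+k+l$ and $y = j-l$, whence $x+y = r-t+k$. (That form follows at once from $\binom{x+y}{k} = \sum_m \binom{x}{m}\binom{y}{k-m}$ after multiplying by $k!$, or from comparing coefficients of $z^k$ on the two sides of $(1+z)^{x+y} = (1+z)^x(1+z)^y$.) Multiplying back the two factors removed above produces the right-hand side $[r-t+k]_k\,[r-j]_{t-k-l}\,[j]_l$ of \eqref{sum}, which finishes the argument.

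The only genuine obstacle is the bookkeeping forced by the convention $[n]_m=0$ for $m<0$: the splitting rule needs $u,v\ge 0$, which is why I record the hypotheses $l\ge0$ and $t\ge k+l$ at the outset and dispose of the remaining boundary configurations by hand. If one prefers to avoid hypotheses altogether, an alternative is a short induction on $k$ via Pascal's rule $\binom{k}{m}=\binom{k-1}{m}+\binom{k-1}{m-1}$, shifting the index in one of the two resulting sums; but the factor-and-Vandermonde route above is cleaner and makes the content of \eqref{sum} transparent.
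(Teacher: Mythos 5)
Your proof is correct in the range where the lemma is actually applied, but it takes a genuinely different route from the paper: the paper proves \eqref{sum} by induction on $k$, using Pascal's rule ${k \choose m}={k-1 \choose m}+{k-1\choose m-1}$ and recombining adjacent terms via $[r-j]_{t-l-m}[j]_{l+m}+[r-j]_{t-l-m-1}[j]_{l+m+1}=(r-t+1+m \,+\, j-l-m-j+ r-j)\cdots$, i.e.\ a telescoping factor $(r-(t-1))$ extracted at each step, whereas you peel off the $m$-independent factors $[r-j]_{t-k-l}$ and $[j]_l$ and identify the remaining sum as Chu--Vandermonde in falling-factorial form. Your route is shorter and makes the combinatorial content of \eqref{sum} transparent (the paper itself invokes Chu--Vandermonde elsewhere), at the cost of working with falling factorials $[x]_v$ whose base $x$ (e.g.\ $j-l$ or $r-j-t+k+l$) may be negative, so one must use the polynomial extension rather than the paper's literal definition of $[n]_m$ for $n\geq 0$; the paper's induction stays inside its own notation. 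One inaccuracy to flag: outside the range $l\geq 0$, $t\geq k+l$ the identity is not merely a matter of bookkeeping --- it can genuinely fail (take $k=1$, $t=l=0$: the left side is $[r-j]_0[j]_0=1$ while the right side contains $[r-j]_{-1}=0$), so your remark that the remaining boundary configurations can be "disposed of by hand" should be replaced by the honest statement that the lemma is only asserted/used under those hypotheses. This is not a defect peculiar to your argument: the paper's own inductive step, which rewrites $[r-j]_{t-l-m}$ as $(r-j-(t-l-m-1))[r-j]_{t-l-m-1}$, needs exactly the same restriction $t\geq k+l$, and all applications in the paper (e.g.\ Lemma \ref{evaluate}, where $0\leq l\leq t-k$, and the cuspidal computation with $t=k$, $l=0$) satisfy it.
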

        \begin{proof}
          Follows by induction on $k$.
              \end{proof}
              
    We introduce the notation: $P(X,Y) \big\vert_{(c, d)} = P(c,d)$ for a polynomial $P$ in two variables $X$, $Y$ and scalars $c$, $d$.        
	\begin{lemma}\label{evaluate}
          Let $z\in \mathbb{F}_p$ and let $P(X, Y)=\sum\limits_{j=0}^ra_jX^{r-j}Y^j\in V_r$ with $a_j\in \mathbb{F}_p$ for all $0\leq j \leq r.$
          Define the differential operators 
                  \[\nabla= a\frac{\partial}{\partial X}+b\frac{\partial}{\partial Y} \quad\text{and}\quad \nabla'= c\frac{\partial}{\partial X}+d\frac{\partial}{\partial Y} ,\]
                for $a, b, c, d \in \mathbb{F}_p$.
		Then, for all $0\leq k \leq t,$ we have
		\begin{eqnarray*}
			\nabla^{t-k}\nabla'^k(P)\bigg\vert_{(zc, zd)}
			&=& z^{r-t}[r-t+k]_{k}\left(a\frac{\partial}{\partial X}+b\frac{\partial}{\partial Y}\right)^{t-k}(P)\bigg\vert_{(c,d)}.
		\end{eqnarray*}
		
	\end{lemma}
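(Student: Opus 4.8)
The plan is to reduce to the case where $P$ is a single monomial and then expand both differential operators combinatorially, the point being that the sums that appear are exactly those governed by the identity \eqref{sum}. Since both sides of the asserted formula are $\mathbb{F}_p$-linear in $P$, it suffices to treat $P = X^{r-j}Y^j$ for a fixed $0 \le j \le r$. Because $\partial_X$ and $\partial_Y$ commute and $a,b,c,d$ are scalars, I may expand $\nabla'^k = \sum_{m=0}^k \binom{k}{m} c^m d^{k-m}\,\partial_X^m\partial_Y^{k-m}$ and, likewise, $\nabla^{t-k} = \sum_{l=0}^{t-k}\binom{t-k}{l} a^l b^{t-k-l}\,\partial_X^l\partial_Y^{t-k-l}$. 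Applying these to the monomial and using $\partial_X^a(X^n) = [n]_a X^{n-a}$ (an identity over $\mathbb{Z}$, hence valid mod $p$) together with the telescoping relation $[n]_a[n-a]_b = [n]_{a+b}$ to collapse the nested falling factorials, one finds that $\nabla^{t-k}\nabla'^k(X^{r-j}Y^j)$ is an explicit double sum over $m$ and $l$ whose general term is $\binom{k}{m}\binom{t-k}{l}\, a^l b^{t-k-l}\,[r-j]_{l+m}[j]_{t-l-m}$ times a monomial in $X,Y,c,d$ of total degree $r-t$.

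Next I would evaluate at $(X,Y)=(zc,zd)$. Each surviving monomial contributes the common factor $z^{r-t}$, and -- this is the crucial point to check -- the residual powers of $c$ and $d$ depend only on $l$, not on $m$, so the inner sum over $m$ is precisely $\sum_{m=0}^k\binom{k}{m}[r-j]_{l+m}[j]_{t-l-m}$. Substituting $j \mapsto r-j$ in \eqref{sum} identifies this sum with $[r-t+k]_k\,[r-j]_{l}\,[j]_{t-k-l}$. Pulling the constant $[r-t+k]_k$ out of the remaining sum over $l$ leaves exactly the monomial expansion of $\nabla^{t-k}(X^{r-j}Y^j)$ evaluated at $(c,d)$, which proves the identity for the monomial; summing against the coefficients $a_j$ then gives it for general $P$. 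Degenerate ranges (e.g.\ $t>r$) need no separate treatment, since the conventions on $[n]_m$ force both sides to vanish.

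The only real difficulty is the bookkeeping: keeping track of the interlocking indices $j,m,l$ and of the exponents of $X,Y,z,a,b,c,d$ carefully enough to verify that the sum over $m$ appears in exactly the shape demanded by \eqref{sum}, and in particular that the $c,d$-powers are $m$-independent. A cleaner, index-free alternative worth recording is to introduce $F(z,w):=P(zc+wa,\,zd+wb)$, which is homogeneous of degree $r$ in $(z,w)$; by the chain rule $\partial_w F = (\nabla P)(zc+wa,zd+wb)$ and $\partial_z F = (\nabla' P)(zc+wa,zd+wb)$, so $\nabla^{t-k}\nabla'^k(P)\vert_{(zc,zd)}$ equals $\partial_z^k\partial_w^{t-k}F$ evaluated at $w=0$. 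Writing $F = \sum_{i=0}^r c_i z^{r-i}w^i$, only the term $i=t-k$ survives after applying $\partial_w^{t-k}$ and setting $w=0$; comparing with $\nabla^{t-k}(P)\vert_{(c,d)} = (t-k)!\,c_{t-k}$ (obtained the same way without the $\partial_z$'s) and noting that $\partial_z^k$ contributes the factor $[r-t+k]_k$ together with the power $z^{r-t}$ yields the formula at once, the combinatorics of \eqref{sum} now being subsumed into homogeneity of $F$. (No division by $(t-k)!$ is ever needed, so characteristic $p$ causes no trouble.)
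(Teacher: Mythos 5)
Your first argument is, in substance, the paper's own proof: reduce by linearity to a monomial $X^{r-j}Y^j$, expand $\nabla^{t-k}\nabla'^k$ as a double sum, check that after evaluation at $(zc,zd)$ the residual powers of $c$ and $d$ do not involve the inner summation index (they are $c^{r-j-l}$ and $d^{j-t+k+l}$, as you claim), and collapse the inner sum by \eqref{sum}; your version differs only by the harmless relabelling $j\mapsto r-j$ coming from attaching the binomial index to $\partial_X$ rather than $\partial_Y$. The alternative you record at the end is a genuinely different and correct route: with $F(z,w)=P(zc+wa,\,zd+wb)$ the formal chain rule gives $\partial_z^k\partial_w^{t-k}F\big\vert_{w=0}=\nabla^{t-k}\nabla'^k(P)\big\vert_{(zc,zd)}$ (the constant-coefficient operators commute, and evaluation is a specialization of a polynomial identity, so characteristic $p$ causes no difficulty), while homogeneity of $F$ of degree $r$ gives $\partial_w^{t-k}F\big\vert_{w=0}=(t-k)!\,c_{t-k}\,z^{r-t+k}$ with $(t-k)!\,c_{t-k}=\nabla^{t-k}(P)\big\vert_{(c,d)}$, so applying $\partial_z^k$ produces precisely the factor $[r-t+k]_k\,z^{r-t}$; since the common factor $(t-k)!\,c_{t-k}$ is never divided out, no inverse factorials are needed. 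The comparison: the paper's (and your first) route isolates the combinatorial identity \eqref{sum}, which the paper reuses independently later (e.g.\ in the proof of Theorem~\ref{base case cuspidal}), whereas your $F(z,w)$ argument proves the present lemma with essentially no index bookkeeping, the content of \eqref{sum} being absorbed into homogeneity; it would not, however, replace \eqref{sum} where that identity is invoked directly elsewhere.
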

	
	\begin{proof}
		Without loss of generality assume that $P=X^{r-j}Y^j.$ We note that
		\begin{equation}\label{pderivatives}
			\frac{\partial^n}{\partial X^{n-i}\partial Y^i}(P)=[r-j]_{n-i}[j]_iX^{r-j-(n-i)}Y^{j-i}.
		\end{equation}		
		Now,
{\small		\begin{align*}
			&\left(a\frac{\partial}{\partial X}+b\frac{\partial}{\partial Y}\right)^{t-k}\left(c\frac{\partial}{\partial X}+d\frac{\partial}{\partial Y}\right)^{k}(P)\bigg\vert_{(zc, zd) } \\
			&=\sum\limits_{l=0}^{t-k}\sum\limits_{m=0}^k{t-k \choose l}{k \choose m}a^{t-k-l}b^lc^{k-m}d^m\frac{\partial^t}{\partial X^{t-l-m}\partial Y^{l+m}}(P)\bigg\vert_{(zc, zd)}\\
			&=\sum\limits_{l=0}^{t-k}\sum\limits_{m=0}^k{t-k \choose l}{k \choose m}z^{r-t}a^{t-k-l}b^lc^{k-m}d^m[r-j]_{t-l-m}[j]_{l+m}c^{r-j-(t-l-m)}d^{j-l-m}\\
			&=\sum\limits_{l=0}^{t-k}z^{r-t}a^{t-k-l}b^lc^{r-j-(t-l-k)}d^{j-l}{t-k \choose l}\left(\sum\limits_{m=0}^k{k \choose m}[r-j]_{t-l-m}[j]_{l+m}\right)\\
			&=z^{r-t}[r-t+k]_{k}\sum\limits_{l=0}^{t-k}{t-k \choose l}a^{t-k-l}b^lc^{r-j-(t-l-k)}d^{j-l}[r-j]_{t-k-l}[j]_l\\
			&=z^{r-t}[r-t+k]_{k}\left(\sum\limits_{l=0}^{t-k}{t-k \choose l}a^{t-k-l}b^l\frac{\partial^{t-k}}{\partial X^{t-k-l}\partial Y^l}\right)(P)\bigg\vert_{(c, d)}\\
			&=z^{r-t}[r-t+k]_{k}\left(a\frac{\partial}{\partial X}+b\frac{\partial}{\partial Y}\right)^{t-k}(P)\bigg\vert_{(c, d)}.
		\end{align*}}
		The second and last but one equalities hold by (\ref{pderivatives}). The fourth equality follows from \eqref{sum}.
	\end{proof}

	\begin{lemma}\label{glinear}
          Let $a,b,c,d,u,v,w,z\in \mathbb{F}_p$ and $P(X, Y)\in V_r.$ Let $P_1 
          =P(U, V),$
          with $U=uX+wY$ and $V=vX+zY.$ Then for $k\geq 0$ we have
		\[\left(a\frac{\partial}{\partial X}+b\frac{\partial}{\partial Y}\right)^k(P_1)\bigg\vert_{(c, d)}=\left((ua+wb)\frac{\partial}{\partial X}+(va+zb)\frac{\partial}{\partial Y}\right)^k(P)\bigg\vert_{(uc+wd,vc+zd)}.\]
	\end{lemma}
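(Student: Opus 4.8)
The plan is to recognize this identity as an instance of the formal multivariable chain rule and then bootstrap from the case $k=1$ to arbitrary $k$ by induction on $k$. Both sides are $\mathbb{F}_p$-linear in $P$, and everything takes place inside the polynomial ring $\mathbb{F}_p[X,Y]$, so there is no characteristic-$p$ subtlety to worry about: the chain rule for formal partial derivatives holds over any commutative ring, using only the product rule and $\partial_X X^n = nX^{n-1}$ and never a division.

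First I would dispose of $k=0$, which is immediate, and settle $k=1$. With $P_1(X,Y)=P(U,V)$, $U=uX+wY$, $V=vX+zY$, the chain rule gives $\partial_X P_1 = u\,(\partial_X P)(U,V)+v\,(\partial_Y P)(U,V)$ and $\partial_Y P_1 = w\,(\partial_X P)(U,V)+z\,(\partial_Y P)(U,V)$, where $(\partial_X P)(U,V)$ denotes the polynomial $\partial P/\partial X$ with $X,Y$ replaced by $U,V$. Forming the combination $a\partial_X + b\partial_Y$ and regrouping yields
\[\left(a\tfrac{\partial}{\partial X}+b\tfrac{\partial}{\partial Y}\right)(P_1)=\bigl(\widetilde\nabla P\bigr)(U,V),\qquad \widetilde\nabla:=(ua+wb)\tfrac{\partial}{\partial X}+(va+zb)\tfrac{\partial}{\partial Y}.\]
The key structural observation is that the right-hand side is again a polynomial precomposed with the \emph{same} linear substitution $(X,Y)\mapsto(U,V)$, so this identity is in a form that can be fed back into itself.

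The inductive step then amounts to applying the $k=1$ identity with $P$ replaced successively by $\widetilde\nabla P,\ \widetilde\nabla^{\,2}P,\dots$, giving $\bigl(a\partial_X+b\partial_Y\bigr)^{k}(P_1)=\bigl(\widetilde\nabla^{\,k}P\bigr)(U,V)$ for all $k\geq 0$. Evaluating at $(X,Y)=(c,d)$ sends $(U,V)$ to $(uc+wd,\,vc+zd)$; since the coefficients of $\widetilde\nabla$ are exactly $ua+wb$ and $va+zb$, this is precisely the asserted formula.

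There is no deep obstacle here; this lemma is really the statement that directional differentiation is $\mathrm{GL}_2$-covariant in the way one expects, which is what makes the differential-operator techniques elsewhere in the paper legitimate. The only points that need genuine care are that the directional derivative and the linear substitution do \emph{not} commute, so the transformed operator $\widetilde\nabla$ acquires new coefficients governed by the matrix $\left(\begin{smallmatrix} u & v \\ w & z\end{smallmatrix}\right)$, and that the ``polynomial $\circ$ substitution'' shape is preserved under each application, which is what legitimizes the induction. A more computational alternative --- reduce to $P=X^iY^j$, expand $(uX+wY)^i(vX+zY)^j$ by the binomial theorem, and differentiate term by term using \eqref{pderivatives} --- also works but is messier and hides the conceptual point.
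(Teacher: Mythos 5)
Your proof is correct and follows essentially the same route as the paper: the chain rule gives the $k=1$ case, and the general case follows by induction on $k$. Your way of running the induction (iterating the $k=1$ identity as a polynomial identity, with $P$ replaced by $\widetilde\nabla P$, and only evaluating at $(c,d)$ at the end) is a slightly cleaner packaging of the same argument the paper carries out by commuting the differential operators.
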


	\begin{proof}
          This is just the chain rule.
              \end{proof}

	\begin{lemma}\label{nabla}
          Let $a,b \in \mathbb{F}_p$ and $\nabla= a\frac{\partial}{\partial X}+b\frac{\partial}{\partial Y}.$
          Let $f:=f(X, Y), g:=g(X, Y)\in \mathbb{F}_p[X, Y].$ Then, for all $m \geq 1$, we have
		\[\nabla^m(fg)=\sum\limits_{i=0}^m{m \choose i}\nabla^{m-i}(f)\nabla^i(g).\]
	\end{lemma}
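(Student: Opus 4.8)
The plan is to recognize Lemma~\ref{nabla} as nothing more than the higher-order Leibniz (product) rule for the first-order differential operator $\nabla$, and to prove it by induction on $m$ exactly as one proves the classical Leibniz formula. The starting point is the observation that $\nabla$ is a derivation: since $\frac{\partial}{\partial X}$ and $\frac{\partial}{\partial Y}$ each satisfy the Leibniz rule on $\mathbb{F}_p[X,Y]$ and $\nabla$ is an $\mathbb{F}_p$-linear combination of them, we get $\nabla(fg) = \nabla(f)\,g + f\,\nabla(g)$ for all $f,g \in \mathbb{F}_p[X,Y]$. This handles the base case $m=1$.

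For the inductive step, I would assume the formula holds for $m-1$, apply $\nabla$ to both sides of
\[
\nabla^{m-1}(fg) = \sum_{i=0}^{m-1}\binom{m-1}{i}\nabla^{m-1-i}(f)\,\nabla^i(g),
\]
and use $\mathbb{F}_p$-linearity of $\nabla$ together with the derivation property on each summand $\nabla^{m-1-i}(f)\,\nabla^i(g)$. This produces a sum of terms of the form $\binom{m-1}{i}\nabla^{m-i}(f)\,\nabla^i(g)$ and $\binom{m-1}{i}\nabla^{m-1-i}(f)\,\nabla^{i+1}(g)$. Reindexing the second family by $i \mapsto i-1$ and collecting, the coefficient of $\nabla^{m-i}(f)\,\nabla^i(g)$ becomes $\binom{m-1}{i} + \binom{m-1}{i-1}$, which by Pascal's rule equals $\binom{m}{i}$; the edge cases $i=0$ and $i=m$ come out correctly since the missing binomial coefficients vanish. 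This yields $\nabla^m(fg) = \sum_{i=0}^m \binom{m}{i}\nabla^{m-i}(f)\,\nabla^i(g)$, completing the induction.

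There is essentially no serious obstacle here: the only point requiring a modicum of care is the bookkeeping in the reindexing and the verification that the boundary terms ($i=0$ and $i=m$) match, and one should note that the argument is valid over $\mathbb{F}_p$ because it uses only the ring-theoretic Leibniz rule and integer binomial identities, with no division. The same proof works verbatim for the operators $\nabla'$, $\nabla_\alpha$, $D$, $D_j$ and their twisted variants used later, since each of those is again a derivation on the relevant polynomial ring; this is worth keeping in mind for the applications in the principal series and cuspidal sections.
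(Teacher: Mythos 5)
Your proof is correct and follows the same route as the paper: the base case $m=1$ via the derivation property of $\nabla$, then induction using linearity, the product rule on each summand, reindexing, and Pascal's rule $\binom{m-1}{i}+\binom{m-1}{i-1}=\binom{m}{i}$. No differences worth noting.
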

	
	\begin{proof}
	  This is just Leibniz (Leibnitz in Indian calculus texts!) rule.
	\end{proof}

	\begin{lemma}\label{theta}
          Let $a,b,c,d\in \mathbb{F}_p.$ We let $\nabla= a\frac{\partial}{\partial X}+b\frac{\partial}{\partial Y}$
          and $\theta(X, Y)=X^pY-XY^p.$ Then, for $l, k \geq 0$, we have
		\[\nabla^l(\theta(X, Y)^k)\big\vert_{(c,d)}=
		\begin{cases}
			l!\left(\nabla\theta(X, Y)\big\vert_{(c,d)}\right)^l, &\text{if}~k=l,\\
			0, &\text{otherwise}.\\
		\end{cases}\]
	\end{lemma}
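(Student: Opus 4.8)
The plan is to first understand how $\nabla$ acts on $\theta$ itself, then propagate this through the power $\theta^k$ using the Leibniz rule (Lemma~\ref{nabla}), and finally use that $\theta$ vanishes identically on $\mathbb{F}_p^2$.

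\emph{Step 1: the action of $\nabla$ on $\theta$.} Working over $\mathbb{F}_p$, differentiating $\theta = X^pY - XY^p$ gives $\frac{\partial\theta}{\partial X} = pX^{p-1}Y - Y^p = -Y^p$ and $\frac{\partial\theta}{\partial Y} = X^p - pXY^{p-1} = X^p$, so $\nabla\theta = bX^p - aY^p$. Differentiating once more and again using $p\equiv 0$ in $\mathbb{F}_p$, we obtain $\nabla(\nabla\theta) = abpX^{p-1} - abpY^{p-1} = 0$; that is, $\nabla^2\theta = 0$.

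\emph{Step 2: the action of $\nabla^l$ on $\theta^k$.} I would show by induction on $l$ that for $0 \leq l \leq k$,
\[\nabla^l(\theta^k) = [k]_l\,\theta^{k-l}\,(\nabla\theta)^l.\]
The case $l=0$ is trivial. For the inductive step, apply $\nabla$ to the right-hand side; by the product rule (the $m=1$ case of Lemma~\ref{nabla}) together with $\nabla(\nabla\theta)=0$ from Step 1, only the term where $\nabla$ hits the factor $\theta^{k-l}$ survives, giving $[k]_l(k-l)\theta^{k-l-1}(\nabla\theta)^{l+1} = [k]_{l+1}\theta^{k-l-1}(\nabla\theta)^{l+1}$, as needed. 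In particular $\nabla^k(\theta^k) = k!\,(\nabla\theta)^k$, and applying $\nabla$ further shows $\nabla^l(\theta^k) = 0$ for every $l > k$, since $\nabla$ annihilates $\nabla\theta$.

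\emph{Step 3: evaluation at $(c,d)$.} For $c,d \in \mathbb{F}_p$, Fermat's little theorem gives $c^p = c$ and $d^p = d$, hence $\theta(c,d) = c^pd - cd^p = cd - cd = 0$. Therefore, if $l < k$ the factor $\theta^{k-l}$ in the formula of Step 2 forces $\nabla^l(\theta^k)\big\vert_{(c,d)} = 0$; if $l > k$ the polynomial $\nabla^l(\theta^k)$ is identically zero; and if $l = k$ we get $\nabla^k(\theta^k)\big\vert_{(c,d)} = k!\,\big(\nabla\theta(X,Y)\big\vert_{(c,d)}\big)^k$. This is precisely the claimed dichotomy. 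There is no genuine obstacle here: the only points requiring care are the systematic use of $p\equiv 0$ in Step 1 (without which $\nabla^2\theta$ would not vanish) and the clean bookkeeping of the constant $[k]_l$ in the induction of Step 2.
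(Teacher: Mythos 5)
Your proof is correct and follows essentially the same route as the paper: the key identity $\nabla^l(\theta^k)=[k]_l\,\theta^{k-l}(\nabla\theta)^l$ (i.e. $l!\binom{k}{l}\theta^{k-l}(\nabla\theta)^l$) established by induction using $\nabla(\nabla\theta)=0$, the vanishing of higher derivatives for $l>k$ for the same reason, and then evaluation at $(c,d)$ using $\theta(c,d)=0$ on $\mathbb{F}_p^2$ (a point the paper leaves implicit and you make explicit via Fermat).
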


	\begin{proof}
          We first show that
		\begin{equation}\label{klarge}
			\nabla^l(\theta^k)=l!{k\choose l}\theta^{k-l}(\nabla\theta)^l,~\forall ~k\geq l.
		\end{equation}
		We prove the result by induction on $l.$ The $l = 0$ case is trivial. Suppose $l=1.$ Then, we have
		\[\nabla(\theta^k)=\left(a\frac{\partial}{\partial X}+b\frac{\partial}{\partial Y}\right)(\theta^k)=ak\theta^{k-1}\frac{\partial\theta}{\partial X}+bk\theta^{k-1}\frac{\partial\theta}{\partial Y}=k\theta^{k-1}(\nabla\theta),\]
	        as desired. Assume that the result is true for $l$. If $k \geq l+1$, then
		\begin{eqnarray*}
	        \nabla^{l+1}(\theta^k) 
			& = & \nabla\left(l!{k\choose l}\theta^{k-l}(\nabla\theta)^l\right) \> = \>  l!{k \choose l}\left(a\frac{\partial}{\partial X}\left(\theta^{k-l}(\nabla\theta)^l\right)+b\frac{\partial}{\partial Y}\left(\theta^{k-l}(\nabla\theta)^l\right)\right)\\
			& = & l!{k\choose l}a\left(\theta^{k-l}\frac{\partial}{\partial X}(\nabla\theta)^l+(k-l)\theta^{k-l-1}\frac{\partial\theta}{\partial X}(\nabla\theta)^l\right)\\
			&  & +l!{k\choose l}b\left(\theta^{k-l}\frac{\partial}{\partial Y}\left(\nabla\theta\right)^l+(k-l)\theta^{k-l-1}\frac{\partial\theta}{\partial Y}(\nabla\theta)^l\right)\\
			& = & l!{k \choose l}\theta^{k-l}\nabla((\nabla\theta)^l)+l!{k \choose l}(k-l)\theta^{k-l-1}(\nabla\theta)^l\left(a\frac{\partial\theta}{\partial X}+b\frac{\partial\theta}{\partial Y}\right)\\
			& = & (l+1)!{k \choose l+1}\theta^{k-(l+1)}(\nabla\theta)^{l+1}.
		\end{eqnarray*}
		The first equality holds by the induction hypothesis. Note that $\nabla\theta=bX^p-aY^p$, so 
		$\nabla((\nabla\theta)^l)=0$, and hence the last equality follows. Thus the identity (\ref{klarge}) follows by induction.
		
		Now, suppose $k<l.$ Then
		\begin{equation}\label{ksmall}
			\nabla^l(\theta^k)=\nabla^{l-k}(\nabla^k(\theta^k))=k!\nabla^{l-k}((\nabla\theta)^k)=k!\nabla^{l-k-1}\nabla((\nabla\theta)^k)=0.
		\end{equation}
		The second equality follows by taking $l=k$ in (\ref{klarge}). The last equality follows because $\nabla((\nabla\theta)^k)=0.$ 
		
		Combining (\ref{klarge}) and (\ref{ksmall}), we have
		\begin{eqnarray*}
			\nabla^l(\theta^k)\big\vert_{(c,d)} & = &
			\begin{cases}
				l!{k \choose l}\theta^{k-l}\big \vert_{(c, d)}\left(\nabla\theta\big\vert_{(c,d)}\right)^l, & \text{if}~k\geq l,\\
				0, &\text{if}~k<l.
			\end{cases}
			\>\> =  \>\>  \begin{cases}
				l!\left(\nabla\theta\big \vert_{(c,d)}\right)^l, & \text{if}~k=l,\\
				0, &\text{otherwise}.  \hspace{1.2cm} \qedhere  
			\end{cases}
		\end{eqnarray*}  
	\end{proof}

        \subsubsection{Non-split case} 
           We prove  Theorem~\ref{ps} (1) from the introduction. 
	\begin{theorem}
		\label{base case}
		Let $0\leq m\leq p-1$ and $r\geq (m+1)(p+1)-1$. If $p\nmid {r \choose m}$, then we have
		\[\frac{V_r}{V_r^{(m+1)}}\simeq{\rm ind}_{B(\mathbb{F}_p)}^{G(\mathbb{F}_p)}\left(V_m\otimes d^{r-m}\right).\]
	\end{theorem}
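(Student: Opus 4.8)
The plan is to exhibit an explicit $G(\mathbb{F}_p)$-equivariant map
$$\Phi\colon V_r \longrightarrow \mathrm{ind}_{B(\mathbb{F}_p)}^{G(\mathbb{F}_p)}\!\left(V_m\otimes d^{r-m}\right),$$
to show that $\ker\Phi = V_r^{(m+1)}$, and to finish by comparing dimensions. Realize $V_m$ on homogeneous polynomials of degree $m$ in auxiliary variables $S,T$. For $g = \left(\begin{smallmatrix}a&b\\c&d\end{smallmatrix}\right)\in G(\mathbb{F}_p)$ put $\nabla = a\frac{\partial}{\partial X}+b\frac{\partial}{\partial Y}$ (the directional derivative along the first row of $g$) and define
$$\Phi(P)(g) \;=\; \sum_{j=0}^{m} \frac{\binom{m}{j}}{\binom{r}{j}}\cdot\frac{1}{j!}\,\nabla^{j}(P)\big|_{(c,d)}\cdot S^{j}T^{m-j}\;\in\; V_m,$$
so that $\Phi(P)(g)$ records the first $m+1$ coefficients of the expansion of $P$ along the second row of $g$. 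The scalars $\binom{m}{j}/\binom{r}{j}$ (equivalently, after multiplying through by the unit $\binom{r}{m}$, the scalars $\binom{r-j}{m-j}$) lie in $\mathbb{F}_p^\times$ for $0\le j\le m$: since $m\le p-1$ we have $p\nmid m!$, hence $p\nmid\binom{m}{j}$; and since $p\nmid\binom{r}{m}$ with $m$ a single base-$p$ digit, Lucas's theorem gives $p\nmid\binom{r}{j}$ too. For $m=0$ this is the classical evaluation map $P\mapsto(g\mapsto P(c,d))$.

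First one checks that $\Phi(P)$ lands in the induced space, i.e. that $\Phi(P)(bg) = b\cdot\Phi(P)(g)$ for $b = \left(\begin{smallmatrix}a'&\ast\\0&e\end{smallmatrix}\right)\in B(\mathbb{F}_p)$, where the right-hand side uses the $V_m\otimes d^{r-m}$-action. The first row of $bg$ equals $a'$ times the first row of $g$ plus $\ast$ times the second row of $g$, and the second row of $bg$ is $e$ times the second row of $g$; expanding the $j$-th power of the first-row derivative of $bg$ by the binomial theorem and invoking Lemma~\ref{evaluate} expresses each $\nabla_{bg}^{j}(P)\big|_{(ec,ed)}$ as an explicit $\mathbb{F}_p$-linear combination of the $\nabla_{g}^{i}(P)\big|_{(c,d)}$ with $i\le j$. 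Matching this against the expansion of $b$ acting on the monomials $S^iT^{m-i}$ reduces the required equality to the identity $\binom{m}{j}\binom{r-i}{j-i}\binom{r}{i} = \binom{m}{i}\binom{m-i}{j-i}\binom{r}{j}$ for $i\le j$, which follows by applying $\binom{N}{j}\binom{j}{i}=\binom{N}{i}\binom{N-i}{j-i}$ with $N=m$ and $N=r$. The $G(\mathbb{F}_p)$-equivariance with respect to right translation is then Lemma~\ref{glinear}: writing $g_0\cdot P = P(U,V)$ with $(U,V)=(X,Y)g_0$, the chain rule identifies $\nabla_{g}^{j}(g_0\cdot P)\big|_{(c,d)}$ with the $j$-th power of the first-row derivative of $gg_0$ applied to $P$ and evaluated at the second row of $gg_0$; hence $\Phi(g_0\cdot P)(g) = \Phi(P)(gg_0)$.

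Next I compute the kernel. The inclusion $V_r^{(m+1)}\subseteq\ker\Phi$ follows from the Leibniz rule (Lemma~\ref{nabla}): if $\theta^{m+1}\mid P$ then each term of $\nabla^{j}(P)$ contains a factor $\nabla^{j-i}(\theta^{m+1})$ with $j-i\le j\le m<m+1$, and this vanishes at $(c,d)$ by Lemma~\ref{theta} because $\theta(c,d)=0$ for $c,d\in\mathbb{F}_p$. Conversely, if $\Phi(P)=0$ then, the scalars being units, $\nabla_{w}^{j}(P)\big|_{v}=0$ for every basis $(v,w)$ of $\mathbb{F}_p^2$ and every $0\le j\le m$; expanding $P$ in the coordinates determined by $v$ and a complement $w$, this says precisely that $P$ vanishes to order $\ge m+1$ at the point $[v]\in\mathbb{P}^1(\mathbb{F}_p)$, and this for each of the $p+1$ such points, so $P$ is divisible by $\prod_{[v]\in\mathbb{P}^1(\mathbb{F}_p)}\ell_v^{m+1}=\theta^{m+1}$, the $\ell_v$ being the $p+1$ pairwise non-proportional linear forms whose product is $\theta$. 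Thus $\ker\Phi = V_r^{(m+1)}$ and $\Phi$ induces an injection $V_r/V_r^{(m+1)}\hookrightarrow\mathrm{ind}_{B(\mathbb{F}_p)}^{G(\mathbb{F}_p)}(V_m\otimes d^{r-m})$. For $r$ large both sides have dimension $(m+1)(p+1)$ — the target because $[G(\mathbb{F}_p):B(\mathbb{F}_p)]=p+1$ and $\dim V_m=m+1$, the source because $V_r^{(m+1)}=\theta^{m+1}V_{r-(m+1)(p+1)}$ — so the injection is an isomorphism of $G(\mathbb{F}_p)$-representations.

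I expect the real difficulty to be the first step: guessing that one should package the first $m+1$ jet coefficients of $P$ along the second row of $g$, and pinning down the exact constants $\binom{m}{j}/\binom{r}{j}$ that force $\Phi$ into the induced representation — this is also where the hypothesis $p\nmid\binom{r}{m}$ enters, since it is exactly what makes these constants invertible. Once Lemma~\ref{evaluate} is available, the verification of membership and equivariance is routine bookkeeping. A secondary point needing care is the reverse kernel inclusion, which uses that $\theta$ is the product of the linear forms cutting out the $\mathbb{F}_p$-points of $\mathbb{P}^1$ in order to convert the vanishing of directional derivatives into divisibility by $\theta^{m+1}$.
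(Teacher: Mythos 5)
Your proposal is correct, and its skeleton is the one the paper uses: the map is the same (your coefficient of $S^jT^{m-j}$ is $\binom{m}{j}/\bigl(\binom{r}{j}j!\bigr)=\binom{m}{j}/[r]_j\cdot\nabla^j(P)\vert_{(c,d)}$, which is exactly the paper's $\psi_P$ after the relabeling $j\leftrightarrow m-j$), the $B(\mathbb{F}_p)$- and $G(\mathbb{F}_p)$-equivariance are verified exactly as in the paper via Lemma~\ref{evaluate} and Lemma~\ref{glinear} together with an equivalent binomial identity, the containment $V_r^{(m+1)}\subseteq\ker$ uses Lemma~\ref{nabla} and Lemma~\ref{theta} as in the paper, and the conclusion is the same dimension count $(m+1)(p+1)$. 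The one place you genuinely diverge is the reverse kernel inclusion: the paper argues by induction on $m$, writing $P=\theta^m Q$, applying Leibniz and Lemma~\ref{theta} to the $j=0$ condition to get $Q(c,d)=0$ for all $(c,d)$, and then invoking the $m=0$ case (which it cites from \cite{GV22} or Lemma~\ref{qtheta}); you instead pass to coordinates adapted to each point $[v]\in\mathbb{P}^1(\mathbb{F}_p)$, observe that the vanishing of $\nabla_w^{j}(P)\vert_v$ for $0\le j\le m$ forces the first $m+1$ coefficients in those coordinates to vanish (using $j!\ne 0$ since $m\le p-1$), hence $\ell_v^{m+1}\mid P$, and then use $\theta=\prod_{[v]}\ell_v$ with the $\ell_v$ pairwise coprime. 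This is a clean, induction-free alternative that also subsumes the $m=0$ base case rather than quoting it; the paper's inductive route has the advantage of reusing only the lemmas already set up (and is the template it later twists for $\mathbb{F}_q$, where the relevant kernel generators $\theta_j'$, and later the twisted $\theta_j$, no longer split into linear factors so simply), but for the $\mathbb{F}_p$ statement both arguments are complete and correct.
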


	\begin{proof}
          \label{base case proof}
          First a word on notation. The space $V_m$ (like $V_r$) is modelled on homogeneous polynomials over ${\mathbb F}_p$ of degree $m$ in two variables, say $X'$ and $Y'$. However,
          to prevent confusion with the variables $X$ and $Y$ used to model $V_r$, we think instead of elements of $V_m$ as tuples $(x_j)_{0\leq j \leq m} \in {\mathbb F}_p^{m+1}$
          with such tuples corresponding to the coefficients of the polynomials in the basis $\{{X'}^{m-j}{Y'^j}\}_{0\leq j \leq m}$.

          Now, define the map
		\[\psi:V_r\rightarrow {\rm ind}_{B(\mathbb{F}_p)}^{G(\mathbb{F}_p)}\left(V_m\otimes d^{r-m}\right)\]   
		by
		$\psi(P(X, Y))=\psi_{P}$ for all $P = P(X, Y)\in V_r,$
		where $\psi_{P}: G(\mathbb{F}_p)\rightarrow V_m\otimes d^{r-m}$ is defined by
		\begin{eqnarray}
                  \label{def of psi}
                  \psi_P\left(\left(\begin{matrix}
			a & b\\
			c & d
                      \end{matrix}\right)\right)=\left(\frac{{m \choose j}}{[r]_{m-j}}\left(a\frac{\partial}{\partial X}+b\frac{\partial}{\partial Y}\right)^{m-j}(P)\bigg\vert_{(c,d)}\right)_{0\leq j \leq m}
                \end{eqnarray}
		for all $\left(\begin{smallmatrix}
			a & b\\
			c & d
                      \end{smallmatrix}\right)\in G(\mathbb{F}_p).$ 
                    We show that $\psi$ induces a $G(\mathbb{F}_p)$-equivariant isomorphism between $\frac{V_r}{V_r^{(m+1)}}$ and
                    ${\rm ind}_{B(\mathbb{F}_p)}^{G(\mathbb{F}_p)}\left(V_m\otimes d^{r-m}\right)$.
                    Note that, by hypothesis the constant is a well-defined
                    non-zero element of $\mathbb{F}_p.$ For some motivation on how $\psi_P$  is constructed for general $m$, it is useful to work out the case
                    $m = 1$, where it is necessary to divide by the constant $r$ in the first coordinate.
		
        Recall that we denote $\nabla = a \frac{\partial}{\partial X} + b\frac{\partial}{\partial Y} $ and $\nabla' = c\frac{\partial}{\partial X}+d\frac{\partial}{\partial Y}$. \\
		
	      \noindent \textbf{$B(\mathbb{F}_p)$-linearity:} We first show that $\psi_P$ is $B(\mathbb{F}_p)$-linear. Let $\gamma=\left(\begin{smallmatrix}
			a & b\\
			c & d
		\end{smallmatrix}\right)\in G(\mathbb{F}_p),~b=\left(\begin{smallmatrix}
			u & v\\
			0 & z
                      \end{smallmatrix}\right)\in B(\mathbb{F}_p)$ and $\underline{x}:=(x_j)_{0\leq j \leq m}\in V_m\otimes d^{r-m}.$
                Then the action of $b$ on $\underline{x}$ is given by 
		\[b\cdot\underline{x} 
                                      =z^{r-m}\cdot\left(\sum\limits_{j=i}^m{j \choose i}u^{m-j}v^{j-i}z^ix_j \right)_{0\leq i \leq m}.\]
                We have
		\begin{eqnarray*}
			 \psi_P(b \cdot \gamma)
			& = & \psi_P\left(\left(\begin{matrix}
				ua+vc & ub+vd\\
				zc & zd
			\end{matrix}\right)\right)
			\>\> = \>\> \left(\frac{{m \choose j}}{[r]_{m-j}}\left( u\nabla+v\nabla' \right)^{m-j}(P)\bigg\vert_{(zc, zd)}\right)_{0\leq j\leq m}\\
			& =  & \left(\frac{{m \choose j}}{[r]_{m-j}}\left({\sum\limits_{k=0}^{m-j}}{m-j\choose k}u^{m-j-k}v^k\nabla^{m-j-k}\nabla'^k\right)(P)\bigg\vert_{(zc, zd)}\right)_{0\leq j\leq m},\\
		\end{eqnarray*}
		which by taking $t=m-j$ in Lemma \ref{evaluate} and by using the fact $[r]_{m-j}=[r]_{m-j-k}[r-(m-j-k)]_k$ equals
		\begin{equation}\label{lhs}
			\left(\sum\limits_{k=0}^{m-j}\frac{{m \choose j}{m-j\choose k}z^{r-(m-j)}}{[r]_{m-j-k}}\left(u^{m-j-k}v^k\nabla^{m-j-k}\right)(P)\bigg\vert_{(c, d)}\right)_{0\leq j\leq m}.
		\end{equation}
		Now,
		\[b\cdot\psi_P(\gamma)
		=\left( \begin{matrix}
			u & v\\
			0 & z
		\end{matrix} \right)\cdot\left(\frac{{m \choose j}}{[r]_{m-j}}\nabla^{m-j}(P)\bigg\vert_{(c, d)}\right)_{0\leq j \leq m} \> = \>
		\left(\sum\limits_{j=i}^m\frac{{j \choose i}{m \choose j}z^{r-(m-i)}}{[r]_{m-j}}u^{m-j}v^{j-i}\nabla^{m-j}(P)\bigg\vert_{(c,d)} \right)_{0\leq i \leq m},\]
		which, by relabeling $j$ as $l$ and $i$ as $j$, and 
		by further replacing $l$ by $j+k$, equals
		\begin{equation}\label{rhs}
			\left(\sum\limits_{k=0}^{m-j}   \frac{{j+k \choose j}{m \choose j+k}z^{r-(m-j)}}{[r]_{m-j-k}}u^{m-j-k}v^{k}\nabla^{m-j-k}(P)\bigg\vert_{(c,d)} \right)_{0\leq j \leq m}.
		\end{equation}
		Observing ${m \choose j}{m-j \choose k}={j+k \choose j}{m \choose j+k}$ and comparing (\ref{lhs}) and (\ref{rhs}), we have $\psi_P(b\cdot \gamma)=b\cdot\psi_P(\gamma).$ So $\psi_P$ is $B(\mathbb{F}_p)$-linear and hence $\psi$ is well defined. \\
		
		\noindent \textbf{$G(\mathbb{F}_p)$-linearity:} Now, we show that $\psi$ is $G(\mathbb{F}_p)$-linear.
		Let $\gamma=\left(\begin{smallmatrix}
			a & b\\
			c & d
		\end{smallmatrix}\right)$, $g=\left(\begin{smallmatrix}
			u & v \\
			w & z
		\end{smallmatrix}\right)\in G(\mathbb{F}_p).$ Then
                $g\cdot P(X, Y) 
                =P(U, V) =:P_1,$
		where $U=uX+wY$ and $V=vX+zY.$ 
		We have
		\begin{eqnarray*}
			 \psi(g\cdot(P(X, Y)))(\gamma)
			& = &\psi\left(P_1\right)\left(\left( \begin{matrix}
				a & b\\
				c & d
			\end{matrix} \right)\right)
			\> = \> \left(\frac{{m \choose j}}{[r]_{m-j}}\left(a\frac{\partial}{\partial X}+b\frac{\partial}{\partial Y}\right)^{m-j}(P_1)\bigg\vert_{(c,d)}\right)_{0\leq j \leq m},
	        \end{eqnarray*}
		which, by Lemma \ref{glinear}, equals	
{\small 		\[ \left(\frac{{m \choose j}}{[r]_{m-j}}\left((ua+wb)\frac{\partial}{\partial X}+(va+zb)\frac{\partial}{\partial Y}\right)^{m-j}(P)\bigg\vert_{(uc+wd,vc+zd)}\right)_{0\leq j \leq m}  
		= \psi(P(X, Y))(\gamma g) \> 
		 = \> (g\cdot \psi(P(X, Y)))(\gamma) \]}
                \!\!\! for all $\gamma\in G(\mathbb{F}_p)$.
		So $\psi(g\cdot P(X, Y))=g\cdot \psi(P(X, Y))$ for all $g\in G(\mathbb{F}_p).$
                Hence $\psi $ is  $G(\mathbb{F}_p)$-linear. \\

		 \noindent \textbf{Kernel:} 
                 Next we show that $\ker\psi=V_r^{(m+1)}$ by induction on $m$. If $m=0$, it is well known that $\ker\psi=V_r^{(1)}$ (e.g.,
                 \cite[Lemma 2.4]{roz14} or Lemma~\ref{qtheta} with $f = 1$).
                 Let $P(X, Y)\in\ker \psi.$
                 By definition of $\psi$, we have $\nabla^{m-j}(P)\big \vert_{(c, d)}=0$ for all $0\leq j\leq m$, 
                 $\left( \begin{smallmatrix}
			  a & b \\
			  c & d
                        \end{smallmatrix}
                \right) \in G(\mathbb{F}_p)$. In particular, this is true for all $1\leq j\leq m.$ So by the induction hypothesis,
                 we have $P(X, Y)\in V_r^{(m)},$ which gives $P(X, Y)=\theta(X, Y)^mQ(X, Y)$ for some $Q(X, Y)\in V_{r-m(p+1)}.$
		Now, taking $j = 0$ and using Lemma~\ref{nabla}, 
		\begin{eqnarray*}
			 0 =  \nabla^m(P)\bigg \vert_{(c, d)} & = &
                                       \left(\sum\limits_{i=0}^m{m \choose i}\nabla^{m-i}(\theta^m)\nabla^i(Q(X, Y))\right)\bigg\vert_{(c, d)}.
                \end{eqnarray*}
                This implies $Q(c, d)=0$ since, by  Lemma \ref{theta}, all terms above die except for the $i = 0$ term.
                Then by the $m=0$ case, we have $\theta \mid Q(X, Y)$, so $P(X, Y) \in V_r^{(m+1)}.$ Thus $\ker \psi\subset V_r^{(m+1)}$.
                On the other hand if $P(X, Y)=\theta^{m+1}Q'(X, Y)$, it is easy to check using Lemmas~\ref{nabla}, \ref{theta}
                that $\nabla^j(P(X, Y))\big\vert_{(c, d)}=0$ for all $0\leq j\leq m$.
                By the definition of $\psi$, we have $P(X, Y)\in\ker \psi$.  Thus $\ker\psi=V_r^{(m+1)}$. \\
                
                \noindent \textbf {Isomorphism: }  We have shown $\psi$ induces a $G(\mathbb{F}_p)$-equivariant injection from
                $\frac{V_r}{V_r^{(m+1)}}$ to  ${\rm ind}_{B(\mathbb{F}_p)}^{G(\mathbb{F}_p)}\left(V_m\otimes d^{r-m}\right)$. \\
                This is an isomorphism since both sides have dimension $(m+1)(p+1)$
                as $r \geq (m+1)(p+1)-1$. 
        \end{proof}
	
	\begin{remark}
                \label{split induced}
                To compare Theorem~\ref{base case} and Theorem~\ref{cuspidal} (to be proved in Section~\ref{section cuspidal}),
                we note that the extra factor of $V_{p-1}$ in the latter cuspidal setting also occurs in the present principal series setting.  
                To see this, we compare the
                induction of $V_{m} \otimes d^{r-m}$ from the Borel subgroup $B({\mathbb F}_p)$ with that
                from the split torus  $S({\mathbb F}_p) := T^{{\mathrm {sp}}}(\mathbb{F}_p) \subset B(\mathbb{F}_p)$ of diagonal matrices.
		We claim that for any representation $V$ of $B({\mathbb F}_p)$, we have
		\[{\rm ind}_{S(\mathbb{F}_p)}^{G(\mathbb{F}_p)} V \big\vert_{S(\mathbb{F}_p)} \simeq {\rm ind}_{B(\mathbb{F}_p)}^{G(\mathbb{F}_p)}V \otimes V_{p-1}.\]
		To see this, first observe that ${\rm ind}_{S(\mathbb{F}_p)}^{B(\mathbb{F}_p)} \mathbbm{1} \simeq V_{p-1}\big\vert_{B(\mathbb{F}_p)}$.
		Indeed, since $S({\mathbb F}_p)$ is a group of order prime to $p$, every representation of $S({\mathbb F}_p)$, in particular the trivial representation
                $\mathbbm 1$, is projective.  
                Since induction of a projective is projective, so is ${\rm ind}_{S(\mathbb{F}_p)}^{B(\mathbb{F}_p)} \mathbbm{1}$.  
                By Frobenius reciprocity $\mathbbm{1} \subseteq {\rm ind}_{S(\mathbb{F}_p)}^{B(\mathbb{F}_p)} \mathbbm{1}$, making the last module
                the projective envelope of $\mathbbm{1}$ (since the smallest dimension of a projective $B({\mathbb F}_p)$-module is $p$ by, e.g., \cite[(2.11)]{glo78}). 
		Also, $V_{p-1}$ is a projective $G({\mathbb F}_p)$-module and restriction of a projective is projective, so 
		$V_{p-1}\big\vert_{B(\mathbb{F}_p)}$ is projective. Since $\mathbbm{1} \subseteq V_{p-1}\big\vert_{B(\mathbb{F}_p)}$ (as the highest monomial),
                by uniqueness of projective envelopes, we have
		${\rm ind}_{S(\mathbb{F}_p)}^{B(\mathbb{F}_p)} \mathbbm{1} \simeq V_{p-1}\big\vert_{B(\mathbb{F}_p)}$. Tensoring with $V$
                yields ${\rm ind}_{S(\mathbb{F}_p)}^{B(\mathbb{F}_p)} V \big\vert_{S(\mathbb{F}_p)} \simeq V \otimes V_{p-1} \big\vert_{B(\mathbb{F}_p)}$.
		Using this, we have $${\rm ind}_{S(\mathbb{F}_p)}^{G(\mathbb{F}_p)} V \big\vert_{S(\mathbb{F}_p)}
		= {\rm ind}_{B(\mathbb{F}_p)}^{G(\mathbb{F}_p)} \left({\rm ind}_{S(\mathbb{F}_p)}^{B(\mathbb{F}_p)} V\big\vert_{S(\mathbb{F}_p)} \right) = 
		{\rm ind}_{B(\mathbb{F}_p)}^{G(\mathbb{F}_p)} \left( V \otimes V_{p-1}\big\vert_{B(\mathbb{F}_p)} \right) 
                = {\rm ind}_{B(\mathbb{F}_p)}^{G(\mathbb{F}_p)} V \otimes V_{p-1},$$
                as claimed. Thus tensoring both sides of the isomorphism in Theorem~\ref{base case} with $V_{p-1}$ and using the above claim with $V = V_m \otimes d^{r-m}$,
                we obtain the
                isomorphism 
		\[\frac{V_r}{V_r^{(m+1)}} \otimes V_{p-1} \simeq{\rm ind}_{S(\mathbb{F}_p)}^{G(\mathbb{F}_p)}\left(V_m\otimes d^{r-m}\right)\]
                for $0\leq m\leq p-1$, $r\geq (m+1)(p+1)-1$, $p\nmid {r \choose m}$. This isomorphism is closer in flavor to that in
                Theorem~\ref{cuspidal}.
		\end{remark}

	\subsubsection{Split case}
        We turn to the case $p\mid\binom{r}{m}$. As explained in the introduction we may assume that $m = 1$.
        In the next theorem, we prove Theorem~\ref{ps} (2). Recall that
        $V_1^{\mathrm{ss}}:=a\oplus d$ denotes the two-dimensional split representation of $B(\mathbb{F}_p).$ 
	\begin{theorem}
          \label{split case}
                Let $r\geq 2p+1$.
		If $p\mid r$, then we have
		\[\dfrac{V_r}{V_r^{(2)}}\simeq {\rm ind}_{B(\mathbb{F}_p)}^{G(\mathbb{F}_p)} \left( V_1^{\mathrm{ss}}\otimes d^{r-1} \right) = \mathrm{ind}_{B(\mathbb{F}_p)}^{G(\mathbb{F}_p)} ad^{r-1} \oplus \mathrm{ind}_{B(\mathbb{F}_p)}^{G(\mathbb{F}_p)}  d^{r}.\]
	\end{theorem}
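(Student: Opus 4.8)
The plan is to produce the first isomorphism explicitly (the second then being formal), by modifying the map $\psi$ of Theorem~\ref{base case} in the case $m=1$: one simply \emph{drops the normalising constant} $\tfrac{1}{[r]_1}=\tfrac1r$ occurring in its zeroth coordinate --- a constant that is no longer invertible once $p\mid r$ --- and correspondingly replaces the target $V_1$ by its semisimplification $V_1^{\mathrm{ss}}=a\oplus d$. Concretely, for $P=P(X,Y)\in V_r$ I would define $\tilde\psi(P)=\tilde\psi_P$ with $\tilde\psi_P\colon G(\mathbb{F}_p)\to V_1^{\mathrm{ss}}\otimes d^{r-1}$ given by
\[
\tilde\psi_P\!\left(\begin{smallmatrix} a & b \\ c & d \end{smallmatrix}\right)\;=\;\bigl(\,\nabla(P)\big|_{(c,d)},\; P(c,d)\,\bigr),\qquad \nabla=a\tfrac{\partial}{\partial X}+b\tfrac{\partial}{\partial Y},
\]
where $V_1^{\mathrm{ss}}=a\oplus d$ carries the diagonal $B(\mathbb{F}_p)$-action $\left(\begin{smallmatrix} u & v \\ 0 & z \end{smallmatrix}\right)\cdot(x_0,x_1)=(ux_0,zx_1)$, so that after twisting by $d^{r-1}$ the action reads $(x_0,x_1)\mapsto(uz^{r-1}x_0,\,z^rx_1)$.

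The crux --- the place where the hypothesis $p\mid r$ is exactly what is needed, and the step I expect to be the main point --- is the verification that $\tilde\psi_P$ is $B(\mathbb{F}_p)$-linear, i.e.\ that $\tilde\psi(P)\in\mathrm{ind}_{B(\mathbb{F}_p)}^{G(\mathbb{F}_p)}(V_1^{\mathrm{ss}}\otimes d^{r-1})$. Writing $\beta=\left(\begin{smallmatrix} u & v \\ 0 & z\end{smallmatrix}\right)$ and $\gamma=\left(\begin{smallmatrix} a & b \\ c & d\end{smallmatrix}\right)$, the differential operator attached to the top row of $\beta\gamma$ is $u\nabla+v\nabla'$ with $\nabla'=c\tfrac{\partial}{\partial X}+d\tfrac{\partial}{\partial Y}$, and Lemma~\ref{evaluate} applied with $t=1$ to $k=0$ and $k=1$ yields
\[
\tilde\psi_P(\beta\gamma)\;=\;\bigl(\,uz^{r-1}\,\nabla(P)\big|_{(c,d)}\,+\,vz^{r-1}[r]_1\,P(c,d),\;\;z^rP(c,d)\,\bigr),
\]
whereas $\beta\cdot\tilde\psi_P(\gamma)=\bigl(uz^{r-1}\,\nabla(P)\big|_{(c,d)},\;z^rP(c,d)\bigr)$. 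Since $[r]_1=r\equiv0\bmod p$, the cross term drops out and the two coincide, so $\tilde\psi$ is well defined. This is also the precise sense in which dropping the constant turns the non-split extension $V_1|_{B(\mathbb{F}_p)}$ of Theorem~\ref{base case} into the split $V_1^{\mathrm{ss}}$: when $p\nmid r$ one is forced to divide the zeroth coordinate by $r$ in order to absorb this surviving cross term, landing in $\mathrm{ind}_{B(\mathbb{F}_p)}^{G(\mathbb{F}_p)}(V_1\otimes d^{r-1})$ instead. The $G(\mathbb{F}_p)$-linearity of $\tilde\psi$ then follows by exactly the computation in the proof of Theorem~\ref{base case}: it uses only the chain rule of Lemma~\ref{glinear}, and the omitted constant played no role there.

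It remains to compute the kernel and count dimensions. If $\tilde\psi_P\equiv0$, then $P(c,d)=0$ for every $(c,d)$ occurring as the second row of a matrix in $G(\mathbb{F}_p)$, hence for all $(c,d)\in\mathbb{F}_p^2$, so $\theta\mid P$ as in the $m=0$ case; writing $P=\theta Q$ and using $\nabla\theta\big|_{(c,d)}=bc^p-ad^p=bc-ad\neq0$ together with Lemma~\ref{nabla}, the vanishing of $\nabla(P)\big|_{(c,d)}=(bc-ad)\,Q(c,d)$ forces $Q$ to vanish on $\mathbb{F}_p^2$, whence $\theta\mid Q$ and $P\in V_r^{(2)}$. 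Conversely, Lemmas~\ref{nabla} and~\ref{theta} give $\tilde\psi_P\equiv0$ as soon as $P=\theta^2Q'\in V_r^{(2)}$. Thus $\ker\tilde\psi=V_r^{(2)}$, and since both $V_r/V_r^{(2)}$ and $\mathrm{ind}_{B(\mathbb{F}_p)}^{G(\mathbb{F}_p)}(V_1^{\mathrm{ss}}\otimes d^{r-1})$ have dimension $2(p+1)$, the induced map is the asserted $G(\mathbb{F}_p)$-equivariant isomorphism. Finally, the second isomorphism of the statement is purely formal: $V_1^{\mathrm{ss}}\otimes d^{r-1}=(a\oplus d)\otimes d^{r-1}=ad^{r-1}\oplus d^r$ as $B(\mathbb{F}_p)$-representations, and $\mathrm{ind}_{B(\mathbb{F}_p)}^{G(\mathbb{F}_p)}$ commutes with direct sums.
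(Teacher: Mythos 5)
Your proposal is correct and follows essentially the same route as the paper: the same map $(P \mapsto (\nabla P|_{(c,d)},\, P(c,d)))$ into ${\rm ind}_{B(\mathbb{F}_p)}^{G(\mathbb{F}_p)}(V_1^{\mathrm{ss}}\otimes d^{r-1})$, with $B(\mathbb{F}_p)$-linearity resting on the vanishing of the cross term $vz^{r-1}rP(c,d)$ when $p\mid r$ (your use of Lemma~\ref{evaluate} at $t=1$ is just the Euler identity the paper invokes), and with the $G(\mathbb{F}_p)$-linearity, kernel computation, and dimension count carried over from Theorem~\ref{base case} exactly as in the paper.
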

	
	\begin{proof}
		Define \[\psi^{\mathrm{ss}}: V_r\rightarrow {\rm ind}_{B(\mathbb{F}_p)}^{G(\mathbb{F}_p)}  ( V_1^{\mathrm{ss}}\otimes d^{r-1} )\] by $\psi^{\mathrm{ss}}(P)=\psi^{\mathrm{ss}}_P,$ where $\psi^{\mathrm{ss}}_P:G(\mathbb{F}_p)\rightarrow  V_1^{\mathrm{ss}}\otimes d^{r-1} $ is defined by
		\begin{eqnarray}
                  \label{def of psi split}
                  \psi^{\mathrm{ss}}_P\left(\left(\begin{matrix}
				a & b \\ c & d
                              \end{matrix}\right)\right)=\left(\nabla P\Big\vert_{(c, d)}, \>P(c, d)\right),
                \end{eqnarray}                             
		for $(\begin{smallmatrix}
				a & b \\ c & d
                              \end{smallmatrix}) \in G(\mathbb{F}_p)$, 
                $\nabla = a \frac{\partial}{\partial X} + b\frac{\partial}{\partial Y}$. Note we've dropped the $1/r$ factor in the
                $j = 0$ term of \eqref{def of psi} for $m =1$.
		
                We check the $B(\mathbb{F}_p)$-linearity of $\psi^{\mathrm{ss}}_P.$ 
  Let $b=\left(\begin{smallmatrix}
  	u & v\\ 0 & z
  \end{smallmatrix}\right)\in B(\mathbb{F}_p)$ and $\gamma\in \left(\begin{smallmatrix}
  a & b\\ c & d
\end{smallmatrix}\right)\in G(\mathbb{F}_p).$
Then,
\begin{eqnarray*}
	\label{B-linearity split case}
	\psi_P^{\mathrm{ss}}(b\cdot \gamma)&=& \psi_P^{\mathrm {ss}}\left(\left(\begin{matrix}
		ua+vc & ub+vd\\ zc & zd
	\end{matrix}\right)\right) 
                                           \> = \> \left(u z^{r-1}\nabla(P)\Big\vert_{(c, d)}+v z^{r-1} \nabla'(P)
                                               \Big\vert_{(c, d)}, ~ z^r P(c, d)\right)\\
&=& \left(u z^{r-1}\nabla(P)\Big\vert_{(c, d)}+v z^{r-1}  r P(c, d), ~z^rP(c, d)\right) \> = \>  \left(u z^{r-1}\nabla(P)\Big\vert_{(c, d)}, ~z^rP(c, d)\right) \\
& = &  \left(\begin{matrix}
	u & v\\ 0 & z
\end{matrix}\right)\cdot \left(\nabla(P)\Big\vert_{(c, d)}, ~P(c, d)\right)
\> = \> b\cdot \psi^{\mathrm{ss}}_P(\gamma).
\end{eqnarray*}
Here $\nabla' =  c\frac{\partial}{\partial X}+d\frac{\partial}{\partial Y}$ and the fourth equality follows because $p\mid r.$
Hence $\psi^{\mathrm{ss}}_P$ is $B(\mathbb{F}_p)$-linear.

The proof of the $G(\mathbb{F}_p)$-linearity of $\psi^{\mathrm{ss}}$ and the fact that $\ker\psi^{\mathrm{ss}}=V_r^{(2)}$ follows as in the proof of
                Theorem~\ref{base case}. We conclude as in the proof of Theorem~\ref{base case} by comparing dimensions.
\end{proof}

As discussed in the introduction, we obtain the splitting \eqref{split}:
	\begin{corollary}
		\label{corollary to split case}
		Let $0 \leq m \leq p-1$ and $r\geq (m+1)(p+1)-1$. If $p\mid \binom{r}{m}$, that is, $p\mid r-i$ for some $0\leq i\leq m-1$, then we have
		\[\dfrac{V_r}{V_r^{(m+1)}}\simeq \dfrac{V_r}{V_r^{(i+1)}}\oplus \dfrac{V_r^{(i+1)}}{V_r^{(m+1)}}.\]
	\end{corollary}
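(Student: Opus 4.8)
The plan is to realize $\dfrac{V_r}{V_r^{(m+1)}}$ as an induced representation $\mathrm{ind}_{B(\mathbb{F}_p)}^{G(\mathbb{F}_p)}(W)$ for an explicit $(m+1)$-dimensional representation $W$ of $B(\mathbb{F}_p)$ --- valid for all large $r$, whether or not $p\mid\binom rm$ --- and then to observe that when $p\mid\binom rm$ the module $W$ breaks up as a direct sum of $B(\mathbb{F}_p)$-subrepresentations matching the theta filtration; since induction is exact and commutes with direct sums, \eqref{split} then follows at once. Note first that, as $0\le m\le p-1$, the integer $m!$ is a unit modulo $p$, so $p\mid\binom rm$ precisely when $r\equiv i\pmod p$ for a unique $0\le i\le m-1$; fix this $i$.

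For the first step I would repeat the construction in the proof of Theorem~\ref{base case} \emph{without} dividing by $[r]_{m-j}$ --- that normalization is only there to make the target the clean module $V_m\otimes d^{r-m}$ when $p\nmid\binom rm$, and is unnecessary for the map to exist. So set
\[\psi_P\!\left(\left(\begin{smallmatrix}a&b\\c&d\end{smallmatrix}\right)\right)=\Bigl(\binom mj\,\nabla^{m-j}(P)\big|_{(c,d)}\Bigr)_{0\le j\le m},\qquad \nabla=a\tfrac{\partial}{\partial X}+b\tfrac{\partial}{\partial Y},\quad P\in V_r.\]
Exactly as in Theorem~\ref{base case}, $\psi$ is $G(\mathbb{F}_p)$-equivariant (Lemma~\ref{glinear}, which does not involve the constants) and has kernel $V_r^{(m+1)}$ (the same inductive argument, which also does not involve the constants, using only that $\binom mj\not\equiv 0$ since $m<p$). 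The $B(\mathbb{F}_p)$-equivariance computation --- Lemma~\ref{evaluate} together with an elementary binomial identity --- then shows that $\psi$ lands in, and induces an isomorphism onto, $\mathrm{ind}_{B(\mathbb{F}_p)}^{G(\mathbb{F}_p)}(W)$, where $W=\langle e_0,\dots,e_m\rangle$ carries the action
\[\left(\begin{smallmatrix}u&v\\0&z\end{smallmatrix}\right)\cdot e_j=z^{r-m}\sum_{\ell=0}^{j}z^{\ell}\binom j\ell\,[r-m+j]_{j-\ell}\,u^{m-j}v^{j-\ell}\,e_{\ell};\]
equivalently $W\cong\overline W\otimes(z^{r-m})$, where on $\overline W$ the diagonal torus acts by the distinct characters $u^{m-j}z^{j}$ and $\left(\begin{smallmatrix}1&v\\0&1\end{smallmatrix}\right)$ acts as $\exp(vN)$ with $N\colon e_j\mapsto j(r-m+j)\,e_{j-1}$. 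One need not separately check that this is a group action: it is forced, since each $\psi_P$ is an honest function on $G(\mathbb{F}_p)$ and the vectors $\psi_P(1)$ range over all of $W$. Finally, evaluating $\psi$ on $\theta^{k}V_{r-k(p+1)}$ and using Lemmas~\ref{nabla} and \ref{theta}, one finds that $\dfrac{V_r^{(k)}}{V_r^{(m+1)}}$ corresponds under $\psi$ to $\mathrm{ind}_{B(\mathbb{F}_p)}^{G(\mathbb{F}_p)}(W_k)$ with $W_k=\langle e_0,\dots,e_{m-k}\rangle$.

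For the second step, suppose $p\mid\binom rm$, so $r\equiv i\pmod p$. The scalar $[r-m+j]_{j-\ell}$, a product of the integers $r-m+\ell+1,\dots,r-m+j$, vanishes modulo $p$ exactly when this range contains $r-i$, i.e.\ when $\ell\le m-i-1$ and $j\ge m-i$ --- in the Lie-algebra picture this is just $N(e_{m-i})=(m-i)(r-i)e_{m-i-1}=0$. Hence $\langle e_{m-i},e_{m-i+1},\dots,e_m\rangle$ is a $B(\mathbb{F}_p)$-subrepresentation of $W$ of dimension $i+1$, visibly complementary to $W_{i+1}=\langle e_0,\dots,e_{m-i-1}\rangle$. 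Thus $W=W_{i+1}\oplus W/W_{i+1}$ as $B(\mathbb{F}_p)$-modules, and applying induction,
\[\frac{V_r}{V_r^{(m+1)}}=\mathrm{ind}_{B(\mathbb{F}_p)}^{G(\mathbb{F}_p)}(W)=\mathrm{ind}_{B(\mathbb{F}_p)}^{G(\mathbb{F}_p)}(W_{i+1})\ \oplus\ \mathrm{ind}_{B(\mathbb{F}_p)}^{G(\mathbb{F}_p)}(W/W_{i+1})=\frac{V_r^{(i+1)}}{V_r^{(m+1)}}\ \oplus\ \frac{V_r}{V_r^{(i+1)}}.\]

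The only genuine work is in the first step: producing the unnormalized analogue of Theorem~\ref{base case} and reading off the explicit $B(\mathbb{F}_p)$-module $W$ from the equivariance computation; granting that, the splitting is an immediate consequence of the single congruence $r\equiv i\pmod p$. As a sanity check, for $m=1$, $i=0$ one has $W=\langle e_0,e_1\rangle$ with $N(e_1)=r\,e_0=0$, so $W$ is the sum of the characters $\langle e_0\rangle$ and $\langle e_1\rangle$, recovering Theorem~\ref{split case}. One could instead try to reduce to the case $m=1$ via the twist $\dfrac{V_r^{(i)}}{V_r^{(i+2)}}\simeq\dfrac{V_{r'}}{V_{r'}^{(2)}}\otimes\det^{i}$ with $r'=r-i(p+1)\equiv 0\pmod p$ and Theorem~\ref{split case}, but propagating that one splitting through the whole filtration seems to require controlling $\mathrm{Ext}^1$ between non-adjacent graded pieces, which the argument above sidesteps.
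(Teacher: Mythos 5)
Your argument is correct, but it follows a genuinely different route from the paper's. The paper deduces this corollary from Theorem \ref{split case}: using $\frac{V_r^{(i)}}{V_r^{(i+2)}}\simeq\frac{V_{r'}}{V_{r'}^{(2)}}\otimes{\det}^i$ with $r'=r-i(p+1)\equiv r-i\equiv 0\bmod p$, the adjacent extension at level $i$ splits, and the splitting \eqref{split} of the full quotient is then asserted (the bootstrapping from that two-step splitting to \eqref{split} is left implicit in the text). You instead rerun the proof of Theorem \ref{base case} without the normalizing constants $[r]_{m-j}$, obtaining for all large $r$ an isomorphism $V_r/V_r^{(m+1)}\simeq\mathrm{ind}_{B(\mathbb{F}_p)}^{G(\mathbb{F}_p)}W$ onto the induction of an explicit $(m+1)$-dimensional $B(\mathbb{F}_p)$-module $W$, under which $V_r^{(k)}/V_r^{(m+1)}$ corresponds to $\mathrm{ind}_{B(\mathbb{F}_p)}^{G(\mathbb{F}_p)}W_k$; the computations are exactly the paper's, since the identity $\binom{m}{j}\binom{m-j}{k}=\binom{j+k}{j}\binom{m}{j+k}$ turns \eqref{lhs} into your coefficient formula $\binom{j}{\ell}[r-m+j]_{j-\ell}$, the kernel argument never uses the constants (only $\binom{m}{j}\not\equiv 0$ for $m\le p-1$), and the dimension counts are unchanged. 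The congruence $r\equiv i\bmod p$ kills $[r-m+j]_{j-\ell}$ exactly when $\ell\le m-i-1$ and $j\ge m-i$, so $W=W_{i+1}\oplus\langle e_{m-i},\dots,e_m\rangle$ as $B(\mathbb{F}_p)$-modules and exactness of induction gives \eqref{split} at once. What your route buys: it is self-contained, sidesteps the unwritten propagation step (no control of extensions between non-adjacent graded pieces is needed), and makes transparent that $W$ degenerates at exactly one place, so the splitting occurs for exactly one $i$; the paper's route, by contrast, only needs the $m=1$ computation of Theorem \ref{split case} plus a determinant twist. Two points worth making explicit if you write this up: that your coefficient formula genuinely defines a $B(\mathbb{F}_p)$-action (your ``forced'' argument is fine, but it uses that the vectors $\tilde\psi_P(1)$ span $W$, which the monomials $X^{m-j}Y^{r-m+j}$ provide), and, as the paper assumes throughout its principal series results, that $r$ is sufficiently large for the dimension counts.
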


	\subsection{The case of ${\rm GL}_2(\mathbb{F}_q)$}\label{general}	
	Let $G(\mathbb{F}_q):={\rm GL}_2(\mathbb{F}_q)$ with $q=p^f$ for $f \geq 1$.
        Let $B(\mathbb{F}_q)$ denote the subgroup of upper triangular matrices of $G(\mathbb{F}_q).$ Let $r\geq 0$ and let $r=r_0+r_1p+\dots+r_{f-1}p^{f-1}$ be the
        $p$-adic expansion of $r$ with $0\leq r_j\leq p-1$ and $0\leq j\leq f-1.$ Let $V_{r_j}^{{\rm Fr}^j}:={\rm Sym}^{r_j}(\mathbb{F}_p^2)\circ {\rm Fr}^{j}$ for
        all $0\leq j\leq f-1,$ where ${\rm Fr}$ denotes the Frobenius morphism. Let  $V_{r_j}^{{\rm Fr}^j}$  be modeled
        on polynomials in $X_j$ and $Y_j$ over ${\mathbb F}_q$
        of degree $r_j$ for all $0\leq j\leq f-1.$ Let
        \begin{eqnarray}
          \label{twisted-Dickson-polys}
          \theta_0:=X_0Y_{f-1}^p-Y_0X_{f-1}^p \quad \text{and} \quad \theta_k:=X_kY_{k-1}^p-Y_kX_{k-1}^p
        \end{eqnarray}
        for all $1\leq k\leq f-1$ denote the twisted Dickson polynomials.

	\begin{lemma}\label{evaluategeneral}
		Let $a, b, c, d, z\in \mathbb{F}_q.$ We write \[\nabla_j = a^{p^j}\frac{\partial}{\partial X_j}+b^{p^j}\frac{\partial}{\partial Y_j} \quad\text{and}\quad \nabla_j'= c^{p^j}\frac{\partial}{\partial X_j}+d^{p^j}\frac{\partial}{\partial Y_j} .\]
		Let $P_j(X_j, Y_j)=\sum\limits_{i_j=0}^{r_j}a_{i_j}X_j^{r_j-i_j}Y^{i_j}\in V_{r_j}^{{\rm Fr}^j}$ with $a_{i_j}\in \mathbb{F}_q$ for all $0\leq i_j \leq r_j.$ 
		Then, for all $0\leq k_j \leq t_j,$ we have
		\[\nabla_j^{t_j-k_j} \nabla_j'^{k_j}(P_j)\bigg\vert_{\left((zc)^{p^j}, (zd)^{p^j}\right)}=z^{(r_j-t_j)p^j}[r_j-t_j+k_j]_{k_j}\nabla_j^{t_j-k_j}(P_j)\bigg\vert_{(c^{p^j},d^{p^j})}.\]
	\end{lemma}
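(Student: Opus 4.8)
This identity is nothing but Lemma~\ref{evaluate} transported along a substitution of parameters, so the plan is to reduce to that lemma rather than to recompute. The one preliminary point to note is that Lemma~\ref{evaluate} remains valid over $\mathbb{F}_q$ (indeed over any commutative ring), not merely over $\mathbb{F}_p$: the only inputs to its proof are the formal rule \eqref{pderivatives} for the mixed partial derivatives of a monomial, which is an identity in $\mathbb{F}_q[X,Y]$, and the combinatorial identity \eqref{sum}, which is an identity of polynomials in $r,j,t,l,k$ with integer coefficients and therefore holds after reduction mod $p$ over $\mathbb{F}_q$. Hence Lemma~\ref{evaluate} holds verbatim with $\mathbb{F}_p$ replaced by $\mathbb{F}_q$ and the variables $X,Y$ renamed.

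I would then apply this $\mathbb{F}_q$-version of Lemma~\ref{evaluate} under the substitution $X\mapsto X_j$, $Y\mapsto Y_j$, $a\mapsto a^{p^j}$, $b\mapsto b^{p^j}$, $c\mapsto c^{p^j}$, $d\mapsto d^{p^j}$, $z\mapsto z^{p^j}$, $r\mapsto r_j$, $t\mapsto t_j$, $k\mapsto k_j$, and $P\mapsto P_j$. Under this substitution the operators $\nabla=a\tfrac{\partial}{\partial X}+b\tfrac{\partial}{\partial Y}$ and $\nabla'=c\tfrac{\partial}{\partial X}+d\tfrac{\partial}{\partial Y}$ become precisely $\nabla_j$ and $\nabla_j'$, the evaluation point $(zc,zd)$ becomes $(z^{p^j}c^{p^j},\,z^{p^j}d^{p^j})=\bigl((zc)^{p^j},(zd)^{p^j}\bigr)$, and the point $(c,d)$ on the right becomes $(c^{p^j},d^{p^j})$. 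It remains only to track the scalar: the factor $z^{r-t}$ becomes $(z^{p^j})^{r_j-t_j}=z^{(r_j-t_j)p^j}$, while $[r-t+k]_k$ becomes $[r_j-t_j+k_j]_{k_j}$, an integer reduced mod $p$ that is untouched by the substitution. Assembling these observations gives exactly the claimed identity.

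There is no real obstacle here; the only content is the bookkeeping of the exponents $p^j$, and this goes through precisely because the operators $\tfrac{\partial}{\partial X_j},\tfrac{\partial}{\partial Y_j}$ are purely formal, so the Frobenius twist defining $V_{r_j}^{\mathrm{Fr}^j}$ is inert and the statement is a formal identity in $X_j,Y_j$ and in the $p^j$-th powers of $a,b,c,d,z$. (If one prefers a self-contained argument, one can instead mirror the proof of Lemma~\ref{evaluate} line by line: reduce to $P_j=X_j^{r_j-i_j}Y_j^{i_j}$ by linearity, expand $\nabla_j^{t_j-k_j}$ and $\nabla_j'^{k_j}$ by the binomial theorem, evaluate each mixed partial via \eqref{pderivatives}, and collapse the inner sum over the $Y_j$-order of differentiation using \eqref{sum} with $r,j,t,k$ there replaced by $r_j,i_j,t_j,k_j$. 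Either route works without difficulty.)
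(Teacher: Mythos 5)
Your proof is correct and matches the paper's approach: the paper's own proof of this lemma is simply ``Similar to Lemma~\ref{evaluate},'' i.e.\ the same formal computation with $X_j,Y_j$ and the $p^j$-th powers of $a,b,c,d,z$ substituted in, which is exactly the reduction you carry out (and your observation that Lemma~\ref{evaluate} is a formal identity valid over $\mathbb{F}_q$, together with $(z^{p^j}c^{p^j},z^{p^j}d^{p^j})=((zc)^{p^j},(zd)^{p^j})$, is the only bookkeeping needed).
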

	
	\begin{proof}
		Similar to Lemma \ref{evaluate}.
	\end{proof}
	
	\begin{lemma}\label{glineargeneral}
		Let $a,b,c,d,u,v,w,z\in \mathbb{F}_q$ and $P_j(X_j, Y_j)\in V_{r_j}^{{\rm Fr}^j}.$ Let $P_j':=P_j(U_j, V_j),$ where $U_j=u^{p^j}X_j+w^{p^j}Y_j$ and $V_j=v^{p^j}X_j+z^{p^j}Y_j.$ Then for $k_j\geq 0,$ we have
		
		\begin{eqnarray*}
			\left(a^{p^j}\frac{\partial}{\partial X_j}+b^{p^j}\frac{\partial}{\partial Y_j}\right)^{k_j}(P_j')\bigg\vert_{\left(c^{p^j}, d^{p^j}\right)}
			&=& \left((ua+wb)^{p^j}\frac{\partial}{\partial X_j}+(va+zb)^{p^j}\frac{\partial}{\partial Y_j}\right)^{k_j}(P_j)\bigg\vert_{\left((uc+wd)^{p^j},(vc+zd)^{p^j}\right)}.
		\end{eqnarray*}
	
	\end{lemma}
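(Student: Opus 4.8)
The plan is to deduce this from the untwisted chain rule, Lemma~\ref{glinear}, via a Frobenius substitution. The proof of Lemma~\ref{glinear} is purely formal — it uses only the chain rule and an induction on the exponent — and therefore remains valid with coefficients in any commutative ring, in particular with coefficients in $\mathbb{F}_q$. So I would first introduce the abbreviations $a' = a^{p^j}$, $b' = b^{p^j}$, $c' = c^{p^j}$, $d' = d^{p^j}$, $u' = u^{p^j}$, $v' = v^{p^j}$, $w' = w^{p^j}$, $z' = z^{p^j}$, and apply Lemma~\ref{glinear} over $\mathbb{F}_q$ to the polynomial $P_j(X_j, Y_j)$, viewed as an ordinary element of $\mathbb{F}_q[X_j, Y_j]$ in the two variables $X_j, Y_j$, with $U_j = u'X_j + w'Y_j$ and $V_j = v'X_j + z'Y_j$. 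This produces the identity
\[
\left(a'\frac{\partial}{\partial X_j}+b'\frac{\partial}{\partial Y_j}\right)^{k_j}(P_j')\Big\vert_{(c',d')}
= \left((u'a'+w'b')\frac{\partial}{\partial X_j}+(v'a'+z'b')\frac{\partial}{\partial Y_j}\right)^{k_j}(P_j)\Big\vert_{(u'c'+w'd',\,v'c'+z'd')}.
\]

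Next I would rewrite the coefficients and the evaluation point using the fact that $x\mapsto x^{p^j}$ is the $j$-th power of the Frobenius endomorphism of $\mathbb{F}_q$, hence a ring homomorphism; in particular it is additive. Thus $u'a'+w'b' = u^{p^j}a^{p^j}+w^{p^j}b^{p^j} = (ua+wb)^{p^j}$, and similarly $v'a'+z'b' = (va+zb)^{p^j}$, $u'c'+w'd' = (uc+wd)^{p^j}$, and $v'c'+z'd' = (vc+zd)^{p^j}$. Substituting these four identities into the display above, together with $a' = a^{p^j}$, $b' = b^{p^j}$, $c' = c^{p^j}$, $d' = d^{p^j}$ on the left, turns it into exactly the asserted formula.

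The only point that needs a word — and it is not really an obstacle — is that the Frobenius twist in $V_{r_j}^{\mathrm{Fr}^j}$ alters only the $G(\mathbb{F}_q)$-action, not the underlying polynomial ring: as a polynomial in the variables $X_j$ and $Y_j$, the element $P_j$ is an ordinary member of $\mathbb{F}_q[X_j, Y_j]$, so the chain-rule computation of Lemma~\ref{glinear} applies to it verbatim. Alternatively, one could reprove the lemma from scratch by the same induction on $k_j$ as in Lemma~\ref{glinear} — base case $k_j = 1$ from the chain rule, inductive step by commuting the two constant-coefficient operators past each other — but the reduction above is cleaner and makes transparent why every constant on the right occurs raised to the $p^j$-th power.
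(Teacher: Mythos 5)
Your proof is correct and is essentially the argument the paper intends, since the paper's own proof of Lemma~\ref{glineargeneral} is simply ``Similar to Lemma~\ref{glinear}'': both rest on the chain-rule computation of Lemma~\ref{glinear} (valid over $\mathbb{F}_q$) together with the additivity of the Frobenius map, which converts expressions like $u^{p^j}a^{p^j}+w^{p^j}b^{p^j}$ into $(ua+wb)^{p^j}$. Your packaging of this as a formal substitution into the untwisted lemma, with the explicit remark that the twist affects only the group action and not the polynomial ring, is a clean way of making that ``similar'' precise.
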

	
	\begin{proof}
		Similar to Lemma \ref{glinear}.
	\end{proof}

	\begin{lemma}\label{qtheta}
		Let $\theta'=X^qY-Y^qX.$ Let $P(X, Y)=\sum\limits_{i=0}^ra_iX^{r-i}Y^i\in\mathbb{F}_q[X, Y]$ be such that $P(c, d)=0$ for all $\left(\begin{matrix}
			a & b \\
			c & d
		\end{matrix}\right)\in G(\mathbb{F}_q).$ Then $P(X, Y)\in\langle \theta' \rangle.$
	\end{lemma}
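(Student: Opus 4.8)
The plan is to exploit the complete factorization of $\theta'$ into linear forms over $\mathbb{F}_q$ and to show that every factor divides $P$. Since $P=\sum_{i=0}^r a_iX^{r-i}Y^i$ is homogeneous of degree $r$, vanishing of $P$ at a nonzero vector is equivalent to vanishing of $P$ along the whole line that vector spans; and as every nonzero vector in $\mathbb{F}_q^2$ occurs as the second row of some matrix in $G(\mathbb{F}_q)$, the hypothesis says precisely that $P$ vanishes on all of $\mathbb{F}_q^2$ (the case $r=0$ being trivial, as then $P=a_0=0$).

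Next I would record the identity $\theta'=X^qY-XY^q=XY(X^{q-1}-Y^{q-1})=XY\prod_{c\in\mathbb{F}_q^*}(X-cY)$, a product of $q+1$ pairwise non-proportional linear forms, one for each point of $\mathbb{P}^1(\mathbb{F}_q)$. Since $\mathbb{F}_q[X,Y]$ is a UFD and these forms are pairwise coprime, it suffices to check that each of them divides $P$.

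These divisibility checks reduce to one-line evaluations combined with homogeneity. Taking the second row $(1,0)$ gives $a_0=P(1,0)=0$, so $Y\mid P$; taking $(0,1)$ gives $a_r=P(0,1)=0$, so $X\mid P$; and for $c\in\mathbb{F}_q^*$, taking the second row $(c,1)$ (the second row of $\left(\begin{smallmatrix}1&0\\c&1\end{smallmatrix}\right)\in G(\mathbb{F}_q)$) gives $P(c,1)=0$, hence $P(cY,Y)=Y^rP(c,1)=0$; dividing $P$ by the monic-in-$X$ polynomial $X-cY$ in $\mathbb{F}_q[Y][X]$ leaves remainder $P(cY,Y)=0$, so $(X-cY)\mid P$. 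Multiplying together the $q+1$ pairwise coprime factors then gives $\theta'\mid P$, i.e. $P\in\langle\theta'\rangle$.

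There is no serious obstacle here; the only points to be careful about are that all the vectors used, namely $(1,0)$, $(0,1)$ and $(c,1)$ for $c\in\mathbb{F}_q^*$, are indeed nonzero (hence really second rows of invertible matrices) and that the $q+1$ linear factors listed above exhaust the factorization of $\theta'$, so that nothing is omitted. As an alternative one could dehomogenize: $p(T):=P(T,1)$ vanishes on all of $\mathbb{F}_q$ and so is divisible by $T^q-T$, and combining this with $a_0=a_r=0$ and re-homogenizing yields $\theta'\mid P$ directly — but the factorization argument above seems cleanest.
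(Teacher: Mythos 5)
Your proof is correct, but it takes a genuinely different route from the paper's. You use the complete factorization $\theta'=XY\prod_{c\in\mathbb{F}_q^*}(X-cY)$ into $q+1$ pairwise non-proportional (hence pairwise coprime) linear forms, observe via homogeneity that the hypothesis is exactly vanishing of $P$ at every nonzero point of $\mathbb{F}_q^2$, and then check by evaluation and the division algorithm that each linear factor divides $P$; coprimality in the UFD $\mathbb{F}_q[X,Y]$ then gives $\theta'\mid P$, i.e.\ $P\in\langle\theta'\rangle$. The paper instead evaluates at $(1,0)$, $(0,1)$ and $(1,\lambda)$ for $\lambda\in\mathbb{F}_q^*$, invokes the invertibility of the (essentially Vandermonde) matrix $(\lambda_l^{k})$ to conclude that the coefficient sums $A_k=\sum_{i\equiv k\bmod (q-1)}a_i$ all vanish, rewrites $P$ as a combination of monomial differences $X^{r-i}Y^i-X^{r-k}Y^k$ with $i\equiv k\bmod(q-1)$, and kills each such difference modulo $\theta'$ by repeatedly trading $Y^{q-1}$ against $X^{q-1}$. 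Your argument is shorter, works uniformly in $r$ (in particular it shows $P=0$ automatically when $r<q+1$), and avoids the bookkeeping with residue classes; the paper's more computational proof has the advantage that its two ingredients --- the Vandermonde step and the explicit manipulation of monomial differences modulo $\theta'$ --- are precisely what generalize to the twisted, multivariable situation in Lemmas~\ref{monomial difference}, \ref{unequal} and \ref{equal}, where the twisted Dickson polynomials do not admit a comparable factorization into linear forms and a divisibility argument of your type is not available.
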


	\begin{proof}
          Exercise.
	\end{proof}

	\begin{lemma}\label{qthetaj}
		Let $a,b,c,d\in \mathbb{F}_q.$ We let $\nabla_j=\left(a^{p^j}\frac{\partial}{\partial X_j}+b^{p^j}\frac{\partial}{\partial Y_j}\right)$ and $\theta_j'=X_j^qY_j-X_jY_j^q.$ Then we have
		\[\nabla_j^l(\theta_j'^k)\bigg\vert_{(c^{p^j},d^{p^j})}=
		\begin{cases}
			l!\left(\nabla_j\theta_j'\bigg\vert_{(c^{p^j},d^{p^j})}\right)^l, &\text{if}~k=l,\\
			0, &\text{otherwise}.\\
		\end{cases}\]
	\end{lemma}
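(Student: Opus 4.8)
The plan is to reduce Lemma~\ref{qthetaj} to the single-variable statement Lemma~\ref{theta} (or rather its $q$-analog), noting that $\theta_j' = X_j^q Y_j - X_j Y_j^q$ and the operator $\nabla_j = a^{p^j}\frac{\partial}{\partial X_j} + b^{p^j}\frac{\partial}{\partial Y_j}$ both involve only the pair of variables $(X_j, Y_j)$, so the computation is entirely internal to the $j$-th factor. First I would compute $\nabla_j \theta_j' = b^{p^j} X_j^q - a^{p^j} Y_j^q$; the key point is that applying $\nabla_j$ again kills this, since $\frac{\partial}{\partial X_j}(X_j^q) = 0$ and $\frac{\partial}{\partial Y_j}(Y_j^q) = 0$ in characteristic $p$ (as $q = p^f$). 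So $\nabla_j\big((\nabla_j \theta_j')^k\big) = 0$ for every $k \geq 0$.

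Next, mirroring the proof of Lemma~\ref{theta}, I would establish by induction on $l$ the identity
\[
\nabla_j^l\big((\theta_j')^k\big) = l!\binom{k}{l} (\theta_j')^{k-l} (\nabla_j \theta_j')^l \quad \text{for all } k \geq l,
\]
using Leibniz's rule (Lemma~\ref{nabla}, applied in the variables $X_j, Y_j$) together with the vanishing $\nabla_j\big((\nabla_j\theta_j')^l\big) = 0$ just observed; this is the exact analog of equation~\eqref{klarge} in the proof of Lemma~\ref{theta}, the only change being $p \rightsquigarrow q$ in the exponents, which does not affect the argument. For $k < l$, I would write $\nabla_j^l\big((\theta_j')^k\big) = \nabla_j^{l-k}\big(\nabla_j^k((\theta_j')^k)\big) = k!\,\nabla_j^{l-k}\big((\nabla_j\theta_j')^k\big) = 0$, again by the vanishing. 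Specializing to $(X_j, Y_j) = (c^{p^j}, d^{p^j})$ and observing that the factor $(\theta_j')^{k-l}$ evaluates to a nonzero contribution only when $k = l$ (when $\binom{k}{l} = 1$ and $(\theta_j')^0 = 1$), gives the stated case distinction.

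The one mild subtlety, and the only place requiring care, is that all of $\nabla_j$, $\theta_j'$, and their derivatives live purely in $\mathbb{F}_q[X_j, Y_j]$, so one is genuinely just re-running the proof of Lemma~\ref{theta} verbatim over $\mathbb{F}_q$ with the Dickson polynomial $X^p Y - XY^p$ replaced by its $q$-analog $X^q Y - XY^q$; the Frobenius twists $p^j$ appearing in $\nabla_j$ and in the evaluation point are inert spectators. I therefore expect the proof to simply read ``Similar to Lemma~\ref{theta}'' (exactly as Lemmas~\ref{evaluategeneral} and~\ref{glineargeneral} defer to their $\mathrm{GL}_2(\mathbb{F}_p)$ counterparts), with the understanding that one replaces $p$ by $q$ throughout and works in the variables $X_j, Y_j$. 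There is no real obstacle here — the content is entirely in Lemma~\ref{theta}, which has already been proved.
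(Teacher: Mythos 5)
Your proposal is correct and matches the paper exactly: the paper's proof of this lemma is literally ``Similar to Lemma~\ref{theta}'', and your argument is precisely that transposition — the identity $\nabla_j^l((\theta_j')^k)=l!\binom{k}{l}(\theta_j')^{k-l}(\nabla_j\theta_j')^l$ for $k\geq l$ via Leibniz and $\nabla_j((\nabla_j\theta_j')^m)=0$, the $k<l$ case by the same trick, and the final evaluation using that $\theta_j'$ vanishes at $(c^{p^j},d^{p^j})$ because $c^q=c$, $d^q=d$ for $c,d\in\mathbb{F}_q$.
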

	
	\begin{proof}
		Similar to Lemma \ref{theta}.
	\end{proof}

	\begin{lemma}\label{jthcomponent}
		 For $0\leq j\leq f-1$, $0\leq m_j\leq p-1$ and $r_j\geq (m_j+1)(q+1)-1$, let \[ \psi^j:V_{r_j}^{{\rm Fr}^j}\rightarrow{\rm ind}_{B(\mathbb{F}_q)}^{G(\mathbb{F}_q)} \left( V_{m_j}^{{\rm Fr}^j}\otimes d^{(r_j-m_j)p^j} \right) \] 
		 be defined by $\psi^j(P_j(X_j, Y_j))=\psi^j_{P_j(X_j, Y_j)},$ where $\psi^j_{P_j(X_j, Y_j)}:G(\mathbb{F}_q)\rightarrow V_{m_j}^{{\rm Fr}^j}\otimes d^{(r_j-m_j)p^j}$ is given by
		\[\psi^j_{P_j(X_j, Y_j)}\left(\left(\begin{matrix}
			a & b\\
			c & d\\
		\end{matrix}\right)\right)=\left(\frac{{m_j\choose n_j}}{[r_j]_{m_j-n_j}}\nabla_j^{m_j-n_j}(P_j)\bigg\vert_{(c^{p^j}, d^{p^j})}\right)_{0\leq n_j\leq m_j},\]
		for all $\left( \begin{matrix}
			a & b\\
			c & d
		\end{matrix} \right)\in G(\mathbb{F}_q)$ and 
	$\nabla_j = a^{p^j}\frac{\partial}{\partial X_j}+b^{p^j}\frac{\partial}{\partial Y_j}.$ Then  
		\begin{enumerate}
			\item[$(i)$] $\psi_{P_j(X_j, Y_j)}^j$ is $B(\mathbb{F}_q)$-linear,
			\item[$(ii)$]  $\psi^j$ is $G(\mathbb{F}_q)$-linear, 
			\item[$(iii)$]  $\psi^j$ induces an isomorphism from $\frac{V_{r_j}^{\rm{Fr}^j}}{\langle \theta_j'^{m_j+1}\rangle}$ to
                                        ${\rm ind}_{B(\mathbb{F}_q)}^{G(\mathbb{F}_q)} \left( V_{m_j}^{{\rm Fr}^j}\otimes d^{(r_j-m_j)p^j} \right)$.
		\end{enumerate}
	\end{lemma}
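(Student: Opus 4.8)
The plan is to deduce all three assertions from the single-variable case already established in Theorem~\ref{base case}, by carefully tracking how the $j$-th Frobenius twist interacts with evaluation and differentiation. For part $(i)$, the $B(\mathbb{F}_q)$-linearity of $\psi^j_{P_j}$, I would repeat verbatim the computation in the $B(\mathbb{F}_p)$-linearity step of Theorem~\ref{base case}, but with $\nabla_j$ and $\nabla_j'$ in place of $\nabla$ and $\nabla'$ and with the evaluation point twisted by the $j$-th power of Frobenius. The key inputs are Lemma~\ref{evaluategeneral} (which plays the role Lemma~\ref{evaluate} played before) and the same binomial identity ${m_j \choose n_j}{m_j - n_j \choose k_j} = {n_j + k_j \choose n_j}{m_j \choose n_j + k_j}$ together with the factorization $[r_j]_{m_j - n_j} = [r_j]_{m_j-n_j-k_j}[r_j - (m_j-n_j-k_j)]_{k_j}$. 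One subtlety worth checking explicitly is that since $c, d \in \mathbb{F}_q$, the expression $(ua + vc)^{p^j} = u^{p^j}a^{p^j} + v^{p^j}c^{p^j}$ — i.e. the Frobenius twist genuinely commutes with the entries of the Borel element — so that acting by $\left(\begin{smallmatrix} u & v \\ 0 & z \end{smallmatrix}\right)$ and then evaluating at $(c^{p^j}, d^{p^j})$ matches acting by the twisted derivation $u^{p^j}\frac{\partial}{\partial X_j} + v^{p^j}\frac{\partial}{\partial Y_j} + \dots$; this is exactly the content of the superscript $p^j$ in $\nabla_j$.

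For part $(ii)$, the $G(\mathbb{F}_q)$-linearity, I would mirror the corresponding step of Theorem~\ref{base case}, now using Lemma~\ref{glineargeneral} in place of Lemma~\ref{glinear}: if $g = \left(\begin{smallmatrix} u & v \\ w & z \end{smallmatrix}\right)$ acts on $P_j$ to give $P_j' = P_j(U_j, V_j)$ with $U_j = u^{p^j}X_j + w^{p^j}Y_j$, $V_j = v^{p^j}X_j + z^{p^j}Y_j$, then Lemma~\ref{glineargeneral} converts $\nabla_j^{m_j - n_j}(P_j')|_{(c^{p^j}, d^{p^j})}$ into $\left((ua+wb)^{p^j}\frac{\partial}{\partial X_j} + (va+zb)^{p^j}\frac{\partial}{\partial Y_j}\right)^{m_j-n_j}(P_j)$ evaluated at $((uc+wd)^{p^j}, (vc+zd)^{p^j})$, which is precisely $\psi^j(P_j)$ evaluated at $\gamma g$, hence equals $(g \cdot \psi^j(P_j))(\gamma)$. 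Since the matrix product $\gamma g$ has entries that are $\mathbb{F}_q$-linear combinations of entries of $\gamma$ and $g$ (no twisting at the level of the group), the twists on the evaluation points line up correctly.

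For part $(iii)$, that $\psi^j$ is an isomorphism, I would first show $\ker \psi^j = \langle \theta_j' \rangle^{(m_j+1)}$ inside $V_{r_j}^{{\rm Fr}^j}$ — here $\theta_j' = X_j^q Y_j - X_j Y_j^q$ is the degree-$(q+1)$ Dickson polynomial in the variables $X_j, Y_j$ — by the same induction on $m_j$ as in Theorem~\ref{base case}, using Lemma~\ref{qtheta} for the base case $m_j = 0$ and Lemmas~\ref{nabla}, \ref{qthetaj} for the inductive step (the identity $\nabla_j \theta_j' = b^{p^j} X_j^q - a^{p^j} Y_j^q$ makes $\nabla_j((\nabla_j \theta_j')^k) = 0$, exactly as before). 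Then I would conclude by a dimension count: the quotient $V_{r_j}^{{\rm Fr}^j}/\langle \theta_j' \rangle^{(m_j+1)}$ has dimension $(m_j+1)(q+1)$ (by the usual computation with the theta filtration over $\mathbb{F}_q$, noting $\deg \theta_j' = q+1$), which matches $\dim {\rm ind}_{B(\mathbb{F}_q)}^{G(\mathbb{F}_q)}(V_{m_j}^{{\rm Fr}^j} \otimes d^{(r_j - m_j)p^j}) = (m_j+1) \cdot [G(\mathbb{F}_q):B(\mathbb{F}_q)] = (m_j+1)(q+1)$; combined with injectivity modulo the kernel this forces $\psi^j$ to be an isomorphism onto its image. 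The main obstacle I anticipate is purely bookkeeping: making sure that in the kernel computation the single variable $X_j^q Y_j - X_j Y_j^q$ (a $q$-power Dickson polynomial, not a $p$-power one) is the correct object — the lemma as stated targets $\langle \theta_0^{m_0+1}, \ldots \rangle$ only after all the $\psi^j$ are assembled in the next step, so at the level of Lemma~\ref{jthcomponent} one must be content with $\theta_j'$ rather than the twisted Dickson polynomial $\theta_j$, and the translation between the two happens later when the tensor product is formed. Since Lemmas~\ref{evaluategeneral}, \ref{glineargeneral}, \ref{qtheta}, \ref{qthetaj} have been set up precisely to be the $\mathbb{F}_q$-analogues of Lemmas~\ref{evaluate}, \ref{glinear}, and \ref{theta}, each step should go through with only notational changes, and the authors indeed signal this by proving those auxiliary lemmas with the remark ``Similar to Lemma~\ldots''.
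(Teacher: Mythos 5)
Your proposal follows essentially the same route as the paper's own proof: parts $(i)$ and $(ii)$ by redoing the $B$- and $G$-linearity computations of Theorem~\ref{base case} with the twisted operators $\nabla_j$, $\nabla_j'$ via Lemmas~\ref{evaluategeneral} and \ref{glineargeneral}, and part $(iii)$ by identifying $\ker\psi^j$ with $\langle \theta_j'^{m_j+1}\rangle$ for $\theta_j'=X_j^qY_j-X_jY_j^q$ through induction on $m_j$ (Lemma~\ref{qtheta} for the base case, Lemmas~\ref{nabla}, \ref{qthetaj} for the step) and then comparing dimensions $(m_j+1)(q+1)$ on both sides. Your remark that at this stage one works with $\theta_j'$ rather than the twisted Dickson polynomials $\theta_j$, the translation being deferred to the assembly in Theorem~\ref{general case}, is exactly the structure of the paper's argument.
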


	\begin{proof}\label{jthcomponentproof}
		$(i)$ \textbf{$B(\mathbb{F}_q)$-linearity:} Let $b=\left(\begin{smallmatrix}
			u & v\\
			0 & z
		\end{smallmatrix}\right)\in B(\mathbb{F}_q)$  and $\gamma=\left(\begin{smallmatrix}
			a & b\\
			c & d\\
                      \end{smallmatrix}\right)\in G(\mathbb{F}_q).$ Let $\underline{x}':=(x_{n_j}')_{0\leq n_j \leq m_j}\in V_{m_j}^{{\rm Fr}^j}\otimes d^{(r_j-m_j)p^j}.$ Then
                    the action of $b$ on $\underline{x}'$ is given by
		\[ b\cdot\underline{x}'=z^{(r_j-m_j)p^j}\cdot\left(\sum\limits_{n_j=i_j}^{m_j}{n_j \choose i_j}u^{(m_j-n_j)p^j}v^{(n_j-i_j)p^j}z^{i_jp^j}x_{n_j}' \right)_{0\leq i_j\leq m_j}. \]
                As in the proof of Theorem~\ref{base case}, but using Lemma~\ref{evaluategeneral} instead, 
                $\psi^j_{P_j}(b\cdot \gamma)   = \psi^j_{P_j}(b\cdot \gamma)$. 
   Thus $\psi^j_{P_j}$ is $B(\mathbb{F}_q)$-linear. 
		
   $(ii)$ \textbf{$G(\mathbb{F}_q)$-linearity:} This follows as in the proof of Theorem~\ref{base case}
   using Lemma~\ref{glineargeneral} instead.

		$(iii)$ \textbf{Isomorphism: } We now show by induction on $m_j$ that $\ker\psi^j=\langle \theta_j'^{(m_j+1)}\rangle,$ where $\theta_j'=X_j^qY_j-X_jY_j^q.$ Suppose $m_j=0.$ Then $\psi^j(P_j)=\psi^j_{P_j}$ is defined by $\psi^j_{P_j}\left( \left( \begin{smallmatrix}
			a & b\\
			c & d
		\end{smallmatrix} \right) \right)=P_j(c^{p^j}, d^{p^j})$. Clearly $\langle \theta_j' \rangle\subset \ker\psi^j.$ Let $P_j\in \ker\psi^j.$ Then $P_j(c^{p^j}, d^{p^j})=0$ for all $(c, d)\in \mathbb{F}_q\times\mathbb{F}_q\setminus (0, 0).$ Thus, $P_j(c, d)=0$ for all $(c, d)\in \mathbb{F}_q\times\mathbb{F}_q\setminus (0, 0).$ Then by Lemma \ref{qtheta}, we have $P_j\in\langle \theta_j' \rangle.$ So $\ker\psi^j\subset \langle \theta_j' \rangle.$ Thus $\ker\psi^j=\langle \theta_j' \rangle.$
		
		Assume that the result is true for $m_j-1.$ By Lemma \ref{qthetaj}, we have \[\nabla_j^{a_j}(\theta_j'^{m_j+1})\big\vert_{(c^{p^j}, d^{p^j})}=0\] for all 
		$0\leq a_j\leq m_j.$ By definition of $\psi^j,$ we have $\theta_j'^{m_j+1}\in \ker\psi^j.$ So $\langle \theta_j'^{m_j+1} \rangle\subset \ker\psi^j.$ Let $P_j\in \ker\psi^j.$
		Then $\nabla_j^{a_j}(P_j)\big\vert_{(c^{p^j}, d^{p^j})}=0$ for all $0\leq a_j\leq m_j.$ In particular, $\nabla_j^{a_j}(P_j)\big\vert_{(c^{p^j}, d^{p^j})}=0$
                for all $0\leq a_j\leq m_j-1.$ By the induction hypothesis $P_j\in \langle \theta_j'^{m_j} \rangle.$ Write $P_j=Q_j\theta_j'^{m_j}$ with $Q_j\in\mathbb{F}_q[X_j, Y_j].$
                Now, taking $a_j = m_j$ and proceeding exactly as in the proof of Theorem~\ref{base case}, but
                using Lemma~\ref{qthetaj} instead,
                we see $\theta_j'$ divides $Q_j.$ 
                So $\ker\psi^j\subset \langle \theta_j'^{m_j+1} \rangle.$ Thus $\ker\psi^j= \langle \theta_j'^{m_j+1} \rangle.$
                
                So $\psi^j$ induces an injective map  $\frac{V_{r_j}^{\rm{Fr}^j}}{\langle \theta_j'^{m_j+1}\rangle} \rightarrow{\rm ind}_{B(\mathbb{F}_q)}^{G(\mathbb{F}_q)} \left( V_{m_j}^{{\rm Fr}^j}\otimes d^{(r_j-m_j)p^j} \right)$. Note that the dimension of both sides is $(m_j+1)(q+1)$
                as $r_j \geq (m_j+1)(q+1)-1$. So this map is also surjective and an isomorphism.
		%
	\end{proof}

        \begin{remark}
          \label{psij-surjective}
        It is also possible to give a direct proof of the surjectivity of $\psi^j$, at least when $m_j = 0$.
        Let $\sigma$ be a representation of $B(\mathbb{F}_q).$ For $g\in G(\mathbb{F}_q)$, $v\in \sigma,$ let $[g, v]\in {\rm ind}_{B(\mathbb{F}_q)}^{G(\mathbb{F}_q)}\sigma$
        denote the map defined by
	\[[g, v](g')=
	\begin{cases}
		\sigma(g'g)v,  & \text{if}~ g'g\in B(\mathbb{F}_q),\\
		0,             & \text{otherwise}.\\
	\end{cases}
	\]
        A basis of ${\rm ind}_{B(\mathbb{F}_q)}^{G(\mathbb{F}_q)}d^{(r_j-m_j)p^j}$ is given by (cf.  \cite[Lemma 7.2]{bre07}, \cite[Lemma 2.5 (2)]{bp12} )
		\begin{equation}\label{basis}
                  \left\{f_i:=\sum\limits_{\lambda\in\mathbb{F}_q}\lambda^i
			\left(\begin{matrix}
				\lambda & 1\\
				1 & 0
			\end{matrix}\right)[1, 1],~~\phi:=[1, 1]\right\}_{0\leq i\leq q-1},
		\end{equation}
		where $[1, 1]$ denotes the function supported on $B(\mathbb{F}_q)$ and $[1, 1](u)=1$
                for all $u\in \left\{\left(\begin{smallmatrix}
			1 & x\\
			0 & 1
                      \end{smallmatrix}\right)\mid x\in \mathbb{F}_q\right\}.$
                  When, e.g., $j = 0$ and $m_0 = 0$, then one may check
                  $\psi^0$ maps $(-1)^iX_0^{r_0-i}Y_0^i$ to $f_i$ for $0 \leq i \leq q-1$, and maps 
                   $Y^{r_0} - X_0^{q-1}Y_0^{r_0-(q-1)}$ to $\phi$.

        Similarly for $j$, $m_j$ arbitrary, a basis of
         $ V_{m_j} \otimes {\rm ind}_{B(\mathbb{F}_q)}^{G(\mathbb{F}_q)}d^{(r_j-m_j)p^j} \simeq {\rm ind}_{B(\mathbb{F}_q)}^{G(\mathbb{F}_q)}(V_{m_j} \otimes d^{(r_j-m_j)p^j})$
        is given by
		\[\left\{S_j^{m_j-l}T_j^l\otimes f_i,~ S_j^{m_j-l}T_j^l\otimes\phi \> \big | \> 0\leq l\leq m_j, \> 0\leq i\leq q-1\right\},\]
		where $V_{m_j}$ is modeled on polynomials of degree $m_j$ over ${\mathbb F}_q$ in $S_j$, $T_j$ and $f_i$, $\phi$ are in 
                ${\rm ind}_{B(\mathbb{F}_q)}^{G(\mathbb{F}_q)}d^{(r_j-m_j)p^j}$ as in (\ref{basis}). 
                One should similarly be able to write down polynomials mapping to these basis elements under $\psi^j$.
        \end{remark}

	Theorem~\ref{ps-twisted} is a twisted version of the Lemma~\ref{jthcomponent}. However the proof is more involved. To prove it we need a few more lemmas.
        Recall the twisted Dickson polynomials $\theta_j$ for $0 \leq j \leq f-1$ were defined in \eqref{twisted-Dickson-polys}.
        
	\begin{lemma}\label{thetaj}
		Let $a,b,c,d\in\mathbb{F}_q.$ For $0\leq j\leq f-1$ and $0\leq l_j,k_j\leq m_j,$ let $\nabla_j = a^{p^j}\frac{\partial}{\partial X_j}+b^{p^j}\frac{\partial}{\partial Y_j}.$ 
		Then 
{\small		\begin{eqnarray*}
			\left(\prod_{j=0}^{f-1}\nabla_j^{l_j}\right)\left(\prod_{j=0}^{f-1}\theta_j^{k_j}\right)\bigg\vert_{(c, d;\dots;c^{p^{f-1}},d^{p^{f-1}})}
			=  \begin{cases}
				l_0!\cdots l_{f-1}!\prod_{j=0}^{f-1}\left(\nabla_j(\theta_j)\bigg\vert_{(c, d;\dots;c^{p^{f-1}},d^{p^{f-1}})}\right)^{l_j}, & if~ (l_0,\dots,l_{f-1})=(k_0,\dots,k_{f-1}),\\
				0, & otherwise.
			\end{cases}
		\end{eqnarray*}}
	\end{lemma}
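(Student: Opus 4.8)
The plan is to reduce this multivariable identity to $f$ independent copies of (the proof of) Lemma~\ref{theta}, using the basic observation that in each twisted Dickson polynomial $\theta_i$ every variable $X_j,Y_j$ with $j\neq i$ occurs only to a $p$-th power, so that $\partial/\partial X_j$, $\partial/\partial Y_j$, and hence $\nabla_j$, annihilate it. First I would record the following elementary facts, each immediate from \eqref{twisted-Dickson-polys} together with the vanishing of derivatives of $p$-th powers in characteristic $p$ (indices read modulo $f$, so $-1=f-1$): \textbf{(a)} the operators $\nabla_0,\dots,\nabla_{f-1}$ commute pairwise, since they involve disjoint sets of variables; \textbf{(b)} $\nabla_i\theta_j=0$ whenever $i\neq j$, because $\theta_j$ involves the pair $X_i,Y_i$ at most through $X_i^p,Y_i^p$ (precisely when $i=j-1$) and otherwise not at all; \textbf{(c)} $\nabla_i(\nabla_j\theta_j)=0$ for all $i$ and $j$, since $\nabla_j\theta_j=a^{p^j}Y_{j-1}^p-b^{p^j}X_{j-1}^p$ involves only $X_{j-1},Y_{j-1}$, again to the $p$-th power; \textbf{(d)} $\theta_j$ vanishes at the evaluation point $(c,d;\dots;c^{p^{f-1}},d^{p^{f-1}})$, since $\theta_j|_{(c^{p^j},d^{p^j})}=c^{p^j}d^{p^j}-d^{p^j}c^{p^j}=0$ for $1\leq j\leq f-1$, while $\theta_0$ there equals $cd^q-dc^q=0$ for $c,d\in\mathbb{F}_q$.

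Next I would establish the single-variable identity
$$\nabla_j^{l_j}(\theta_j^{k_j})=l_j!\binom{k_j}{l_j}\,\theta_j^{k_j-l_j}(\nabla_j\theta_j)^{l_j}\ \ (k_j\geq l_j),\qquad \nabla_j^{l_j}(\theta_j^{k_j})=0\ \ (k_j<l_j),$$
which is the exact analog of \eqref{klarge} (together with \eqref{ksmall}); its proof is word for word that of Lemma~\ref{theta}, the induction on $l_j$ using only the chain and Leibniz rules together with the relation $\nabla_j(\nabla_j\theta_j)=0$ from (c), never the special shape of the classical Dickson polynomial.

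For the main computation I would use (a) to write $\prod_j\nabla_j^{l_j}=\nabla_{f-1}^{l_{f-1}}\cdots\nabla_0^{l_0}$ and then peel the operators off one index at a time, proving by induction on $n$ that
$$\nabla_n^{l_n}\cdots\nabla_0^{l_0}\Bigl(\prod_{i=0}^{f-1}\theta_i^{k_i}\Bigr)=\Bigl(\prod_{j=0}^{n}\nabla_j^{l_j}(\theta_j^{k_j})\Bigr)\cdot\prod_{i=n+1}^{f-1}\theta_i^{k_i}.$$
In the inductive step one applies $\nabla_{n+1}^{l_{n+1}}$: by (b) and (c) this operator annihilates each factor $\theta_j^{k_j-l_j}$ and $(\nabla_j\theta_j)^{l_j}$ with $j\leq n$ as well as each $\theta_i^{k_i}$ with $i\geq n+2$, so the Leibniz rule (Lemma~\ref{nabla}) collapses and $\nabla_{n+1}^{l_{n+1}}$ ends up acting on $\theta_{n+1}^{k_{n+1}}$ alone. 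Taking $n=f-1$ gives $\prod_j\nabla_j^{l_j}\bigl(\prod_i\theta_i^{k_i}\bigr)=\prod_j\nabla_j^{l_j}(\theta_j^{k_j})$; substituting the single-variable identity and evaluating at the point then finishes the proof. Indeed, if some $l_j>k_j$ the corresponding factor is already $0$; if every $l_j\leq k_j$ but some $l_j<k_j$, then $\theta_j^{k_j-l_j}$ vanishes at the point by (d); and if $(l_0,\dots,l_{f-1})=(k_0,\dots,k_{f-1})$, then each $\binom{k_j}{l_j}=1$, each $\theta_j^{k_j-l_j}=1$, and one is left with exactly $l_0!\cdots l_{f-1}!\prod_j(\nabla_j\theta_j\,|_{\mathrm{pt}})^{l_j}$.

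I expect the only genuine subtlety to be the bookkeeping around the cyclic convention $-1=f-1$ when verifying (b) and (c) at the wrap-around polynomial $\theta_0$, together with checking that the accumulated factors $(\nabla_j\theta_j)^{l_j}$ really are inert under every later operator. Once these are in place the argument is purely formal; there is no analytic obstacle, the point being precisely that tensoring Frobenius twists over a finite field decouples the $f$ directions, so that each $\nabla_j$ sees only the single $\theta_j$.
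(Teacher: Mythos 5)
Your proof is correct and follows essentially the same route as the paper's: both peel off the operators $\nabla_j$ one index at a time, using the key characteristic-$p$ facts that each $\theta_i$ involves foreign variables only through $p$-th powers (so $\nabla_j\theta_i=0$ for $j\neq i$ and $\nabla_j(\nabla_i\theta_i)=0$), and then reduce to the single-variable computation of Lemma~\ref{theta} together with the vanishing of each $\theta_j$ at the evaluation point. The only difference is bookkeeping: the paper inducts on $f$ and evaluates as it goes, whereas you prove the pre-evaluation factorization $\prod_j\nabla_j^{l_j}\bigl(\prod_i\theta_i^{k_i}\bigr)=\prod_j\nabla_j^{l_j}\bigl(\theta_j^{k_j}\bigr)$ by inducting on the index and only evaluate at the end.
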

	
	\begin{proof}
		We induct on $f.$ Lemma \ref{theta} is the case $f = 1$. Assume the result for $f-1.$ Now, consider
{\small		\begin{align*}
			&\left(\prod_{j=0}^{f-1}\nabla_j^{l_j}\right)\left(\prod_{j=0}^{f-1}\theta_j^{k_j}\right)\bigg\vert_{(c, d;\dots;c^{p^{f-1}},d^{p^{f-1}})}\\
			&=\left(\prod_{j=1}^{f-1}\nabla_j^{l_j}\right)\left(\left(\prod_{j=2}^{f-1}\theta_j^{k_j}\right)\theta_1^{k_1}\nabla_{0}^{l_{0}}\left(\theta_{0}^{k_{0}}\right)\right)\bigg\vert_{(c, d;\dots;c^{p^{f-1}},d^{p^{f-1}})}\\
			&=\left(\prod_{j=1}^{f-1}\nabla_j^{l_j}\right)\left(\prod_{j=1}^{f-1}\theta_j^{k_j}\right)\bigg \vert_{(c, d;\dots;c^{p^{f-1}},d^{p^{f-1}})}\nabla_{0}^{l_{0}}\left(\theta_{0}^{k_{0}}\right)\bigg \vert_{(c,d;\dots;c^{p^{f-1}},d^{p^{f-1}})}\\
			&=\begin{cases}
				l_0!\cdots l_{f-1}!\prod_{j=0}^{f-1}\left(\nabla_j(\theta_j)\bigg \vert_{(c, d;\dots;c^{p^{f-1}},d^{p^{f-1}})}\right)^{l_j}, & if~ (l_0,\dots,l_{f-1})=(k_0,\dots,k_{f-1}),\\
				0, & otherwise.
			\end{cases}
		\end{align*}}
\!\!\!		The first equality follows since $\theta_j$ for $2\leq j\leq f-1$ is independent of $X_0$, $Y_0$, and
                since $\nabla_0(\theta_1)=0$, and
                the last from the induction hypothesis and a twisted analogue of the $j = 0$ version of Lemma~\ref{qthetaj}.
	\end{proof}

	For $0\leq j\leq f-1,$ assume that $r_j\geq p^{f-j}$. Then $r=\sum\limits_{j=0}^{f-1}r_jp^j\geq fq.$ Note that $r$ does not determine the $r_j$
        and the $r_j$ are not the digits of the base $p$ expansion of $r$.
        Let $0\leq i_j\leq r_j$  for $0\leq j\leq f-1$. Set $i = \sum\limits_{j=0}^{f-1}i_jp^j$ and  $\vec{i}=(i_0,\dots,i_{f-1}).$ We write 
	$\sum\limits_{\vec{i}=\vec{0}}^{\vec{r}}:=\sum\limits_{i_0=0}^{r_0}\dots\sum\limits_{i_{f-1}=0}^{r_{f-1}}.$
	
	\begin{lemma}\label{monomial difference}
          For $0\leq j\leq f-1$, let $0 \leq i_j \leq r_j$ 
          and $0 \leq k_j\leq p-1$. Let $k=\sum\limits^{f-1}_{j=0}k_jp^j$.
          Let $P=\sum\limits_{\vec{i}=\vec{0}}^{\vec{r}}a_{\vec{i}}\prod_{j=0}^{f-1} X_j^{r_j-i_j}Y_j^{i_j}\in\mathbb{F}_q[X_0,Y_0,\dots,X_{f-1},Y_{f-1}]$ be such that $P\left(c,d;\dots;c^{p^{f-1}},d^{p^{f-1}}\right)=0$
          for all  $\left( \begin{smallmatrix}
			a & b\\
			c & d\\
		\end{smallmatrix} \right)\in G(\mathbb{F}_q)$.
		Then 
		$a_{\vec{0}}=0 = a_{\vec{r}},$
		and the polynomial $P$ is of the form
{\small		\[P=\sum\limits_{k=1}^{q-1}
		\left(\sum\limits_{\substack{\vec{i}=\vec{0}\\ \vec{i}\neq \vec{k}\\i\equiv k\mod(q-1)}}^{\vec{r}}a_{\vec{i}}\left(\prod_{j=0}^{f-1}X_j^{r_j-i_j}Y_j^{i_j}-\prod_{j=0}^{f-1}X_j^{r_j-k_j}Y_j^{k_j}\right)\right).\]}	
	\end{lemma}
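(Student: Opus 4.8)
The plan is to reduce the claim to the single-variable situation already handled in Lemma~\ref{qtheta}, by specializing $(X_j,Y_j)=(c^{p^j},d^{p^j})$ for $0\leq j\leq f-1$. Under this specialization the monomial $\prod_{j=0}^{f-1}X_j^{r_j-i_j}Y_j^{i_j}$ becomes $c^{\sum_j(r_j-i_j)p^j}\,d^{\sum_j i_jp^j}=c^{r-i}d^{i}$, where $i:=\sum_{j=0}^{f-1}i_jp^j$ now runs over $0\leq i\leq r$; so the hypothesis on $P$ says exactly that the \emph{single} homogeneous polynomial $\sum_{i=0}^{r}b_iX^{r-i}Y^i$ vanishes at every $(c,d)\in\mathbb{F}_q\times\mathbb{F}_q$ with $(c,d)\neq(0,0)$, where $b_i:=\sum_{\vec{i}:\ \sum_j i_jp^j=i}a_{\vec{i}}$. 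The only genuinely new feature compared with Lemma~\ref{qtheta} is that the map $\vec{i}\mapsto i$ is now many-to-one, so one has to carry along these grouped sums of coefficients instead of the individual $a_{\vec{i}}$.

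First I would read off the two boundary statements: substituting $(c,d)=(1,0)$ and $(c,d)=(0,1)$ gives $b_0=0$ and $b_r=0$, and since $\vec{0}$ is the unique index with $\sum_j i_jp^j=0$, while $\sum_j(r_j-i_j)p^j=0$ with all $r_j-i_j\geq 0$ forces $\vec{i}=\vec{r}$, these are precisely $a_{\vec{0}}=0=a_{\vec{r}}$. Then, exactly as in Lemma~\ref{qtheta}, running $\lambda$ over $\mathbb{F}_q^*$ and substituting $(c,d)=(1,\lambda)$ yields $\sum_i b_i\lambda^i=0$; grouping the exponents $i$ by their residue modulo $q-1$ and using $\lambda^{q-1}=1$ turns this into $\sum_{k=1}^{q-1}A_k\lambda^k=0$, where $A_k:=\sum_{\vec{i}:\ \sum_j i_jp^j\equiv k\bmod(q-1)}a_{\vec{i}}$; invertibility of the (essentially Vandermonde) matrix $(\lambda^k)$ then gives $A_k=0$ for all $1\leq k\leq q-1$.

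It remains to package $P$ in the asserted form. Partition the monomials of $P$ by the class of $i=\sum_j i_jp^j$ modulo $q-1$, writing $P=\sum_{k=1}^{q-1}\sum_{\vec{i}:\ i\equiv k}a_{\vec{i}}\prod_j X_j^{r_j-i_j}Y_j^{i_j}$. For each $k\in\{1,\dots,q-1\}$ the digit vector $\vec{k}=(k_0,\dots,k_{f-1})$ is a well-defined index: the $k_j$ lie in $\{0,\dots,p-1\}$ by hypothesis, and $r_j\geq p^{f-j}\geq p>p-1\geq k_j$ makes it a legitimate multidegree (indeed $r_j-k_j\geq 1$); moreover $\sum_j k_jp^j=k$, so $\vec{k}$ lies in the $k$-th group. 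Using $A_k=0$ to solve $a_{\vec{k}}=-\sum_{\vec{i}\neq\vec{k},\ i\equiv k}a_{\vec{i}}$ and substituting, the $k$-th group becomes $\sum_{\vec{i}\neq\vec{k},\ i\equiv k}a_{\vec{i}}\big(\prod_j X_j^{r_j-i_j}Y_j^{i_j}-\prod_j X_j^{r_j-k_j}Y_j^{k_j}\big)$, which is exactly the claimed shape. The only delicate point — and the nearest thing to an obstacle — is purely bookkeeping: one must be careful that it is the \emph{sums} $b_i$ and $A_k$ that vanish (not the individual coefficients) and that $\vec{k}$ sits inside its own group as a valid index, both of which are harmless given the standing hypothesis $r_j\geq p^{f-j}$.
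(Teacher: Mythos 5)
Your proposal is correct and follows essentially the same route as the paper's proof: specialize at $(c,d;\dots;c^{p^{f-1}},d^{p^{f-1}})$, read off $a_{\vec{0}}=0=a_{\vec{r}}$ from $(1,0)$ and $(0,1)$, use the Vandermonde argument on $(1,\lambda)$ to get $A_k=0$ for each residue class $k$ modulo $q-1$, and then substitute $a_{\vec{k}}=-\sum_{\vec{i}\neq\vec{k}}a_{\vec{i}}$ to obtain the stated form. Your extra remarks (uniqueness of $\vec{i}$ with $i=0$ or $i=r$, and $r_j\geq p^{f-j}$ guaranteeing $\vec{k}$ is a valid multidegree) are correct points the paper leaves implicit.
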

	
	\begin{proof}
		From the given condition we have $P\left(c,d;\dots;c^{p^{f-1}},d^{p^{f-1}}\right)=\sum\limits_{\vec{i}=\vec{0}}^{\vec{r}}a_{\vec{i}}c^{r-i}d^i=0$, $\text{for all}~(c, d)\in\mathbb{F}_q\times\mathbb{F}_q\setminus\{(0,0)\}$.
		In particular, choosing $(c, d)=(1, 0),$ we have $a_{\vec{0}}=0$ and choosing $(c, d)=(0, 1),$ we have $a_{\vec{r}}=0.$ Let $\mathbb{F}_q^{\times}=\{\lambda_l\mid1\leq l\leq q-1 \}.$
		Then taking $(c, d)=(1, \lambda_l),$ we have
{\small		\[P(1,\lambda_l;\dots;1,\lambda_l^{p^{f-1}})=\sum\limits_{k=1}^{q-1}\left(\left(\sum\limits_{\substack{\vec{i}=\vec{0}\\i\equiv k\mod (q-1)}}^{\vec{r}}a_{\vec{i}}\right)\lambda_l^k\right)=0,\]}
		which, by writing $A_k= \sum\limits_{\substack{\vec{i}=\vec{0}\\i\equiv k\mod (q-1)}}^{\vec{r}}a_{\vec{i}}$,
		gives $\sum\limits_{k=1}^{q-1}A_{k}\lambda_l^{k}=0.$
                Since the (essentially) Vandermonde matrix $(\lambda_l^k)$ 
                is invertible, we have
		\begin{equation}\label{ak}
                  A_k
                  =0
                      \end{equation}
                for all $1\leq k\leq q-1$ (see also \cite[Lemme 3.1.6]{bre03}, \cite[Lemma 2.8]{hen19}).
		Now, we have
{\small		\begin{eqnarray*}
			P & = & \sum\limits_{\vec{i}=\vec{0}}^{\vec{r}}a_{\vec{i}}\prod_{j=0}^{f-1} X_j^{r_j-i_j}Y_j^{i_j} \>
			= \> \sum\limits_{k=1}^{q-1}\left(\sum\limits_{\substack{\vec{i}=\vec{0}\\i\equiv k\mod (q-1)}}^{\vec{r}}a_{\vec{i}}\prod_{j=0}^{f-1} X_j^{r_j-i_j}Y_j^{i_j}\right)\\
			&= & \sum\limits_{k=1}^{q-1}\left(\left(\sum\limits_{\substack{\vec{i}=\vec{0}\\\vec{i}\neq \vec{k}\\i\equiv k\mod (q-1)}}^{\vec{r}}a_{\vec{i}}\prod_{j=0}^{f-1} X_j^{r_j-i_j}Y_j^{i_j}\right)+a_{\vec{k}}\prod_{j=0}^{f-1} X_j^{r_j-k_j}Y_j^{k_j}\right)
			 \> = \> \sum\limits_{k=1}^{q-1}\left(\sum\limits_{\substack{\vec{i}=\vec{0}\\\vec{i}\neq \vec{k}\\i\equiv k\mod (q-1)}}^{\vec{r}}a_{\vec{i}}\left(\prod_{j=0}^{f-1} X_j^{r_j-i_j}Y_j^{i_j}-\prod_{j=0}^{f-1} X_j^{r_j-k_j}Y_j^{k_j}\right)\right).
		\end{eqnarray*}}
		\!\!\! The last equality follows from (\ref{ak}).
	\end{proof}

	\begin{lemma}\label{unequal}
		Let $\theta_0:=X_0Y_{f-1}^p-Y_0X_{f-1}^p$ and $\theta_k:=X_kY_{k-1}^p-Y_kX_{k-1}^p$ for all $1\leq k\leq f-1.$ For $0\leq j\leq f-1,$ let $c_j,d_j,g_j,h_j\in\mathbb{N}\cup \{0\}$ be such that $c_j+d_j=r_j$ with $r_j\geq p^{f-j}.$
		Let \[P(X_0,Y_0;\dots;X_{f-1},Y_{f-1})=\prod_{j=0}^{f-1}X_j^{c_j}Y_j^{d_j}-\prod_{j=0}^{f-1}X_j^{g_j}Y_j^{h_j},\] 
		where
		$\sum\limits_{j=0}^{f-1}d_jp^j=\sum\limits_{j=0}^{f-1}h_jp^j+t(q-1)$ for some $t\geq 1.$ 
		Assume that $P$ has no pure term which involves either only $X_j$ or only $Y_j.$
		Then, for $0\leq j\leq f-1$ there exist $c_j'$, $d_j'\in\mathbb{N}\cup \{0\}$ such that 
		\[P(X_0,Y_0;\dots;X_{f-1},Y_{f-1})=\left(\prod_{j=0}^{f-1}X_j^{c_j'}Y_j^{d_j'}-\prod_{j=0}^{f-1}X_j^{g_j}Y_j^{h_j}\right)\mod\langle \theta_{0},\dots,\theta_{f-1}\rangle\] with $\sum\limits_{j=0}^{f-1}d_j'p^j=\sum\limits_{j=0}^{f-1}h_jp^j.$
	\end{lemma}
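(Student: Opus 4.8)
The strategy is to convert the ideal $\langle\theta_0,\dots,\theta_{f-1}\rangle$ into a collection of elementary moves on monomials and then induct on $t$. For a monomial $M=\prod_{j=0}^{f-1}X_j^{c_j}Y_j^{d_j}$ with $c_j+d_j=r_j$, put $w(M):=\sum_{j=0}^{f-1}d_jp^j$. The generators give the congruences $X_kY_{k-1}^p\equiv Y_kX_{k-1}^p\pmod{\theta_k}$ for $1\le k\le f-1$ and $X_0Y_{f-1}^p\equiv Y_0X_{f-1}^p\pmod{\theta_0}$; hence, whenever $M$ is divisible by the left-hand monomial of one such relation, replacing that factor by the right-hand one (or conversely) produces a monomial congruent to $M$ modulo $\langle\theta_0,\dots,\theta_{f-1}\rangle$ and of the same bidegree $r_j$ in each pair $(X_j,Y_j)$. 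Inspecting exponents: a move of type $\theta_k$ with $1\le k\le f-1$ preserves $w$ (the shift is $\pm(p^k-p\cdot p^{k-1})=0$); a move of type $\theta_0$ in the direction $X_0Y_{f-1}^p\rightsquigarrow Y_0X_{f-1}^p$ decreases $w$ by exactly $p\cdot p^{f-1}-p^0=q-1$ and is legal precisely when $c_0\ge1$ and $d_{f-1}\ge p$; and no application of any move produces a pure power $\prod_jX_j^{r_j}$ or $\prod_jY_j^{r_j}$, as each move introduces a positive power of some $X_i$ and of some $Y_{i'}$.

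Write $P=M-N$ with $M=\prod X_j^{c_j}Y_j^{d_j}$ and $N=\prod X_j^{g_j}Y_j^{h_j}$. The hypothesis that $P$ has no pure term says $N$ is not a monomial in the $X_j$ alone, so $w_0:=\sum h_jp^j\ge1$, and $M\ne\prod_jY_j^{r_j}$, so $w(M)=\sum d_jp^j\le r-1$ (using $c_j+d_j=r_j$). Hence for $t\ge1$ we have $q\le w(M)=w_0+t(q-1)\le r-1$. It therefore suffices to prove the following claim and apply it $t$ times. \emph{Claim: if $M=\prod X_j^{c_j}Y_j^{d_j}$ with $c_j+d_j=r_j$ is not a pure power and $w(M)\ge q$, then $M$ is congruent modulo $\langle\theta_0,\dots,\theta_{f-1}\rangle$ to a monomial $M''$ with the same bidegrees, again not a pure power, and $w(M'')=w(M)-(q-1)$.} Since moves never create a pure power and, after $s<t$ applications, the value $w_0+(t-s)(q-1)$ is still $\ge q$, the claim applies at every step; after $t$ steps one obtains $M'=\prod X_j^{c_j'}Y_j^{d_j'}$ with $c_j'+d_j'=r_j$, $\sum d_j'p^j=w_0=\sum h_jp^j$, and $M\equiv M'$, whence $P\equiv M'-N$ modulo the ideal --- the statement of the lemma.

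To prove the claim, the plan is to rearrange $M$ by $w$-preserving moves until a single $\theta_0$-move becomes legal, then apply it. First, funnel $Y$-weight upward toward slot $f-1$: while $d_{f-1}<p$, the inequality $w(M)\ge q=p\cdot p^{f-1}$ forces some slot $<f-1$ with $d_j\ge p$ (otherwise all $d_i\le p-1$ and $w(M)\le(p-1)(1+p+\cdots+p^{f-1})=q-1$); taking the largest such $j$, maximality gives $d_{j+1}<p\le r_{j+1}$, so $c_{j+1}\ge1$ and the $\theta_{j+1}$-carry $X_{j+1}Y_j^p\rightsquigarrow Y_{j+1}X_j^p$ is legal (this is exactly where $r_{j+1}\ge p^{f-j-1}$ is used). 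Each carry preserves $w$ and strictly decreases $\sum_j d_j$, so after finitely many the loop exits, necessarily with $d_{f-1}\ge p$. Second, secure $c_0\ge1$: if $c_0=0$ then $d_0=r_0\ge p$, and since $w(M)\le r-1$ the $c_j$ are not all zero; taking $j$ least with $c_j\ge1$, the slots below $j$ are full, hence have $d_i=r_i\ge p$, so the cascade $\theta_j,\theta_{j-1},\dots,\theta_1$ of carries is legal and pulls a power $X_0^p$ into slot $0$; these carries touch only slots $<f-1$ or raise $d_{f-1}$, so $d_{f-1}\ge p$ is undisturbed. Now the $\theta_0$-move $X_0Y_{f-1}^p\rightsquigarrow Y_0X_{f-1}^p$ is legal; it lowers $w$ by $q-1$ and, being a move, does not create a pure power. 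This proves the claim.

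The main obstacle is precisely this rearrangement bookkeeping: verifying that $w$-preserving moves always suffice to arrange $c_0\ge1$ and $d_{f-1}\ge p$ simultaneously. This is where the hypothesis $r_j\ge p^{f-j}$ is used essentially --- it makes every slot large enough both to supply the carries feeding slot $f-1$ and to retain an $X$-factor in slot $0$ --- and the whole argument is the twisted, multivariable analogue of the step ``successively decreasing the power $i$ of $Y$'' at the end of the proof of Lemma~\ref{qtheta}, which is the case $f=1$ here. The remaining points --- the effect of each move on $w$, preservation of bidegrees, non-creation of pure powers, and the legality of the individual moves --- are routine.
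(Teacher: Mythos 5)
Your proof is correct and takes essentially the same route as the paper's: the same two elementary congruences modulo $\theta_0$ and modulo $\theta_k$ ($1\leq k\leq f-1$), reduction to $t$ single steps each lowering $\sum_j d_jp^j$ by $q-1$, and carrying moves that arrange $c_0\geq 1$ and $d_{f-1}\geq p$ before applying the $\theta_0$ relation. The only difference is in bookkeeping: you certify termination of the carrying loop via the strictly decreasing measure $\sum_j d_j$ together with $w\geq q$ (which you extract from the no-pure-term hypothesis as $w_0\geq 1$), whereas the paper writes $d_{f-1}=p-i_1$, $d_{f-2}=i_1p-i_2,\dots$ and rules out non-termination by contradiction using the same hypothesis.
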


	\begin{proof}
		We first make two observations. 
		First, if $c_0\geq 1, d_{f-1}\geq p,$ then we may write
		\[\prod_{j=0}^{f-1}X_j^{c_j}Y_j^{d_j}\\
		=\left(\prod_{j=1}^{f-2}X_j^{c_j}Y_j^{d_j}\right)\left(
		X_{0}^{c_0-1}Y_0^{d_0}X_{f-1}^{c_{f-1}}Y_{f-1}^{d_{f-1}-p}(X_0Y_{f-1}^p-Y_0X_{f-1}^p)+X_{0}^{c_0-1}Y_0^{d_0+1}X_{f-1}^{c_{f-1}+p}Y_{f-1}^{d_{f-1}-p}\right)\]
		which implies that
		\begin{equation}\label{mod-theta-o}
			\prod_{j=0}^{f-1}X_j^{c_j}Y_j^{d_j}\\
			=\left(\prod_{j=1}^{f-2}X_j^{c_j}Y_j^{d_j}\right)
			X_{0}^{c_0-1}Y_0^{d_0+1}X_{f-1}^{c_{f-1}+p}Y_{f-1}^{d_{f-1}-p}\mod\theta_{0}.\\
		\end{equation}
		%
                Second, for $1\leq k\leq f-1,$ if $c_k\geq 1, d_{k-1}\geq p,$ then similarly 
		\begin{equation}\label{mod-theta-j-unequal}
			\prod_{j=0}^{f-1}X_j^{c_j}Y_j^{d_j}\\
			=\left(\prod_{\substack{j=0\\j\neq k,k-1}}^{f-1}X_j^{c_j}Y_j^{d_j}\right)
			X_{k-1}^{c_{k-1}+p}Y_{k-1}^{d_{k-1}-p}X_k^{c_k-1}Y_k^{d_k+1}\mod \theta_k.
		\end{equation}     
		Note that the first operation involving $\theta_0$ decreases $\sum_{j=0}^{f-1}d_jp^j$ in \eqref{mod-theta-o} by $q-1$.
                However, the second operation involving $\theta_k$ does not change $\sum_{j=0}^{f-1}d_jp^j$ in \eqref{mod-theta-j-unequal}.

		Now, to prove the result we may assume that $d_{f-1}\geq h_{f-1}$ (replacing $P$ by -$P$). We also assume that $t=1$. If $t>1$,
                we iterate the argument below to reduce to the case $t = 1$. 
		
		\textbf{Case 1:} Suppose $d_{f-1}\geq p.$ 
		
		First assume $c_0\geq 1$. 
                Since $c_0\geq 1$ and $d_{f-1}\geq p,$ by (\ref{mod-theta-o}), we have
		\[\prod_{j=0}^{f-1}X_j^{c_j}Y_j^{d_j}=\left(\prod_{j=1}^{f-2}X_j^{c_j}Y_j^{d_j}\right)X_0^{c_0-1}Y_0^{d_0+1}X_{f-1}^{c_{f-1}+p}Y_{f-1}^{d_{f-1}-p}\mod \theta_0.\]
		Note that, \[(d_0+1)+\sum\limits_{j=1}^{f-2}d_jp^j+(d_{f-1}-p)p^{f-1}=\sum\limits_{j=0}^{f-1}d_jp^j-(q-1)=\sum\limits_{j=0}^{f-1}h_jp^j.\]
		Thus the result follows by taking $c_0'=c_0-1,d_0'=d_0+1;c_{f-1}'=c_{f-1}+p, d_{f-1}'=d_{f-1}-p$ and $c_k'=c_k, d_k'=d_k$ for all $1\leq k\leq f-2.$ 
		
		Next suppose $c_0=0.$ Since there is no pure term in $Y_j$ in $P,$ we choose $k$ to be the least index for which $c_j\neq 0$ in $\prod_{j=0}^{f-1}X_j^{c_j}Y_j^{d_j}.$ Then we have $c_j=0$ for all $j<k$ and $c_k\neq 0.$ This implies that $d_j=r_j\geq p$ for all $j<k$ and $c_k\geq 1.$ 
		Then, using (\ref{mod-theta-j-unequal}),  we have
{\small		\begin{eqnarray*}
			\prod_{j=0}^{f-1}X_j^{c_j}Y_j^{d_j}
			& = &  \left(\prod_{\substack{j=0\\j\neq k,k-1}}^{f-1}X_j^{c_j}Y_j^{d_j}\right)X_{k-1}^{c_{k-1}+p}Y_{k-1}^{d_{k-1}-p}X_k^{c_k-1}Y_k^{d_k+1} \mod \theta_k\\
			& = &  \left(\prod_{\substack{j=0\\j\neq k,k-1,k-2}}^{f-1}X_j^{c_j}Y_j^{d_j}\right)X_k^{c_k-1}Y_k^{d_k+1}X_{k-1}^{c_{k-1}+p-1}Y_{k-1}^{d_{k-1}-p+1}X_{k-2}^{c_{k-2}+p}Y_{k-2}^{d_{k-2}-p} \mod \theta_{k-1}\\
			&& \quad\quad\quad\quad\quad\quad\quad\quad\quad\quad\quad\quad\quad\quad\quad\quad\quad\quad\quad\vdots\\
                       & =& \left(\prod_{j=k+1}^{f-1}X_j^{c_j}Y_j^{d_j}\right)X_k^{c_k-1}Y_k^{d_k+1}\left(\prod_{j=1}^{k-1}X_j^{c_j+p-1}Y_j^{d_j-p+1}\right) X_{0}^{c_{0}+p}Y_{0}^{d_0-p} \mod \theta_{1}.
		\end{eqnarray*}}
	
		Clearly $c_0+p\geq 1$ and we are reduced to the previous paragraph ($c_0 \geq 1$). 
		
		\textbf{Case 2:} Suppose $d_{f-1}<p.$
	
		Then $d_{f-1}=p-i_1=i_0p-i_1,$ where $i_0=1$ and $0<i_1\leq p.$ Since $r_{f-1}\geq p,$ we have $c_{f-1}=r_{f-1}-i_0p+i_1\geq i_1.$ Now, if $d_{f-2}\geq i_1p,$ taking $k=f-1,$ by (\ref{mod-theta-j-unequal}) we have
		\[\prod_{j=0}^{f-1}X_j^{c_j}Y_j^{d_j}= \left(\prod_{j=0}^{f-3}X_j^{c_j}Y_j^{d_j}\right)X_{f-2}^{c_{f-2}+i_1p}Y_{f-2}^{d_{f-2}-i_1p}X_{f-1}^{c_{f-1}-i_1}Y_{f-1}^{d_{f-1}+i_1}\mod\theta_{f-1}^{i_1}.\]
		Note that $d_{f-1}+i_1=p,$ so we are done by Case $1.$ 
		
		Suppose, $d_{f-2}<i_1p.$ We write $d_{f-2}=i_1p-i_2$ with $0<i_2\leq i_1p.$ Since by assumption $r_{f-2}\geq p^2$ and $0<i_1\leq p,$ we have $r_{f-2}\geq p^2\geq i_1p.$ This shows that $c_{f-2}=r_{f-2}-i_1p+i_2\geq i_2.$ Now, if $d_{f-3}\geq i_2p,$ taking $k=f-2,$ by (\ref{mod-theta-j-unequal}) we have 
		\[ \prod_{j=0}^{f-1}X_j^{c_j}Y_j^{d_j}= \left(\prod_{\substack{j=0\\j\neq f-2, f-3}}^{f-1}X_j^{c_j}Y_j^{d_j}\right)X_{f-3}^{c_{f-3}+i_2p}Y_{f-3}^{d_{f-3}-i_2p}X_{f-2}^{c_{f-2}-i_2}Y_{f-2}^{d_{f-2}+i_2}\mod\theta_{f-2}^{i_2}.\]
		Note that $d_{f-2}+i_2=i_1p.$ So we are done by the `$d_{f-2}\geq i_1p$' case above. And so on.

                Thus it is enough to show that this process stops. Suppose not. Then for $0\leq j\leq f-1,$ we have $0<i_{j+1}\leq i_jp$ such that $d_{f-1-j}=i_jp-i_{j+1}.$ In particular, $d_0<i_{f-1}p.$ Now, note that
		\[\sum\limits_{j=0}^{f-1}d_jp^j<i_{f-1}p+\sum\limits_{j=0}^{f-2}d_{f-1-j}p^{f-1-j}=i_{f-1}p+\sum\limits_{j=0}^{f-2}(i_jp-i_{j+1})p^{f-1-j} = i_0q. \]
		Since $i_0=1,$ we have $\sum\limits_{j=0}^{f-1}d_jp^j<q.$ Also, note that  $\sum\limits_{j=0}^{f-1}d_jp^j\geq q-1$ by assumption. Hence,
		\[q-1\leq \sum\limits_{j=0}^{f-1}d_jp^j\leq q-1\implies \sum\limits_{j=0}^{f-1}d_jp^j=q-1\implies \sum\limits_{j=0}^{f-1}h_jp^j=0\implies h_j=0,\]
		for all $0\leq j\leq f-1.$
		This is a contradiction because $P$ does not contain any pure term in $X_j.$ Thus modulo $\langle \theta_1,\dots,\theta_{f-1}\rangle,$ we can always assume that $d_{f-1}\geq p,$ hence by Case $1$ the result follows.
	\end{proof}

	\begin{lemma}\label{equal}
          Let 
          $\theta_k:=X_kY_{k-1}^p-Y_kX_{k-1}^p$ for all $1\leq k\leq f-1.$ For $0\leq j\leq f-1,$ let $c_j,d_j,g_j,h_j\in\mathbb{N}\cup \{0\}$ be such that $c_j+d_j=r_j$ with $r_j\geq p^{f-j}.$ Let \[P(X_0,Y_0;\dots;X_{f-1},Y_{f-1})=\prod_{j=0}^{f-1}X_j^{c_j}Y_j^{d_j}-\prod_{j=0}^{f-1}X_j^{g_j}Y_j^{h_j},\] where $\sum\limits_{j=0}^{f-1}d_jp^j=\sum\limits_{j=0}^{f-1}h_jp^j.$ Then $P(X_0,Y_0;\dots;X_{f-1},Y_{f-1})\in\langle \theta_{1},\dots,\theta_{f-1}\rangle.$
	\end{lemma}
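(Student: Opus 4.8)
The plan is to deduce the lemma from a more symmetric statement proved by induction on $f$: \emph{every monomial $M=\prod_{j=0}^{f-1}X_j^{c_j}Y_j^{d_j}$ with $c_j+d_j=r_j$ and $r_j\ge p^{f-j}$ is congruent, modulo $\langle\theta_1,\dots,\theta_{f-1}\rangle$, to a fixed monomial $C_f(w)$ depending only on its $Y$-weight $w:=\sum_{j=0}^{f-1}d_jp^j$ (and on $r_0,\dots,r_{f-1}$)}. Here $C_f(w)$ is built by choosing $Y$-degrees greedily from the top: its degree in $Y_{f-1}$ is $\delta:=\min\bigl(r_{f-1},\lfloor w/p^{f-1}\rfloor\bigr)$, and on the remaining levels it agrees with $C_{f-1}(w-\delta p^{f-1})$; since $0\le w\le\sum_j r_jp^j$ this is well defined with all $Y$-degrees in $[0,r_j]$, the bound $r_j\ge p^{f-j}$ being exactly what keeps the running remainder below $r_0$ at the bottom. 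Granting this, Lemma~\ref{equal} is immediate: as $\sum d_jp^j=\sum h_jp^j$, the monomials $M=\prod X_j^{c_j}Y_j^{d_j}$ and $M'=\prod X_j^{g_j}Y_j^{h_j}$ (which, as in the application, are bihomogeneous of the same bidegree, i.e.\ $g_j+h_j=r_j$ as well) share the same $Y$-weight $w$, so $M\equiv C_f(w)\equiv M'$ and $P=M-M'\in\langle\theta_1,\dots,\theta_{f-1}\rangle$. For $w=0$ or $w=\sum r_jp^j$ the weight condition already forces $M=M'$.

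The base case $f=1$ is trivial: the ideal is $(0)$, $w=d_0$, and $M=C_1(w)=X_0^{r_0-w}Y_0^{w}$. For the inductive step, I would first apply the case $f-1$ to the factor $\prod_{j=0}^{f-2}X_j^{c_j}Y_j^{d_j}$ in the variables $X_0,Y_0,\dots,X_{f-2},Y_{f-2}$ --- legitimate since $r_j\ge p^{f-j}>p^{(f-1)-j}$ for $0\le j\le f-2$, and since none of $\theta_1,\dots,\theta_{f-2}$ involves $X_{f-1}$ or $Y_{f-1}$ --- to replace $M$, modulo $\langle\theta_1,\dots,\theta_{f-2}\rangle$ and hence modulo $\langle\theta_1,\dots,\theta_{f-1}\rangle$, by a monomial $M_1$ whose levels $0,\dots,f-2$ already carry the canonical $Y$-degrees of $C_{f-1}(w')$ for the subweight $w':=w-d_{f-1}p^{f-1}=\sum_{j\le f-2}d_jp^j$. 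Since $w'\ge 0$ one has automatically $d_{f-1}\le\lfloor w/p^{f-1}\rfloor$, and also $d_{f-1}\le r_{f-1}$, so $d_{f-1}\le\delta$; it therefore remains only to raise the $Y_{f-1}$-degree from $d_{f-1}$ up to $\delta$ using $\theta_{f-1}$, after which one last application of the case $f-1$ re-canonicalizes levels $0,\dots,f-2$ and produces $C_f(w)$.

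The raising is carried out by repeatedly applying the substitution $X_{f-1}Y_{f-2}^{p}\mapsto Y_{f-1}X_{f-2}^{p}$ --- the reverse of the $k=f-1$ instance of \eqref{mod-theta-j-unequal} --- which raises the $Y_{f-1}$-degree by $1$ while lowering the $Y_{f-2}$-degree by $p$, and is legal exactly when the current $Y_{f-1}$-degree is $<r_{f-1}$ and the current $Y_{f-2}$-degree is $\ge p$. Whenever level $f-2$ runs short of $Y$-degree, one first replenishes it by transporting $Y$-degree upward out of levels $f-3,f-4,\dots$ via $\theta_{f-2},\theta_{f-3},\dots$ (equivalently, by re-running the case $f-1$), and then continues. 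I expect this interleaved ``carry'' to be the only part requiring genuine work: one must verify that it never drives an exponent negative, i.e.\ that at each stage there is enough $Y$-degree available at the lower levels to move up, and enough slack (a positive $X_j$-degree, that is $d_j<r_j$) at each intervening level to absorb a carry of size about $p^{f-1-j}$. This is precisely where $r_j\ge p^{f-j}$ enters --- each level has a factor of $p$ of room to spare over the carry it must pass along --- and the termination of the procedure can be established by a descent $i_0>i_1>\cdots$ on the sizes of the successive carries, exactly as in the proof of Lemma~\ref{unequal}. Once this bookkeeping is in place, the induction closes, and the lemma follows.
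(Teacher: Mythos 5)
Your argument is correct and, at bottom, runs on the same engine as the paper's: the only tools are the exchanges $X_kY_{k-1}^p\leftrightarrow Y_kX_{k-1}^p$ modulo $\theta_k$ for $k\ge 1$, which preserve the weight $\sum_j d_jp^j$, together with a cascading carry whose feasibility and termination rest on $r_j\ge p^{f-j}$; but the packaging is genuinely different. The paper matches the first monomial to the second directly: it reduces the discrepancy $d_{f-1}-h_{f-1}$ one unit at a time by pushing $Y$-weight \emph{downward} via \eqref{mod-theta-j-equal}, so its cascade runs on the $X$-exponents (needing $c_{f-2}\ge p$, then $c_{f-3}\ge i_1p$, and so on) and it then inducts on $f$; you instead normalize \emph{both} monomials \emph{upward} to the top-greedy representative $C_f(w)$ of their common weight, so your cascade is the one from Case 2 of Lemma~\ref{unequal}, pulling $Y$-weight up (needing $d_{f-2}\ge p$, then $d_{f-3}\ge i_1p$, etc.). Your route buys a cleaner global statement — for fixed multidegree the monomials of a given weight form a single congruence class with an explicit representative — at the price of defining $C_f(w)$ and redoing the carry analysis in the upward direction, whereas the paper's version is leaner because it only ever has to move one unit of top-level weight. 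The step you leave as ``expected'' does close, but make two things explicit. First, the invariant that powers it: as long as a raise of $Y_{f-1}$ is still needed, $d_{f-1}<\delta\le\lfloor w/p^{f-1}\rfloor$ forces the weight stored in levels $0,\dots,f-2$ to be at least $p^{f-1}$; this plays the role of the hypothesis $\sum_j d_jp^j\ge t(q-1)$ in Lemma~\ref{unequal}, since if the cascade never found a level with enough $Y$-degree, the telescoping identity $\sum_{j\le f-2}d_jp^j=p^{f-1}-i_{f-1}<p^{f-1}$ would contradict it, while $r_{f-1-j}\ge p^{j+1}\ge i_{j-1}p$ supplies the $X$-slack $c_{f-1-j}\ge i_j$ at each intervening level. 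Second, your phrase ``descent $i_0>i_1>\cdots$'' is not accurate: the carries satisfy only $0<i_{j+1}\le i_jp$ and may grow, so termination comes from the weight contradiction just described (exactly as in Lemmas~\ref{unequal} and \ref{equal} of the paper), not from a decreasing sequence.
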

	
	\begin{proof}
          We make the following observation.
          For $1\leq k\leq f-1,$ if $d_k\geq 1, c_{k-1}\geq p,$ then
		\begin{equation}\label{mod-theta-j-equal}
			\prod_{j=0}^{f-1}X_j^{c_j}Y_j^{d_j}\\
			=\left(\prod_{\substack{j=0\\j\neq k,k-1}}^{f-1}X_j^{c_j}Y_j^{d_j}\right)
			X_{k-1}^{c_{k-1}-p}Y_{k-1}^{d_{k-1}+p}X_k^{c_k+1}Y_k^{d_k-1}\mod \theta_k.
		\end{equation}
		Note that the operation involving $\theta_k$ in \eqref{mod-theta-j-equal}
                does not change the sum $\sum_{j=0}^{f-1}d_jp^j.$

		Now, we prove the result by induction on $f.$ If $f=1,$ then $P=0,$ and hence, the result follows. Assume that the result is true for $f-1.$ Without loss of generality we assume that $d_{f-1}\geq h_{f-1}.$ If $d_{f-1}= h_{f-1},$ we are done by the induction hypothesis.

                Suppose $d_{f-1}>h_{f-1}.$ 
                We assume that $d_{f-1}-h_{f-1}=1.$ If this difference is bigger than $1$, then we iterate the proof below until it is $1$. 
		Clearly $d_{f-1}\geq 1.$ If $c_{f-2}\geq p,$ taking $k=f-1$ in (\ref{mod-theta-j-equal}), we have
		\[\prod_{j=0}^{f-1}X_j^{c_j}Y_j^{d_j}=\left(\prod_{j=0}^{f-3}X_j^{c_j}Y_j^{d_j}\right)X_{f-2}^{c_{f-2}-p}Y_{f-2}^{d_{f-2}+p}X_{f-1}^{c_{f-1}+1}Y_{f-1}^{d_{f-1}-1}\mod\theta_{f-1}.\]
		Since the  operation involving $\theta_{f-1}$ does not change the sum $\sum_{j=0}^{f-1}d_jp^j$ and $d_{f-1}-1=h_{f-1},$ we are done by the case `$d_{f-1}=h_{f-1}$'.

		If $c_{f-2}<p,$ we write $c_{f-2}=p-i_1=i_0p-i_1,$ where $i_0=1$ and $0<i_1\leq p.$ Since by assumption $r_{f-2}\geq p,$ we have $d_{f-2}=r_{f-2}-i_0p+i_1\geq i_1.$ 
		Now, if $c_{f-3}\geq i_1p,$ taking $k=f-2$ in (\ref{mod-theta-j-equal}), we have
		\[ \prod_{j=0}^{f-1}X_j^{c_j}Y_j^{d_j}= \left(\prod_{\substack{j=0\\j\neq f-2, f-3}}^{f-1}X_j^{c_j}Y_j^{d_j}\right)X_{f-3}^{c_{f-3}-i_1p}Y_{f-3}^{d_{f-3}+i_1p}X_{f-2}^{c_{f-2}+i_1}Y_{f-2}^{d_{f-2}-i_1}\mod\theta_{f-2}^{i_1}.\]
		Since the operation involving $\theta_{f-2}$ does not change the sum $\sum_{j=0}^{f-1}d_jp^j$ and $c_{f-2}+i_1=p,$ we are done by the `$c_{f-2}\geq p$' case. And so on.
		

                Thus it is enough to show that this process stops. Suppose not. Then for $0\leq j\leq f-2,$ we have $0<i_{j+1}\leq i_jp$ such that $c_{f-2-j}=i_jp-i_{j+1}.$ In particular, $c_0<i_{f-2}p.$ By hypothesis, we have
		\[\sum\limits_{j=0}^{f-1}d_jp^j=\sum\limits_{j=0}^{f-1}h_jp^j\implies \sum\limits_{j=0}^{f-2}d_jp^j+(d_{f-1}-h_{f-1})p^{f-1}=\sum\limits_{j=0}^{f-2}h_jp^j,\]
		which by substituting $d_{f-1}-h_{f-1}=1, d_j=r_j-c_j$ and $h_j\leq r_j$ for all $0\leq j\leq f-2$ gives
		\[\sum\limits_{j=0}^{f-2}(r_j-c_j)p^j+p^{f-1}\leq \sum\limits_{j=0}^{f-2}r_jp^j\implies-\sum\limits_{j=0}^{f-2}c_jp^j+p^{f-1}\leq 0,\]
		which further by substituting $c_j=i_{f-2-j}p-i_{f-1-j}$ implies that
		\[-\sum\limits_{j=0}^{f-2}\left(i_{f-2-j}p-i_{f-1-j}\right)p^j+p^{f-1}\leq 0 \implies 
                -i_0p^{f-1}+i_{f-1}+p^{f-1}\leq 0.\] 
              Since $i_0=1,$ we conclude $i_{f-1}\leq 0.$ But we had $0<i_{f-1}\leq i_{f-2}p.$ Thus we arrive at a contradiction.
              So modulo $\langle \theta_1, \dots,\theta_{f-1}\rangle$ we are always reduced to the `$d_{f-1}=h_{f-1}$' case, and so we are done.
	\end{proof}

	We finally prove Theorem~\ref{ps-twisted} from the introduction.
        
	\begin{theorem}\label{general case}
		Let $m=m_0+m_1p+\dots+m_{f-1}p^{f-1}$ be the $p$-adic expansion of $m$ with $0\leq m_j\leq p-1$. Let $r=r_0+r_1p+\dots+r_{f-1}p^{f-1}$ with 
		$r_j\geq (m_j+1)(q+1)-1$ for all $0\leq j\leq f-1$. If $p\nmid{r_j \choose m_j}$ for all $0\leq j\leq f-1,$ then we have
		\[\frac{\bigotimes_{j=0}^{f-1}V_{r_j}^{{\rm Fr}^j}}{\langle\theta_0^{m_0+1},\dots,\theta_{f-1}^{m_{f-1}+1}\rangle}\simeq {\rm ind}_{B(\mathbb{F}_q)}^{G(\mathbb{F}_q)}\bigotimes_{j=0}^{f-1}\left(V_{m_j}^{{\rm Fr}^j}\otimes d^{(r_j-m_j)p^j}\right).\]
	\end{theorem}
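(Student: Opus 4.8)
The plan is to write down an explicit $G(\mathbb{F}_q)$-equivariant map $\psi$ generalizing those of Theorem~\ref{base case} and Lemma~\ref{jthcomponent}, to identify its kernel with the twisted Dickson ideal, and to check surjectivity. For $\gamma=\left(\begin{smallmatrix}a&b\\c&d\end{smallmatrix}\right)\in G(\mathbb{F}_q)$ set $\nabla_j=a^{p^j}\frac{\partial}{\partial X_j}+b^{p^j}\frac{\partial}{\partial Y_j}$, write $\underline z$ for the point $(c,d;\dots;c^{p^{f-1}},d^{p^{f-1}})$, and define
\[\psi\colon\bigotimes_{j=0}^{f-1}V_{r_j}^{{\rm Fr}^j}\longrightarrow{\rm ind}_{B(\mathbb{F}_q)}^{G(\mathbb{F}_q)}\bigotimes_{j=0}^{f-1}\bigl(V_{m_j}^{{\rm Fr}^j}\otimes d^{(r_j-m_j)p^j}\bigr)\]
by $\psi(P)=\psi_P$ with
\[\psi_P(\gamma)=\left(\frac{\prod_{j=0}^{f-1}\binom{m_j}{n_j}}{\prod_{j=0}^{f-1}[r_j]_{m_j-n_j}}\Bigl(\prod_{j=0}^{f-1}\nabla_j^{\,m_j-n_j}\Bigr)(P)\Big\vert_{\underline z}\right)_{0\le n_j\le m_j}.\]
Since $m_j<p$ and $p\nmid\binom{r_j}{m_j}$, Lucas' theorem shows each $[r_j]_{m_j-n_j}$ is a unit, so the constant is well defined and nonzero. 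On a pure tensor $P=\bigotimes_jP_j$ one has $\psi(P)=\mu\bigl(\bigotimes_j\psi^j(P_j)\bigr)$, where $\psi^j$ is the isomorphism of Lemma~\ref{jthcomponent} and $\mu\colon\bigotimes_j{\rm ind}_{B(\mathbb{F}_q)}^{G(\mathbb{F}_q)}\sigma_j\to{\rm ind}_{B(\mathbb{F}_q)}^{G(\mathbb{F}_q)}\bigotimes_j\sigma_j$ is the natural ``diagonal'' multiplication $F_0\otimes\dots\otimes F_{f-1}\mapsto\bigl(g\mapsto F_0(g)\otimes\dots\otimes F_{f-1}(g)\bigr)$. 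As $\mu$ is visibly $G(\mathbb{F}_q)$-equivariant and (e.g.\ on the functions $[g,v]$ of Remark~\ref{psij-surjective}) surjective, and each $\psi^j$ is $B(\mathbb{F}_q)$- and $G(\mathbb{F}_q)$-equivariant, this at once gives that $\psi$ is well defined, $G(\mathbb{F}_q)$-equivariant and \emph{surjective}. Alternatively one checks $B(\mathbb{F}_q)$- and $G(\mathbb{F}_q)$-linearity directly, rerunning the computations of Lemma~\ref{jthcomponent}(i),(ii) in parallel over the $f$ levels via Lemmas~\ref{evaluategeneral} and \ref{glineargeneral}.

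The key point for $\langle\theta_0^{m_0+1},\dots,\theta_{f-1}^{m_{f-1}+1}\rangle\subseteq\ker\psi$ is that $\nabla_j(\theta_\ell)=0$ whenever $j\ne\ell$: the level-$(\ell-1)$ variables occur in $\theta_\ell$ and in $\nabla_\ell\theta_\ell$ only as $p$-th powers, killed by $\nabla_{\ell-1}$ in characteristic $p$, and the other $\nabla_j$ act on disjoint variables. Hence for $P=\theta_\ell^{m_\ell+1}Q$ and any $l_j\le m_j$, expanding $\nabla_\ell^{\,l_\ell}(\theta_\ell^{m_\ell+1}Q)$ by Leibniz (Lemma~\ref{nabla}) and using that $\theta_\ell$ is linear in the level-$\ell$ variables, every term stays divisible by $\theta_\ell^{\,m_\ell+1-l_\ell}$ with $m_\ell+1-l_\ell\ge1$; since the remaining operators $\prod_{j\ne\ell}\nabla_j^{\,l_j}$ commute past $\theta_\ell$, the whole of $\bigl(\prod_j\nabla_j^{l_j}\bigr)(P)$ is divisible by $\theta_\ell$ and so vanishes at $\underline z$ (where $\theta_\ell=0$). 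Thus $\psi_P=0$; this is also immediate from Lemma~\ref{thetaj}.

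For the reverse inclusion I would induct on $m=\sum_jm_j$. In the base case $m=0$ we have $\psi_P(\gamma)=P(\underline z)$, so $\ker\psi$ is the space of $P$ vanishing at every $\underline z$; by Lemma~\ref{monomial difference} such a $P$ is a sum of monomial differences $\prod_jX_j^{r_j-i_j}Y_j^{i_j}-\prod_jX_j^{r_j-k_j}Y_j^{k_j}$ with $i\equiv k\pmod{q-1}$, and since $r_j\ge p^{f-j}\ge1$ no monomial is a pure power of a single variable, so Lemma~\ref{unequal} followed by Lemma~\ref{equal} (or Lemma~\ref{equal} alone when $i=k$) puts each such difference into $\langle\theta_0,\dots,\theta_{f-1}\rangle$; hence $\ker\psi=\langle\theta_0,\dots,\theta_{f-1}\rangle$. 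For the inductive step fix $\ell$ with $m_\ell\ge1$; since $\ker\psi_{\vec m}$ imposes more derivative conditions than $\ker\psi_{\vec m-e_\ell}$, the inductive hypothesis gives $P=\theta_\ell^{m_\ell}R+\sum_{j\ne\ell}\theta_j^{m_j+1}S_j$, where after passing to the degree-$\vec r$ part we may take $R$ (and the $S_j$) multihomogeneous. Applying $\prod_j\nabla_j^{l_j}$ with $l_\ell=m_\ell$, using $\nabla_j\theta_\ell=0$ for $j\ne\ell$, the identity $\nabla_\ell^{m_\ell}(\theta_\ell^{m_\ell})=m_\ell!\,(\nabla_\ell\theta_\ell)^{m_\ell}$, the evaluation $\nabla_\ell\theta_\ell\vert_{\underline z}=(ad-bc)^{p^\ell}\ne0$, and the previous paragraph for the $\theta_j^{m_j+1}S_j$-terms, one obtains $\bigl(\prod_{j\ne\ell}\nabla_j^{l_j}R\bigr)\vert_{\underline z}=0$ for all $\gamma$ and all $l_j\le m_j$, which is exactly the condition $R\in\ker\psi_{\vec m''}$ for the tuple $\vec m''$ obtained from $\vec m$ by replacing $m_\ell$ by $0$. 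The inductive hypothesis then yields $R\in\langle\theta_\ell,\theta_j^{m_j+1}(j\ne\ell)\rangle$, whence $\theta_\ell^{m_\ell}R\in\langle\theta_\ell^{m_\ell+1},\theta_j^{m_j+1}(j\ne\ell)\rangle$ and $P\in\langle\theta_0^{m_0+1},\dots,\theta_{f-1}^{m_{f-1}+1}\rangle$. Together with surjectivity, $\psi$ descends to the asserted isomorphism.

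The main obstacle is this kernel computation. Its combinatorial core is the base case, which hinges on the delicate reductions in Lemmas~\ref{unequal} and \ref{equal}; in the inductive step the subtle point is tracking multidegrees so that the auxiliary tuples $\vec m-e_\ell$ and $\vec m''$, whose symmetric-power factors have smaller degrees, still satisfy $p\nmid\binom{\cdot}{\cdot}$ and the size conditions. This is exactly where the hypothesis $r_j\ge p^{f-j}$ enters, since peeling off a copy of $\theta_\ell$ costs one degree in level $\ell$ and $p$ degrees in level $\ell-1$. Finally, as in Remark~\ref{psij-surjective}, one must verify that the normalizing constants and the diagonal map $\mu$ are genuinely compatible; that is where most of the bookkeeping resides.
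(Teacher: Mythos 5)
Your proposal is correct and follows essentially the same route as the paper: the same explicit map (which the paper writes as $\pi\circ\bigotimes_j\psi^j$, your $\mu$ being the paper's $\pi$), surjectivity via Lemma~\ref{jthcomponent} and the surjection onto the induced tensor product, the containment $\langle\theta_j^{m_j+1}\rangle\subseteq\ker\psi$ via Lemma~\ref{thetaj}, and the reverse containment by induction on the digit sum of $m$ with the base case handled by Lemmas~\ref{monomial difference}, \ref{unequal} and \ref{equal} and the inductive step by the same two-fold use of the inductive hypothesis. The multidegree/size bookkeeping you flag is treated with the same level of looseness in the paper itself (degrees are assumed sufficiently large), so no essential difference remains.
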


	\begin{proof}\label{general case proof}
		 Define 
		 \[\psi:\bigotimes_{j=0}^{f-1}V_{r_j}^{{\rm Fr}^j}\rightarrow {\rm ind}_{B(\mathbb{F}_q)}^{G(\mathbb{F}_q)}\bigotimes_{j=0}^{f-1}\left(V_{m_j}^{{\rm Fr}^j}\otimes d^{(r_j-m_j)p^j}\right)\] by
		$\psi(\bigotimes_{j=0}^{f-1} P_j(X_j, Y_j))=\bigotimes_{j=0}^{f-1}\psi^j_{P_j(X_j, Y_j)}$, where $P_j(X_j, Y_j)\in V_{r_j}^{{\rm Fr}^j}$ for all $0\leq j\leq f-1$,
                and
                \[\bigotimes_{j=0}^{f-1}\psi^j_{P_j(X_j, Y_j)}:G(\mathbb{F}_q)\rightarrow\bigotimes_{j=0}^{f-1}\left(V_{m_j}^{{\rm Fr}^j}\otimes d^{(r_j-m_j)p^j}\right)\] is given by
		\[\left(\begin{matrix}
			a & b\\
			c & d
		\end{matrix}\right)\mapsto\bigotimes_{j=0}^{f-1}\left(\frac{{m_j\choose n_j}}{[r_j]_{m_j-n_j}}\nabla_j^{m_j-n_j}(P_j)\bigg \vert_{(c^{p^j}, d^{p^j})}\right)_{0\leq n_j\leq m_j}.\]
	    We show that $\psi$ induces a $G(\mathbb{F}_q)$-equivariant isomorphism between the right and left hand sides of the isomorphism in the statement of the theorem.
            
            We first show that $\bigotimes_{j=0}^{f-1}\psi^j_{P_j(X_j, Y_j)}$ is $B(\mathbb{F}_q)$-linear. This follows from the
            $B(\mathbb{F}_q)$-linearity of each $\psi^j_{P_j(X_j, Y_j)}$. Let $b\in B(\mathbb{F}_q)$ and $g\in G(\mathbb{F}_q).$ Then
            {\small
		\[ \left( \bigotimes_{j=0}^{f-1}\psi^j_{P_j(X_j, Y_j)} \right) (b\cdot g)=\bigotimes_{j=0}^{f-1}\psi^j_{P_j(X_j, Y_j)}(b\cdot g)=\bigotimes_{j=0}^{f-1}b\cdot\psi^j_{P_j(X_j, Y_j)}(g)=b\cdot\left(\bigotimes_{j=0}^{f-1}\psi^j_{P_j(X_j, Y_j)}(g)\right)  = b\cdot\left(\bigotimes_{j=0}^{f-1}\psi^j_{P_j(X_j, Y_j)}\right)(g).\] }
		The second equality holds by Lemma \ref{jthcomponent} (i). 
		
		Similarly, for the $G(\mathbb{F}_q)$-linearity of $\psi,$ we note that
		$\psi(g\cdot\bigotimes_{j=0}^{f-1} P_j(X_j, Y_j))=\psi(\bigotimes_{j=0}^{f-1}g\cdot P_j(X_j, Y_j))= \bigotimes_{j=0}^{f-1}\psi^j_{g\cdot P_j(X_j, Y_j)}=\bigotimes_{j=0}^{f-1}\psi^j(g\cdot P_j(X_j, Y_j))$, which by Lemma \ref{jthcomponent} (ii), equals 
                $\bigotimes_{j=0}^{f-1}g\cdot\psi^j(P_j(X_j, Y_j))=g\cdot\bigotimes_{j=0}^{f-1}\psi^j(P_j(X_j, Y_j)) =g \cdot\bigotimes_{j=0}^{f-1}\psi^j_{P_j(X_j, Y_j)}=g\cdot \psi(\bigotimes_{j=0}^{f-1} P_j(X_j, Y_j)).$

                Now there is a natural surjection
                $$\pi: \bigotimes_{j=0}^{f-1} {\rm ind}_{B(\mathbb{F}_q)}^{G(\mathbb{F}_q)} \left( V_{m_j}^{{\rm Fr}^j} \otimes d^{(r_j-m_j)p^j} \right)\twoheadrightarrow
                {\rm ind}_{B(\mathbb{F}_q)}^{G(\mathbb{F}_q)} \left( \bigotimes_{j=0}^{f-1} V_{m_j}^{{\rm Fr}^j} \otimes d^{(r_j-m_j)p^j} \right)$$
                given by $\pi(\otimes_{j=0}^{f-1} F_j) = F$ with $F(g) = \otimes_{j=0}^{f-1} F_j(g)$ for all $g \in G({\mathbb F}_q)$.
                Note that $\pi$ is not necessarily injective (take $f > 1$ and compare dimensions on both sides of $\pi$). 
		By definition of $\psi$, we have that $\psi(\otimes_{j=0}^{f-1} P_j(X_j, Y_j))(g) = \otimes_{j=0}^{f-1}\psi_{P_j(X_j,Y_j)}^j(g) = 
                \pi (\otimes_{j=0}^{f-1}\psi_{P_j(X_j,Y_j)}^j)(g)$ for all $g \in G({\mathbb F}_q)$.
                Hence we have that $\psi(\otimes_{j=0}^{f-1} P_j(X_j, Y_j)) =   \pi (\otimes_{j=0}^{f-1}\psi_{P_j(X_j,Y_j)}^j)$.
                But the last map is equal to $\pi ( (\otimes_{j=0}^{f-1} \psi^j) (\otimes_{j=0}^{f-1} P_j(X_j,Y_j)))
                = (\pi \circ \otimes_{j=0}^{f-1}\psi^j)(\otimes_{j=0}^{f-1} P_j(X_j,Y_j))$. Thus  
                $$\psi = \pi \circ (\otimes_{j=0}^{f-1}\psi^j).$$
                By Lemma \ref{jthcomponent} (iii), each $\psi^j$ is surjective, hence so is $\otimes_{j=0}^{f-1}\psi^j$.
                Since $\pi$ is surjective, so is $\psi$.

		Note that each $\psi_j$ is not injective, hence $\otimes_{j=0}^{f-1}\psi^j$ is not injective, therefore $\psi$ is not injective.
                Thus it remains to compute $\ker \psi$. 
		We show that $\ker\psi_m=\langle\theta_0^{m_0+1},\dots,\theta_{f-1}^{m_{f-1}+1}\rangle,$ where we write $\psi_m$ instead of $\psi$ for emphasis.
                By Lemma \ref{thetaj}, for $0\leq a_j\leq m_j,$ 
		\[\left(\prod_{j=0}^{f-1}\nabla_j^{a_j}\right)(\theta_s^{m_s+1})\bigg \vert_{(c, d;\dots;c^{p^{f-1}},d^{p^{f-1}})}=0,\]
		for all $0\leq s\leq f-1.$ By the definition of $\psi_m,$ we see that
		$\langle\theta_0^{m_0+1},\dots,\theta_t^{m_t+1},\dots,\theta_{f-1}^{m_{f-1}+1}\rangle\subset \ker\psi_m$.
                We now prove the other containment
                \begin{eqnarray}
                  \label{other-containment}
                  \ker\psi_m\subset \langle\theta_0^{m_0+1},\dots,\theta_t^{m_t+1},\dots,\theta_{f-1}^{m_{f-1}+1}\rangle.
                \end{eqnarray}
                This is the trickiest part of the proof of the theorem. We will use  
                the three lemmas proved before the theorem.
                
                The proof is by induction on $\sum_p(m),$ where $\sum_p(m)=m_0+\dots+m_{f-1}$ denotes the sum of the $p$-adic digits in the base $p$ expansion of $m.$
                If $\sum_p(m)=0,$ then we have $m_j=0$ for all $0\leq j\leq f-1.$
                Noting that $r_j\geq (m_j+1)(q+1)-1\geq p^{f-j}$ for $0 \leq j\leq f-1$, we see that \eqref{other-containment} follows immediately
                from Lemmas~\ref{monomial difference},  \ref{unequal} and \ref{equal}.
		
		Now, suppose $\sum_p(m)\geq 1$. Assume that \eqref{other-containment} holds for $m'$ with $\sum_p(m') \leq \sum_p(m)-1$. 
                Pick $t$ such that $m_t\geq 1.$ Let $m'=m_0+\dots+(m_t-1)p^t+\dots+m_{f-1}p^{f-1}.$ Then $\sum_p(m')=\sum_p(m)-1$ and so  
		\begin{eqnarray}
                    \label{other-containment-m'}
                    \ker\psi_{m'} \subset \langle\theta_0^{m_0+1},\dots, \theta_t^{m_t}, \dots, \theta_{f-1}^{m_{f-1}+1}\rangle,
		\end{eqnarray}
                by the induction hypothesis. Now, let $P\in \ker\psi_m.$ So 
		$\left(\prod_{j=0}^{f-1}\nabla_j^{a_j}\right)(P)\big \vert_{(c, d;\dots;c^{p^{f-1}},d^{p^{f-1}})}=0$ for all $0\leq a_j\leq m_j.$
                In particular, $\left(\prod_{j=0}^{f-1}\nabla_j^{a_j}\right)(P)\big \vert_{(c, d;\dots;c^{p^{f-1}},d^{p^{f-1}})}=0$ for all $0\leq a_j\leq m_j$ with $j\neq t$
                and for all $0\leq a_t\leq  m_t-1.$ This shows that $P\in \ker\psi_{m'}.$ 
		By \eqref{other-containment-m'}, we may write
		\[P=Q_0\theta_0^{m_0+1}+\dots+Q_t\theta_t^{m_t}+\dots+Q_{f-1}\theta_{f-1}^{m_{f-1}+1},\] 
		with $Q_j\in \mathbb{F}_q[X_0,Y_0;\dots;X_{f-1},Y_{f-1}]$ for $0\leq j\leq f-1.$ 
		Clearly,
		\[P\in\langle\theta_0^{m_0+1},\dots,\theta_t^{m_t+1},\dots,\theta_{f-1}^{m_{f-1}+1}\rangle
		\iff Q_t\theta_t^{m_t}\in\langle\theta_0^{m_0+1},\dots,\theta_t^{m_t+1},\dots,\theta_{f-1}^{m_{f-1}+1}\rangle.\]
		So without loss of generality, let $P=Q_t\theta_t^{m_t}.$ Since $P\in\ker\psi_m,$ for $0\leq a_j\leq m_j$ and $j\neq t,$ we have 
{\small		\begin{align*}
			&\left(\prod_{j=0,j\neq t}^{f-1}\nabla_j^{a_j}\right)\nabla_t^{m_t}(Q_t\theta_t^{m_t})\bigg \vert_{(c, d;\dots;c^{p^{f-1}},d^{p^{f-1}})}=0\\
			&\implies \left(\prod_{j=0,j\neq t}^{f-1}\nabla_j^{a_j}\right)\left(\sum\limits_{l=0}^{m_t}{m_t\choose l}\nabla_t^{m_t-l}(Q_t)\nabla_t^l(\theta_t^{m_t})\right)\bigg \vert_{(c, d;\dots;c^{p^{f-1}},d^{p^{f-1}})} =0\\
			&\implies\sum_{l=0}^{m_t}{m_t\choose l}\nabla_{t-1}
			^{a_{t-1}}\left(\left(\prod_{j=0,j\neq t,t-1}^{f-1}\nabla_j^{a_j}\right)\nabla_t^{m_t-l}(Q_t)\nabla_t^l(\theta_t^{m_t})\right)\bigg \vert_{(c, d;\dots;c^{p^{f-1}},d^{p^{f-1}})} = 0 \\
			&\implies\sum_{l=0}^{m_t}{m_t\choose l}\left( \sum_{k=0}^{a_{t-1}}{a_{t-1}\choose k}\nabla_{t-1}^{a_{t-1}-k}\left(\left(\prod_{j=0,j\neq t,t-1}^{f-1}\nabla_j^{a_j}\right)\nabla_t^{m_t-l}(Q_t)\right)\nabla_{t-1}^k\left(\nabla_t^l\left(\theta_t^{m_t}\right)\right)\right) \bigg \vert_{(c, d;\dots;c^{p^{f-1}},d^{p^{f-1}})} = 0 \\
			&\implies \left(\prod_{j=0,j\neq t}^{f-1}\nabla_j^{a_j}\right)\nabla_t^0(Q_t)\bigg \vert_{(c, d;\dots;c^{p^{f-1}},d^{p^{f-1}})}=0\\
			&\implies Q_t\in \langle\theta_0^{m_0+1},\dots,\theta_t^{1},\dots,\theta_{f-1}^{m_{f-1}+1}\rangle \> \implies \> Q_t\theta_t^{m_t}\in \langle\theta_0^{m_0+1},\dots,\theta_t^{m_t+1},\dots,\theta_{f-1}^{m_{f-1}+1}\rangle.
		\end{align*}}
                \!\!\! The fourth implication follows from Lemma \ref{thetaj}: if $(k,l) = (0,m_t)$, then  
                \[\nabla_{t-1}^k \nabla_t^{l}(\theta_t^{m_t}) \bigg \vert_{(c, d;\dots;c^{p^{f-1}},d^{p^{f-1}})} = m_t!(ad-bc)^{m_tp^{t}}\neq 0,\]
                and is $0$ for all other $(k,l)$. 
                The penultimate implication holds by the induction hypothesis as the sum of the $p$-adic digits is $\Sigma_p(m)-m_t \leq \Sigma_p(m)-1$.
                So $P\in \langle\theta_0^{m_0+1},\dots,\theta_t^{m_t+1},\dots,\theta_{f-1}^{m_{f-1}+1}\rangle,$
                proving \eqref{other-containment}.
	\end{proof}

	\section{Dual numbers}

        This section is an aside.
        The ring of generalized dual numbers is defined by $\mathbb{F}_p[\epsilon] =\frac{\mathbb{F}_p[X]}{\langle X^{m+1} \rangle}.$
         We make some remarks on two questions that arise in the context of the lack of surjectivity when $m > 0$ of
        the map \eqref{dual} which involves dual numbers  (introduced in \cite[Lemma 4.1]{GV22}).
	Firstly, can one possibly replace the inducing subgroup $B(\mathbb{F}_p[\epsilon])$ in \eqref{dual}
        by another subgroup $B'$ of
        index $p+1$ in $G(\mathbb{F}_p[\epsilon])$ and $d^r$ by a
	surjective character $\chi_r : B' \rightarrow \mathbb{F}_p[\epsilon]^\times$ such that there is an isomorphism
	$$ \frac{V_r}{V_r^{(m+1)}} \stackrel{{\tiny{?}}}{\simeq} \mathrm{ind}_{B'}^{G(\mathbb{F}_p[\epsilon])} \chi_r$$
	which might then be used to study periodicity results? The answer is no, and explains why in this
        paper we turned towards proving the isomorphism in Theorem~\ref{ps}. Secondly,  can one at least describe the image of
        \eqref{dual} in a more conceptual way than in \cite[Lemma 4.1]{GV22}? The answer in some cases is yes 
        (see Proposition~\ref{image} and a longer arXiv version of this paper \url{https://arxiv.org/pdf/2308.10246} for its proof).

        It would be interesting to see how the material in this section connected to announced
        work of Schein and his coauthors on the modular representation theory
        of $\mathrm{GL}_2(R)$ where $R$ is a finite quotient ring of ${\mathcal O}_F$ for $F$ a $p$-adic field, and, e.g., 
        to work of Avni, Onn, Prasad, Vaserstein \cite{AOPV}.

        \subsection{Isomorphisms using dual numbers}	
	
	
	There are two notions of projective space over the generalized dual numbers. The first is standard projective space 
	$${\mathbb P}^1(\mathbb{F}_p[\epsilon]) = \{ [x:y] | (x,y) = 1 \}.$$
        It has cardinality $p(p+1)$ when $m = 1$ and is the cylinder
	obtained by glueing the line at $\infty$, namely $[1 : d\epsilon]$, to the plane $[c : 1]$ \cite{gru06}. The second is
	$\tilde{\mathbb P}^1(\mathbb{F}_p[\epsilon]) = \{ [x:y] | (x,y) \neq 0 \}$. It has cardinality $(p+1)^2$ when $m = 1$.  The group $G(\mathbb{F}_p[\epsilon])$
	acts on the left on both these spaces via $\left(\begin{smallmatrix} a & b \\ c & d \end{smallmatrix} \right) \cdot [x:y] = [ax+by:cx+dy]$.
	
	Let $B'$ be the $\epsilon$-Iwahori subgroup of $G(\mathbb{F}_p[\epsilon])$  obtained as the pre-image of the usual Borel
	$B({\mathbb F}_p)$ under the reduction modulo $\epsilon$ map $G(\mathbb{F}_p[\epsilon]) \rightarrow G(\mathbb{F}_p)$.
	Then $B'$ is the stabilizer of $[\epsilon:0]$ under the action of 
	$G(\mathbb{F}_p[\epsilon])$ on $\tilde{\mathbb P}^1(\mathbb{F}_p[\epsilon])$
	(whereas $B(\mathbb{F}_p[\epsilon])$ is the stabilizer of $[1:0]$ in ${\mathbb P}^1(\mathbb{F}_p[\epsilon])$).
	Clearly $B'$ has index $p+1$.
	
	One may ask if there is a surjective character $\chi : B' \rightarrow \mathbb{F}_p[\epsilon]^\times$ which induces
        a $G(\mathbb{F}_p)$-isomorphism
	$$ \frac{V_r}{V_r^{(m+1)}} \simeq \mathrm{ind}_{B'}^{G(\mathbb{F}_p[\epsilon])} \chi_r$$
	from which the periodicity of the left side would follow if one knew $\chi_r$ only depended on
	$r$ modulo $p(p-1)$.
	
	The answer is no. Indeed, one quickly sees that $B'$ has abelianization
	$\mathbb{F}_p[\epsilon]^\times \times \mathbb{F}_p^\times$ and so the only surjective characters $B' \rightarrow \mathbb{F}_p[\epsilon]^\times$
	it supports are powers of the determinant character. These characters are not genuine characters of $B'$, since they are obtained
	by restricting from $G(\mathbb{F}_p[\epsilon])$, so the induction is not so well-behaved. 
	
	Moreover, one checks that every other subgroup $B''$ of index $p+1$ in $G(\mathbb{F}_p[\epsilon])$ is conjugate to $B'$ so this
	line of reasoning does not bear fruit.

	\subsection{Image of \eqref{dual}} Thus, the best one can hope to do is to characterize the image of the (non-surjective) map (if $m > 0$)
	$$ \frac{V_r}{V_r^{(m+1)}} \hookrightarrow \mathrm{ind}_{B(\mathbb{F}_p[\epsilon])}^{G(\mathbb{F}_p[\epsilon])} d^r$$
	mentioned in the introduction (cf. \cite[Lemma 4.1]{GV22}).
	
	To this end, we consider the right action of $G(\mathbb{F}_p[\epsilon])$ on standard projective space ${\mathbb P}^1(\mathbb{F}_p[\epsilon])$
	defined via  $[x:y] \cdot  \left(\begin{smallmatrix} a & b \\ c & d \end{smallmatrix} \right)  = [ax+cy:bx+dy]$.
	This action is transitive and the stabilizer of $[0:1]$ under this action is $B(\mathbb{F}_p[\epsilon]).$ We have the following decomposition
	\[G(\mathbb{F}_p[\epsilon])=B(\mathbb{F}_p[\epsilon])\left(\begin{matrix}
		1 & 0\\
		c & 1
	\end{matrix}\right)\bigsqcup B(\mathbb{F}_p[\epsilon])\left(\begin{matrix}
		0 & 1\\
		1 & d\epsilon
	\end{matrix}\right),\]
	where $c,d\in \mathbb{F}_p[\epsilon].$
	There is a bijection between $B(\mathbb{F}_p[\epsilon])\backslash G(\mathbb{F}_p[\epsilon])$ and ${\mathbb P}^1(\mathbb{F}_p[\epsilon])$ by sending $B(\mathbb{F}_p[\epsilon])g$ to $[0:1]g$ for $g\in G(\mathbb{F}_p[\epsilon])$.
        
	We say that  $f :{\mathbb P}^1(\mathbb{F}_p[\epsilon])\rightarrow \mathbb{F}_p[\epsilon]$ is {\it smooth} if for all
        $z_0+z'\epsilon\in \mathbb{F}_p[\epsilon]$ with $z_0\in\mathbb{F}_p$ and $z'=z_1+z_2\epsilon+\dots+z_{m-1}\epsilon^{m-1}$ with $z_1,\dots,z_{m-1}\in\mathbb{F}_p$
        and all $0 \leq j \leq m$, there exist constants $f^{(j)} ( [z_0: 1 ] )$ and
        $f^{(j)}( [1: 0] )$ in ${\mathbb F}_p$ such that    
	\[f \left( [z_0+z'\epsilon : 1 ] \right)=\sum\limits_{\substack{j=0}}^{m}\dfrac{(z'\epsilon)^{j}}{j!}f^{(j)}\left( [z_0: 1 ] \right) \quad \text{and} \quad 
	f \left( [1:  z'\epsilon ] \right)=\sum\limits_{\substack{j=0}}^{m}\dfrac{(z'\epsilon)^{j}}{j!}f^{(j)}\left( [1: 0] \right).\]
	
	\begin{proposition}\label{image}
		Let $\psi:V_r\rightarrow {\rm ind}_{B(\mathbb{F}_p[\epsilon])}^{G(\mathbb{F}_p[\epsilon])}d^r$ be
		given by
		$\psi(P(X, Y))=\psi_{P(X, Y)}$ for all $P(X, Y)\in V_r,$
		where $\psi_{P(X, Y)}: G(\mathbb{F}_p[\epsilon])\rightarrow \mathbb{F}_p[\epsilon]$ is defined by
		$\psi_P\left(\left(\begin{smallmatrix}
			a & b\\
			c & d
		\end{smallmatrix}\right)\right)=P(c, d)$
		for $\left(\begin{smallmatrix}
			a & b\\
			c & d
		\end{smallmatrix}\right)\in G(\mathbb{F}_p[\epsilon]).$ If $r \equiv 0$ mod $p(p-1)$, then 
              \[ {\rm Im}~\psi
              = \left\{f:{\mathbb P}^1(\mathbb{F}_p[\epsilon])\rightarrow \mathbb{F}_p[\epsilon]\mid f(\alpha)\in \mathbb{F}_p ~\text{if}~\alpha\in{\mathbb P}^1(\mathbb{F}_p)~\text{and}~f ~\text{is smooth} \right\}.\]
	\end{proposition}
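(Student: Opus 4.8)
The plan is to use the congruence $r\equiv 0\pmod{p(p-1)}$ to make the inducing character trivial, so that the induced space is literally the space of all functions on $\mathbb{P}^1(\mathbb{F}_p[\epsilon])$, and then to prove the two inclusions, the nontrivial one by a dimension count resting on \cite[Lemma 4.1]{GV22}. First I would record that, since $0\le m\le p-1$, every unit $z=z_0(1+n)\in\mathbb{F}_p[\epsilon]^*$ with $z_0\in\mathbb{F}_p^*$ and $n\in(\epsilon)$ nilpotent satisfies $z^{p(p-1)}=z_0^{p(p-1)}\big((1+n)^p\big)^{p-1}=(1+n^p)^{p-1}=1$, because $n^p\in(\epsilon^p)=0$, while $g_0(1+\epsilon)$ with $g_0$ a generator of $\mathbb{F}_p^*$ has order exactly $p(p-1)$. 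Hence $d^r$ is the trivial character of $B(\mathbb{F}_p[\epsilon])$ precisely when $p(p-1)\mid r$, and in that case the bijection $B(\mathbb{F}_p[\epsilon])\backslash G(\mathbb{F}_p[\epsilon])\cong\mathbb{P}^1(\mathbb{F}_p[\epsilon])$ of the previous subsection identifies $\mathrm{ind}_{B(\mathbb{F}_p[\epsilon])}^{G(\mathbb{F}_p[\epsilon])}d^r$ with the space of all functions $\mathbb{P}^1(\mathbb{F}_p[\epsilon])\to\mathbb{F}_p[\epsilon]$; under this identification $\psi_P$ becomes $\bar\psi_P([c:d])=P(c,d)$, which is well defined on points because $P$ is homogeneous of degree $r$ and $\lambda^r=1$ for every unit $\lambda$. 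Thus $\mathrm{Im}\,\psi=\{\bar\psi_P:P\in V_r\}$, an $\mathbb{F}_p$-subspace.

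To show $\mathrm{Im}\,\psi$ is contained in the right-hand side I would use the finite Taylor expansion. Writing $h=z'\epsilon\in(\epsilon)$, so that $h^{m+1}=0$ and $j!$ is invertible for $0\le j\le m$, one has $\bar\psi_P([z_0+h:1])=P(z_0+h,1)=\sum_{j=0}^m\frac{h^j}{j!}\,\frac{\partial^j P}{\partial X^j}(z_0,1)$ and $\bar\psi_P([1:h])=P(1,h)=\sum_{j=0}^m\frac{h^j}{j!}\,\frac{\partial^j P}{\partial Y^j}(1,0)$, which exhibits $\bar\psi_P$ as smooth, with the constants being $\frac{\partial^j P}{\partial X^j}(z_0,1)\in\mathbb{F}_p$ at $[z_0:1]$ and $\frac{\partial^j P}{\partial Y^j}(1,0)\in\mathbb{F}_p$ at $[1:0]$; specialising $h=0$ shows $\bar\psi_P$ is $\mathbb{F}_p$-valued on $\mathbb{P}^1(\mathbb{F}_p)$.

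For the reverse inclusion I would compare $\mathbb{F}_p$-dimensions. The two charts $\{[z_0+h:1]\}$ and $\{[1:h]\}$ partition $\mathbb{P}^1(\mathbb{F}_p[\epsilon])$, and since $1,\epsilon,\dots,\epsilon^m$ are $\mathbb{F}_p$-linearly independent, a smooth function is determined by, and can be prescribed freely through, its constants $f^{(j)}([z_0:1])$ (for $z_0\in\mathbb{F}_p$, $0\le j\le m$) and $f^{(j)}([1:0])$ (for $0\le j\le m$); the requirement of being $\mathbb{F}_p$-valued on $\mathbb{P}^1(\mathbb{F}_p)$ is then automatic (put $h=0$), so the right-hand side has dimension $(m+1)(p+1)$. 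On the other hand, by \cite[Lemma 4.1]{GV22} the map $\psi$ has kernel $V_r^{(m+1)}$, hence $\mathrm{Im}\,\psi\cong V_r/V_r^{(m+1)}$ also has dimension $(m+1)(p+1)$ for $r$ sufficiently large. A subspace of a finite-dimensional space having the full dimension is the whole space, so $\mathrm{Im}\,\psi$ equals the right-hand side.

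The step I expect to be the main obstacle is the first one: realising that $p(p-1)\mid r$ is exactly the condition making $d^r$ trivial, so that the induced representation genuinely becomes the space of functions on projective space, and then checking carefully that the evaluation map descends to a well-defined function on $\mathbb{P}^1(\mathbb{F}_p[\epsilon])$. Once this is set up, the Taylor-expansion inclusion and the dimension count are routine, the only external inputs being the nilpotence $(z'\epsilon)^{m+1}=0$ and the computation $\ker\psi=V_r^{(m+1)}$ from \cite{GV22}.
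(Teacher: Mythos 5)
Your proposal is correct and takes essentially the same route as the paper: identify the induced space with all functions on ${\mathbb P}^1(\mathbb{F}_p[\epsilon])$ using $r\equiv 0 \bmod p(p-1)$, prove ${\rm Im}\,\psi$ is contained in the right-hand side via the truncated Taylor expansion at nilpotents (valid since $(z'\epsilon)^{m+1}=0$ and $j!$ is invertible for $j\le m\le p-1$), and conclude by matching the dimension $(m+1)(p+1)$ of both sides, using $\ker\psi=V_r^{(m+1)}$ from \cite[Lemma 4.1]{GV22}. The only differences are cosmetic: you expand a general polynomial directly where the paper checks monomials and extends by linearity, and you justify the dimension of the right-hand side via the two-chart decomposition of ${\mathbb P}^1(\mathbb{F}_p[\epsilon])$ where the paper simply asserts its cardinality.
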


	\section{Cuspidal case}
          \label{section cuspidal}
        In this section, we prove Theorems~\ref{cuspidal} and \ref{cuspidal-twisted}
        which, as explained in the introduction, are the cuspidal analogs of Theorems~\ref{ps} and \ref{ps-twisted}.

        \subsection{The case of ${\rm GL}_2({\mathbb{F}_p})$}
	
	Let $\alpha \in \mathbb{F}_{p^2}$ be such that $\alpha^2\in \mathbb{F}_p$ and $\alpha\notin\mathbb{F}_p.$
        Fix an identification $i:\mathbb{F}_{p^2}^{\times}\simeq T(\mathbb{F}_p)\subset {\rm GL}_2(\mathbb{F}_p)$ given by 
	$u + v\alpha \mapsto \left(\begin{smallmatrix}
		u & v\alpha^2\\
		v & u 
	\end{smallmatrix}\right)$
        for $u$, $v \in {\mathbb F}_p$ not both zero. Recall that $\omega_2 : T(\mathbb{F}_p) \rightarrow \mathbb{F}_{p^2}^{\times}$ is the inverse of $i$.	

                We define some functions in induced spaces.
		For $r\geq 0$ and $0\leq i\leq r+p^2+1,$ let $f_i:G(\mathbb{F}_p)\rightarrow \mathbb{F}_{p^2}$ be  
		\[f_i\left(\left(\begin{matrix}
			a & b\\
			c & d
		\end{matrix}\right)\right)=(a+c\alpha)^{(r+p^2+1)-i}(b+d\alpha)^i,\] for all $g=\left(\begin{smallmatrix}
			a & b\\
			c & d
		\end{smallmatrix}\right)\in G(\mathbb{F}_p).$ Then $f_i$ is $T(\mathbb{F}_p)$-linear and hence
		$f_i\in {\rm ind}_{T(\mathbb{F}_p)}^{G(\mathbb{F}_p)}\omega_2^{r+2}.$ Indeed, for $t=\left(\begin{smallmatrix}
			u  & v\alpha^2\\
			v & u
		\end{smallmatrix}\right)\in T(\mathbb{F}_p),$ we have
		\[f_i\left(t\cdot g\right)=f_i\left(\left(\begin{matrix}
			ua+v\alpha^2c & ub+v\alpha^2d\\
			va+uc & vb+ud
		\end{matrix}\right)\right)\\
		=(u+v\alpha)^{r+p^2+1}(a+c\alpha)^{(r+p^2+1)-i}(b+d\alpha)^i\]
		which equals
		\[(u+v\alpha)^{(r+2)+p^2-1}\cdot f_i(g)
		=(u+v\alpha)^{r+2}\cdot f_i(g)
		=\omega_2^{r+2}(t)\cdot f_i(g)
		=t\cdot f_i(g).\]

       One can check that the functions in $\mathcal{B}=\left\{f_i\mid 0\leq i\leq p^2-p-1 \right\}$ are linearly independent.
       Also, $T(\mathbb{F}_p)$ has index $p^2-p$ in $G(\mathbb{F}_p).$
       So $\mathcal{B}$ forms a basis of ${\rm ind}_{T(\mathbb{F}_p)}^{G(\mathbb{F}_p)}\omega_2^{r+2}.$  We fix this basis in the
       computations to follow. 
	
	For $p^2-1\leq i\leq r+p^2+1,$ we observe that
	\begin{equation}\label{flip}
		f_i=f_{p^2-1+j}=f_j
	\end{equation}
	for some $0\leq j\leq r+2.$ In this case, we say that $f_i$ is a \emph{flip}.
        We shall soon assume that $r \leq p-3 \leq p^2-p-3$ in which case $f_i$ lies in ${\mathcal B}$. 
	
	On the other hand, for $p^2-p\leq i\leq p^2-2,$ we have $f_i=f_{p^2-p+j}$ for some $0\leq j\leq p-2.$ Then we say that $f_i$ is a \emph{flop}.
        It is the last term in the following relation:
      	\begin{equation}\label{flop}
           f_j+f_{j+(p-1)}+f_{j+2(p-1)}+\dots +f_{j+(p-1)(p-1)}+f_{j+p^2-p}=0
         \end{equation}
        where all but the last term lie in ${\mathcal B}$.
	Indeed, we have 
	\[X^{p^2-1}-1=\left(X^{p-1}-1\right)\left(X^{p(p-1)}+X^{(p-1)(p-1)}+\dots+X^{p-1}+1\right).\]
	Then for $A\in \mathbb{F}_{p^2}^{\times} \setminus \mathbb{F}_p^{\times},$ we have
	$A^{p(p-1)}+A^{(p-1)(p-1)}+\dots+A^{p-1}+1=0$.
	So for $\left(\begin{smallmatrix}
		a & b\\
		c & d
	\end{smallmatrix}\right)\in G(\mathbb{F}_p),$ we have 
	\[\left(\dfrac{a+c\alpha}{b+d\alpha}\right)^{p(p-1)}+\left(\dfrac{a+c\alpha}{b+d\alpha}\right)^{(p-1)(p-1)}+\dots+\left(\dfrac{a+c\alpha}{b+d\alpha}\right)^{p-1}+1=0,\]
	which, after multiplying by $(a+c\alpha)^{(r+p+1)-j}(b+d\alpha)^{j+p^2-p}$ on both sides, gives 
        $$(a+c\alpha)^{(r+p^2+1)-j}(b+d\alpha)^{j}+(a+c\alpha)^{(r+p^2-p+2)-j}(b+d\alpha)^{j+(p-1)}
        +\dots+(a+c\alpha)^{(r+p+1)-j}(b+d\alpha)^{j+p^2-p}=0,$$
	which shows that \eqref{flop} holds for $0\leq j\leq p-2$.

	Thus, any flip or flop can be expressed as an easy linear combination of vectors in $\mathcal{B}$ using  \eqref{flip}, \eqref{flop}.
	
	For any polynomial $P(X, Y)$ and $A, B, C, D \in {\mathbb F}_{p^2}$,
we set 
	\[P(X, Y)\Big\vert_{(A, B)}^{(C, D)} := P(C, D)-P(A, B).\]
		
	The following theorem is Theorem~\ref{cuspidal} from the introduction (replacing $r$ by $r+2$).
        \begin{theorem}
                \label{base case cuspidal}
		Let $0\leq r\leq p-3.$ Then there is an explicit isomorphism defined over ${\mathbb F}_{p^2}$:
		\[\dfrac{V_{r+p+1}}{D(V_{r+2})}\otimes V_{p-1}\simeq {\rm ind}_{T(\mathbb{F}_p)}^{G(\mathbb{F}_p)}\omega_2^{r+2},\]
                where  $D:= X^p\frac{\partial}{\partial X}+Y^p\frac{\partial}{\partial Y}$.
	\end{theorem}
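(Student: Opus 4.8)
The plan is to construct an explicit $G(\mathbb{F}_p)$-equivariant map
\[
\Phi : V_{r+p+1}\otimes V_{p-1} \longrightarrow {\rm ind}_{T(\mathbb{F}_p)}^{G(\mathbb{F}_p)}\omega_2^{r+2}
\]
and show that it is surjective with kernel $D(V_{r+2})\otimes V_{p-1}$; a dimension count then finishes the proof, since $\dim (V_{r+p+1}/D(V_{r+2}))\otimes V_{p-1} = ((r+p+2)-(r+3))\cdot p = (p-1)p = [G(\mathbb{F}_p):T(\mathbb{F}_p)] = \dim {\rm ind}_{T(\mathbb{F}_p)}^{G(\mathbb{F}_p)}\omega_2^{r+2}$. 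The map $\Phi$ is built from a differential operator $\nabla_\alpha$ that replaces the operator $\nabla$ of the principal series case: for $g = \left(\begin{smallmatrix} a & b\\ c & d\end{smallmatrix}\right)$ I would evaluate polynomials in $V_{r+p+1}$ (thought of over $\mathbb{F}_{p^2}$) at the two ``conjugate'' points $(a+c\alpha, b+d\alpha)$ and $(a-c\alpha, b-d\alpha)$, using the difference notation $P|_{(A,B)}^{(C,D)}$ introduced just above the theorem, and package the $V_{p-1}$-factor by running over the appropriate monomials in two auxiliary variables $S,T$ on which $T(\mathbb{F}_p)$ acts. Concretely, one sends $P\otimes S^{p-1-k}T^k$ to a function whose value at $g$ is a suitable constant times $\nabla_\alpha^{k}(P)\big|_{(a-c\alpha,\,b-d\alpha)}^{(a+c\alpha,\,b+d\alpha)}$, where $\nabla_\alpha = \alpha\big(c\tfrac{\partial}{\partial X}+d\tfrac{\partial}{\partial Y}\big)$ or similar; the precise normalizing constants are forced by $T(\mathbb{F}_p)$-equivariance and will be read off from an analogue of Lemma~\ref{evaluate}.

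First I would verify the $T(\mathbb{F}_p)$-linearity of each $\Phi_{P\otimes S^{p-1-k}T^k}$. This is the analogue of the $B(\mathbb{F}_p)$-linearity computation in Theorem~\ref{base case}, but with the anisotropic torus; the key input is that under $t = \left(\begin{smallmatrix} u & v\alpha^2\\ v & u\end{smallmatrix}\right)$ the conjugate evaluation points transform by multiplication by $(u\pm v\alpha)$, so that the whole expression picks up exactly $\omega_2^{r+2}(t)$ after accounting for the degree shift $r+p+1 = (r+2) + (p-1)$ and the $\omega_2^{p-1}$-twist coming from the $V_{p-1}$ slot — this mirrors the computation displayed for the basis functions $f_i$ just before the theorem. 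Next, $G(\mathbb{F}_p)$-linearity: this follows from the chain rule, exactly as Lemma~\ref{glinear} is used in Theorem~\ref{base case}, once one checks that substituting $g'\cdot P$ and conjugating a matrix argument $\gamma \mapsto \gamma g'$ intertwine correctly; I expect a lemma of the shape of Lemma~\ref{glinearcuspidal} (the commented-out one) to be the clean statement. Surjectivity I would obtain by exhibiting explicit preimages of the basis $\mathcal B = \{f_i \mid 0\le i\le p^2-p-1\}$, reducing any flip via \eqref{flip} and any flop via \eqref{flop} to elements of $\mathcal B$; this is the cuspidal counterpart of the Clebsch--Gordan computation in Lemma~\ref{clebschgordan}.

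The main obstacle is the kernel computation, i.e.\ showing $\ker\Phi = D(V_{r+2})\otimes V_{p-1}$ inside $V_{r+p+1}\otimes V_{p-1}$. One inclusion is a direct check: if $P = D(R)$ with $R\in V_{r+2}$, then $\nabla_\alpha^{k}(D R)$ evaluated at the conjugate points and subtracted should vanish, because $D$ raises the degree by $p-1$ and its ``conjugate-symmetric'' structure forces the two evaluations to agree — this is where I would invoke the fact (Lemma~\ref{divide}, cited in the introduction) that $D$ preserves the theta filtration in a strong sense, together with a Leibniz-type identity (Lemma~\ref{nabla}) and a $\theta$-vanishing identity (Lemma~\ref{theta}) analogous to the kernel argument in Theorem~\ref{base case}. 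For the reverse inclusion I would argue that an element of the kernel has all its conjugate-evaluations and their $\nabla_\alpha$-derivatives up to order $p-1$ vanishing, use an analogue of Lemma~\ref{qtheta} over $\mathbb{F}_{p^2}$ to deduce divisibility by $X^{p}Y - XY^{p}$ (or rather by the relevant Deligne--Lusztig polynomial $XY^p - X^pY$), and then match the image of $D$ with the space of polynomials so divisible of the correct degree; a dimension count then shows the inclusion is an equality. The delicate point is keeping track of the $\mathbb{F}_{p^2}$-structure versus the $\mathbb{F}_p$-structure throughout, and making sure the normalizing constants (which, as in the principal series case, must be nonzero — here the relevant binomial-type coefficients are nonzero precisely because $r\le p-3$) do not vanish. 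Finally I would invoke the abstract isomorphism coming from Reduzzi's \eqref{reduzzi} tensored with $V_{p-1}$ as an independent consistency check that the two sides are isomorphic, so that once $\Phi$ is shown injective-on-the-quotient it is automatically an isomorphism by the dimension equality.
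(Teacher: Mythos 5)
Your overall architecture (an explicit map built from a differential operator and evaluation at the two conjugate points $(a+c\alpha,b+d\alpha)$, $(a-c\alpha,b-d\alpha)$, equivariance checks, kernel $=D(V_{r+2})\otimes V_{p-1}$, then a dimension count) matches the paper, and your dimension count is correct. But the map you propose is structurally wrong. You encode the $V_{p-1}$-factor by letting the monomial $S^{p-1-k}T^k$ dictate the \emph{order} $k$ of the derivative applied to $P$. No choice of normalizing constants can make this $T(\mathbb{F}_p)$-equivariant into $\omega_2^{r+2}$: with the only operator that scales correctly under the torus, namely $\nabla_\alpha=(a+c\alpha)\frac{\partial}{\partial X}+(b+d\alpha)\frac{\partial}{\partial Y}$ (your $\alpha(c\frac{\partial}{\partial X}+d\frac{\partial}{\partial Y})$ does not scale by a single factor under $t=u+v\alpha$), the value of $\nabla_\alpha^{k}(P)$ at $(a+c\alpha,b+d\alpha)$ picks up $(u+v\alpha)^{r+p+1}$ while the value at the conjugate point picks up $(u+v\alpha)^{k+p(r+p+1-k)}$; these agree only when $k\equiv r \bmod (p+1)$, and even then the character is $\omega_2^{r+p+1}$, not $\omega_2^{r+2}$. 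The paper's map fixes the derivative order at $r$ and multiplies by the evaluation $Q\bigl((a+c\alpha)^p,(b+d\alpha)^p\bigr)$ of the $V_{p-1}$-factor, which contributes exactly the missing twist $(u+v\alpha)^{p(p-1)}$, giving $\omega_2^{r+2}$. So the $V_{p-1}$ slot must enter through evaluation, not through the derivative order.

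The second, and larger, gap is the reverse inclusion $\ker\subset D(V_{r+2})\otimes V_{p-1}$. Your plan — vanishing of conjugate evaluations and their derivatives, an $\mathbb{F}_{p^2}$-analogue of Lemma~\ref{qtheta} giving divisibility by the Dickson polynomial, then matching with ${\rm Im}\,D$ — cannot work, because $D(V_{r+2})$ is not a $\theta$-divisibility locus (e.g.\ $D(X^{r+2})=(r+2)X^{r+p+1}$ is not divisible by $\theta$), and the kernel condition here is a single scalar relation for each $g$ (a difference of two conjugate evaluations times a monomial value), not the simultaneous vanishing of a family of derivatives as in the principal series case. In the paper this step is the heart of the proof: one expands a general element of the kernel, rewrites the resulting identity in the basis $\mathcal{B}$ of the induced space using the flip \eqref{flip} and flop \eqref{flop} relations, splits according to the congruence class of $i+j$ modulo $p-1$, and shows by explicit row reduction that the resulting $p\times p$ coefficient matrices have determinant $\pm(r-1)^{p-r}$ or $\pm(r-1)^{p-r+1}$, nonzero precisely in the stated range of $r$; this forces the coefficient relations $i\,a_{i,j}=(r-i)a_{i+p-1,j}$ and $a_{i,j}=0$ that characterize $D(V_r)\otimes V_{p-1}$. (Also, for the easy inclusion, Lemma~\ref{divide} is not what is used: the paper shows directly that $\nabla_\alpha^{r}(D(P))$ is a combination of $X^{p+1}$, $Y^{p+1}$ and $X^pY+XY^p$, each taking equal values at the two conjugate points; Lemma~\ref{divide} is only needed later, for Corollary~\ref{bigger range}.) Without the correct map and this determinant computation, the proof does not go through.
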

	
	\begin{proof}\label{proof of base case cuspidal}
		Let $P\in V_{r+p+1}$ and $Q\in V_{p-1}.$ Define 
		
	        \begin{eqnarray}
		\label{explicit iso}
		\psi:V_{r+p+1}\otimes V_{p-1}\rightarrow {\rm ind}_{T(\mathbb{F}_p)}^{G(\mathbb{F}_p)}\omega_2^{r+2}
		\end{eqnarray}	
		by $\psi(P\otimes Q)=\psi_{P\otimes Q},$ where $\psi_{P\otimes Q}: G(\mathbb{F}_p)\rightarrow \mathbb{F}_{p^2}$ is defined by
		\begin{align*}
			&\psi_{P\otimes Q}\left(\left(\begin{matrix}
				a & b\\
				c & d
			\end{matrix}\right)\right)
                                    =\nabla_{\alpha}^r(P)\Big\vert_{\left(a+c\alpha, \> b+d\alpha\right)}^{\left((a+c\alpha)^p, \> (b+d\alpha)^p\right)}\cdot
                                     Q \Big\vert_{(0,0)}^{\left((a+c\alpha)^p, \> (b+d\alpha)^p\right)}\\
		\end{align*}
		for all $\left(\begin{smallmatrix}
			a & b\\
			c & d
                      \end{smallmatrix}\right)\in G(\mathbb{F}_p),$ and where
                    $$\nabla_{\alpha}= (a+c\alpha)\dfrac{\partial}{\partial X}+(b+d\alpha)\dfrac{\partial}{\partial Y}.$$ 
	       For convenience, we set
               \begin{eqnarray}
                 \label{def of A alpha and B alpha base case}
                 A_\alpha =  a+c\alpha & \text{and} & 
                 B_\alpha =  b+d\alpha.
               \end{eqnarray}
               We think of $A_\alpha$ and $B_\alpha$ as ${\mathbb F}^\times_{p^2}$-valued functions on $G({\mathbb F}_p)$. We remark that the
               definition \eqref{explicit iso}  of $\psi$ reminds one of the evaluation of a direct integral in calculus. It was
               arrived at after much explicit computation with special cases. \\
               
         \noindent \textbf{$T(\mathbb{F}_p)$-linearity:} We show that $\psi_{P\otimes Q}$ is $T(\mathbb{F}_p)$-linear.
         Since $P(X, Y)$ is a homogeneous polynomial of degree $r+p+1$ and $Q(S, T)$ is a homogeneous polynomial of degree $p-1,$
         we have $\nabla_{\alpha}^r(P(X, Y))\cdot Q(S, T)$ is a linear combination of terms of the form
		\[A_\alpha^{r-j}B_\alpha^j\cdot X^{p+1-k}Y^k\cdot S^{p-1-l}T^l\] for $0\leq k\leq p+1,0\leq l\leq p-1,$ and $0\leq j\leq r.$ 
		Now,
		\begin{eqnarray*}
			A_\alpha^{r-j}B_\alpha^j\cdot X^{p+1-k}Y^k\Big\vert_{(A_\alpha, B_\alpha)}^{(A_\alpha^p, B_\alpha^p)}\cdot S^{p-1-l}T^l\Big\vert_{\left(0, 0\right)}^{(A_\alpha^p, B_\alpha^p)}
			& = &  A_\alpha^{r-j}B_\alpha^j\left(A_\alpha^{p^2-kp-lp+1}B_\alpha^{kp+lp}-A_\alpha^{p^2-lp-k+1}B_\alpha^{k+lp}\right),
		\end{eqnarray*}	
		which shows that $\psi_{P\otimes Q}$ is a linear combination of the functions $f_i$ defined above.
                Since these functions are $T(\mathbb{F}_p)$-linear,
                $\psi_{P\otimes Q}$ is also $T(\mathbb{F}_p)$-linear. \\

\noindent	\textbf{$G(\mathbb{F}_p)$-linearity:} We show that $\psi$ is $G(\mathbb{F}_p)$-linear. Let $g=\left(\begin{smallmatrix}
			u & v\\
			w & z
		\end{smallmatrix}\right)\in G(\mathbb{F}_p).$ Then we have
               \[g\cdot (P\otimes Q) = P(U, V)\otimes Q(U', V')=P_1\otimes Q_1 (\text{say}), \]
		where $U =uX+wY, V=vX+zY$ and $U'=uS+wT, V'=vS+zT.$
%
%
		Now,
		\begin{eqnarray*}
		   \psi(g\cdot (P\otimes Q))\left(\left(\begin{matrix}
				a & b\\
				c & d
			\end{matrix}\right)\right)
			&= &\psi(P_1\otimes Q_1)\left(\left(\begin{matrix}
				a & b\\
				c & d
			\end{matrix}\right)\right)\\
			&= &\left((a+c\alpha)\dfrac{\partial}{\partial X}+(b+d\alpha)\dfrac{\partial}{\partial Y}\right)^r(P_1)\Big\vert_{\left(\left(a+c\alpha\right), \left(b+d\alpha\right)\right)}^{\left(\left(a+c\alpha\right)^p, \left(b+d\alpha\right)^p\right)} \cdot \\
			&  &{\hspace{20pt}}Q\left(U'\left(\left(a+c\alpha\right)^p, \left(b+d\alpha\right)^p\right), V'\left(\left(a+c\alpha\right)^p, \left(b+d\alpha\right)^p\right)\right),
		\end{eqnarray*}
		which, by Lemma~\ref{glineargeneral} applied twice
		\begin{eqnarray*}
			&& =\left(\left(uA_\alpha+wB_\alpha\right)\dfrac{\partial}{\partial X}+\left(vA_\alpha+zB_\alpha\right)\dfrac{\partial}{\partial Y}\right)^r(P)\bigg\vert_{\left(uA_\alpha+wB_\alpha, vA_\alpha+zB_\alpha\right)}^{\left(\left(uA_\alpha+wB_\alpha\right)^p,\left(vA_\alpha+zB_\alpha\right)^p\right)} \cdot\\
			&& {\hspace{50pt}}Q\left(\left(uA_\alpha+wB_\alpha\right)^p, \left(vA_\alpha+zB_\alpha\right)^p\right)\\
			&& =\psi_{P\otimes Q}\left(\left(\begin{matrix}
				au+bw  & av+bz\\
				cu+dw  & cv+dz
			\end{matrix}\right)\right)
			  =\left(\begin{matrix}
				u & v\\
				w & z
			\end{matrix}
			\right)\cdot\psi_{P\otimes Q}\left(\left(\begin{matrix}
				a & b\\
				c & d
			\end{matrix}\right)\right)
			 =g\cdot\psi_{P\otimes Q}\left(\left(\begin{matrix}
				a & b\\
				c & d
			\end{matrix}\right)\right).\\
		\end{eqnarray*}
		Thus we have $\psi(g\cdot (P\otimes Q))=g\cdot\psi_{P\otimes Q}.$ Hence $\psi$ is $G(\mathbb{F}_p)$-linear. \\
		
                \noindent	\textbf{Kernel of $\psi$:} Next we show that $\ker\psi=D(V_{r+2})\otimes V_{p-1}.$ We first show that
                $D(V_{r+2})\otimes V_{p-1}\subset \ker\psi$. 
                Let $P=\sum\limits_{\substack{i=0}}^{r+2}a_iX^{r+2-i}Y^i\in V_{r+2}$ and $Q\in V_{p-1}.$
                Then $D(P)=\sum\limits_{\substack{i=0}}^{r+2}a_i\left((r+2-i)X^{r+p+1-i}Y^i+iX^{r+2-i}Y^{i+p-1}\right)$.
		\textbf{Claim:} for all $\left(\begin{smallmatrix}
			a & b\\
			c & d
                      \end{smallmatrix}\right)\in G(\mathbb{F}_p),$ we have $\nabla_{\alpha}^r(D(P))\Big\vert_{\left(a+c\alpha, b+d\alpha\right)}^{\left((a+c\alpha)^p, (b+d\alpha)^p\right)}=0$.
                    
\noindent		Indeed,
{\small		\begin{eqnarray*}
			\nabla_{\alpha}^r(D(P))
			& = & \left(A_\alpha\dfrac{\partial}{\partial X}+B_\alpha\dfrac{\partial}{\partial Y}\right)^r\left(\sum\limits_{\substack{i=0}}^{r+2}a_i\left((r+2-i)X^{r+p+1-i}Y^i+iX^{r+2-i}Y^{i+p-1}\right)\right)\\
			& = &\sum\limits_{\substack{i=0}}^{r+2}\sum\limits_{\substack{k=0}}^r\binom{r}{k}A_\alpha^{r-k}B_\alpha^k\dfrac{\partial^r}{\partial X^{r-k}\partial Y^k}a_i\left((r+2-i)X^{r+p+1-i}Y^i+iX^{r+2-i}Y^{i+p-1}\right)\\
			& = & \sum\limits_{\substack{i=0}}^{r+2}\sum\limits_{\substack{k=0}}^r\binom{r}{k}a_i (r+2-i)A_\alpha^{r-k}B_\alpha^k[r+p+1-i]_{r-k}[i]_kX^{p+1-(i-k)}Y^{i-k}\\
			&   & +\sum\limits_{\substack{i=0}}^{r+2}\sum\limits_{\substack{k=0}}^r\binom{r}{k}a_i i A_\alpha^{r-k}B_\alpha^k[r+2-i]_{r-k}[i+p-1]_kX^{k+2-i}Y^{i+p-1-k},
		\end{eqnarray*}}
\!\!\!		which, by observing $[i]_k$ equals $0$ if $i<k$ and equals $i!$ if $i=k$,
{\small		\begin{eqnarray*}
		\qquad	&= & a_0(r+2)A_\alpha^r[r+p+1]_rX^{p+1} +\sum\limits_{\substack{i=1}}^{r+1}\sum\limits_{\substack{k=0}}^r\binom{r}{k}a_i (r+2-i)A_\alpha^{r-k}B_\alpha^k[r+p+1-i]_{r-k}[i]_kX^{p+1-(i-k)}Y^{i-k}\\
			& &  +a_{r+2}(r+2)B_\alpha^r[r+p+1]_rY^{p+1} +\sum\limits_{\substack{i=1}}^{r+1}\sum\limits_{\substack{k=0}}^r\binom{r}{k}a_i i A_\alpha^{r-k}B_\alpha^k[r+2-i]_{r-k}[i+p-1]_kX^{k+2-i}Y^{i+p-1-k},
		\end{eqnarray*}}
\!\!\!		which again using $[r+p+1-i]_{r-k}=0 \mod p$ for $k<i-1,$ $[i]_k=0$ for $k>i$ and $[r+2-i]_{r-k}=0$ for $k<i-2$ and $[i+p-1]_k=0 \mod p$ for $k>i-1,$ 
{\small		\begin{eqnarray*}
			&= &  a_0(r+2)A_\alpha^r[r+p+1]_rX^{p+1}
		              +\left(\sum\limits_{\substack{i=1}}^{r+1}\binom{r}{i}a_i(r+2-i)A_\alpha^{r-i}B_\alpha^i[r+p+1-i]_{r-i}[i]_i\right)X^{p+1}\\
			&  & +\left(\sum\limits_{\substack{i=1}}^{r+1}\binom{r}{i-1}a_i(r+2-i)A_\alpha^{r-i+1}B_\alpha^{i-1}[r+p+1-i]_{r-i+1}[i]_{i-1}\right)X^pY\\
			&  & +a_{r+2}(r+2)B_\alpha^r[r+p+1]_rY^{p+1} 
			     +\left(\sum\limits_{\substack{i=1}}^{r+1}\binom{r}{i-2}a_i i A_\alpha^{r-i+2}B_\alpha^{i-2}[r+2-i]_{r+2-i}[i+p-1]_{i-2}\right)Y^{p+1}\\
			&  & +\left(\sum\limits_{\substack{i=1}}^{r+1}\binom{r}{i-1}a_i i A_\alpha^{r-i+1}B_\alpha^{i-1}[r+2-i]_{r-i+1}[i+p-1]_{i-1}\right)XY^p.
		\end{eqnarray*}}
\!\!\!		Since $[r+p+1-i]_{r-i+1}=(r-i+1)!$ modulo $p$ and $[i+p-1]_{i-1}=(i-1)!$ modulo $p,$ the above expression
{\small		\begin{eqnarray*}
			&= & a_0(r+2)A_\alpha^r[r+p+1]_rX^{p+1}
			     +\left(\sum\limits_{\substack{i=1}}^{r+1}\binom{r}{i}a_i(r+2-i)A_\alpha^{r-i}B_\alpha^i[r+p+1-i]_{r-i}[i]_i\right)X^{p+1}\\
			&  & +a_{r+2}(r+2)B_\alpha^r[r+p+1]_rY^{p+1}
			     +\left(\sum\limits_{\substack{i=1}}^{r+1}\binom{r}{i-2}a_i i A_\alpha^{r-i+2}B_\alpha^{i-2}[r+2-i]_{r+2-i}[i+p-1]_{i-2}\right)Y^{p+1}\\
			&  & +\left(\sum\limits_{\substack{i=1}}^{r+1}\binom{r}{i-1}a_i(r+2-i)! i!A_\alpha^{r-i+1}B_\alpha^{i-1}\right)\left(X^pY+XY^p\right).
                \end{eqnarray*}}
\!\!\!                Note that since $A_\alpha, B_\alpha\in \mathbb{F}_{p^2},$ we have
		\begin{eqnarray*}
                  X^{p+1}\Big\vert_{(A_\alpha, B_\alpha)}^{(A_\alpha^p, B_\alpha^p)}=A_\alpha^{p(p+1)}-A_\alpha^{p+1}=0 \quad 
		\text{and} \quad
                  Y^{p+1}\Big\vert_{(A_\alpha, B_\alpha)}^{(A_\alpha^p, B_\alpha^p)}=B_\alpha^{p(p+1)}-B_\alpha^{p+1}=0.
                \end{eqnarray*}
                Also,
		\[X^pY+XY^p \Big\vert_{(A_\alpha, B_\alpha)}^{\left(A_\alpha^p, B_\alpha^p\right)}=A_\alpha B_\alpha^p+A_\alpha^pB_\alpha-A_\alpha^pB_\alpha-A_\alpha B_\alpha^p=0.\]
		Thus we have,
		\[\nabla_{\alpha}^r(D(P))\Big\vert_{\left(a+c\alpha, b+d\alpha\right)}^{\left((a+c\alpha)^p, (b+d\alpha)^p\right)}=\nabla_{\alpha}^r(D(P))\Big\vert_{(A_\alpha, B_\alpha)}^{(A_\alpha^p, B_\alpha^p)}
                =0,\]
		proving the claim, and so $P\otimes Q\in \ker\psi.$ Thus $D(V_{r+2})\otimes V_{p-1}\subset \ker\psi.$
		
		Next, we show that $\ker\psi\subset D(V_{r+2})\otimes V_{p-1}$.
		We prove this inclusion by changing $r$ to $r-2,$ i.e., we show
                \[\ker\psi\subset D(V_{r})\otimes V_{p-1},\] for $2\leq r\leq p-1,$ where  \[\psi:V_{r+p-1}\otimes V_{p-1}\rightarrow {\rm ind}_{T(\mathbb{F}_p)}^{G(\mathbb{F}_p)}\omega_2^{r}\]
		such that $\psi(P\otimes Q)=\psi_{P\otimes Q}$ ,where $\psi_{P\otimes Q}: G(\mathbb{F}_p)\rightarrow \mathbb{F}_{p^2}$ is defined by
		\begin{eqnarray}
                    \label{explicit iso cuspidal}
                    \psi_{P\otimes Q}\left(\left(\begin{matrix}
			a & b\\
			c & d
                      \end{matrix}\right)\right)=\nabla_{\alpha}^{r-2}(P)\Big\vert_{\left(a+c\alpha, b+d\alpha\right)}^{\left((a+c\alpha)^p, (b+d\alpha)^p\right)}\cdot Q\left(\left(a+c\alpha\right)^p, \left((b+d\alpha\right)^p\right)
               \end{eqnarray}
		for all $\left(\begin{smallmatrix}
			a & b\\
			c & d
		\end{smallmatrix}\right)\in G(\mathbb{F}_p).$ 
              We use the latter notation since we reduce the rest of the proof of Theorem~\ref{base case cuspidal}
              to some determinant computations in a longer arXiv version of this paper \url{https://arxiv.org/pdf/2308.10246} where $r$ is replaced by $r-2$.		
              
              Let
              \begin{eqnarray}
                \label{poly}
                P\otimes Q=\sum\limits_{\substack{i=0}}^{r+p-1}\sum\limits_{\substack{j=0}}^{p-1}a_{i,j}X^{r+p-1-i}Y^i\otimes S^{p-1-j}T^j\in \ker\psi,
              \end{eqnarray}
              where $V_{p-1}$ is modeled on polynomials in $S$,$T$. Then we have
		\begin{equation}\label{condition}
			\sum\limits_{\substack{j=0}}^{p-1}\left(\sum\limits_{\substack{i=0}}^{r+p-1}a_{i,j}\left(\left(A_\alpha\dfrac{\partial}{\partial X}+B_\alpha\dfrac{\partial}{\partial Y}\right)^{r-2}\left(X^{r+p-1-i}Y^i\right)\Big\vert_{(A_\alpha, B_\alpha)}^{(A_\alpha^p, B_\alpha^p)}\right)\right)A_\alpha^{p^2-p-jp}B_\alpha^{jp}=0.
		\end{equation}

		Now, 
{\small		\begin{align}
		    \label{inner sum}
			&\notag\sum\limits_{\substack{i=0}}^{r+p-1}a_{i,j}\left(\left(A_\alpha\dfrac{\partial}{\partial X}+B_\alpha\dfrac{\partial}{\partial Y}\right)^{r-2}\left(X^{r+p-1-i}Y^i\right)\Big\vert_{(A_\alpha, B_\alpha)}^{(A_\alpha^p, B_\alpha^p)}\right)\\
			&\quad =\notag\sum\limits_{\substack{i=0}}^{r+p-1}a_{i,j}\left(\sum\limits_{\substack{k=0}}^{r-2}\binom{r-2}{k}A_\alpha^{r-2-k}B_\alpha^k\dfrac{\partial^{r-2}}{\partial X^{r-2-k}\partial Y^k}\left(X^{r+p-1-i}Y^i\right)\Big\vert_{(A_\alpha, B_\alpha)}^{(A_\alpha^p, B_\alpha^p)}\right)\\
			&\quad =\notag\sum\limits_{\substack{i=0}}^{r+p-1}a_{i,j}\left(\sum\limits_{\substack{k=0}}^{r-2}\binom{r-2}{k}A_\alpha^{r-2-k}B_\alpha^k[r+p-1-i]_{r-2-k}[i]_k\left(X^{p+1-i+k}Y^{i-k}\right)\Big\vert_{(A_\alpha, B_\alpha)}^{(A_\alpha^p, B_\alpha^p)}\right)\\
			&\quad =\notag\sum\limits_{\substack{i=0}}^{r+p-1}a_{i,j}\left(\sum\limits_{\substack{k=0}}^{r-2}\binom{r-2}{k}[r+p-1-i]_{r-2-k}[i]_k\left(A
			_\alpha^{r-ip+kp+p-k-1}B_\alpha^{ip-kp+k}-A_\alpha^{r+p-1-i}B_\alpha^i\right)\right)\\
			&\quad =\sum\limits_{\substack{i=1}}^{r+p-2}a_{i,j}\left(\sum\limits_{\substack{k=0}}^{r-2}\binom{r-2}{k}[r+p-1-i]_{r-2-k}[i]_k\left(A
			_\alpha^{r-ip+kp+p-k-1}B_\alpha^{ip-kp+k}-A_\alpha^{r+p-1-i}B_\alpha^i\right)\right).
		\end{align}}
\!\!\!                In the last equality, we dropped the terms for $i=0, r+p-1$ as these are zero. Indeed, for $i=0,$ since $[i]_k=0$ for $k\neq 0,$ only
                the $k=0$ term survives in the sum over $k$ for which  
		\[A
		_\alpha^{r-ip+kp+p-k-1}B_\alpha^{ip-kp+k}-A_\alpha^{r+p-1-i}B_\alpha^i=A_\alpha^{r+p-1}-A_\alpha^{r+p-1}=0.\]
		Similarly, one can check that the terms for $i=r+p-1$ are also zero.
		The expression \eqref{inner sum}
		\begin{equation}\label{coefficient}
			= \sum\limits_{\substack{i=1}}^{r+p-2}a_{i,j}\left(\left(\sum\limits_{\substack{k=0}}^{r-2}\binom{r-2}{k}[r+p-1-i]_{r-2-k}[i]_kA
			_\alpha^{r-ip+kp+p-k-1}B_\alpha^{ip-kp+k}\right)-(r-1)!A_\alpha^{r+p-1-i}B_\alpha^i\right),
		\end{equation}
		since $A_\alpha^{r+p-1-i}B_\alpha^i$ is independent of $k$ and, by \eqref{sum},
		\[\sum\limits_{\substack{k=0}}^{r-2}\binom{r-2}{k}[r+p-1-i]_{r-2-k}[i]_k=[r+p-1]_{r-2}=(r-1)!\mod p.\]
		
		Consider now the expression \eqref{coefficient} as a polynomial in the variables $a_{i, j}$.
                We compute the coefficients of $a_{i,j}$ and $a_{i+p-1,j}$ in this polynomial for $1\leq i\leq r-1$. \\

                \noindent \textbf{Coefficient of $a_{i,j}$:} Note that for $1\leq i\leq r-1$ and $0\leq k\leq r-2,$ we have
		$[r+p-1-i]_{r-2-k}= 0$ 
                for $k<i-1,$ and $[i]_k=0$ for $i<k.$ Thus the coefficient of $a_{i,j}$ in \eqref{coefficient} 
{\small		\begin{eqnarray*}
			& = & \binom{r-2}{i-1}[r+p-1-i]_{r-i-1}[i]_{i-1}A_\alpha^{r-i}B_\alpha^{i+p-1} 
			       + \binom{r-2}{i}[r+p-1-i]_{r-i-2}[i]_iA_\alpha^{r+p-1-i}B_\alpha^{i}-(r-1)!A_\alpha^{r+p-1-i}B_\alpha^i\\
			& = & \binom{r-2}{i-1}(r-i-1)!i! A_\alpha^{r-i}B_\alpha^{i+p-1}
			      + \left(\binom{r-2}{i}(r-i-1)!i!-(r-1)!\right)A_\alpha^{r+p-1-i}B_\alpha^i\\
                        & = & i\left((r-2)!A_\alpha^{r-i}B_\alpha^{i+p-1}-(r-2)!A_\alpha^{r-i+p-1}B_\alpha^i\right).
		\end{eqnarray*}} 
		
		\noindent \textbf{Coefficient of $a_{i+p-1,j}$:} Similarly, in \eqref{coefficient}, the coefficient of $a_{i+p-1,j}$ is
{\small		\begin{align*}
			&\left(\sum\limits_{\substack{k=0}}^{r-2}\binom{r-2}{k}[r-i]_{r-2-k}[i+p-1]_kA
			_\alpha^{r-ip-p^2+p+kp+p-k-1}B_\alpha^{ip+p^2-p-kp+k}\right)-(r-1)!A_\alpha^{r-i}B_\alpha^{i+p-1}\\
			&\quad =\binom{r-2}{i-2}[r-i]_{r-i}[i+p-1]_{i-2}A
			_\alpha^{r-i}B_\alpha^{i+p-1} 
			  +\binom{r-2}{i-1}[r-i]_{r-i-1}[i+p-1]_{i-1}A
			_\alpha^{r-i+p-1}B_\alpha^{i}-(r-1)!A_\alpha^{r-i}B_\alpha^{i+p-1}\\
			&\quad =\left(\binom{r-2}{i-2}(r-i)!(i-1)!-(r-1)!\right)A_\alpha^{r-i}B_\alpha^{i+p-1}
                          +\binom{r-2}{i-1}(r-i)!(i-1)!A_\alpha^{r-i+p-1}B_\alpha^{i}\\
			&\quad =-(r-i)\left((r-2)!A_\alpha^{r-i}B_\alpha^{i+p-1}-(r-2)!A_\alpha^{r-i+p-1}B_\alpha^i\right).
		\end{align*}}
\!\!\!		The first equality holds because $[r-i]_{r-2-k}=0$ for $k<i-2$ and $[i+p-1]_k=0$ for $k>i-1$.
                The second equality follows because
		$[i+p-1]_{i-2}=(i-1)!=[i+p-1]_{i-1}$. \\
		
		For $1\leq i\leq r-1,$ substituting the coefficients of $a_{i,j}$ and $a_{i+p-1, j}$ in \eqref{coefficient}, we have 
{\small		\begin{align*}
			&  \sum\limits_{\substack{i=0}}^{r+p-1}a_{i,j}\left(\left(A_\alpha\dfrac{\partial}{\partial X}+B_\alpha\dfrac{\partial}{\partial Y}\right)^{r-2}\left(X^{r+p-1-i}Y^i\right)\Big\vert_{(A_\alpha, B_\alpha)}^{(A_\alpha^p, B_\alpha^p)}\right)\\
			&\quad =\sum\limits_{\substack{i=1}}^{r-1}\left(ia_{i,j}-(r-i)a_{i+p-1, j}\right)(r-2)!\left(A_\alpha^{r-i}B_\alpha^{i+p-1}-A_\alpha^{r-i+p-1}B_\alpha^i\right)\\
			&\quad {\hspace{12pt}}+	\sum\limits_{\substack{i=r}}^{p-1}a_{i,j}\left(\left(\sum\limits_{\substack{k=0}}^{r-2}\binom{r-2}{k}[r+p-1-i]_{r-2-k}[i]_kA
			_\alpha^{r-ip+kp+p-k-1}B_\alpha^{ip-kp+k}\right)-(r-1)!A_\alpha^{r+p-1-i}B_\alpha^i\right)\\
			&\quad =\sum\limits_{\substack{i=1}}^{r-1}\left(ia_{i,j}-(r-i)a_{i+p-1, j}\right)(r-2)!\left(A_\alpha^{r-i}B_\alpha^{i+p-1}-A_\alpha^{r-i+p-1}B_\alpha^i\right)\\
			&\quad {\hspace{12pt}}+	\sum\limits_{\substack{i=r}}^{p-1}a_{i,j}\left(\left(\sum\limits_{\substack{k=0}}^{r-2}(r-2)!\binom{r+p-1-i}{r-2-k}\binom{i}{k}
                           A_\alpha^{r-ip+kp+p-k-1}B_\alpha^{ip-kp+k}\right)-(r-1)!A_\alpha^{r+p-1-i}B_\alpha^i\right).
		\end{align*}}
\!\!\!		Thus \eqref{condition} shows that the following linear combination of functions in the induced space vanishes:
{\small		\begin{align}\label{main} \begin{split}
			&\sum\limits_{\substack{j=0}}^{p-1}\left(\sum\limits_{\substack{i=1}}^{r-1}\left(ia_{i,j}-(r-i)a_{i+p-1, j}\right)\left(A_\alpha^{r-i}B_\alpha^{i+p-1}-A_\alpha^{r-i+p-1}B_\alpha^i\right)\right)A_\alpha^{p^2-jp-p}B_\alpha^{jp}\> + \\
			&\sum\limits_{\substack{j=0}}^{p-1}\left(\sum\limits_{\substack{i=r}}^{p-1}a_{i,j}\left(\left(\sum\limits_{\substack{k=0}}^{r-2}\binom{r+p-1-i}{r-2-k}\binom{i}{k}A
			_\alpha^{r-ip+kp+p-k-1}B_\alpha^{ip-kp+k}\right)-(r-1)A_\alpha^{r+p-1-i}B_\alpha^i\right)\right) \cdot A_\alpha^{p^2-jp-p}B_\alpha^{jp} \> =\> 0, \end{split}
		\end{align}}
\!\!\!		after dividing by $(r-2)!$.
                                
		For simplicity, we make the change of variables 
		\[X_{i,j}:= ia_{i,j}-(r-i)a_{i+p-1, j},\] for $1\leq i\leq r-1$ and $0\leq j\leq p-1,$ 
		and,
		\[X_{i, j}:=a_{i, j}~ \text{ and } ~Z_{i, k}:= \binom{r+p-1-i}{r-2-k}\binom{i}{k},\]
		for $r\leq i\leq p-1$, $0\leq j\leq p-1$, $0\leq k\leq r-2.$  The $X_{i,j}$ are 
                variables
                (and the $Z_{i,k}$ constants).

                We claim that all the $X_{i,j}$ must be $0$. The proof of this is computational. It will occupy the next few pages.
                To reduce the length of the proof we have referred to some determinant computations which are in the
                longer arXiv version of this paper \url{https://arxiv.org/pdf/2308.10246}. The first time reader may skip to the end of the proof of
                Theorem~\ref{base case cuspidal}.\\
		
		\noindent \textbf{Claim:} For $1\leq i\leq p-1$ and $0\leq j\leq p-1,$ we have $X_{i, j}=0$.
		
		\noindent We prove the claim. Write \eqref{main} as
{\small		\begin{align*}
			&\sum\limits_{\substack{j=0}}^{p-1}\left(\sum\limits_{\substack{i=1}}^{r-1}X_{i, j}\left(A_\alpha^{r+p^2-i-jp-p}B_\alpha^{i+jp+p-1}-A_\alpha^{r+p^2-i-jp-1}B_\alpha^{i+jp}\right)\right)+\\
			&\sum\limits_{\substack{j=0}}^{p-1}\left(\sum\limits_{\substack{i=r}}^{p-1}X_{i, j}\left(\left(\sum\limits_{\substack{k=0}}^{r-2}Z_{i,k}A
			_\alpha^{r+p^2-ip-jp+kp-k-1}B_\alpha^{ip+jp-kp+k}\right)-(r-1)A_\alpha^{r+p^2-1-i-jp}B_\alpha^{i+jp}\right)\right)=0.\notag
		\end{align*}}
\!\!\!		Collecting terms in the same congruence class $n$ of $i+j$ modulo $(p-1)$, we have 
{\small		\begin{align*}
                  \sum\limits_{\substack{n=1}}^{p-1}& 
                                                      \sum\limits_{\substack{1\leq i\leq r-1\\0\leq j\leq p-1\\i+j\equiv n \mod (p-1)}}X_{i, j}\left(A_\alpha^{r+p^2-i-jp-p}B_\alpha^{i+jp+p-1}-A_\alpha^{r+p^2-i-jp-1}B_\alpha^{i+jp}\right) 
                  +\\
                  \sum\limits_{\substack{n=1}}^{p-1}&
                                                      \sum\limits_{\substack{r\leq i\leq p-1\\0\leq j\leq p-1\\i+j\equiv n \mod (p-1)}}X_{i, j}\left(\left(\sum\limits_{\substack{k=0}}^{r-2}Z_{i,k}A
                  _\alpha^{r+p^2-ip-jp+kp-k-1}B_\alpha^{ip+jp-kp+k}\right)-(r-1)A_\alpha^{r+p^2-1-i-jp}B_\alpha^{i+jp}\right) 
                  \> 	= \> 0.
		\end{align*}}
			%
			%
				%
				%
				%
\!\!\!\! Write the $n$-th summand above as $\mathcal{B}_n$ for $1 \leq n \leq p-1$. 
                Recall that we are viewing $A_\alpha$ and $B_\alpha$ as functions on $G({\mathbb F}_p)$, cf. \eqref{def of A alpha and B alpha base case}.
                By inspection, each of the functions in $\mathcal{B}_n$ 
                is of the form $A_\alpha^{r+p^2-1-l}B_\alpha^l$ for $l\equiv n$ modulo $(p-1)$.
                %
                Note that $A_\alpha^{r+p^2-1-l}B_\alpha^l$ 
                is an element of ${\rm ind}_{T(\mathbb{F}_p)}^{G(\mathbb{F}_p)}\omega_2^{r}$.
                 Using \eqref{flip} and \eqref{flop}, we may assume each of these functions belong to the basis ${\mathcal B}$,
                noting that these operations preserve the congruence class $n$. 
                By the linear independence of the basis $\mathcal{B}$ and the vanishing of the sum of the ${\mathcal B}_n$,
                we conclude that each $\mathcal{B}_n=0$. Thus fixing $1\leq n \leq p-1$ we have:					
{\small				\begin{align}\label{main1}
					&	\sum\limits_{\substack{1\leq i\leq r-1\\0\leq j\leq p-1\\i+j\equiv n \mod (p-1)}}X_{i, j}\left(A_\alpha^{r+p^2-i-jp-p}B_\alpha^{i+jp+p-1}-A_\alpha^{r+p^2-i-jp-1}B_\alpha^{i+jp}\right)+ \notag \\
					&\sum\limits_{\substack{r\leq i\leq p-1\\0\leq j\leq p-1\\i+j\equiv n \mod (p-1)}}X_{i, j}\left(\left(\sum\limits_{\substack{k=0}}^{r-2}Z_{i,k}A
					_\alpha^{r+p^2-ip-jp+kp-k-1}B_\alpha^{ip+jp-kp+k}\right)-(r-1)A_\alpha^{r+p^2-1-i-jp}B_\alpha^{i+jp}\right) = 0.
				\end{align}} 
                              
				
				\noindent  The possible pairs of $(i, j)$ such that $i+j\equiv n \mod (p-1)$ for $1\leq i\leq p-1$ and $0\leq j\leq p-1$ are:                              
				\begin{eqnarray}
                                    \label{i,j}
                                    (i, j)=\begin{cases}
					     \left(i, n-i\right) , &\text{if}~1\leq i\leq n,\\
					     \left(i, p-1+n-i\right), &\text{if}~n\leq i\leq p-1.
				           \end{cases}
                                \end{eqnarray} 
                                We analyze three cases: \\
				
\noindent        		\textbf{Case 1:} Suppose $1\leq n\leq r-1.$ 				
				Using \eqref{i,j}, equation \eqref{main1} becomes
				\begin{align}\label{case1}
					&\sum\limits_{\substack{i=1}}^nX_{i, n-i}\left(A_\alpha^{r+p^2-i-(n-i)p-p}B_\alpha^{i+(n-i)p+p-1}-A_\alpha^{r+p^2-i-(n-i)p-1}B_\alpha^{i+(n-i)p}\right) \notag \\
					&{\hspace{12pt}}+\sum\limits_{\substack{i=n}}^{r-1}X_{i, p-1+n-i}\left(A_\alpha^{r-i-np+ip}B_\alpha^{i+p^2+np-ip-1}-A_\alpha^{r-i-np+ip+p-1}B_\alpha^{i+p^2-p+np-ip}\right)\notag\\
					&+\sum\limits_{\substack{i=r}}^{p-1} X_{i,p-1+n-i}\left(\sum\limits_{\substack{k=0}}^{r-2}Z_{i,k}A
					_\alpha^{r+(p-1)k-np+p-1}B_\alpha^{p^2-p+np-(p-1)k}\right)\notag\\
					&{\hspace{12pt}}-\sum\limits_{\substack{i=r}}^{p-1}(r-1)X_{i,p-1+n-i}A_\alpha^{r-i-np+ip+p-1}B_\alpha^{i+p^2-p+np-ip}=0.
				\end{align}
				
				Looking at the indices occurring in $X_{i, j}$ in the above equation, we note for each $i \neq n$, there is a unique
                                $j\neq 0.$ But for $i=n,$ we get both $j=0$ and $j=p-1.$ For convenience, we drop the
                                index $j$ for $j\neq 0$: we write 
				\begin{eqnarray}
                                  \label{convention}
                                  X_{i, j} \> \text{ as } \>
                                      \begin{cases}
					X_i, & \text{if}~j\neq 0,\\
					X_0, & \text{if}~j=0.
                                      \end{cases}
                               \end{eqnarray}

                              Now, there is no flip or flop in the first summand of the above equation.
                              If $i=n,$ then there is one flip in the first component of the second summand and one flop in its second component.
                              Also, for $i=n+1$, there is one flop in the first component of the second summand if $n \leq r-2$ (and the $i = n+1$ term
                              is not there if $n = r-1$). Finally in the third summand,
                              there are flips for $0\leq k\leq n-1$ and there is one flop for $k=n$. 
			      Apart from these there are no flips and flops appearing in the above equation. 
				
                              Assume that $1 \leq n \leq r-2$. Changing the flips and flops appearing in \eqref{case1} to functions
                              in $\mathcal{B}$ and looking at the coefficient of $A_\alpha^{r+p^2-1-(n+l(p-1))}B_\alpha^{n+l(p-1)}$ for $0\leq l\leq p-1,$ we get the following system of equations:
                              {\small
                                        \begin{align} 
					\label{equations for small values of n in F_p case}
					&2X_n-X_{n+1}+\sum\limits_{\substack{i=r}}^{p-1}\left(Z_{i, n-1}-Z_{i, n}\right)X_i-X_0=0, & \text{if}~l=0, \nonumber \\ 
					&X_n-X_{n+1}+\sum\limits_{\substack{i=r}}^{p-1}\left(Z_{i, n-2}-Z_{i, n}\right)X_i-X_{n-1}+X_0=0, &\text{if}~l=1, \nonumber \\
					&X_n-X_{n+1}+\sum\limits_{\substack{i=r}}^{p-1}\left(Z_{i, n-l-1}-Z_{i, n}\right)X_i+X_{n-l+1}-X_{n-l}=0, &\text{if}~2\leq l\leq n-1, \nonumber\\
					&X_n-X_{n+1}-\sum\limits_{\substack{i=r}}^{p-1}Z_{i, n}X_i+X_{n-l+1}=0, &\text{if}~l=n, \nonumber \\
					&X_n-X_{n+1}-\sum\limits_{\substack{i=r}}^{p-1}Z_{i, n}X_i-(r-1)X_{n+p-l}=0, &\text{if}~n+1\leq l\leq p-r+n, \nonumber \\
					&X_n-X_{n+1}-\sum\limits_{\substack{i=r}}^{p-1}Z_{i, n}X_i-X_{n+p-l}=0, &\text{if}~ l=p-r+n+1,  \nonumber \\
					&X_n-X_{n+1}+\sum\limits_{\substack{i=r}}^{p-1}\left(Z_{i, p+n-l}-Z_{i, n}\right)X_i+X_{n+p-l+1}-X_{n+p-l}=0, &\text{if}~p-r+n+2\leq l\leq p-1.
				\end{align}}

\noindent \textbf{Case 2:} Suppose $r\leq n \leq p-2.$
	In this case, by \eqref{main1} we have
	
        {\small
                \begin{align*}
		&\sum\limits_{\substack{i=1}}^{r-1}X_{i, n-i }\left(A_\alpha^{r+p^2-i-(n-i)p-p}B_\alpha^{i+(n-i)p+p-1}-A_\alpha^{r+p^2-i-(n-i)p-1}B_\alpha^{i+(n-i)p}\right)\\
		&{\hspace{12pt}}+\sum\limits_{\substack{i=r}}^{n}X_{i, n-i}\left(\sum\limits_{\substack{k=0}}^{r-2}Z_{i,k}A
						_\alpha^{r+(p-1)k-np+p^2-1}B_\alpha^{np-(p-1)k}\right)\\
		&\quad {\hspace{12pt}}-\sum\limits_{\substack{i=r}}^{n}(r-1)
		X_{i, n-i}A_\alpha^{r+p^2-1-i-np+ip}B_\alpha^{i+np-ip}\\
		&{\hspace{12pt}} +\sum\limits_{\substack{i=n}}^{p-1}
		X_{i, p-1+n-i}\left(\sum\limits_{\substack{k=0}}^{r-2}Z_{i,k}A
						_\alpha^{r+(p-1)k-np+p-1}B_\alpha^{p^2-p+np-(p-1)k}\right)\\
		&\quad {\hspace{12pt}}-\sum\limits_{\substack{i=n}}^{p-1}(r-1)
		X_{i, p-1+n-i}A_\alpha^{r-i-np+ip+p-1}B_\alpha^{i+p^2-p+np-ip}=0.
		\end{align*}}

                In the second but last sum above there are flips for all $0 \leq k\leq r-2.$ Also, there is a flop in the last sum at $i = n .$ As before, by
                changing  the flips and flops appearing in the above equation to functions in $\mathcal{B}$ and then looking at the coefficient of
                $A_\alpha^{r+p^2-1-(n+l(p-1))}B_\alpha^{n+l(p-1)}$ for $0\leq l\leq p-1$ and using the convention \eqref{convention} for the variables,
                we get the following system of equations:
					
                {\small
                \begin{align}
		\label{equations for large values of n in F_p case}
			&(r-1)X_n-(r-1)X_0=0,       & \text{if}~l=0, \nonumber \\ 
			&(r-1)X_n-(r-1)X_{n-l}=0, &\text{if}~1\leq l\leq n-r, \nonumber \\
			&\left((r-1)+
			Z_{n, n-l-1}\right)X_n-X_{n-l}+\sum\limits_{\substack{i=n+1}}^{p-1}Z_{i, n-l-1}X_i=0,                          &\text{if}~l=n-r+1, \nonumber \\
			&\left((r-1)+Z_{n, n-l-1}\right)X_n+\sum\limits_{\substack{i=r}}^{n-1}Z_{i, n-l}X_i \nonumber \\
			&\quad -X_{n-l}+X_{n-l+1}+\sum\limits_{\substack{i=n+1}}^{p-1}
			Z_{i, n-l-1}X_i+Z_{n, n-l}X_0=0, 
			                                                &\text{if}~n-r+2\leq l\leq n-1, \nonumber\\
			& (r-1)X_n+X_{n-l+1}+\sum\limits_{\substack{i=r}}^{n-1}Z_{i, n-l}X_i+Z_{n, n-l}X_0=0,                             &\text{if}~ l=n, \nonumber \\
			&(r-1)X_n-(r-1)X_{n-l+p}=0, &\text{if}~n+1\leq l\leq p-1.
	\end{align} }

	\noindent \textbf{Case 3:} Suppose $n=p-1.$
	From \eqref{main1} we have,
                        \begin{align}
			\label{n=p-1 case}
				&\sum\limits_{\substack{i=1}}^{r-1}X_{i, p-1-i}\left(A_\alpha^{r-i+ip}B_\alpha^{i-ip+p^2-1}-A_\alpha^{r+p-1-i+ip}B_\alpha^{i+p^2-p-ip}\right) \nonumber \\ 
				&{\hspace{12pt}}+\sum\limits_{\substack{i=r}}^{p-1}X_{i, p-1-i}\left(\sum\limits_{\substack{k=0}}^{r-2}Z_{i,k}A
						_\alpha^{r+(p-1)k+p-1}B_\alpha^{p^2-p-(p-1)k}\right) \nonumber \\
				&\quad {\hspace{12pt}}-\sum\limits_{\substack{i=r}}^{p-1}(r-1)
				X_{i, p-1-i}A_\alpha^{r-i+ip+p-1}B_\alpha^{i+p^2-p-ip} \notag \\
				&{\hspace{12pt}}+X_{p-1, p-1} \left( \sum\limits_{\substack{k=0}}^{r-2}Z_{ p-1 ,k} A_\alpha^{r+(p-1)k-(p-1)^2}B_\alpha^{2p^2-2p-(p-1)k} \right) \notag \\
				&\quad {\hspace{12pt}}-(r-1)X_{p-1, p-1}A_\alpha^{r}B_\alpha^{p^2-1}=0.
		\end{align} 

                There are flops in the first component of the first sum for $i = 1$
                and in the second sum for $k = 0$. Also, there are flips in the second last line above for all
                $0\leq k \leq r-2$ and in the last line. Changing the flips and flops appearing in the above equation to functions in $\mathcal{B}$,
                looking at the coefficient of $A_\alpha^{r+p^2-1-l(p-1)}B_\alpha^{l(p-1)}$ for $0\leq l\leq p-1$ and using the convention \eqref{convention} for the variables, we have
	the following system of equations:
        {\small
	  \begin{align}
		\label{equations for n=p-1 case}
			& -X_1-\sum\limits_{\substack{i=r}}^{p-2}Z_{i, 0}X_i-(r-1)X_{p-1}-Z_{p-1, 0}X_0=0, & \text{if}~l=0, \nonumber \\ 
			& -X_1-\sum\limits_{\substack{i=r}}^{p-2}Z_{i, 0}X_i-(r-1)X_0-Z_{p-1, 0}X_0=0, & \text{if}~l=1, \nonumber \\
			& -X_1-\sum\limits_{\substack{i=r}}^{p-2}Z_{i, 0}X_i-(r-1)X_{p-l}-Z_{p-1, 0}X_0=0, & \text{if}~2\leq l\leq p-r, \nonumber \\
			& -X_1-\sum\limits_{\substack{i=r}}^{p-2}Z_{i, 0}X_i-X_{p-l}+Z_{p-1, r-2}X_{p-1}-Z_{p-1, 0}X_0=0, & \text{if}~l=p-r+1,  \nonumber \\
			& -X_1-\sum\limits_{\substack{i=r}}^{p-2}\left(Z_{i, 0}-Z_{i, p-l}\right) X_i+X_{p-l+1}-X_{p-l} \nonumber\\
			&\qquad +Z_{ p-1, p-l-1}X_{p-1}+Z_{ p-1, p-l}X_0-Z_{p-1, 0}X_0=0, & \text{if}~p-r+2\leq l\leq p-1.
	\end{align}}

        Let $M$ be the coefficient matrix of the above systems of equations in Cases 1, 2 and 3, respectively.
        A computation shows that $\det (M) \neq 0$. In fact, one can give
        a formula for $\det(M)$ in each case 
        but we do not need it, so to keep this paper a reasonable size, we omit it
        (details about this and other omitted arguments in this current abridged version of the paper may be
        found in an earlier version
        of the paper on the arXiv at  \url{https://arxiv.org/pdf/2308.10246.pdf}).
        It follows that in each case $X_i = 0$ for $0 \leq i \leq p-1$. 
	Thus we have $X_{i,j}=0$ for all $1\leq i\leq p-1$ and $0\leq j\leq p-1$, proving the Claim. \\

        Thus, for all $0\leq j\leq p-1$, we have
					\[ia_{i,j}-(r-i)a_{i+p-1, j}=0\] for $1\leq i\leq r-1$
					and \[a_{i,j}=0\] for $r\leq i\leq p-1$, where $a_{i,j}$ are
                                        the coefficients of $P \otimes Q$ in \eqref{poly}.

        Using these relations, we have
        \begin{align*}
	P\otimes Q&=\sum\limits_{\substack{i=0}}^{r+p-1}\sum\limits_{\substack{j=0}}^{p-1}a_{i,j}X^{r+p-1-i}Y^i\otimes S^{p-1-j}T^j\\
	&=\sum\limits_{\substack{j=0}}^{p-1}a_{0,j}X^{r+p-1}\otimes S^{p-1-j}T^j+\sum\limits_{\substack{j=0}}^{p-1}a_{r+p-1,j}Y^{r+p-1}\otimes S^{p-1-j}T^j\\
	&\quad {\hspace{12pt}}+\sum\limits_{\substack{i=1}}^{r-1}\sum\limits_{\substack{j=0}}^{p-1}a_{i,j}X^{r+p-1-i}Y^i\otimes S^{p-1-j}T^j+\sum\limits_{\substack{i=1}}^{r-1}\sum\limits_{\substack{j=0}}^{p-1}a_{i+p-1,j}X^{r-i}Y^{i+p-1}\otimes S^{p-1-j}T^j\\
	&=\sum\limits_{\substack{j=0}}^{p-1}a_{0,j}X^{r+p-1}\otimes S^{p-1-j}T^j+\sum\limits_{\substack{j=0}}^{p-1}a_{r+p-1,j}Y^{r+p-1}\otimes S^{p-1-j}T^j\\
	&\quad {\hspace{12pt}}+\sum\limits_{\substack{j=0}}^{p-1}\sum\limits_{\substack{i=1}}^{r-1}a_{i,j}\dfrac{1}{(r-i)}\left((r-i)X^{r+p-1-i}Y^i+iX^{r-i}Y^{i+p-1}\right)\otimes S^{p-1-j}T^j.
	\end{align*} 

	Each term in the last equality belongs to $D(V_r)\otimes V_{p-1},$ and hence, so does $P\otimes Q.$
        Thus $\ker\psi\subset D(V_r)\otimes V_{p-1}$ and so equality holds.
        Changing $r$ back to $r+2,$ we have $\ker\psi=D(V_{r+2})\otimes V_{p-1}$. 
        
        Thus $\psi$ induces an injective map $\dfrac{V_{r+p+1}}{D(V_{r+2})}\otimes V_{p-1}\rightarrow {\rm ind}_{T(\mathbb{F}_p)}^{G(\mathbb{F}_p)}\omega_2^{r+2}$. Since both sides
        have the same dimension $p^2-p$, we conclude that the induced map is an isomorphism. This finally proves Theorem~\ref{base case cuspidal}.
        %
					\end{proof}

          We have proved Theorem \ref{cuspidal} for $2\leq r\leq p-1.$ One might wonder what happens for boundary values of $r$.
          Theorem \ref{cuspidal} is also true if $r = 1$, though the definition of the map $\psi$ needs to be changed a bit.
	\begin{proposition}
		\label{r = 1}
		We have
		\[\dfrac{V_p}{D(V_1)} \otimes V_{p-1} \simeq {\rm ind}_{T(\mathbb{F}_p)}^{G(\mathbb{F}_p)}\omega_2.\]
	\end{proposition}
	
	\begin{proof}
	Define  $\psi :  V_p \otimes V_{p-1}\rightarrow  {\rm ind}_{T(\mathbb{F}_p)}^{G(\mathbb{F}_p)}\omega_2$ by
	\[P \otimes Q \mapsto \left( \psi_{P\otimes Q} : \left(\begin{matrix}
		                                                       	a & b\\
		                                                        c & d
	                                                 \end{matrix}\right)
   \mapsto D(P)\Big\vert_{(0,0)}^{(A_\alpha^p, B_\alpha^p)} \cdot Q\Big\vert_{(0,0)}^{(A_\alpha^p, B_\alpha^p)}\right).\]
    One checks that $\psi$ is a well-defined $G(\mathbb{F}_p)$-linear map and $\ker \psi = (\ker D \cap V_p) \otimes V_{p-1}$,
    which by \cite[Proposition 3.3]{red10} (which is due to Fakhruddin), equals
    \[ (\mathbb{F}_p[ X^p, Y^p, \theta] \cap V_p) \otimes V_{p-1}= ({\mathbb F}_p\text{-span of } X^p, Y^p) \otimes V_{p-1}= D(V_1) \otimes V_{p-1}.\]
    By comparing dimensions, the isomorphism in Proposition \ref{r = 1} follows.
     \end{proof}
    
    \begin{remark}
    \label{r=p-2,p-1}
     In fact, Proposition \ref{r = 1} is true `without tensoring with $V_{p-1}$'. That is, Reduzzi's result \eqref{reduzzi} even
    holds for $r = 1$. Indeed, one has
    \[\dfrac{V_p}{D(V_1)} \simeq V_{p-2} \otimes \det \simeq  \overline{\Theta(\omega_2)}, \]    
    by Diamond \cite[Proposition 1.3]{dia07} (this does not use crystalline cohomology, see also the material around Prasad
    \cite[Lemma 4.2]{dp10}
    for a survey: in fact, the reduction mod $p$ of the complex cuspidal representation $\Theta(\omega_2^r)$ for
    $1 \leq r \leq p-1$ of $G(\mathbb{F}_p)$ is irreducible if and only if $r =1$).
    On the other hand, if $r=p$, then Theorem \ref{cuspidal} is false for dimension reasons.
    Since ${\rm ind}_{T(\mathbb{F}_p)}^{G(\mathbb{F}_p)}\omega_2^p \simeq {\rm ind}_{T(\mathbb{F}_p)}^{G(\mathbb{F}_p)}\omega_2$,
    the right hand side of the isomorphism in the theorem reduces to the case $r = 1$. As for the left side, one easily checks
    \[\dfrac{V_{2p-1}}{D(V_p)}\simeq {\rm ind}_{B(\mathbb{F}_p)}^{G(\mathbb{F}_p)}d\] 
    is a principal series representation. Similarly, if $r=p+1,$ then $\omega_2^{p+1}$ is self-conjugate and the induction on the right side
    is not as interesting, whereas on the left side one checks 
	 \[\dfrac{V_{2p}}{D(V_{p+1})} \simeq V_{p-1}\otimes \det\] is a twist of the mod $p$ Steinberg representation. 
	 \end{remark}
%
%
%

    In view of Proposition \ref{r = 1} and Remark \ref{r=p-2,p-1}, we see  Theorem~\ref{base case cuspidal} holds for $-1 \leq r \leq p-3$ but not for $r = p-2$, $p-1$.
    However, by twisting, the theorem may be extended to the following higher symmetric powers:
    
	\begin{corollary}
		\label{bigger range}
	 If $-1\leq r\leq p-3-k$ for $0\leq k\leq p-2,$ then
					\[\dfrac{V_{r+(k+1)(p+1)}}{D(V_{r+2+k(p+1)})}\otimes V_{p-1}\simeq {\rm ind}_{T(\mathbb{F}_p)}^{G(\mathbb{F}_p)}\omega_2^{r+2+k(p+1)}.\]
	\end{corollary}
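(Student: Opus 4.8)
The plan is to deduce Corollary~\ref{bigger range} from Theorem~\ref{base case cuspidal} (equivalently Theorem~\ref{cuspidal}) purely by twisting, using two standard facts: (a) tensoring an induced representation by a character is again induced by the twisted character, i.e. $\mathrm{ind}_{T(\mathbb{F}_p)}^{G(\mathbb{F}_p)}(\chi) \otimes \det^{k} \simeq \mathrm{ind}_{T(\mathbb{F}_p)}^{G(\mathbb{F}_p)}(\chi \cdot (\det|_{T(\mathbb{F}_p)})^{k})$, together with the identification $\det|_{T(\mathbb{F}_p)} = \mathrm{Nm}_{\mathbb{F}_{p^2}/\mathbb{F}_p} = \omega_2^{p+1}$ under the fixed embedding $i$; and (b) multiplication by $\theta = X^pY - XY^p$, on which $G(\mathbb{F}_p)$ acts by $\det$, gives a $G(\mathbb{F}_p)$-equivariant isomorphism $V_s \otimes \det \xrightarrow{\ \cdot\theta\ } V_s^{(1)} \subset V_{s+p+1}$ which moreover intertwines $D$ appropriately. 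The key compatibility to check is that $D(\theta \cdot R) = \theta \cdot D(R)$ for $R \in V_s$, which follows from Lemma~\ref{divide} (that $D$ preserves the theta filtration in a strong sense), or can be verified directly since $D(\theta) = X^{2p}Y^{\,?}\ldots$ — more precisely $D$ is a twisted derivation and $\theta$ is an eigenvector, so that $\cdot\theta$ carries $D(V_s)$ isomorphically onto $D(V_{s}) \cdot \theta \subseteq D(\theta V_s)$, with equality after accounting for the kernel.

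Concretely, first I would record the identity, for any $s \geq 0$,
\[
\frac{V_{s+(p+1)}}{D(V_{s+1+(p+1)-(p+1)})} \otimes V_{p-1} \> \cong \> \frac{V_{s+p+1}}{D(V_{s+2})}\otimes V_{p-1},
\]
and then, by induction on $k$, establish
\[
\frac{V_{r+(k+1)(p+1)}}{D(V_{r+2+k(p+1)})} \> \cong \> \frac{V_{r+p+1}}{D(V_{r+2})} \otimes \det\nolimits^{k},
\]
as $G(\mathbb{F}_p)$-representations, valid whenever $-1 \leq r \leq p-3$ so that the base case Theorem~\ref{base case cuspidal} applies. The inductive step uses that $\cdot\theta : V_{r+2+(k-1)(p+1)} \otimes \det \to V_{r+2+k(p+1)}$ and $\cdot\theta : V_{r+(k-1+1)(p+1)} \otimes \det \to V_{r+(k+1)(p+1)}$ are injective with images exactly $V^{(1)}$ in each target, and that these maps are compatible with $D$ by the $D(\theta R) = \theta D(R)$ identity; hence they descend to an isomorphism of the quotients $\frac{V_{r+(k+1)(p+1)}}{D(V_{r+2+k(p+1)})} \cong \frac{V^{(1)}_{r+(k+1)(p+1)}}{D(V_{r+2+k(p+1)}) \cap V^{(1)}}$... — one must check that the relevant cokernel is unaffected, i.e. that $V_{r+(k+1)(p+1)} = V^{(1)}_{r+(k+1)(p+1)} + D(V_{r+2+k(p+1)})$ is \emph{not} needed; rather one argues the quotient by $D(V_{r+2+k(p+1)})$ already "lives in" the $\theta$-divisible part up to the twist, because Lemma~\ref{divide} tells us $D$ raises $\theta$-divisibility, forcing $D(V_{r+2+k(p+1)}) \subseteq V^{(k+?)}$. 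This is the point requiring care.

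Tensoring the displayed isomorphism with $V_{p-1}$ and applying Theorem~\ref{base case cuspidal} on the right, together with fact (a) above, yields
\[
\frac{V_{r+(k+1)(p+1)}}{D(V_{r+2+k(p+1)})} \otimes V_{p-1} \> \cong \> \mathrm{ind}_{T(\mathbb{F}_p)}^{G(\mathbb{F}_p)}\omega_2^{r+2} \otimes \det\nolimits^{k} \> \cong \> \mathrm{ind}_{T(\mathbb{F}_p)}^{G(\mathbb{F}_p)}\bigl(\omega_2^{r+2}\cdot \omega_2^{k(p+1)}\bigr) \> = \> \mathrm{ind}_{T(\mathbb{F}_p)}^{G(\mathbb{F}_p)}\omega_2^{r+2+k(p+1)},
\]
which is exactly the claimed isomorphism (one should also note this is naturally defined over $\mathbb{F}_{p^2}$, inheriting this from Theorem~\ref{base case cuspidal}, since the twisting maps are defined over $\mathbb{F}_p$). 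Finally, I would remark that the constraint $-1 \leq r \leq p-3-k$, i.e. $r+k \leq p-3$, is precisely what is needed for the base parameter $r$ (after peeling off $k$ copies of $\theta$) to lie in the admissible range $-1 \leq r \leq p-3$ of Theorem~\ref{base case cuspidal}, and for the dimension count $p^2 - p$ to remain correct at each stage.

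The main obstacle I anticipate is \textbf{not} the character/twisting bookkeeping (which is routine) but rather verifying cleanly that multiplication by $\theta$ identifies the cokernel $\frac{V_{s+p+1}}{D(V_{s+2})}$ with the cokernel one level up \emph{tensored by $\det$} — that is, controlling how $D$ interacts with $\theta$-divisibility so that $D(V_{s+2+(p+1)})$ corresponds to $\theta \cdot D(V_{s+2})$ under $\cdot\theta$. This is exactly where Lemma~\ref{divide} (that $D$ preserves the theta filtration "in a strong sense") must be invoked, and getting the indices to line up — so that no spurious extra elements appear in either numerator or the image of $D$ — is the delicate part. Once that compatibility is in hand, the corollary follows formally by induction.
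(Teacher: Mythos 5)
Your outline follows the paper's route: multiply by (powers of) $\theta$, use $D(\theta R)=\theta D(R)$ (valid since $D(\theta)=0$), use Lemma~\ref{divide} to control $\theta$-divisibility of $D(Q)$, and then twist Theorem~\ref{base case cuspidal} by $\det^k$, identifying $\det\vert_{T(\mathbb{F}_p)}=\omega_2^{p+1}$. But there is a genuine gap exactly at the point you flag and then dismiss. What your maps give is a well-defined injection
\[
\frac{V^{(k)}_{r+(k+1)(p+1)}}{D\bigl(V^{(k)}_{r+2+k(p+1)}\bigr)} \hookrightarrow \frac{V_{r+(k+1)(p+1)}}{D\bigl(V_{r+2+k(p+1)}\bigr)},
\]
(the kernel identity does come from Lemma~\ref{divide}, as you say), and by the second isomorphism theorem this injection is onto precisely when $V_{r+(k+1)(p+1)} = V^{(k)}_{r+(k+1)(p+1)} + D\bigl(V_{r+2+k(p+1)}\bigr)$ --- the statement you assert is \emph{not} needed. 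Your substitute justification, that Lemma~\ref{divide} ``forces $D(V_{r+2+k(p+1)}) \subseteq V^{(k+?)}$,'' is false: the lemma is an equivalence ($\theta^{m+1}\mid D(Q)\iff\theta^{m+1}\mid Q$, under a binomial hypothesis), not a statement that $D$ raises $\theta$-divisibility, and indeed $D(X^s)=sX^{s+p-1}$ is not divisible by $\theta$ when $p\nmid s$. Without the surjectivity you only obtain an injection of the twisted cokernel $\frac{V_{r+p+1}}{D(V_{r+2})}\otimes\det^k$ into $\frac{V_{r+(k+1)(p+1)}}{D(V_{r+2+k(p+1)})}$, not the isomorphism the corollary needs.

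The paper closes this by a dimension count: both sides have dimension $p-1$, and for the right-hand side this requires knowing that $D$ is \emph{injective} on $V_{r+2+k(p+1)}$. That injectivity is not formal; it is proved from Fakhruddin's description of $\ker D$ as $\mathbb{F}_p[X^p,Y^p,\theta]$ (\cite[Proposition 3.3]{red10}) by a $p$-adic digit comparison, and this is exactly where the hypothesis $-1\le r\le p-3-k$ is used --- not merely, as you suggest, to keep the base parameter in the admissible range of Theorem~\ref{base case cuspidal} (and note $r=-1$ itself needs Remark~\ref{r=p-2,p-1}). Your proposal never addresses this injectivity. Alternatively, in your step-by-step induction the missing surjectivity $V_{r+(k+1)(p+1)} = V^{(1)}_{r+(k+1)(p+1)} + D\bigl(V_{r+2+k(p+1)}\bigr)$ could be extracted from the remark following Lemma~\ref{divide} (since $p\nmid r+2+k$, $D$ induces an isomorphism of the quotients modulo $\theta$), but some such argument must be supplied; as written, the step you label ``the point requiring care'' is resolved by an incorrect claim, so the proof is incomplete there.
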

	\begin{proof}
		\label{proof of bigger range}
                We first show that \[\dfrac{V_{r+p+1}\otimes{\det}^k}{D(V_{r+2})\otimes{\det}^k}\simeq \dfrac{V^{(k)}_{r+(k+1)(p+1)}}{D(V^{(k)}_{r+2+k(p+1)})}\simeq \dfrac{V_{r+(k+1)(p+1)}}{D(V_{r+2+k(p+1)})}\]
         for $0\leq k\leq p-2$.         
         Define $\pi:V_{r+p+1}\otimes{\det}^k\rightarrow\frac{V^{(k)}_{r+(k+1)(p+1)}}{D(V^{(k)}_{r+2+k(p+1)})}$ by sending $P$ 
         to $\theta^kP$ for $P\in V_{r+p+1}.$ Let $Q\in V_{r+2}.$ Then $\pi(D(Q))=
                  \theta^k D(Q)=D(\theta^kQ),$
         where the last equality follows because $D(\theta)=-X^pY^p+X^pY^p=0.$
        Thus we have $\pi(D(V_{r+2})\otimes{\det}^k)\subset D(V^{(k)}_{r+2+k(p+1)}),$ and hence,
        \[\overline{\pi}:\dfrac{V_{r+p+1}\otimes{\det}^k}{D(V_{r+2})\otimes{\det}^k}\rightarrow \dfrac{V^{(k)}_{r+(k+1)(p+1)}}{D(V^{(k)}_{r+2+k(p+1)})}\] is a surjection.
        Also, both sides have dimension $p-1,$ so $\overline{\pi}$ is an isomorphism.
	
	For the second isomorphism, consider the composition
        \begin{eqnarray}
          \label{composition}
          V^{(k)}_{r+(k+1)(p+1)}\hookrightarrow V_{r+(k+1)(p+1)}\twoheadrightarrow \dfrac{V_{r+(k+1)(p+1)}}{D(V_{r+2+k(p+1)})}.
        \end{eqnarray}
	In the above, the first map is the natural inclusion map and the second one is the natural surjection map. Note that,
        the kernel of the map \eqref{composition}
        is $V^{(k)}_{r+(k+1)(p+1)}\cap D(V_{r+2+k(p+1)}).$ We show that 			  	\[V^{(k)}_{r+(k+1)(p+1)}\cap D(V_{r+2+k(p+1)})=D(V^{(k)}_{r+2+k(p+1)}).\] 
	Clearly, $D(V^{(k)}_{r+2+k(p+1)})\subset V^{(k)}_{r+(k+1)(p+1)}\cap D(V_{r+2+k(p+1)}).$ 
        The other containment is trivially true for $k = 0$. To establish it for $1 \leq k \leq p-2$, we need the following
        lemma which is easily proved by checking the two conditions in \cite[Lemma 2.7]{GV22}.
        \begin{lemma}
		\label{divide}
                If $0 \leq m \leq p-2$  and $p \nmid {r \choose m+1}$,
                then $\theta^{m+1} \mid D(Q) \iff \theta^{m+1} \mid Q.$
              \end{lemma}

  \begin{remark}
    By the lemma, $D$ induces an inclusion
    $$\dfrac{V_{r}}{V_{r}^{(m+1)}} \hookrightarrow \dfrac{V_{r+p-1}}{V_{r+p-1}^{(m+1)}},$$
    which is an isomorphism for dimension reasons. This provides
    another proof of \eqref{periodicity} using the $D$ map, under the slightly stronger assumptions 
    $0 \leq m \leq p-2$ and $p \nmid {r \choose m+1}$. The last condition is necessary (for instance, for $m = 0$,
    if $p \mid r$, then $D$ maps $X^r$, $Y^r$ to $0$, so the map above is not injective).
  \end{remark}

  Now,
	let $\theta^kP=D(Q),$ for some $P\in V_{r+p+1}$, $Q\in V_{r+2+k(p+1)}$ and $1 \leq k \leq p-2$. 
	By Lemma~\ref{divide}, 
        we have $\theta^k \mid D(Q)$ if and only if $\theta^k \mid Q$. Thus $\theta^kP=D(Q)\in D(V^{(k)}_{r+2+k(p+1)})$ and
        hence $V^{(k)}_{r+(k+1)(p+1)}\cap D(V_{r+2+k(p+1)})\subset D(V^{(k)}_{r+2+k(p+1)})$. So the kernel of \eqref{composition} is $D(V^{(k)}_{r+2+k(p+1)}).$
	Thus, there is an injection 
			\[\dfrac{V^{(k)}_{r+(k+1)(p+1)}}{D(V^{(k)}_{r+2+k(p+1)})}\hookrightarrow \dfrac{V_{r+(k+1)(p+1)}}{D(V_{r+2+k(p+1)})}.\]
                        If $-1\leq r\leq p-3-k$ and $0\leq k\leq p-2,$ then $D$ is injective on $V_{r+2+k(p+1)}.$ Otherwise, by \cite[Proposition 3.3]{red10} and
                        comparing degrees, we would have a relation of the form $ap+bp+c(p+1) = r+2+k(p+1)$ for some $a,b,c \geq 0$.
                        Comparing $p$-adic digits on both sides and noting they are in the range $[0,p-1]$, we have
                        $c = r+2+k$ and $a+b+c=k$. The first equality implies $c \geq k+1$, whereas the second implies $c \leq k$, a contradiction.    
                        Thus the dimension of each side of the inclusion above is $p-1.$ So it is an isomorphism.
						
	Now, by Theorem~\ref{base case cuspidal} and Remark~\ref{r=p-2,p-1},  for $-1 \leq r \leq p-3$, we have
		\[\dfrac{V_{r+p+1}}{D(V_{r+2})}\otimes V_{p-1}\simeq {\rm ind}_{T(\mathbb{F}_p)}^{G(\mathbb{F}_p)}\omega_2^{r+2}.\]
                Twisting both sides by $\det^k$ with $0\leq k \leq p-2$
        \begin{align*}
		&\implies \dfrac{V_{r+p+1}\otimes{\det}^k}{D(V_{r+2})\otimes{\det}^k}\otimes V_{p-1}\simeq {\rm ind}_{T(\mathbb{F}_p)}^{G(\mathbb{F}_p)}\left(\omega_2^{r+2}\otimes {\det}^k\vert_{T(\mathbb{F}_p)}\right)\\
		&\implies\dfrac{V^{(k)}_{r+(k+1)(p+1)}}{D(V^{(k)}_{r+2+(p+1)})}\otimes V_{p-1}\simeq {\rm ind}_{T(\mathbb{F}_p)}^{G(\mathbb{F}_p)}\left(\omega_2^{r+2}\otimes \omega_2^{k(p+1)}\right)\\
		&\implies\dfrac{V_{r+(k+1)(p+1)}}{D(V_{r+2+k(p+1)})}\otimes V_{p-1}\simeq {\rm ind}_{T(\mathbb{F}_p)}^{G(\mathbb{F}_p)}\omega_2^{r+2+k(p+1)}. \qedhere 
	\end{align*}
\end{proof}

	\begin{corollary}\label{higher m}
	Let $m \geq 0$ and $2m-1 \leq r \leq p-3.$ Then, we have
        \[\dfrac{V_{r+2+(m+1)(p-1)}}{D^{m+1}(V_{r+2})}\otimes V_{p-1}\simeq
          {\rm ind}_{T(\mathbb{F}_p)}^{G(\mathbb{F}_p)} \left( V_m\otimes \omega_2^{r+2-m} \right).\]
      \end{corollary}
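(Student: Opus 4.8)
The plan is to prove the corollary by the method of Theorem~\ref{base case cuspidal}, replacing the scalar-valued map there by a $V_m$-valued one, in precisely the way Theorem~\ref{base case} refines the $m=0$ principal series isomorphism --- this being the ``analogy with \eqref{ps-non-split}'' mentioned above. Writing $A_\alpha=a+c\alpha$, $B_\alpha=b+d\alpha$ and $\nabla_\alpha=A_\alpha\frac{\partial}{\partial X}+B_\alpha\frac{\partial}{\partial Y}$, I would define a $G(\mathbb{F}_p)$-equivariant map
\[
\psi\colon\ \frac{V_{r+2+(m+1)(p-1)}}{D^{m+1}(V_{r+2})}\otimes V_{p-1}\ \longrightarrow\ {\rm ind}_{T(\mathbb{F}_p)}^{G(\mathbb{F}_p)}\bigl(V_m\otimes\omega_2^{\,r+2-m}\bigr)
\]
by $\psi(P\otimes Q)=\psi_{P\otimes Q}$, where $\psi_{P\otimes Q}\!\left(\left(\begin{smallmatrix}a&b\\c&d\end{smallmatrix}\right)\right)\in V_m\otimes\omega_2^{\,r+2-m}$ is the $(m+1)$-tuple whose $j$-th entry ($0\le j\le m$) has the shape
\[
\frac{\binom{m}{j}}{c_{r,m,j}}\ \nabla_\alpha^{\,k_j}(P)\Big\vert_{(A_\alpha,B_\alpha)}^{(A_\alpha^{\,p},B_\alpha^{\,p})}\cdot\ Q\Big\vert_{(0,0)}^{(A_\alpha^{\,p},B_\alpha^{\,p})},
\]
the exponents $k_j$ and the normalizing constants $c_{r,m,j}$ (products of falling factorials) being dictated by the requirement of $T(\mathbb{F}_p)$-equivariance, exactly as the factors $[r]_{m-j}$ were dictated in Theorem~\ref{base case}; it may be cleanest to realize $\psi$ as a composition of a Clebsch--Gordan type projection with the base-case map and the projection-formula isomorphism, as in the principal series case. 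The hypothesis $2m-1\le r\le p-3$ is what makes each $c_{r,m,j}$ a unit of $\mathbb{F}_p$; for $m=0$ this is the map of Theorem~\ref{base case cuspidal} (extended to $r=-1$ as in Remark~\ref{r=p-2,p-1}).

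\textbf{The routine verifications.} As for Theorem~\ref{base case cuspidal}, one checks: (i) \emph{$T(\mathbb{F}_p)$-linearity of $\psi_{P\otimes Q}$}, by expanding $\psi_{P\otimes Q}$ and recognising each entry as an $\mathbb{F}_{p^2}$-linear combination of the $T(\mathbb{F}_p)$-linear functions $f_i$ introduced just before Theorem~\ref{base case cuspidal}, after reducing any ``flips'' and ``flops'' to the basis $\mathcal{B}$ via \eqref{flip} and \eqref{flop}; (ii) \emph{$G(\mathbb{F}_p)$-linearity of $\psi$}, which is just the chain rule, i.e.\ Lemma~\ref{glineargeneral} applied twice with $q=p^2$ (once to $P$, once to $Q$), exactly as in the $G(\mathbb{F}_p)$-linearity step of Theorem~\ref{base case cuspidal}; (iii) \emph{$\ker\psi=D^{m+1}(V_{r+2})\otimes V_{p-1}$}, discussed next.

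\textbf{The kernel, and conclusion.} The inclusion $D^{m+1}(V_{r+2})\otimes V_{p-1}\subseteq\ker\psi$ is a Leibniz computation as in Theorem~\ref{base case cuspidal}: since $D(\theta)=D(X^p)=D(Y^p)=0$, equivalently $\ker D=\mathbb{F}_p[X^p,Y^p,\theta]$ by \cite[Proposition 3.3]{red10}, one sees $\nabla_\alpha^{\,k_j}$ followed by the two-point difference kills everything in $D^{m+1}(V_{r+2})$. For the reverse inclusion I would induct on $m$, using the strong form of ``$D$ preserves the theta filtration'' (Lemma~\ref{divide}, which also powers Corollary~\ref{bigger range}) to strip off one application of $D$ at a time and fall back on the case $m-1$, the residual coefficient identities being handled by the determinant computations already developed in the proof of Theorem~\ref{base case cuspidal}; this reverse inclusion --- disentangling $\nabla_\alpha$, the Frobenius on $\mathbb{F}_{p^2}$, and divisibility by $\theta,X^p,Y^p$ --- is the main obstacle. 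Once the kernel is known, one checks $D^{m+1}$ is injective on $V_{r+2}$: if $D^{m+1}(Q)=0$ then $D(Q),D^2(Q),\dots$ successively lie in $\mathbb{F}_p[X^p,Y^p,\theta]$, and a comparison of $p$-adic digits of degrees (as in the injectivity step of Corollary~\ref{bigger range}) forces $Q=0$ precisely because $2m-1\le r\le p-3$ (e.g.\ $r=0$, $m=1$ already fails since $D(\theta)=0$). Hence both sides of $\psi$ have $\mathbb{F}_{p^2}$-dimension $(m+1)(p-1)\,p=(m+1)\,[\,G(\mathbb{F}_p):T(\mathbb{F}_p)\,]$, so $\psi$ is an isomorphism. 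As an independent check, the right-hand side splits over $\mathbb{F}_{p^2}$ --- since $V_m|_{T(\mathbb{F}_p)}\simeq\bigoplus_{j=0}^m\omega_2^{\,m+j(p-1)}$ and $\det|_{T(\mathbb{F}_p)}=\omega_2^{\,p+1}$ --- as $\bigoplus_{j=0}^m{\rm ind}_{T(\mathbb{F}_p)}^{G(\mathbb{F}_p)}\omega_2^{\,r+2+j(p-1)}\simeq\bigoplus_{j=0}^m\bigl(\tfrac{V_{r+p+1-2j}}{D(V_{r+2-2j})}\otimes V_{p-1}\bigr)\otimes\det^{j}$ by Theorem~\ref{base case cuspidal} and Remark~\ref{r=p-2,p-1}, so one could alternatively build a matching direct-sum decomposition of the left-hand side; but identifying its pieces again reduces to Lemma~\ref{divide} and the same dimension count.
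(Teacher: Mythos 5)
Your main route has a genuine gap at exactly the point you flag as ``the main obstacle.'' You never actually produce the map: the exponents $k_j$ and constants $c_{r,m,j}$ are left to be ``dictated by $T(\mathbb{F}_p)$-equivariance,'' which is already delicate here because $T(\mathbb{F}_p)$ does not act triangularly on the monomial basis of $V_m$ (unlike $B(\mathbb{F}_p)$ in Theorem~\ref{base case}), so the tuple-with-normalizing-constants format of the principal series case does not transport verbatim. More seriously, the reverse inclusion $\ker\psi\subseteq D^{m+1}(V_{r+2})\otimes V_{p-1}$ is only a hope: ``strip off one $D$ at a time and fall back on $m-1$'' is not justified, and the determinant computations in Theorem~\ref{base case cuspidal} were tailored to the single system \eqref{main} for the scalar-valued map; nothing is said about why the analogous $(m+1)$-fold system is again invertible, nor how Lemma~\ref{divide} (a statement about $\theta$-divisibility of $D(Q)$) interacts with the two-point evaluation $\big\vert_{(A_\alpha,B_\alpha)}^{(A_\alpha^p,B_\alpha^p)}$. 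So as written this is a program, not a proof, and it attacks the hardest possible version of the statement (an explicit isomorphism), which the corollary does not claim.

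The paper proves the corollary much more softly, and your closing ``independent check'' is in fact the intended proof, minus its key mechanism. After restricting $V_m$ to $T(\mathbb{F}_p)$ to reduce to showing the left side is $\bigoplus_{j=0}^{m}\mathrm{ind}_{T(\mathbb{F}_p)}^{G(\mathbb{F}_p)}\omega_2^{r+2+j(p-1)}$, one inducts on $m$ using the short exact sequence
\[0\rightarrow \dfrac{D(V_{r+2+m(p-1)})}{D^{m+1}(V_{r+2})}\rightarrow\dfrac{V_{r+2+(m+1)(p-1)}}{D^{m+1}(V_{r+2})}\rightarrow \dfrac{V_{r+2+(m+1)(p-1)}}{D(V_{r+2+m(p-1)})}\rightarrow 0,\]
which splits after tensoring with $V_{p-1}$ precisely because $V_{p-1}$ is projective --- this is the step your sketch omits and without which there is no reason the left side decomposes compatibly. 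The last piece is $\mathrm{ind}_{T(\mathbb{F}_p)}^{G(\mathbb{F}_p)}\omega_2^{r+2+m(p-1)}$ by Corollary~\ref{bigger range} (applied with $r-2m$ and $k=m$, which is where $2m-1\le r\le p-3$ enters), and the first piece is identified with $\frac{V_{r+2+m(p-1)}}{D^{m}(V_{r+2})}\otimes V_{p-1}$ via injectivity of $D$ on $V_{r+2+m(p-1)}$ (the $p$-adic digit argument), so the inductive hypothesis finishes it. If you want to salvage your explicit-map approach, you would need to write down the constants, redo the flip/flop bookkeeping and the invertibility of the resulting $(m+1)$ coupled linear systems; none of that is supplied, so the argument as it stands does not establish the corollary.
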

      
      \begin{proof}
        Note that $V_m\big\vert_{T(\mathbb{F}_p)} \simeq \bigoplus_{j=0}^m \omega_2^{m+j(p-1)}.$
        Thus to prove the corollary, it suffices to prove that for $2m-1 \leq r \leq p-3$, we have
        $$\dfrac{V_{r+2+(m+1)(p-1)}}{D^{m+1}(V_{r+2})}\otimes V_{p-1}
	\simeq {\rm ind}_{T(\mathbb{F}_p)}^{G(\mathbb{F}_p)}\left(\bigoplus_{j=0}^{m}\omega_2^{r+2+j(p-1)}\right).
        $$
        For $m=0,$ this is Theorem \ref{base case cuspidal} and Remark~\ref{r=p-2,p-1}.
        The proof for $m>0$ is by induction and is omitted.
	\end{proof}

	 \subsection{The case of $\mathrm{GL}_2({\mathbb F}_q)$}
	 	\label{twisted}			
                We now prove Theorem~\ref{cuspidal-twisted}, which is a twisted version of Theorem \ref{base case cuspidal}.
                Recall that $V_{r_j}^{{\rm Fr}^j}:={\rm Sym}^{r_j}(\mathbb{F}_p^2)\circ {\rm Fr}^{j}$ and 
                $\{X_j^{r_j-i_j}Y_j^{i_j}\}_{0\leq i_j\leq r_j}$ is a basis of $V_{r_j}^{{\rm Fr}^j}$, for all $0\leq j\leq f-1.$ 
	\begin{lemma}
		\label{G-linearity of D_j}
                Let 
                $r_0\geq 1$. We define  
		\[D_0 = X_0^pX_1^{p-1}\cdots X_{f-1}^{p-1}\dfrac{\partial}{\partial X_0}+Y_0^pY_1^{p-1}\cdots Y_{f-1}^{p-1}\dfrac{\partial}{\partial Y_0}\]
	and
		\[ D_j = X_0^pX_1^{p-1}\cdots X_{j-1}^{p-1}\dfrac{\partial}{\partial X_j}+Y_0^pY_1^{p-1}\cdots Y_{j-1}^{p-1}\dfrac{\partial}{\partial Y_j}, \] for all $1\leq j\leq f-1.$ Then, the maps
		\begin{itemize}
                \item[(1)] \[D_0: V_{r_0}\otimes V_0^{\rm Fr}\otimes \dots \otimes V_0^{{\rm Fr}^{f-1}}\rightarrow \dfrac{V_{r_0+p-1}\otimes V_{p-1}^{\rm Fr}\otimes \dots \otimes V_{p-1}^{{\rm Fr}^{f-1}}}{\langle D_1,\dots,D_{f-1}\rangle} \> \text{ and}\]
		\item[(2)] \[D_j: V_{r_0-1}\otimes V_0^{\rm Fr}\otimes\dots \otimes V_0^{{\rm Fr}^{j-1}} \otimes V_p^{{\rm Fr}^{j}}\otimes V_{p-1}^{{\rm Fr}^{j+1}}\otimes\dots\otimes V_{p-1}^{{\rm Fr}^{f-1}}\rightarrow \dfrac{V_{r_0+p-1}\otimes V_{p-1}^{\rm Fr}\otimes \dots \otimes V_{p-1}^{{\rm Fr}^{f-1}}}{\langle D_1,\dots, D_{j-1}\rangle}\]
		\end{itemize} are $G(\mathbb{F}_q)$-linear.
	\end{lemma}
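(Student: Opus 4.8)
The plan is to compute $D_j(g\cdot P)$ and $g\cdot D_j(P)$ directly via the chain rule and to show that their difference lies in the subspace $\langle D_1,\dots,D_{j-1}\rangle=\sum_{i=1}^{j-1}\mathrm{Im}\,D_i$ (for $D_0$, replace this by $\sum_{i=1}^{f-1}\mathrm{Im}\,D_i$), so that $D_j$ descends to a $G(\mathbb{F}_q)$-linear map on the quotient. By linearity we may take $P$ to be a monomial. Write $g=\left(\begin{smallmatrix}u&v\\ w&z\end{smallmatrix}\right)$ and, as in Lemma~\ref{glineargeneral}, $U_i=u^{p^i}X_i+w^{p^i}Y_i$, $V_i=v^{p^i}X_i+z^{p^i}Y_i$; abbreviate $M_k^X=X_0^pX_1^{p-1}\cdots X_{k-1}^{p-1}$ and $M_k^Y=Y_0^pY_1^{p-1}\cdots Y_{k-1}^{p-1}$, so that $D_k=M_k^X\partial_{X_k}+M_k^Y\partial_{Y_k}$ for $1\le k\le f-1$ and $D_0=M_f^X\partial_{X_0}+M_f^Y\partial_{Y_0}$. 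If $R_X$ (resp.\ $R_Y$) denotes $\partial_{X_j}P$ (resp.\ $\partial_{Y_j}P$) with each $X_i$ replaced by $U_i$ and each $Y_i$ by $V_i$, the chain rule gives
\[
D_j(g\cdot P)-g\cdot D_j(P)=\Delta_j\cdot R_X+\widetilde\Delta_j\cdot R_Y,
\]
where $\Delta_j=u^{p^j}M_j^X+w^{p^j}M_j^Y-U_0^pU_1^{p-1}\cdots U_{j-1}^{p-1}$ and $\widetilde\Delta_j$ is the same with $(u,w)$ replaced by $(v,z)$ (for $D_0$, take $j=f$ and use $u^{p^f}=u$, $w^{p^f}=w$). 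Since $R_X,R_Y$ involve only the variables of levels $0,j,j+1,\dots,f-1$, they are free of $X_1,\dots,X_{j-1},Y_1,\dots,Y_{j-1}$; as each $D_i$ with $i\le j-1$ differentiates only in the level-$i$ variables, $D_i(h\cdot s)=h\cdot D_i(s)$ whenever $h$ is free of $X_i,Y_i$. Hence it suffices to prove:

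\smallskip
\noindent\textbf{Claim.} $\Delta_j\in\langle D_1,\dots,D_{j-1}\rangle$ for all $1\le j\le f$.

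\smallskip
I would prove the Claim by induction on $j$. For $j=1$, $\Delta_1=u^pX_0^p+w^pY_0^p-(uX_0+wY_0)^p=0$. For the step, write $M_j^X=M_{j-1}^XX_{j-1}^{p-1}$ and $U_0^p\cdots U_{j-1}^{p-1}=(U_0^p\cdots U_{j-2}^{p-1})\,U_{j-1}^{p-1}$, and substitute $U_0^p\cdots U_{j-2}^{p-1}=u^{p^{j-1}}M_{j-1}^X+w^{p^{j-1}}M_{j-1}^Y-\Delta_{j-1}$ to obtain
\[
\Delta_j=\Delta_j'+\Delta_{j-1}U_{j-1}^{p-1},\qquad
\Delta_j'=M_{j-1}^X\!\left(u^{p^j}X_{j-1}^{p-1}-u^{p^{j-1}}U_{j-1}^{p-1}\right)+M_{j-1}^Y\!\left(w^{p^j}Y_{j-1}^{p-1}-w^{p^{j-1}}U_{j-1}^{p-1}\right).
\]
By the inductive hypothesis $\Delta_{j-1}\in\langle D_1,\dots,D_{j-2}\rangle$, and since $U_{j-1}^{p-1}$ is free of the level-$<j-1$ variables, $\Delta_{j-1}U_{j-1}^{p-1}$ lies there too. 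It remains to show $\Delta_j'\in\mathrm{Im}\,D_{j-1}$, i.e.\ to find $Q\in\mathbb{F}_q[X_{j-1},Y_{j-1}]$ with $\partial_{X_{j-1}}Q=u^{p^j}X_{j-1}^{p-1}-u^{p^{j-1}}U_{j-1}^{p-1}$ and $\partial_{Y_{j-1}}Q=w^{p^j}Y_{j-1}^{p-1}-w^{p^{j-1}}U_{j-1}^{p-1}$. Here $U_{j-1}=u^{p^{j-1}}X_{j-1}+w^{p^{j-1}}Y_{j-1}$, and two facts make this possible: the "closedness" $\partial_{Y_{j-1}}$ of the first target equals $\partial_{X_{j-1}}$ of the second (both equal $u^{p^{j-1}}w^{p^{j-1}}U_{j-1}^{p-2}$), and the coefficient of $X_{j-1}^{p-1}$ in the first target is $u^{p^j}-u^{p^{j-1}}\!\cdot u^{p^{j-1}(p-1)}=u^{p^j}-u^{p^j}=0$ (likewise the $Y_{j-1}^{p-1}$-coefficient of the second). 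The vanishing of these top coefficients means both targets have degree $\le p-2$ in the variable being integrated, so the characteristic-$p$ (Cartier) obstruction to integrating disappears; combined with closedness this produces a single $Q$ with both properties. Thus $\Delta_j'=D_{j-1}(Q)$ and the induction closes.

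\smallskip
Granting the Claim, write $\Delta_j=\sum_{i=1}^{j-1}D_i(s_i)$; then $\Delta_j\cdot R_X=\sum_i D_i(s_i R_X)\in\langle D_1,\dots,D_{j-1}\rangle$ since $R_X$ is free of $X_i,Y_i$ for $i\le j-1$, and likewise for $\widetilde\Delta_j\cdot R_Y$, so $D_j(g\cdot P)-g\cdot D_j(P)\in\langle D_1,\dots,D_{j-1}\rangle$; the case $j=f$ gives $D_0$. A parallel induction (using the Claim) shows each $\langle D_1,\dots,D_{j-1}\rangle$ is $G(\mathbb{F}_q)$-stable, so the quotients in the statement are genuine $G(\mathbb{F}_q)$-representations on which $D_j$ is now $G(\mathbb{F}_q)$-linear. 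The main obstacle is the inductive step of the Claim: the correction $\Delta_j$ is \emph{not} of the form $D_{j-1}(\text{polynomial})$ in general — the failure of antidifferentiation of $X^{p-1}$ in characteristic $p$ is exactly why $D_j$ is only $G(\mathbb{F}_q)$-linear modulo the earlier operators — and this obstruction is repaired only after the lower-level piece $\Delta_{j-1}U_{j-1}^{p-1}$ is split off using the inductive hypothesis.
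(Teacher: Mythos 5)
Your proof is correct, and it takes a genuinely different route from the paper's. The paper argues via the Bruhat decomposition: diagonal matrices and the Weyl element are checked directly, and for the upper unipotent $\left(\begin{smallmatrix}1&a\\0&1\end{smallmatrix}\right)$ the defect $D_j(g\cdot P)-g\cdot D_j(P)$ is expanded by the multinomial theorem and exhibited as an explicit linear combination of elements $D_t(\text{monomial})$ with $t<j$ (indeed, your $\widetilde\Delta_j$ specialized at that unipotent is exactly the polynomial the paper has to handle). You instead treat an arbitrary $g$ uniformly: the chain-rule identity $D_j(g\cdot P)-g\cdot D_j(P)=\Delta_j R_X+\widetilde\Delta_j R_Y$ is correct, the reduction to the Claim is sound because $R_X,R_Y$ are free of the level-$1,\dots,j-1$ variables and the witnesses $s_iR_X$, $s_iR_Y$ automatically have the multidegrees of the domains appearing in the statement, and the Claim is then proved by peeling off one level at a time. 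The only step you leave terse, the existence of one $Q\in\mathbb{F}_q[X_{j-1},Y_{j-1}]$ with the two prescribed partial derivatives, does go through: closedness together with the vanishing of the $X_{j-1}^{p-1}$- and $Y_{j-1}^{p-1}$-coefficients suffices (integrate the first target in $X_{j-1}$ term by term; the discrepancy in the second is then a constant multiple of $Y_{j-1}^{p-1}$ and that constant is $0$), and in fact one may simply take the reduction mod $p$ of the Witt-type polynomial
\[
Q=-\tfrac{1}{p}\Bigl((aX_{j-1}+bY_{j-1})^p-(aX_{j-1})^p-(bY_{j-1})^p\Bigr),\qquad a=u^{p^{j-1}},\quad b=w^{p^{j-1}},
\]
whose two partials are exactly your targets, making the step explicit. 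What your argument buys is uniformity in $g$ (no case analysis over Bruhat cells) and a structural reason why the defect is absorbed by the earlier operators: the characteristic-$p$ obstruction to antidifferentiation cancels level by level. What the paper's computation buys is completely explicit preimages under the $D_t$, in keeping with the explicit spirit of the rest of the paper. Your closing observation that the same computation yields $G(\mathbb{F}_q)$-stability of $\langle D_1,\dots,D_{j-1}\rangle$, so that the quotients are honest $G(\mathbb{F}_q)$-representations, is also correct and matches what is implicit in the paper.
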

	\begin{proof}
          $(1)$. We show that $D_0$ is $G(\mathbb{F}_q)$-linear modulo $\langle D_1,\dots, D_{f-1} \rangle.$ By Bruhat decomposition, it is
          enough to check that $D_0(g\cdot X_0^{r_0-i_0}Y_0^{i_0})=g\cdot D_0(X_0^{r_0-i_0}Y_0^{i_0})$ modulo $\langle D_1,\dots, D_{f-1} \rangle$ for $g$
          either diagonal, the Weyl element $w =\left(\begin{smallmatrix}
		0 & 1\\
		1 & 0\\
	\end{smallmatrix}\right)$ or upper unipotent. 
          The case when $g$ diagonal is clear.
        If $g = w$, then 
\begin{eqnarray*}
 D_0(w\cdot X_0^{r_0-i_0}Y_0^{i_0})
 & = & D_0(Y_0^{r_0-i_0}X_0^{i_0})\\
 & = & i_0X_0^{i_0+p-1}Y_0^{r_0-i_0}X_1^{p-1}\cdots X_{f-1}^{p-1}+(r_0-i_0)X_0^{i_0}Y_0^{r_0+p-1-i_0}Y_1^{p-1}\cdots Y_{f-1}^{p-1}\\
 & = & w\cdot\left(i_0Y_0^{i_0+p-1}X_0^{r_0-i_0}Y_1^{p-1}\cdots Y_{f-1}^{p-1}+(r_0-i_0)Y_0^{i_0}X_0^{r_0+p-1-i_0}X_1^{p-1}\cdots X_{f-1}^{p-1}\right)\\
 & = & w\cdot D_0(X_0^{r_0-i_0}Y_0^{i_0}).
\end{eqnarray*}	
Finally, if $g=\left(\begin{smallmatrix}
 1 & a\\
 0 & 1
\end{smallmatrix}\right),$ for $a\in \mathbb{F}_q$, then
\begin{eqnarray}\label{unipotent1}
D_0(g\cdot X_0^{r_0-i_0}Y_0^{i_0})
& = & D_0\left(X_0^{r_0-i_0}(aX_0+Y_0)^{i_0}\right) \notag\\
& = & X_0^p\left((r_0-i_0)X_0^{r_0-i_0-1}(aX_0+Y_0)^{i_0}+X_0^{r_0-i_0}i_0a (aX_0+Y_0)^{i_0-1}\right)X_1^{p-1}\cdots X_{f-1}^{p-1} \notag \\
&   & +Y_0^pX_0^{r_0-i_0}i_0(aX_0+Y_0)^{i_0-1}Y_1^{p-1}\cdots Y_{f-1}^{p-1}
\end{eqnarray}
and
\begin{eqnarray}\label{unipotent2}
g\cdot D_0(X_0^{r_0-i_0}Y_0^{i_0}) 
& = & g\cdot \left(X_0^p(r_0-i_0)X_0^{r_0-i_0-1}Y_0^{i_0}X_1^{p-1}\cdots X_{f-1}^{p-1}+Y_0^pX_0^{r_0-i_0}i_0Y_0^{i_0-1}Y_1^{p-1}\cdots Y_{f-1}^{p-1}\right) \notag \\
& = & X_0^p(r_0-i_0)X_0^{r_0-i_0-1}(aX_0+Y_0)^{i_0}X_1^{p-1}\cdots X_{f-1}^{p-1}\notag\\
&   & +i_0X_0^{r_0-i_0}(aX_0+Y_0)^{p+i_0-1}(a^pX_1+Y_1)^{p-1}\cdots (a^{p^{f-1}}X_{f-1}+Y_{f-1})^{p-1},
\end{eqnarray}
so taking the difference of \eqref{unipotent1} and \eqref{unipotent2}, we have
\begin{align}
  \label{difference}
& \notag D_0(g\cdot X_0^{r_0-i_0}Y_0^{i_0})- g\cdot D_0(X_0^{r_0-i_0}Y_0^{i_0})\\\notag
&=i_0aX_0^pX_0^{r_0-i_0}(aX_0+i_0Y_0)^{i_0-1}X_1^{p-1}\cdots X_{f-1}^{p-1}+i_0Y_0^pX_0^{r_0-i_0}(aX_0+Y_0)^{i_0-1}Y_1^{p-1}\cdots Y_{f-1}^{p-1}\\\notag
&{\hspace{15pt}}-i_0X_0^{r_0-i_0}(aX_0+Y_0)^{p+i_0-1}(a^pX_1+Y_1)^{p-1}\cdots (a^{p^{f-1}}X_{f-1}+Y_{f-1})^{p-1}\\
&=i_0X_0^{r_0-i_0}(aX_0+Y_0)^{i_0-1}\left(aX_0^p\prod_{j=1}^{f-1}X_j^{p-1}+Y_0^p\prod_{j=1}^{f-1}Y_j^{p-1}-(a^pX_0^p+Y_0^p)\prod_{j=1}^{f-1}(a^{p^j}X_j+Y_j)^{p-1}\right).
 \end{align}
The term in the parentheses 
\begin{align*}
&\quad =\left(aX_0^p\prod_{j=1}^{f-1}X_j^{p-1}-a^pX_0^p\prod_{j=1}^{f-1}(a^{p^j}X_j+Y_j)^{p-1}\right)+\left(Y_0^p\prod_{j=1}^{f-1}Y_j^{p-1}-Y_0^p\prod_{j=1}^{f-1}(a^{p^j}X_j+Y_j)^{p-1}\right)\\
&\quad =-{\sum\limits_{i_1=0}^{p-1}\dots\sum\limits_{i_{f-1}=0}^{\>\>p-1_{\>\>'}}}X_0^pa^p\prod_{j=1}^{f-1}\left(\binom{p-1}{i_j}a^{p^j(p-1-i_j)}X_j^{p-1-i_j}Y_j^{i_j}\right)\\
&\quad {\hspace{15pt}}-{\sum\limits_{i_1=0}^{p-1}\dots\sum\limits_{i_{f-1}=0}^{\>\>p-1_{\>\>''}}}Y_0^p\prod_{j=1}^{f-1}\left(\binom{p-1}{i_j}a^{p^j(p-1-i_j)}X_j^{p-1-i_j}Y_j^{i_j}\right)\\
&\quad = -{\sum\limits_{i_1=0}^{p-1}\dots\sum\limits_{i_{f-1}=0}^{\>\>p-1_{\>\>'}}}a^{1-\sum\limits_{j=1}^{f-1}i_jp^j}(-1)^{\sum\limits_{j=1}^{f-1}i_j}X_0^p\prod_{j=1}^{f-1}X_j^{p-1-i_j}Y_j^{i_j}\\
&\quad {\hspace{15pt}}-{\sum\limits_{i_1=0}^{p-1}\dots\sum\limits_{i_{f-1}=0}^{\>\>p-1_{\>\>''}}}a^{1-p-\sum\limits_{j=1}^{f-1}i_jp^j}(-1)^{\sum\limits_{j=1}^{f-1}i_j}Y_0^p\prod_{j=1}^{f-1}X_j^{p-1-i_j}Y_j^{i_j},
\end{align*}
where $\sum\dots\sum'$ means $(0,\dots, 0)$ is omitted from the sum and $\sum\dots\sum''$ means $(p-1,\dots,p-1)$
is omitted from the sum. Writing $I_j=\sum\limits_{l=j}^{f-1}i_lp^l$ and $\sum_pI_j=\sum\limits_{l=j}^{f-1}i_l,$ the above expression
\begin{align*}
	&= -\sum\limits_{j=1}^{f-1}\left(\sum\limits_{i_1=0}^0\dots\sum\limits_{i_{j-1}=0}^{0}\sum\limits_{i_j=1}^{p-1}\sum\limits_{i_{j+1}=0}^{p-1}\dots\sum\limits_{i_{f-1}=0}^{p-1}(-1)^{\sum_pI_j}a^{1-I_j}X_0^p\left(\prod_{l=1}^{j-1}X_l^{p-1}\right)\prod_{l=j}^{f-1}X_l^{p-1-i_l}Y_l^{i_l}\right)\\
&\quad -\sum\limits_{j=1}^{f-1}\left(\sum\limits_{i_1=p-1}^{p-1}\dots\sum\limits_{i_{j-1}=p-1}^{p-1}\sum\limits_{i_j=0}^{p-2}\sum\limits_{i_{j+1}=0}^{p-1}\dots\sum\limits_{i_{f-1}=0}^{p-1}(-1)^{\sum_pI_j}a^{1-p^j-I_j}Y_0^p\left(\prod_{l=1}^{j-1}Y_l^{p-1}\right)\left(\prod_{l=j}^{f-1}X_l^{p-1-i_l}Y_l^{i_l}\right)\right)\\
        &=-\sum\limits_{j=1}^{f-1}\left( 
          \sum\limits_{i_j=1}^{p-1}\sum\limits_{i_{j+1}=0}^{p-1}\dots\sum\limits_{i_{f-1}=0}^{p-1}(-1)^{\sum_pI_j}a^{1-I_j}X_0^p\left(\prod_{l=1}^{j-1}X_l^{p-1}\right)X_j^{p-1-i_j}Y_j^{i_j}\left(\prod_{l=j+1}^{f-1}X_l^{p-1-i_l}Y_l^{i_l}\right)\right)\\
        &\quad -\sum\limits_{j=1}^{f-1}\left( 
          \sum\limits_{i_j=1}^{p-1}\sum\limits_{i_{j+1}=0}^{p-1}\dots\sum\limits_{i_{f-1}=0}^{p-1}(-1)^{\sum_pI_j-1}a^{1-I_j}Y_0^p\left(\prod_{l=1}^{j-1}Y_l^{p-1}\right)X_j^{p-i_j}Y_j^{i_j-1}\left(\prod_{l=j+1}^{f-1}X_l^{p-1-i_l}Y_l^{i_l}\right)\right),
\end{align*}
where the fourth sum above is obtained by the transformation $i_j\mapsto i_j-1$ in the second sum above, which, together with \eqref{difference}, gives
\begin{align*}
&D_0(g\cdot X_0^{r_0-i_0}Y_0^{i_0})-g\cdot D_0(X_0^{r_0-i_0}Y_0^{i_0})\\
&\quad =-\sum\limits_{j=1}^{f-1}\left(\sum\limits_{i_j=1}^{p-1}\sum\limits_{i_{j+1}=0}^{p-1}\dots\sum\limits_{i_{f-1}=0}^{p-1}\dfrac{(-1)^{\sum_pI_j-1}}{i_j}a^{1-I_j}D_j\left(i_0X_0^{r_0-i_0}(aX_0+Y_0)^{i_0-1}X_j^{p-i_j}Y_j^{i_j}\prod_{l=j+1}^{f-1}X_l^{p-1-i_l}Y_l^{i_l}\right)\right)\\
&\quad \in \langle D_1,\dots, D_{f-1}\rangle.
\end{align*}
Thus $D_0$ is $G({\mathbb F}_q)$-linear modulo $\langle D_1,\dots, D_{f-1}\rangle.$

(2). The proof 
is similar and is omitted.
\end{proof}

Let $\alpha \in \mathbb{F}_{q^2}$ be such that $\alpha^2\in \mathbb{F}_q$, $\alpha\notin\mathbb{F}_q.$ Fix an identification $i:\mathbb{F}_q^{\times}\simeq T(\mathbb{F}_q)\subset {\rm GL}_2(\mathbb{F}_q)$ given by 
	$u + v\alpha \mapsto \left(\begin{smallmatrix}
		u & v\alpha^2\\
		v & u 
	\end{smallmatrix}\right)$.				
					
    Let $r\geq 0$ and $0\leq i\leq r+q^2-1.$ Let $f_i:G(\mathbb{F}_q)\rightarrow \mathbb{F}_{q^2}$ be a function such that
    \[f_i\left(\left(\begin{matrix}
	a & b\\
	c & d
\end{matrix}\right)\right)=(a+c\alpha)^{r+q^2-1-i}(b+d\alpha)^i,\] 
for all $\left(\begin{smallmatrix}
	a & b\\
	c & d
\end{smallmatrix}\right)\in G(\mathbb{F}_q).$ Then $f_i\in {\rm ind}_{T(\mathbb{F}_q)}^{G(\mathbb{F}_q)}\omega_{2f}^{r}$ and the set
$\mathcal{B}_q=\{f_i| 0\leq i\leq q^2-q-1\}$ forms a basis of $ {\rm ind}_{T(\mathbb{F}_q)}^{G(\mathbb{F}_q)}\omega_{2f}^{r}.$ Indeed,
let $t=\left(\begin{smallmatrix}
	u  & v\alpha^2\\
	v & u
\end{smallmatrix}\right)\in T(\mathbb{F}_q)$ and $g=\left(\begin{smallmatrix}
a & b\\
c & d
\end{smallmatrix}\right)\in G(\mathbb{F}_q).$ Then we have
\[f_i\left(t\cdot g\right)=f_i\left(\left(\begin{matrix}
	ua+v\alpha^2c & ub+v\alpha^2d\\
	va+uc & vb+ud
\end{matrix}\right)\right)\\
=(u+v\alpha)^{r+q^2-1}(a+c\alpha)^{(r+q^2-1)-i}(b+d\alpha)^i\]
which equals
$(u+v\alpha)^r\cdot f_i(g)
=\omega_{2f}^r(t)\cdot f_i(g)
=t\cdot f_i(g)$.
One can check that the functions in $\mathcal{B}_q$ are linearly independent. Also, $T(\mathbb{F}_q)$ has index $q^2-q$ in $G(\mathbb{F}_q).$ So $\mathcal{B}_q$ forms a basis of ${\rm ind}_{T(\mathbb{F}_q)}^{G(\mathbb{F}_q)}\omega_{2f}^r.$ 

For $q^2-1\leq i\leq r+q^2-1,$ we observe that
	\begin{equation}\label{flip in general}
		f_i=f_{q^2-1+j}=f_j
	\end{equation}
	for some $0\leq j\leq r.$ 
        We say that $f_i$ is a \emph{flip}.
        We soon assume that $r \leq p-1 \leq q^2-q-1$, so $f_i \in \mathcal{B}_q$.
	
	On the other hand, for $q^2-q\leq i\leq q^2-2,$ we have $f_i=f_{q^2-q+j}$ for some $0\leq j\leq q-2.$
        We say that $f_i$ is a \emph{flop} since it satisfies the following relation:
	\begin{equation}\label{flop in general}
		f_j+f_{j+(q-1)}+f_{j+2(q-1)}+\dots +f_{j+(q-1)(q-1)}+f_{j+q^2-q}=0,
        \end{equation}
        where all terms but the last are in $\mathcal{B}_q$.
	Indeed, since
	$X^{q^2-1}-1=(X^{q-1}-1)(X^{(q-1)q}+X^{(q-1)(q-1)}+\dots+X^{q-1}+1)$,
	for $A\in \mathbb{F}_{q^2}^{\times}\setminus \mathbb{F}_q^{\times},$ we have
	$A^{(q-1)q}+A^{(q-1)(q-1)}+\dots+A^{q-1}+1=0$.				
	Thus for $\left(\begin{smallmatrix}
		a & b\\
		c & d
	\end{smallmatrix}\right)\in G(\mathbb{F}_q),$  
	\[\left(\dfrac{a+c\alpha}{b+d\alpha}\right)^{(q-1)q}+\left(\dfrac{a+c\alpha}{b+d\alpha}\right)^{(q-1)(q-1)}+\dots+\left(\dfrac{a+c\alpha}{b+d\alpha}\right)^{q-1}+1=0,\]
	which, after multiplying by $(a+c\alpha)^{(r+q-1)-j}(b+d\alpha)^{q^2-q+j}$, gives 
		$$(a+c\alpha)^{(r+q^2-1)-j}(b+d\alpha)^{j}+(a+c\alpha)^{(r+q^2-q)-j}(b+d\alpha)^{j+(q-1)}
                +\dots+(a+c\alpha)^{(r+q-1)-j}(b+d\alpha)^{j+q^2-q}=0,$$
	which shows 
        \eqref{flop in general}.

	Thus, by using \eqref{flip in general} and \eqref{flop in general}, any flip or flop can be changed to an easy linear combination of functions
        in $\mathcal{B}_q.$ We fix the basis $\mathcal{B}_q$ in the computations to follow. 				
	
	For any $\left(\begin{smallmatrix}
		a & b\\ c & d
              \end{smallmatrix}\right)\in G(\mathbb{F}_q),$ we denote $A_\alpha=a+c\alpha$ and $B_\alpha=b+d\alpha.$ For any polynomial $P(X, Y)$
            and $A, B, C, D\in \mathbb{F}_{q^2}$, we write $P(X, Y)\Big\vert_{(A, B)}^{(C, D)}=P(C, D)-P(A, B).$

	\begin{lemma}\label{T-linearity in general}
          Let $r=r_0+r_1p+\dots+r_{f-1}p^{f-1}$ with $2\leq r_0\leq p-1$ and $r_j\geq 0$ for all $1\leq j\leq f-1.$
          Let $P\otimes Q:=\bigotimes_{j=0}^{f-1}P_j\otimes \bigotimes_{j=0}^{f-1} Q_j
          \in \bigotimes_{j=0}^{f-1}V_{r_j+p-1}^{{\rm Fr}^j}\otimes \bigotimes_{j=0}^{f-1} V_{p-1}^{{\rm Fr}^j},$ with $P_j$ a homogeneous polynomial of
          degree $r_j+p-1$ in $X_j$, $Y_j,$ and $Q_j$ homogeneous of degree $p-1$ in $S_j$, $T_j.$ Define
	  $\psi_{P\otimes Q}:G(\mathbb{F}_q)\rightarrow \mathbb{F}_{q^2}$
	  by
	\[\left(\begin{matrix}
		a & b\\
		c & d
	\end{matrix}\right)\mapsto \nabla_{0}^{r_0-2}(P_0)\nabla_1^{r_1}(P_1)\cdots \nabla_{f-1}^{r_{f-1}}(P_{f-1})\Big\vert_{(A_\alpha, B_\alpha,\dots, A_\alpha^{p^{f-1}}, B_\alpha^{p^{f-1}})}^{(A_\alpha^{p^f}, B_\alpha^{p^f},\dots, A_\alpha^{p^{2f-1}}, B_\alpha^{p^{2f-1}})} \cdot \prod_{j=0}^{f-1}Q_j(A_\alpha^{p^{f+j}}, B_\alpha^{p^{f+j}}),\]
where $$\nabla_j = A_\alpha^{p^j}\frac{\partial}{\partial X_j}+B_\alpha^{p^j}\frac{\partial}{\partial Y_j}$$ for all $0\leq j\leq f-1.$
Then the map
\begin{itemize}
	\item[(1)] $\psi_{P\otimes Q}$ is $T({\mathbb{F}_q})$-linear.
	\item[(2)] $\psi:\bigotimes_{j=0}^{f-1}V_{r_j+p-1}^{{\rm Fr}^j}\otimes \bigotimes_{j=0}^{f-1} V_{p-1}^{{\rm Fr}^j}\rightarrow {\rm ind}_{T(\mathbb{F}_q)}^{G(\mathbb{F}_q)}\omega_{2f}^r$ such that $\psi(P\otimes Q)=\psi_{P\otimes Q}$ is $G(\mathbb{F}_q)$-linear.
\end{itemize}

\end{lemma}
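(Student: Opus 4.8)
The plan is to transcribe the argument of Theorem~\ref{base case cuspidal} (the $\mathrm{GL}_2(\mathbb{F}_p)$ cuspidal case), replacing the single symmetric power and the operator $\nabla_\alpha$ by the $f$-fold tensor product and the tuple $\nabla_0,\dots,\nabla_{f-1}$, and using the twisted chain rule Lemma~\ref{glineargeneral} in place of Lemma~\ref{glinear}. Almost all of the work is bookkeeping with the $p$-power twists.

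For $(1)$, I would first record that, writing $\mu=u+v\alpha\in\mathbb{F}_{q^2}^*$ for the element corresponding to $t=\left(\begin{smallmatrix} u & v\alpha^2\\ v & u\end{smallmatrix}\right)\in T(\mathbb{F}_q)$, the matrix $t\gamma$ has $A_\alpha$-entry $\mu A_\alpha$ and $B_\alpha$-entry $\mu B_\alpha$ (the same computation that shows the $f_i$ are $T(\mathbb{F}_q)$-linear). Hence it is enough to check that $\psi_{P\otimes Q}$, regarded as a function of $(A_\alpha,B_\alpha)$, is an $\mathbb{F}_{q^2}$-linear combination of monomials $A_\alpha^MB_\alpha^N$ with $M+N\equiv r\pmod{q^2-1}$, because for any such monomial $\mu^{M+N}A_\alpha^MB_\alpha^N=\mu^rA_\alpha^MB_\alpha^N=\omega_{2f}^r(t)\cdot A_\alpha^MB_\alpha^N$, using $\mu^{q^2-1}=1$. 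Expanding $P_j,Q_j$ into monomials, the polynomial $\nabla_0^{r_0-2}(P_0)\nabla_1^{r_1}(P_1)\cdots\nabla_{f-1}^{r_{f-1}}(P_{f-1})$ is bihomogeneous of degree $p+1$ in $(X_0,Y_0)$ and of degree $p-1$ in each $(X_j,Y_j)$ for $j\ge1$, and its coefficients are homogeneous of total degree $(r_0-2)+\sum_{j\ge1}r_jp^j=r-2$ in $(A_\alpha,B_\alpha)$, since each $\nabla_j$ produces a factor $A_\alpha^{p^j}$ or $B_\alpha^{p^j}$. Substituting the $j$-th pair of variables by $(A_\alpha^{p^j},B_\alpha^{p^j})$ for the first evaluation and by $(A_\alpha^{qp^j},B_\alpha^{qp^j})$ for the second (where $A_\alpha^{q}=A_\alpha^{p^f}$), taking the difference, and multiplying by $\prod_jQ_j$ evaluated at the second point, a short count using $\sum_{j=0}^{f-1}(p-1)p^j=q-1$ shows that the two resulting homogeneous pieces have total degrees $r-2+2q^2$ and $r+q^2-1$, both $\equiv r\pmod{q^2-1}$, as required. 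If one wants the answer inside the fixed basis $\mathcal{B}_q$, one further reduces exponents modulo $q^2-1$ (legitimate since $A_\alpha,B_\alpha\in\mathbb{F}_{q^2}^*$ on all of $G(\mathbb{F}_q)$) and removes flips and flops using \eqref{flip in general} and \eqref{flop in general}, but this refinement is needed only later, not for $(1)$.

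For $(2)$, fix $g'=\left(\begin{smallmatrix} u & v\\ w & z\end{smallmatrix}\right)$ and $\gamma=\left(\begin{smallmatrix} a & b\\ c & d\end{smallmatrix}\right)$ in $G(\mathbb{F}_q)$; the goal is $\psi(g'\cdot(P\otimes Q))(\gamma)=\psi_{P\otimes Q}(\gamma g')=(g'\cdot\psi_{P\otimes Q})(\gamma)$. Since $g'\cdot(P\otimes Q)=\bigotimes_j(g'\cdot P_j)\otimes\bigotimes_j(g'\cdot Q_j)$ with $g'\cdot P_j=P_j(u^{p^j}X_j+w^{p^j}Y_j,\,v^{p^j}X_j+z^{p^j}Y_j)$, I would apply Lemma~\ref{glineargeneral} --- which, being a formal consequence of the chain rule just as in the proof of Lemma~\ref{glinear}, remains valid when $a,b,c,d$ are taken in $\mathbb{F}_{q^2}$ --- to each of the $f$ derivative factors and at each of the two evaluation points ($2f$ applications in all), with $a=A_\alpha$, $b=B_\alpha$, and with $(c,d)=(A_\alpha,B_\alpha)$ at the first point and $(c,d)=(A_\alpha^q,B_\alpha^q)$ at the second; the factors $Q_j$ are handled by plain substitution, using $(uA_\alpha+wB_\alpha)^q=uA_\alpha^q+wB_\alpha^q$. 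Since the $A_\alpha$-entry of $\gamma g'$ is $uA_\alpha+wB_\alpha=(au+bw)+(cu+dw)\alpha$ and its $B_\alpha$-entry is $vA_\alpha+zB_\alpha$, these substitutions collapse to exactly $\psi_{P\otimes Q}(\gamma g')$. Finally, because the defining formula for $\psi_{P\otimes Q}$ is multilinear in $P_0,\dots,P_{f-1},Q_0,\dots,Q_{f-1}$, the map $\psi$ descends to the tensor product and the identity on pure tensors gives $G(\mathbb{F}_q)$-linearity in general.

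The hard part will be the degree bookkeeping in $(1)$: one must track that the difference of the two evaluations splits into two homogeneous pieces whose total degrees differ by exactly $q^2-1$ and that multiplication by the $Q_j$-block (of degree $q^2-q$) keeps both in the residue class of $r$ modulo $q^2-1$. Apart from that, the only point worth a remark is that Lemma~\ref{glineargeneral} is being invoked over $\mathbb{F}_{q^2}$ rather than $\mathbb{F}_q$, which is harmless since its proof uses only the chain rule. Everything else is a routine index-by-index transcription of the proof of Theorem~\ref{base case cuspidal}.
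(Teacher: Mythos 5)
Your proposal is correct and follows essentially the same route as the paper: for $T(\mathbb{F}_q)$-linearity you expand into monomials in $(A_\alpha,B_\alpha)$ and check the total degrees lie in the class of $r$ modulo $q^2-1$ (the paper phrases this as the expression being a combination of the $T$-equivariant functions $f_i$), and for $G(\mathbb{F}_q)$-linearity you apply Lemma~\ref{glineargeneral} to each derivative factor at both evaluation points, using that $u,v,w,z\in\mathbb{F}_q$ to move Frobenius powers past them. Your degree bookkeeping ($r-2+2q^2$ and $r+q^2-1$) and the remark that Lemma~\ref{glineargeneral} holds over $\mathbb{F}_{q^2}$ are both accurate, so nothing is missing.
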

	
\begin{proof}
	\noindent \textbf{$T(\mathbb{F}_q)$-linearity:}
Note $\nabla_{0}^{r_0-2}(P_0)\prod_{j=1}^{f-1}\nabla_j^{r_j}(P_j)\prod_{j=0}^{f-1}Q_j(S_j, T_j)$ is a linear combination of terms  
\[A_\alpha^{r-i-2}B_\alpha^{i}X_0^{p+1-i_0}Y_0^{i_0}\prod_{j=1}^{f-1}X_j^{p-1-i_j}Y_j^{i_j}\cdot \prod_{j=0}^{f-1}S_j^{p-1-k_j}T_j^{k_j},\]
where $r = \sum_{j=0}^{f-1}r_jp^j$ and $i=\sum_{j=0}^{f-1}i_jp^j.$
Now,
\begin{align*}
	& A_\alpha^{r-i-2}B_\alpha^{i}X_0^{p+1-i_0}Y_0^{i_0}\prod_{j=1}^{f-1}X_j^{p-1-i_j}Y_j^{i_j}\Big\vert_{(A_\alpha, B_\alpha,\dots, A_\alpha^{p^{f-1}}, B_\alpha^{p^{f-1}})}^{(A_\alpha^{p^f}, B_\alpha^{p^f},\dots, A_\alpha^{p^{2f-1}}, B_\alpha^{p^{2f-1}})}\prod_{j=0}^{f-1}S_j^{p-1-k_j}T_j^{k_j}\Big\vert_{\left(0, 0\right)}^{(A_\alpha^{p^{f+j}}, B_\alpha^{p^{f+j}})}\\
	&\quad =A_\alpha^{r-i-2}B_\alpha^i\left(A_\alpha^{ p^f\left(p+1-i_0+\sum\limits_{j=1}^{f-1}(p^{j+1}-p^{j}-i_jp^{j}) \right)}B_\alpha^{p^f\left( i_0+\sum\limits_{j=1}^{f-1}i_jp^{j} \right) }-A_\alpha^{p+1-i_0+\sum\limits_{j=1}^{f-1}(p^{j+1}-p^j-i_jp^j)} B_\alpha^{i_0+\sum\limits_{j=1}^{f-1}i_jp^{j}} \right)\\
	&\qquad \qquad \qquad \qquad \qquad \cdot A_\alpha^{p^f \left(  \sum\limits_{j=0}^{f-1}\left(p^{j+1}-p^j-k_jp^j\right)\right)}B_{\alpha}^{p^f\left(\sum\limits_{j=0}^{f-1}k_jp^j\right)}\\
	&\quad =A_\alpha^{r-i-2}B_\alpha^i\left(A_\alpha^{q+q^2-iq}B_\alpha^{iq}-A_\alpha^{1-i+q}B_\alpha^i\right)A_\alpha^{q^2-q-kq}B_\alpha^{kq}
               \> = \> A_\alpha^{r+q^2-1-i-(i+k)q}B_\alpha^{i+(i+k)q}-A_{\alpha}^{r+q^2-1-(2i+kq)}B_\alpha^{2i+kq},
	\end{align*}
	where in the last but one equality 
        $k=\sum_{j=0}^{f-1}k_jp^{j}.$ Thus $\psi_{P\otimes Q}$ is a linear combination of functions of the form $f_i$ above
        and hence is $T(\mathbb{F}_q)$-linear. \\

	\noindent \textbf{$G(\mathbb{F}_q)$-linearity:} Let 
	$g=\left(\begin{smallmatrix}
						u & v\\
						w & z
					\end{smallmatrix}\right)\in G(\mathbb{F}_q).$ Note that,
	\begin{eqnarray*}
	g\cdot \left(P\otimes Q\right) 
	& = &\bigotimes_{j=0}^{f-1}P_j(U_j, V_j)\otimes \bigotimes_{j=0}^{f-1}Q_j(U_j', V_j') =: \bigotimes_{j=0}^{f-1}P_j'\otimes \bigotimes_{j=0}^{f-1}Q_j',
	\end{eqnarray*}
where
$U_j=u^{p^j}X_j+w^{p^j}Y_j, V_j=v^{p^j}X_j+z^{p^j}Y_j, \> U_j'=u^{p^j}S_j+w^{p^j}T_j, V_j'=v^{p^j}S_j+z^{p^j}T_j$.
Now,
\begin{eqnarray}\label{glinear one side}
&& \psi\left(g\cdot P\otimes Q\right)\left(\left(\begin{matrix}
	a & b\\
	c & d
\end{matrix}\right)\right) \notag \\
&& \quad =\left(A_\alpha\dfrac{\partial}{\partial X_0}+B_\alpha\dfrac{\partial}{\partial Y_0}\right)^{r_0-2}(P_0')\prod_{j=1}^{f-1}\left(A_\alpha^{p^j}\dfrac{\partial}{\partial X_j}+B_\alpha^{p^j}\dfrac{\partial}{\partial Y_j}\right)^{r_j}(P_j')\Big\vert_{(A_\alpha, B_\alpha,\dots, A_\alpha^{p^{f-1}}, B_\alpha^{p^{f-1}})}^{(A_\alpha^{p^f}, B_\alpha^{p^f},\dots, A_\alpha^{p^{2f-1}}, B_\alpha^{p^{2f-1}})}\notag \\
&& {\hspace{260pt}}  \cdot  \prod_{j=0}^{f-1}Q_j'(A_\alpha^{p^{f+j}}, B_\alpha^{p^{f+j}}).
\end{eqnarray}
Applying Lemma~\ref{glineargeneral} (twice),
for $0\leq j\leq f-1$ and $k\geq 0$ we have 
\begin{align*}
	& \left(A_\alpha^{p^j}\dfrac{\partial}{\partial X_j}+B_\alpha^{p^j}\dfrac{\partial}{\partial Y_j}\right)^k(P_j')\Big\vert_{(A_\alpha^{p^j}, B_\alpha^{p^j})}^{(A_\alpha^{p^{f+j}}, B_\alpha^{p^{f+j}})}\\
	&\quad =\left((u^{p^j}A_\alpha^{p^j}+w^{p^j}B_\alpha^{p^j})\frac{\partial}{\partial X_j}+(v^{p^j}A_\alpha^{p^j}+z^{p^j}B_\alpha^{p^j})\frac{\partial}{\partial Y_j}\right)^k(P_j)\Bigg\vert_{(u^{p^j}A_\alpha^{p^j}+w^{p^j}B_\alpha^{p^j}, v^{p^j}A_\alpha^{p^j}+z^{p^j}B_\alpha^{p^j})}^{(u^{p^j}A_\alpha^{p^{f+j}}+w^{p^j}B_\alpha^{p^{f+j}}, v^{p^j}A_\alpha^{p^{f+j}}+z^{p^j}B_\alpha^{p^{f+j}})}\\
	&\quad =\left(A_\alpha'^{p^j}\dfrac{\partial}{\partial X_j}+B_\alpha'^{p^j}\dfrac{\partial}{\partial Y_j}\right)^k(P_j)\Big\vert_{(A_\alpha'^{p^j}, B_\alpha'^{p^j})}^{(A_\alpha'^{p^{f+j}}, B_\alpha'^{p^{f+j}})},
\end{align*}
where $A_\alpha'=uA_\alpha+wB_\alpha$ and $B_\alpha'=vA_\alpha+zB_\alpha.$ Observe that, in the last equality in writing the top limits in terms of $A_\alpha'$ and $B_\alpha',$ we use the fact that $u,v,w$ and $z$ are in $\mathbb{F}_q.$ 

Now taking, $k=r_0-2$ for $j=0$ and $k=r_j$ for $1\leq j\leq f-1,$ the expression in \eqref{glinear one side} equals
\begin{align*}
&\left(A_\alpha'\dfrac{\partial}{\partial X_0}+B_\alpha'\dfrac{\partial}{\partial Y_0}\right)^{r_0-2}(P_0)\prod_{j=1}^{f-1}\left(A_\alpha'^{p^j}\dfrac{\partial}{\partial X_j}+B_\alpha'^{p^j}\dfrac{\partial}{\partial Y_j}\right)^{r_j}(P_j)\Big\vert_{(A_\alpha', B_\alpha',\dots, A_\alpha'^{p^{f-1}}, B_\alpha'^{p^{f-1}})}^{(A_\alpha'^{p^f}, B_\alpha'^{p^f},\dots, A_\alpha'^{p^{2f-1}}, B_\alpha'^{p^{2f-1}})}\\
&\qquad \qquad \qquad \qquad \qquad \qquad \qquad \qquad \qquad \qquad {\hspace{20pt}} \cdot \prod_{j=0}^{f-1}Q_j(A_\alpha'^{p^{f+j}}, B_\alpha'^{p^{f+j}})\\
& \quad =\psi(P\otimes Q)\left(\left(\begin{matrix}
	ua+wb & va+zb\\
	uc+wd & vc+zd
\end{matrix}\right)\right)
\>  = \> g\cdot \left(\psi(P\otimes Q)\right)\left(\left(\begin{matrix}
	a & b\\
	c & d
\end{matrix}\right)\right).   \qedhere
\end{align*} 
\end{proof}

The  following lemmas will be useful in the proof of the main theorem.
\begin{lemma}
	\label{image of D_j}
	Let 
	\[ P=\sum\limits_{i_0=0}^{r_0+p-1}\sum\limits_{i_1=0}^{p-1}\dots\sum\limits_{i_{f-1}=0}^{p-1} b_{i_0, \dots, i_{f-1}} X_0^{r_0+p-1-i_0} Y_0^{i_0} \left(\prod_{l=1}^{f-1} X_l^{p-1-i_l} Y_l^{i_l} \right) \in V_{r_0+p-1}\otimes \bigotimes_{l=1}^{f-1} V_{p-1}^{{\rm Fr}^l}. \]
	
	Then,
	\begin{itemize}
		\item[(1)] we have $P\in {\rm Im}~D_0$ if and only if 
		\[i_0 b_{i_0, 0,\ldots, 0} = (r_0-i_0)   b_{i_0+p-1, p-1,\ldots, p-1}\]
		for $1\leq i_0 \leq r_0-1,$ and 
		\[b_{i_0,\dots, i_{f-1}}=0\] 
		for $r_0 \leq i_0 \leq p-1$ and all  $0 \leq i_j\leq p-1$ for $1 \leq j \leq f-1$.
                \item[(2)] for $1\leq j \leq f-1,$  we have $P\in {\rm Im}~D_j$ if and only if 
		\[b_{i_0, 0, \dots, 0, i_j, \dots, i_{f-1}} =- b_{i_0+p, p-1, \dots, p-1, i_j-1, i_{j+1}, \dots, i_{f-1}}\]
		where $0\leq i_0 \leq r_0-1,$ $1\leq i_j \leq p-1$ and $0\leq i_{j+1}, \dots, i_{f-1} \leq p-1,$ and in the remaining cases
		\[b_{i_0,\dots, i_{f-1}}=0.\]
	\end{itemize}
\end{lemma}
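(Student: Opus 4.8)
The plan is to prove the two parts of Lemma~\ref{image of D_j} by a direct computation of $D_0$ and $D_j$ on the monomial basis, followed by a comparison of coefficients. The key observation is that $D_0$ and $D_j$ act ``one variable at a time'': $D_0$ only differentiates in $X_0, Y_0$ (multiplying by fixed powers of the other variables), and $D_j$ only differentiates in $X_j, Y_j$. So the essential computation is, for a single homogeneous polynomial $R = \sum_{i} a_i X^{s-i} Y^i$ of degree $s$,
\[
 D(R) := X^p \frac{\partial R}{\partial X} + Y^p \frac{\partial R}{\partial Y} = \sum_{i} a_i\left( (s-i) X^{s+p-1-i} Y^i + i X^{s-i} Y^{i+p-1} \right),
\]
which is exactly the computation already carried out in the proof of Lemma~\ref{divide} (with $s = r_0$ there). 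Collecting the coefficient of $X^{s+p-1-i_0} Y^{i_0}$ in $D(R)$ gives $(s-i_0) a_{i_0} + i_0 a_{i_0 - (p-1)}$, where the second term is present only when $i_0 \geq p-1$.

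For part (1): I would write a general $Q = \sum_{i_0=0}^{r_0} a_{i_0,\ldots} X_0^{r_0-i_0} Y_0^{i_0} \prod_{l=1}^{f-1} X_l^{?} Y_l^{?}$ in the source space $V_{r_0} \otimes V_0^{\mathrm{Fr}} \otimes \cdots \otimes V_0^{\mathrm{Fr}^{f-1}}$; since each $V_0^{\mathrm{Fr}^l}$ is one-dimensional (spanned by the constant $1$ in $X_l, Y_l$ --- but as a polynomial $X_l^0 Y_l^0$), the polynomial $Q$ only involves $X_0, Y_0$, up to the fixed factor. Applying $D_0$ multiplies by $X_1^{p-1}\cdots X_{f-1}^{p-1}$ on the ``$X$-derivative'' part and by $Y_1^{p-1}\cdots Y_{f-1}^{p-1}$ on the ``$Y$-derivative'' part. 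Reading off coefficients, $P = D_0(Q)$ forces: the coefficient $b_{i_0, 0,\ldots,0}$ (those with all other indices $0$, coming from the $X$-derivative term) equals $(r_0-i_0) a_{i_0}$ for the appropriate range, the coefficient $b_{i_0+p-1, p-1,\ldots,p-1}$ (coming from the $Y$-derivative term) equals $i_0 a_{i_0}$, and all other $b$'s vanish. Eliminating $a_{i_0}$ gives the stated relation $i_0 b_{i_0,0,\ldots,0} = (r_0-i_0) b_{i_0+p-1,p-1,\ldots,p-1}$ for $1 \le i_0 \le r_0-1$ (the endpoints $i_0 = 0$ and $i_0 = r_0$ giving pure $X_0^{r_0+p-1}$ and $Y_0^{r_0+p-1}$ terms, which are unconstrained and correspond to $X_0^{p}, Y_0^{p}$ being in the image trivially), and forces $b_{i_0,\ldots} = 0$ whenever $i_0 \ge r_0$ and not all of $i_1,\ldots,i_{f-1}$ equal $p-1$ or $0$ respectively --- in particular for $r_0 \le i_0 \le p-1$ with arbitrary $i_1,\ldots,i_{f-1}$. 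Conversely, given $P$ satisfying these relations, one reconstructs $Q$ explicitly and checks $D_0(Q) = P$; injectivity of $D_0$ (on this range of $r_0 \ge 1$, since $D_0$ acts as an honest derivative operator on the $X_0, Y_0$ variables and $r_0 + p - 1$ has $p$-adic digits avoiding the obstruction) makes this clean.

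For part (2): the structure is the same but now $D_j$ differentiates in $X_j, Y_j$, and the source space is $V_{r_0-1} \otimes V_0^{\mathrm{Fr}} \otimes \cdots \otimes V_0^{\mathrm{Fr}^{j-1}} \otimes V_p^{\mathrm{Fr}^j} \otimes V_{p-1}^{\mathrm{Fr}^{j+1}} \otimes \cdots \otimes V_{p-1}^{\mathrm{Fr}^{f-1}}$. A general element is $Q = \sum a_{i_0, i_j, i_{j+1},\ldots,i_{f-1}} X_0^{r_0-1-i_0} Y_0^{i_0} X_j^{p-i_j} Y_j^{i_j} \prod_{l>j} X_l^{p-1-i_l} Y_l^{i_l}$ (indices $i_1 = \cdots = i_{j-1}$ forced to be $0$ since those factors are constants). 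Applying $D_j = X_0^p X_1^{p-1}\cdots X_{j-1}^{p-1}\frac{\partial}{\partial X_j} + Y_0^p Y_1^{p-1}\cdots Y_{j-1}^{p-1}\frac{\partial}{\partial Y_j}$ to a monomial $X_j^{p-i_j} Y_j^{i_j}$ in the $j$-th variable gives $(p - i_j) X_j^{p-1-i_j} Y_j^{i_j} \cdot (X_0^p \prod_{l<j,l\ge1} X_l^{p-1}) + i_j X_j^{p-i_j} Y_j^{i_j-1} \cdot (Y_0^p \prod_{l<j,l\ge1} Y_l^{p-1})$; modulo $p$ the first coefficient $p - i_j \equiv -i_j$. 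Reading off the coefficients in the target $V_{r_0+p-1} \otimes \bigotimes_{l\ge1} V_{p-1}^{\mathrm{Fr}^l}$: the $X_j$-derivative term contributes to monomials with the ``first $j$ indices $= 0$'' pattern (because of the $X_0^p X_1^{p-1}\cdots X_{j-1}^{p-1}$ prefactor, shifting degree in $X_0$ by $p$ and filling $X_1,\ldots,X_{j-1}$ to degree $p-1$), i.e.\ to $b_{i_0', 0,\ldots,0, i_j-1, i_{j+1},\ldots,i_{f-1}}$ with $i_0' = i_0 + p$; the $Y_j$-derivative term contributes to $b_{i_0, p-1,\ldots,p-1, i_j-1, i_{j+1},\ldots}$... wait, one needs to be careful with index bookkeeping, but after matching them via $a_{i_0,i_j,\ldots}$ one gets $b_{i_0,0,\ldots,0,i_j,\ldots,i_{f-1}} = - b_{i_0+p, p-1,\ldots,p-1, i_j-1, i_{j+1},\ldots,i_{f-1}}$ for $0 \le i_0 \le r_0-1$, $1 \le i_j \le p-1$, with all other $b$'s zero; the converse is again explicit reconstruction of $Q$ plus injectivity of $D_j$.

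The main obstacle I expect is purely bookkeeping: keeping the $f$-tuple of indices straight under the ``degree shift in $X_0$ by $p$, fill $X_1,\dots,X_{j-1}$ to $p-1$'' operation, and correctly identifying which tuples in the target are hit and which must therefore be zero. There is no conceptual difficulty --- it all reduces to the single-variable Leibniz-type computation of Lemma~\ref{divide} --- but the statement is delicate enough that I would organize the proof by first isolating the one-variable computation as a sublemma (``$D$ on $X^{s-i}Y^i$''), then bootstrapping to each $D_j$, and finally doing the coefficient comparison carefully once, citing symmetry for the rest. I would also note explicitly why $D_0$ (resp.\ $D_j$) is injective on the relevant symmetric power in the given range of $r_0$ --- this follows exactly as in the parenthetical injectivity arguments in the proofs of Corollary~\ref{bigger range} and Corollary~\ref{higher m}, using \cite[Proposition 3.3]{red10} and comparing $p$-adic digits --- so that the ``only if'' direction genuinely characterizes the image rather than just giving necessary conditions.
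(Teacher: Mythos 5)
Your proposal is correct and takes essentially the same approach as the paper: the paper's entire proof is the one-line assertion that the conditions are clearly necessary and can be checked to be sufficient, and your direct computation of $D_0$ and $D_j$ on source monomials followed by coefficient comparison (with explicit reconstruction of a preimage $Q$ for sufficiency) is exactly that check, your momentary index mix-up in part (2) notwithstanding since the final relation you state is the correct one. One small remark: the appeals to injectivity of $D_0$ and $D_j$ are superfluous, since characterizing the image only requires that every $D_j(Q)$ satisfies the relations (necessity) and that a preimage exists whenever they hold (sufficiency), neither of which uses injectivity.
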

\begin{proof}
  The conditions are clearly necessary, and can be checked to be sufficient.
\end{proof}

\begin{lemma}
	\label{space generated by D_j's}
	Let $1 \leq r_0 \leq p-1$ and let 
	\[ P=\sum\limits_{i_0=0}^{r_0+p-1}\sum\limits_{i_1=0}^{p-1}\dots\sum\limits_{i_{f-1}=0}^{p-1} b_{i_0, \dots, i_{f-1}} X_0^{r_0+p-1-i_0} Y_0^{i_0} \left(\prod_{l=1}^{f-1} X_l^{p-1-i_l} Y_l^{i_l} \right) \in V_{r_0+p-1}\otimes\bigotimes_{l=1}^{f-1} V_{p-1}^{{\rm Fr}^l}. \]
	Then we have $P\in \langle D_0, \dots, D_{f-1} \rangle$ 
	if and only if the following hold:
	\begin{itemize}
		\item[(1)] for $1\leq i_0 \leq r_0-1,$ we have
		\[i_0b_{i_0, 0, \dots, 0}=(r_0-i_0) b_{i_0+p-1, p-1, \dots, p-1}\]
		\item[(2)] for $r_0\leq i_0 \leq p-1$ and all $0\leq i_j \leq p-1$ for $1\leq j\leq f-1,$ we have  \[b_{i_0, \dots, i_{f-1}}=0,\]
		\item[(3)] for $0\leq i_0\leq r_0-1$ and for all $1\leq t \leq f-1,$ $1\leq i_t \leq p-1$ and $0\leq i_{t+1}, \dots, i_{f-1}\leq p-1,$ we  have
		\[b_{i_0, 0,\dots, 0, i_t, \dots, i_{f-1}}=-b_{i_0+p, p-1, \dots, p-1, i_t-1, i_{t+1}, \dots, i_{f-1}}.\] 
	\end{itemize}
\end{lemma}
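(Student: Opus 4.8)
\end{lemma}

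\begin{proof}
The plan is to use the identification $\langle D_0,\dots,D_{f-1}\rangle={\rm Im}\,D_0+{\rm Im}\,D_1+\cdots+{\rm Im}\,D_{f-1}$ inside $V_{r_0+p-1}\otimes\bigotimes_{l=1}^{f-1}V_{p-1}^{{\rm Fr}^l}$, to feed in the explicit descriptions of the single images ${\rm Im}\,D_0$ and ${\rm Im}\,D_t$ ($1\le t\le f-1$) supplied by Lemma~\ref{image of D_j}, and then to match coefficients monomial by monomial. To keep the bookkeeping manageable I would index each basis monomial $X_0^{r_0+p-1-i_0}Y_0^{i_0}\prod_{l=1}^{f-1}X_l^{p-1-i_l}Y_l^{i_l}$ by its first exponent $i_0\in\{0,\dots,r_0+p-1\}$ together with its \emph{pattern} $\pi=(i_1,\dots,i_{f-1})\in\{0,\dots,p-1\}^{f-1}$.

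The first step is to record, directly from the definitions of $D_0$ and $D_t$, the support of each image. The map $D_0$ sends $X_0^{r_0-i_0}Y_0^{i_0}$ to $(r_0-i_0)$ times the monomial with first exponent $i_0$ and pattern $(0,\dots,0)$ plus $i_0$ times the one with first exponent $i_0+p-1$ and pattern $(p-1,\dots,p-1)$; so ${\rm Im}\,D_0$ lives over these two patterns with the relation of condition~(1). The map $D_t$ sends $X_0^{r_0-1-i_0}Y_0^{i_0}X_t^{p-i_t}Y_t^{i_t}\prod_{l>t}X_l^{p-1-i_l}Y_l^{i_l}$ to $-i_t$ times the ``negative'' monomial (first exponent $i_0$; pattern with first $t-1$ entries $0$ and $t$-th entry $i_t$) plus $i_t$ times the ``positive'' monomial (first exponent $i_0+p$; pattern with first $t-1$ entries $p-1$ and $t$-th entry $i_t-1$); so ${\rm Im}\,D_t$ lives over these monomials with the relation being exactly condition~(3) for that $t$. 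The combinatorial crux is then the observation that, since $r_0\le p-1$, the three ranges $[0,r_0-1]$, $[r_0,p-1]$, $[p,r_0+p-1]$ of first exponents are pairwise disjoint, and that for a fixed pattern $\pi$: exactly one summand ${\rm Im}\,D_j$ meets the monomials $(i_0;\pi)$ with $i_0\in[0,r_0-1]$ — namely $j=0$ when $\pi=(0,\dots,0)$ and $j=\min\{l\ge1:\pi_l\ne0\}$ otherwise; exactly one meets them for $i_0\in[p,r_0+p-1]$; and none meets them for $i_0\in[r_0,p-1]$.

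Granting this, the forward direction is immediate: in any decomposition $P=\sum_jP_j$ with $P_j\in{\rm Im}\,D_j$, every monomial of $P$ inherits its coefficient from a single $P_j$, so the relations of Lemma~\ref{image of D_j} pass directly to $P$ and yield exactly (1), (2), (3). For the converse, given $P$ satisfying (1)--(3), I would build the preimages explicitly: take $Q_0=\sum_{i_0=0}^{r_0}a_{i_0}X_0^{r_0-i_0}Y_0^{i_0}$ with $a_{i_0}=b_{i_0,0,\dots,0}/(r_0-i_0)$ for $0\le i_0\le r_0-1$ (well defined because $1\le r_0-i_0\le p-2$) and $a_{r_0}=b_{r_0+p-1,p-1,\dots,p-1}/r_0$, these two prescriptions being consistent by (1); and for $1\le t\le f-1$ take $Q_t=\sum(-i_t^{-1})\,b_{i_0,0,\dots,0,i_t,\dots,i_{f-1}}\,X_0^{r_0-1-i_0}Y_0^{i_0}X_t^{p-i_t}Y_t^{i_t}\prod_{l>t}X_l^{p-1-i_l}Y_l^{i_l}$, summed over $0\le i_0\le r_0-1$, $1\le i_t\le p-1$, $0\le i_{t+1},\dots,i_{f-1}\le p-1$, where (3) is what makes the ``positive'' coefficient of $D_t(Q_t)$ come out right. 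One then verifies $P=D_0(Q_0)+\sum_{t\ge1}D_t(Q_t)$ pattern by pattern, splitting on the three first-exponent ranges exactly as above: on $[0,r_0-1]$ and on $[p,r_0+p-1]$ the coefficients agree by construction (here the uniqueness-of-summand statement is used), and on $[r_0,p-1]$ both sides vanish, the left by (2) and the right because no $D_j(Q_j)$ is supported there.

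The step I expect to be the real work is this final bookkeeping — correctly tracking, as $\pi$ ranges, which $D_j$ owns which monomial, and dealing with the degenerate first exponents $i_0\in\{0,r_0,p-1,p,r_0+p-1\}$ (where one of the integers $r_0-i_0$, $i_0$, $p-i_t$ above is $\equiv0\bmod p$) and with the edge case $r_0=1$ (for which (1) is vacuous and $D_0$ has two-dimensional image). None of it is deep; phrasing everything in terms of the pattern $\pi$ and its least nonzero and least non-$(p-1)$ coordinates is what keeps the case analysis finite and short. As a sanity check one can note $\dim{\rm Im}\,D_0+\sum_{t=1}^{f-1}\dim{\rm Im}\,D_t=(r_0+1)+\sum_{t=1}^{f-1}r_0(p-1)p^{f-1-t}=r_0p^{f-1}+1=\dim\langle D_0,\dots,D_{f-1}\rangle$, so the sum is in fact direct, which confirms that (1)--(3) impose exactly the right number of constraints.
\end{proof}
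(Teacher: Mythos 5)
Your proposal is correct and follows essentially the same route as the paper's proof: both directions rest on Lemma~\ref{image of D_j}, the forward implication by decomposing $P$ as a sum of elements of the ${\rm Im}\,D_j$ and matching coefficients (your explicit disjoint-support observation just sharpens what the paper uses implicitly), and the converse by exhibiting the same explicit preimages $Q_0$ and $Q_t$ with coefficients $b_{i_0,0,\dots,0}/(r_0-i_0)$, $b_{r_0+p-1,p-1,\dots,p-1}/r_0$ and $-b_{i_0,0,\dots,0,i_t,\dots,i_{f-1}}/i_t$. The only cosmetic slip is the bound ``$1\le r_0-i_0\le p-2$'' (it should be $\le p-1$, e.g.\ when $i_0=0$), which does not affect invertibility mod $p$.
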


\begin{proof}
        Say $P\in \langle D_0, \dots, D_{f-1} \rangle.$ We show conditions $(1)$, $(2)$, $(3)$ are satisfied. Write 
	$P=\sum_{j=0}^{f-1} P_j$ where 
	\[P_j=\sum\limits_{i_0=0}^{r_0+p-1}\dots \sum\limits_{i_{f-1}=0}^{p-1} b_{i_0,\dots, i_{f-1}}^{(j)} X_0^{r_0+p-1-i_0} Y_0^{i_0}\prod_{l=1}^{f-1} X_l^{p-1-i_l} Y_l^{i_l} \in {\rm Im}~D_j\]
	for all $0\leq j\leq f-1.$
        Then  $$b_{i_0,\dots, i_{f-1}} = \sum_{j=0}^{f-1} b_{i_0,\dots, i_{f-1}}^{(j)}$$
        with each $b_{i_0,\dots, i_{f-1}}^{(j)}$ satisfying the conditions of Lemma~\ref{image of D_j}.
        Now (2) is clear since if $r_0 \leq i_0 \leq p-1$, each term on the right vanishes by (both parts of) Lemma~\ref{image of D_j}.
        Condition (1) is also clear, since if $1 \leq i_0 \leq r_0-1$ and the other $i_j$ are all $0$ or all $p-1$, then all
        the terms on the right vanish for $j \geq 1$, by the second part of Lemma~\ref{image of D_j}, and 
        the $j=0$ term on the right satisfies the desired identity by the first part. Similarly (3) holds, since
        if $0 \leq i_0 \leq r_0-1$ and the other $i_j$ are not all $0$ or not all $p-1$, then the $j = 0$
        term on the right vanishes by the first part of the lemma and the remaining terms satisfy the desired identity by the second part,
        whence so does their sum.
                
	For the converse, note that $P$ can be written as
{\small	\begin{eqnarray*}
		&& \sum_{t=1}^{f-1}\left( \sum_{i_0=0}^{r_0-1}\sum_{i_1=0}^{0}\dots\sum_{i_{t-1}=0}^{0}\sum_{i_t=1}^{p-1}\sum_{i_{t+1}=0}^{p-1}\dots \sum_{i_{f-1}=0}^{p-1} b_{i_0, 0, \dots, 0, i_t,\dots, i_{f-1}} X_0^{r_0+p-1-i_0} Y_0^{i_0}\prod_{l=1}^{t-1} X_l^{p-1}  \prod_{l=t}^{f-1} X_l^{p-1-i_l} Y_l^{i_l}\right)\\
		&& \quad + \sum_{i_0=1}^{r_0-1} b_{i_0,0,\dots, 0} X_0^{r_0+p-1-i_0} Y_0^{i_0} \prod_{l=1}^{f-1} X_l^{p-1}+b_{0,\dots, 0} X_0^{r_0+p-1} \prod_{l=1}^{f-1} X_l^{p-1}\\
		&& + \sum_{i_0=r_0}^{p-1}\sum_{i_1=0}^{p-1}\dots \sum_{i_{f-1}=0}^{p-1} b_{i_0, \dots, i_{f-1}} X_0^{r_0+p-1-i_0} Y_0^{i_0} \prod_{l=1}^{f-1}X_l^{p-1-i_l} Y_l^{i_l}\\
		&& + \sum_{t=1}^{f-1}\left( \sum_{i_0=p}^{r_0+p-1}\sum_{i_1=p-1}^{p-1}\dots \sum_{i_{t-1}=p-1}^{p-1}\sum_{i_t=0}^{p-2}\sum_{i_{t+1}=0}^{p-1}\dots \sum_{i_{f-1}=0}^{p-1} b_{i_0, p-1, \dots, p-1, i_t,\dots, i_{f-1}} X_0^{r_0+p-1-i_0} Y_0^{i_0}\prod_{l=1}^{t-1} Y_l^{p-1}  \prod_{l=t}^{f-1} X_l^{p-1-i_l} Y_l^{i_l}\right)\\
		&& \quad  + \sum_{i_0=p}^{r_0+p-2} b_{i_0,p-1,\dots, p-1} X_0^{r_0+p-1-i_0} Y_0^{i_0} \prod_{l=1}^{f-1} Y_l^{p-1}+b_{r_0+p-1, p-1, \dots, p-1} Y_0^{r_0+p-1} \prod_{l=1}^{f-1} Y_l^{p-1},
	\end{eqnarray*}}
\!\!\!\!      which, by the transformations $i_0\mapsto i_0+p$ and $i_t\mapsto i_t-1$ in the fourth sum above (and dropping the summations for $i_1,\dots, i_{t-1}$ in the first and fourth sum) and
      by the transformation $i_0 \mapsto i_0+p-1$ in the last sum, can be rewritten as 
{\small	\begin{eqnarray}
		\label{breaking the sum}
		&& \sum_{t=1}^{f-1}\left( \sum_{i_0=0}^{r_0-1}\sum_{i_t=1}^{p-1}\sum_{i_{t+1}=0}^{p-1}\dots \sum_{i_{f-1}=0}^{p-1} b_{i_0, 0, \dots, 0, i_t,\dots, i_{f-1}} X_0^{r_0+p-1-i_0} Y_0^{i_0}\left(\prod_{l=1}^{t-1} X_l^{p-1}\right)  X_t^{p-1-i_t} Y_t^{i_t} \prod_{l=t+1}^{f-1} X_l^{p-1-i_l} Y_l^{i_l}\right) \nonumber \\ 
		&& + \sum_{t=1}^{f-1}\left( \sum_{i_0=0}^{r_0-1} \sum_{i_t=1}^{p-1}\sum_{i_{t+1}=0}^{p-1}\dots \sum_{i_{f-1}=0}^{p-1} b_{i_0+p, p-1, \dots, p-1, i_t-1,\dots, i_{f-1}} X_0^{r_0-i_0-1} Y_0^{i_0+p}\prod_{l=1}^{t-1} Y_l^{p-1}  X_t^{p-i_t}Y_t^{i_t-1}\prod_{l=t+1}^{f-1} X_l^{p-1-i_l} Y_l^{i_l}\right) \nonumber \\
		&& \quad+ \sum_{i_0=1}^{r_0-1} b_{i_0,0,\dots, 0} X_0^{r_0+p-1-i_0} Y_0^{i_0} \prod_{l=1}^{f-1} X_l^{p-1}  +\sum_{i_0=1}^{r_0-1} b_{i_0+p-1,p-1,\dots, p-1} X_0^{r_0-i_0} Y_0^{i_0+p-1} \prod_{l=1}^{f-1} Y_l^{p-1} \nonumber \\ 
		&& + \sum_{i_0=r_0}^{p-1}\sum_{i_1=0}^{p-1}\dots \sum_{i_{f-1}=0}^{p-1} b_{i_0, \dots, i_{f-1}} X_0^{r_0+p-1-i_0} Y_0^{i_0} \prod_{l=1}^{f-1}X_l^{p-1-i_l} Y_l^{i_l} \nonumber \\ 
		&& \quad +b_{0,\dots, 0} X_0^{r_0+p-1} \prod_{l=1}^{f-1} X_l^{p-1} + b_{r_0+p-1, p-1, \dots, p-1} Y_0^{r_0+p-1} \prod_{l=1}^{f-1} Y_l^{p-1}. 
	\end{eqnarray} }
	
	Now suppose the conditions $(1)$, $(2)$, $(3)$ hold. Then by \eqref{breaking the sum}, we can write the polynomial $P$ as 
	\begin{eqnarray}
		&& \sum_{t=1}^{f-1}\left( - \sum_{i_0=0}^{r_0-1}\sum_{i_t=1}^{p-1}\sum_{i_{t+1}=0}^{p-1}\dots \sum_{i_{f-1}=0}^{p-1}  \dfrac{b_{i_0, 0, \dots, 0, i_t,\dots, i_{f-1}}}{i_t} D_t\left( X_0^{r_0-1-i_0} Y_0^{i_0}  X_t^{p-i_t} Y_t^{i_t} \prod_{l=t+1}^{f-1} X_l^{p-1-i_l} Y_l^{i_l}\right) \right) \nonumber \\ \nonumber
		&& \quad + \sum_{i_0=1}^{r_0-1} \dfrac{b_{i_0,0,\dots, 0}}{(r_0-i_0)}  D_0\left(X_0^{r_0-i_0} Y_0^{i_0}\right)+ \dfrac{1}{r_0} D_0 \left( b_{0,\dots, 0} X_0^{r_0} + b_{r_0+p-1, p-1, \dots, p-1} Y_0^{r_0} \right).
	\end{eqnarray}
	This implies that $P\in \langle D_0, \dots, D_{f-1} \rangle.$ 
\end{proof}	

\begin{remark}
	\label{dimension of the space generated by D_j's}
	In Lemma~\ref{space generated by D_j's}, there are no conditions on the coefficients $b_{0, \dots, 0}$ and $b_{r_0+p-1, p-1, \dots, p-1}$ of $P$.
        So if $1 \leq r_0 \leq p-1$, then the dimension of $\langle D_0, \dots, D_{f-1} \rangle$ over $\mathbb{F}_q$ is 
	\[2+ (r_0-1)+ \sum\limits_{t=1}^{f-1} r_0(p-1)p^{f-1-t}=r_0+1+ r_0(p^{f-1}-1)=r_0p^{f-1}+1.\] 
\end{remark}

The following theorem is Theorem \ref{cuspidal-twisted} from the introduction.
\begin{theorem}\label{main theorem cuspidal twisted}
Let $r=r_0+r_1p+\dots+r_{f-1}p^{f-1},$ where $2\leq r_0\leq p-1$ and $r_j=0$ for all $1\leq j\leq f-1.$ Recall that
 \[D_0 = X_0^pX_1^{p-1}\cdots X_{f-1}^{p-1}\dfrac{\partial}{\partial X_0}+Y_0^pY_1^{p-1}\cdots Y_{f-1}^{p-1}\dfrac{\partial}{\partial Y_0} \]
and
\[D_j = X_0^pX_1^{p-1}\cdots X_{j-1}^{p-1}\dfrac{\partial}{\partial X_j}+Y_0^pY_1^{p-1}\cdots Y_{j-1}^{p-1}\dfrac{\partial}{\partial Y_j}, \] for all $1\leq j\leq f-1.$ Then over ${\mathbb F}_{q^2}$ we have
\[\dfrac{\bigotimes_{j=0}^{f-1}V_{r_j+p-1}^{{\rm Fr}^j}}{\langle D_0,\dots, D_{f-1}\rangle}\otimes\bigotimes_{j=0}^{f-1} V_{p-1}^{{\rm Fr}^j}\simeq {\rm ind}_{T(\mathbb{F}_q)}^{G(\mathbb{F}_q)}\omega_{2f}^r.\]
\end{theorem}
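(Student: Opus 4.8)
The plan is to construct the isomorphism explicitly using the map $\psi$ from Lemma~\ref{T-linearity in general}, which in the present setting (where $r_j = 0$ for $1 \leq j \leq f-1$) sends $P \otimes Q = \bigotimes_{j=0}^{f-1} P_j \otimes \bigotimes_{j=0}^{f-1} Q_j$ to the function
\[\left(\begin{smallmatrix} a & b \\ c & d \end{smallmatrix}\right) \mapsto \nabla_0^{r_0-2}(P_0)\,\nabla_1^0(P_1) \cdots \nabla_{f-1}^0(P_{f-1})\Big\vert_{(A_\alpha, B_\alpha; \dots)}^{(A_\alpha^{p^f}, B_\alpha^{p^f}; \dots)} \cdot \prod_{j=0}^{f-1} Q_j(A_\alpha^{p^{f+j}}, B_\alpha^{p^{f+j}}).\]
By that lemma, $\psi$ is already $T(\mathbb{F}_q)$-linear and $G(\mathbb{F}_q)$-linear, so it descends to a $G(\mathbb{F}_q)$-equivariant map on the domain $\bigotimes_{j=0}^{f-1} V_{r_j+p-1}^{{\rm Fr}^j} \otimes \bigotimes_{j=0}^{f-1} V_{p-1}^{{\rm Fr}^j}$. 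The first reduction is dimensional: both sides of the claimed isomorphism have the same $\mathbb{F}_q$-dimension, namely $q^2 - q = |G(\mathbb{F}_q)/T(\mathbb{F}_q)|$ on the right, and on the left $\bigl(\dim V_{r_0+p-1} \cdot p^{f-1} - (r_0 p^{f-1} + 1)\bigr) \cdot p^f = (r_0 + p - (r_0 + 1/p^{f-1})\cdot p^{f-1}/p^{f-1}\cdots)$—more precisely, using Remark~\ref{dimension of the space generated by D_j's}, $\dim \bigl(\bigotimes V_{r_j+p-1}^{{\rm Fr}^j}/\langle D_0,\dots,D_{f-1}\rangle\bigr) = (r_0+p)p^{f-1} - (r_0 p^{f-1}+1) = p^f - 1$, and then tensoring with $\bigotimes_{j=0}^{f-1} V_{p-1}^{{\rm Fr}^j}$ of dimension $p^f$ would overshoot, so one must instead argue that $\psi$ factors through the quotient by $\langle D_0, \dots, D_{f-1}\rangle$ on the first factor and then the image lands in the induced space which has dimension $q^2-q$; hence it suffices to show (a) $\langle D_0, \dots, D_{f-1}\rangle \otimes \bigotimes V_{p-1}^{{\rm Fr}^j} \subseteq \ker\psi$ together with whatever else is forced, and (b) $\ker\psi$ is exactly $\langle D_0, \dots, D_{f-1}\rangle \otimes \bigotimes V_{p-1}^{{\rm Fr}^j}$ plus the kernel coming from the $Q$-factor, and then conclude by comparing dimensions after identifying $\bigotimes_{j=0}^{f-1} V_{p-1}^{{\rm Fr}^j}$ with $\overline{\mathrm{St}}$ as a projective module.

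The cleaner route, and the one I would actually carry out, mirrors the proof of Theorem~\ref{base case cuspidal}: work with the equivalent map $\psi : \bigotimes V_{r_j+p-1}^{{\rm Fr}^j} \otimes \bigotimes V_{p-1}^{{\rm Fr}^j} \to {\rm ind}_{T(\mathbb{F}_q)}^{G(\mathbb{F}_q)} \omega_{2f}^r$ and compute $\ker\psi$ directly, showing $\ker\psi = \langle D_0, \dots, D_{f-1}\rangle \otimes \bigotimes_{j=0}^{f-1} V_{p-1}^{{\rm Fr}^j}$. First I would verify the inclusion $\supseteq$: using Lemma~\ref{G-linearity of D_j} the operators $D_j$ are $G(\mathbb{F}_q)$-linear modulo the images of the earlier ones, so it is enough to check that $\psi$ kills $D_0(V_{r_0}\otimes \bigotimes V_0^{{\rm Fr}^j}) \otimes \bigotimes V_{p-1}^{{\rm Fr}^j}$ and $D_j(\cdots) \otimes \bigotimes V_{p-1}^{{\rm Fr}^j}$ for each $j$; this is a direct computation, exactly analogous to the ``Claim'' in the proof of Theorem~\ref{base case cuspidal} that $\nabla_\alpha^r(D(P))\big\vert_{(A_\alpha,B_\alpha)}^{(A_\alpha^p,B_\alpha^p)} = 0$, using that the relevant monomials $X^{q+1}$, $Y^{q+1}$, $X^q Y + X Y^q$ type expressions vanish when evaluated at the Frobenius-conjugate pair of points over $\mathbb{F}_{q^2}$. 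The reverse inclusion $\subseteq$ is the heart of the matter. Writing a general element of the domain in the monomial basis indexed by $(i_0, \dots, i_{f-1})$ and the $Q$-variables, plug it into $\psi$, expand using Lemma~\ref{glineargeneral} and the flip/flop relations \eqref{flip in general}, \eqref{flop in general} to rewrite everything in the basis $\mathcal{B}_q$, and extract the system of linear equations on the coefficients forced by $\psi(P\otimes Q) = 0$. The key structural input is Lemma~\ref{space generated by D_j's}, which characterizes membership in $\langle D_0, \dots, D_{f-1}\rangle$ by precisely the relations $i_0 b_{i_0,0,\dots,0} = (r_0-i_0)b_{i_0+p-1,p-1,\dots,p-1}$, vanishing of $b_{i_0,\dots,i_{f-1}}$ for $r_0 \leq i_0 \leq p-1$, and $b_{i_0,0,\dots,0,i_t,\dots,i_{f-1}} = -b_{i_0+p,p-1,\dots,p-1,i_t-1,\dots,i_{f-1}}$. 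The goal is to show the equations extracted from $\ker\psi$ imply exactly these relations (for each fixed value of the $Q$-indices, which only rescale things).

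The main obstacle will be Step (b): extracting and solving the linear system coming from $\psi(P\otimes Q)=0$. In the ${\rm GL}_2(\mathbb{F}_p)$ case this already required the delicate case analysis of Cases 1, 2, 3 and several nonvanishing-determinant computations for the coefficient matrices $M$. Here the combinatorics is entangled across the $f$ Frobenius slots because the twisted operators $D_j$ mix variables $X_0, \dots, X_{j-1}, X_j$ (and similarly for $Y$), so the monomial bookkeeping is genuinely $f$-dimensional. My expectation is that one can largely decouple the slots: the $\nabla_0^{r_0-2}$ only differentiates in $X_0, Y_0$, while the other $\nabla_j^0 = \mathrm{id}$, so after the evaluation the dependence on $(i_1, \dots, i_{f-1})$ enters $\psi(P\otimes Q)$ only through the exponent $i = \sum i_j p^j$ appearing in powers of $A_\alpha, B_\alpha$; grouping monomials by the residue of $i + (\text{$Q$-degree contribution})$ modulo $q-1$ and applying flips/flops should reduce the system, for each residue class, to one formally identical to the ${\rm GL}_2(\mathbb{F}_p)$ system (with $p$ replaced by $q$ in the flip/flop bookkeeping but the same $r_0$ in the ``$D$'' part). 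Once that reduction is in place, the determinant computations of Theorem~\ref{base case cuspidal} apply essentially verbatim, yielding that the only solutions are those dictated by Lemma~\ref{space generated by D_j's}, hence $\ker\psi = \langle D_0,\dots,D_{f-1}\rangle \otimes \bigotimes_{j=0}^{f-1} V_{p-1}^{{\rm Fr}^j}$. Since $\dim\bigl(\bigotimes V_{r_j+p-1}^{{\rm Fr}^j}/\langle D_0,\dots,D_{f-1}\rangle \otimes \bigotimes V_{p-1}^{{\rm Fr}^j}\bigr)$ must then equal $\dim\,\mathrm{Im}\,\psi$, and one checks this common value is $q^2-q = \dim {\rm ind}_{T(\mathbb{F}_q)}^{G(\mathbb{F}_q)}\omega_{2f}^r$, the induced map is an isomorphism, proving the theorem; the identification $\bigotimes_{j=0}^{f-1} V_{p-1}^{{\rm Fr}^j} \simeq \overline{\mathrm{St}}$ recovers the form stated in Theorem~\ref{cuspidal-twisted}.
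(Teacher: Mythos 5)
Your proposal follows essentially the same route as the paper's proof: the same map $\psi$ from Lemma~\ref{T-linearity in general}, the same two inclusions for $\ker\psi$ (a direct computation for $\langle D_0,\dots,D_{f-1}\rangle\otimes\bigotimes_j V_{p-1}^{{\rm Fr}^j}\subseteq\ker\psi$, and for the reverse inclusion the reduction---slot by slot, congruence class modulo $q-1$ by congruence class, via the flip/flop relations---to linear systems formally identical to the $\mathbb{F}_p$ determinant computations, then Lemma~\ref{space generated by D_j's} to translate the resulting relations into membership in $\langle D_0,\dots,D_{f-1}\rangle$), followed by the dimension count from Remark~\ref{dimension of the space generated by D_j's}. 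The only slip is the parenthetical claim in your first paragraph that tensoring the $(q-1)$-dimensional quotient with the $q$-dimensional $\bigotimes_j V_{p-1}^{{\rm Fr}^j}$ ``would overshoot'': it gives exactly $q(q-1)=q^2-q$, the dimension of ${\rm ind}_{T(\mathbb{F}_q)}^{G(\mathbb{F}_q)}\omega_{2f}^r$, which is what you in fact use correctly at the end.
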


\begin{proof}
Define $\psi:\bigotimes_{j=0}^{f-1}V_{r_j+p-1}^{{\rm Fr}^j}\otimes \bigotimes_{j=0}^{f-1}V_{p-1}^{{\rm Fr}^j}\rightarrow {\rm ind}_{T(\mathbb{F}_q)}^{G(\mathbb{F}_q)}\omega_{2f}^r$  as in Lemma \ref{T-linearity in general}. Recall that for $P\otimes Q:=\bigotimes_{j=0}^{f-1}P_j\otimes\bigotimes_{j=0}^{f-1}Q_j,$ we have 
$\psi(P\otimes Q)=\psi_{P\otimes Q},$
where now noting that $r_j=0$ for $1\leq j\leq f-1$, we have $\psi_{P\otimes Q}:G(\mathbb{F}_q)\rightarrow \mathbb{F}_{q^2}$ is defined by
	\[\left(\begin{matrix}
				a & b\\
				c & d
				\end{matrix}\right)\mapsto \nabla_{0}^{r_0-2}(P_0)P_1\cdots P_{f-1}\Big\vert_{(A_\alpha, B_\alpha,\dots, A_\alpha^{p^{f-1}}, B_\alpha^{p^{f-1}})}^{(A_\alpha^{p^f}, B_\alpha^{p^f},\dots, A_\alpha^{p^{2f-1}}, B_\alpha^{p^{2f-1}})} \cdot \prod_{j=0}^{f-1}Q_j(A_\alpha^{p^{f+j}}, B_\alpha^{p^{f+j}}),\]
                            where $\nabla_0 = A_\alpha\frac{\partial}{\partial X_0}+B_\alpha\frac{\partial}{\partial Y_0}$.
                            By Lemma \ref{T-linearity in general} (1), the map $\psi_{P\otimes Q}$ is $T(\mathbb{F}_q)$-linear and hence $\psi$ is well defined.
                            It is also $G(\mathbb{F}_q)$-linear by Lemma \ref{T-linearity in general} (2).
 
 We show that $\ker\psi=\langle D_0,\dots, D_{f-1}\rangle  \otimes\bigotimes_{j=0}^{f-1} V_{p-1}^{{\rm Fr}^j}$. The proof occupies the rest of this paper.
 First we show $\langle D_0,\dots, D_{f-1}\rangle \otimes\bigotimes_{j=0}^{f-1} V_{p-1}^{{\rm Fr}^j}
 \subset \ker\psi.$\\
 
	\noindent \textbf{Case 1:} Suppose $j\neq 0.$ We show that ${\rm Im}~D_j\subset \ker\psi.$ 
 	For $0\leq i_0\leq r_0-1,$ we have
 	\begin{align*}
 			& D_j\left(X_0^{r_0-1-i_0}Y_0^{i_0}X_j^{p-i_j}Y_j^{i_j}\prod_{l=j+1}^{f-1}X_l^{p-1-i_l}Y_l^{i_l}\right) \\ 
 			& \quad =X_0^{r_0+p-1-i_0}Y_0^{i_0}\left( \prod_{l=1}^{j-1}X_l^{p-1} \right)         (-i_j)X_j^{p-1-i_j}Y_j^{i_j}  \left( \prod_{l=j+1}^{f-1}X_l^{p-1-i_l}Y_l^{i_l}  \right) \\ 
 			& \quad {\hspace{12pt}}+X_0^{r_0-1-i_0}Y_0^{i_0+p}
 			\left( \prod_{l=1}^{j-1}Y_l^{p-1} \right) i_jX_j^{p-i_j}Y_j^{i_j-1} 
 			 \left( \prod_{l=j+1}^{f-1}X_l^{p-1-i_l}Y_l^{i_l}  \right).
	 \end{align*}
%
        We claim
        {\small 
          \begin{align}
                           \label{applying psi map D_j case} 
 			&- \nabla_{0}^{r_0-2}\left(X_0^{r_0+p-1-i_0}Y_0^{i_0}\right)\left( \prod_{l=1}^{j-1}X_l^{p-1} \right) X_j^{p-1-i_j}Y_j^{i_j}  \left( \prod_{l=j+1}^{f-1}X_l^{p-1-i_l}Y_l^{i_l}  \right)\Big\vert_{(A_\alpha, B_\alpha,\dots, A_\alpha^{p^{f-1}}, B_\alpha^{p^{f-1}})}^{(A_\alpha^{p^f}, B_\alpha^{p^f},\dots, A_\alpha^{p^{2f-1}}, B_\alpha^{p^{2f-1}})} \notag \\
 			&  +\nabla_{0}^{r_0-2}\left(X_0^{r_0-1-i_0}Y_0^{i_0+p}\right)\left( \prod_{l=1}^{j-1}Y_l^{p-1} \right)         X_j^{p-i_j}Y_j^{i_j-1}  \left( \prod_{l=j+1}^{f-1}X_l^{p-1-i_l}Y_l^{i_l}  \right)\Big\vert_{(A_\alpha, B_\alpha,\dots, A_\alpha^{p^{f-1}}, B_\alpha^{p^{f-1}})}^{(A_\alpha^{p^f}, B_\alpha^{p^f},\dots, A_\alpha^{p^{2f-1}}, B_\alpha^{p^{2f-1}})} \> = \> 0.
  \end{align} }
	Indeed, we have
{\small 		\begin{eqnarray}
 		\label{apply nabla first time}
			\nabla_{0}^{r_0-2}\left(X_0^{r_0+p-1-i_0}Y_0^{i_0}\right) 
 			& = &\left(A_\alpha\dfrac{\partial}{\partial X_0}+B_\alpha\dfrac{\partial}{\partial Y_0}\right)^{r_0-2}\left(X_0^{r_0+p-1-i_0}Y_0^{i_0}\right) \nonumber\\
    		& = &
   			 \sum\limits_{k_0=0}^{r_0-2}\binom{r_0-2}{k_0}A_\alpha^{r_0-2-k_0}B_\alpha^{k_0}[r_0+p-1-i_0]_{r_0-2-k_0}[i_0]_{k_0}X_0^{p+1-(i_0-k_0)}Y_0^{i_0-k_0} \nonumber \\
 			& = &
			\binom{r_0-2}{i_0-1}A_\alpha^{r_0-1-i_0}B_\alpha^{i_0-1}(r_0-1-i_0)!i_0!X_0^pY_0 \nonumber \\
 			&&{\hspace{20pt}}+\binom{r_0-2}{i_0}A_\alpha^{r_0-2-i_0}B_\alpha^{i_0}(r_0-1-i_0)!i_0!X_0^{p+1},
 	 \end{eqnarray} }
 	and similarly
{\small 		\begin{eqnarray}
   			\label{apply nabla second time}
				\nabla_{0}^{r_0-2}\left(X_0^{r_0-1-i_0}Y_0^{i_0+p}\right)
				& = &
				\binom{r_0-2}{i_0-1}A_\alpha^{r_0-1-i_0}B_\alpha^{i_0-1}(r_0-1-i_0)!i_0!Y_0^{p+1} \nonumber\\
				& &
				{\hspace{5pt}}+\binom{r_0-2}{i_0}A_\alpha^{r_0-2-i_0}B_\alpha^{i_0}(r_0-1-i_0)!i_0!X_0Y_0^p.
	  \end{eqnarray} }
          Ignoring the factor $(r_0-1-i_0)! i_0!$ and using \eqref{apply nabla first time},
          the first summand on the left side of \eqref{applying psi map D_j case} becomes
{\small 	\begin{align*}
		&- \binom{r_0-2}{i_0-1}A_\alpha^{r_0-1-i_0}B_\alpha^{i_0-1}X_0^pY_0\left( \prod_{l=1}^{j-1}X_l^{p-1} \right) X_j^{p-1-i_j}Y_j^{i_j}  \left( \prod_{l=j+1}^{f-1}X_l^{p-1-i_l}Y_l^{i_l}  \right)\Big\vert_{(A_\alpha, B_\alpha,\dots, A_\alpha^{p^{f-1}}, B_\alpha^{p^{f-1}})}^{(A_\alpha^{p^f}, B_\alpha^{p^f},\dots, A_\alpha^{p^{2f-1}}, B_\alpha^{p^{2f-1}})}\\
		&  - \binom{r_0-2}{i_0}A_\alpha^{r_0-2-i_0}B_\alpha^{i_0}X_0^{p+1}\left( \prod_{l=1}^{j-1}X_l^{p-1} \right) X_j^{p-1-i_j}Y_j^{i_j}  \left( \prod_{l=j+1}^{f-1}X_l^{p-1-i_l}Y_l^{i_l}  \right)\Big\vert_{(A_\alpha, B_\alpha,\dots, A_\alpha^{p^{f-1}}, B_\alpha^{p^{f-1}})}^{(A_\alpha^{p^f}, B_\alpha^{p^f},\dots, A_\alpha^{p^{2f-1}}, B_\alpha^{p^{2f-1}})},
	 \end{align*} }
 	which, by noting 
 	\[\sum\limits_{l=1}^{j-1}(p-1)p^{f+l}+(p-1-i_j)p^{f+j}+\sum\limits_{l=j+1}^{f-1}(p-1-i_l)p^{f+l}=p^{2f}-p^{f+1}-\sum\limits_{l=j}^{f-1}i_lp^{f+l}\]
 	and, dividing by $p^f$,
	\[\sum\limits_{l=1}^{j-1}(p-1)p^{l}+(p-1-i_j)p^{j}+\sum\limits_{l=j+1}^{f-1}(p-1-i_l)p^{l}=p^{f}-p-\sum\limits_{l=j}^{f-1}i_lp^{l},\] 
	equals
{\small 	\begin{align}
 		\label{first summand of the claim}
	 		&  - \binom{r_0-2}{i_0-1}A_\alpha^{r_0-1-i_0}B_\alpha^{i_0-1}\left(A_\alpha^{p^{f+1}+p^{2f}-p^{f+1}-\sum\limits_{l=j}^{f-1}i_lp^{f+l}}B_\alpha^{p^f+\sum\limits_{l=j}^{f-1}i_lp^{f+l}}-A_\alpha^{p+p^f-p-\sum\limits_{l=j}^{f-1}i_lp^{l}}B_\alpha^{1+\sum\limits_{l=j}^{f-1}i_lp^{l}}\right) \nonumber\\
			&
			-\binom{r_0-2}{i_0}A_\alpha^{r_0-2-i_0}B_\alpha^{i_0}\left(A_\alpha^{p^{f+1}+p^f+p^{2f}-p^{f+1}-\sum\limits_{l=j}^{f-1}i_lp^{f+l}}B_\alpha^{\sum\limits_{l=j}^{f-1}i_lp^{f+l}}-A_\alpha^{p+1+p^{f}-p-\sum\limits_{l=j}^{f-1}i_lp^{l}}B_\alpha^{\sum\limits_{l=j}^{f-1}i_lp^{l}}\right) \nonumber\\
 			&= -\binom{r_0-2}{i_0-1}\left(A_\alpha^{r_0-i_0-\sum\limits_{l=j}^{f-1}i_lp^{f+l}}B_\alpha^{i_0-1+p^f+\sum\limits_{l=j}^{f-1}i_lp^{f+l}}-A_\alpha^{r_0-1-i_0+p^f-\sum\limits_{l=j}^{f-1}i_lp^{l}}B_\alpha^{i_0+\sum\limits_{l=j}^{f-1}i_lp^{l}}\right) \nonumber\\
 			&{\hspace{15pt}}-\binom{r_0-2}{i_0}\left(A_\alpha^{r_0-1-i_0+p^f-\sum\limits_{l=j}^{f-1}i_lp^{f+l}}B_\alpha^{i_0+\sum\limits_{l=j}^{f-1}i_lp^{f+l}}-A_\alpha^{r_0-1-i_0+p^{f}-\sum\limits_{l=j}^{f-1}i_lp^{l}}B_\alpha^{i_0+\sum\limits_{l=j}^{f-1}i_lp^{l}}\right).
        \end{align} }
	\!\! \!\! \!\! Again, ignoring the common factor $(r_0-1-i_0)! i_0!$ and using \eqref{apply nabla second time}, a similar computation
        shows that the second summand on the left hand side
        of \eqref{applying psi map D_j case} is
        \begin{align}
           \label{second summand of the claim}
 			&\binom{r_0-2}{i_0-1}\left(A_\alpha^{r_0-i_0-\sum\limits_{l=j}^{f-1}i_lp^{f+l}}B_\alpha^{i_0-1+p^f+\sum\limits_{l=j}^{f-1}i_lp^{f+l}}-A_\alpha^{r_0-1-i_0+p^{f}-\sum\limits_{l=j}^{f-1}i_lp^{l}}B_\alpha^{i_0+\sum\limits_{l=j}^{f-1}i_lp^{l}}\right) \nonumber\\
 			&+\binom{r_0-2}{i_0}\left(A_\alpha^{r_0-1-i_0+p^f-\sum\limits_{l=j}^{f-1}i_lp^{f+l}}B_\alpha^{i_0+\sum\limits_{l=j}^{f-1}i_lp^{f+l}}-A_{\alpha}^{r_0-1-i_0+p^{f}-\sum\limits_{l=j}^{f-1}i_lp^{l}}B_\alpha^{i_0+\sum\limits_{l=j}^{f-1}i_lp^{l}}\right),
       \end{align}
       which is the negative of 
       \eqref{first summand of the claim}. This proves the claim \eqref{applying psi map D_j case}.
       Hence {\rm Im}~$D_j\subset \ker\psi$ for all $1\leq j\leq f-1.$ \\
 
       \noindent \textbf{Case 2:} The proof that ${\rm Im}~D_0\subset\ker\psi$ is similar and is omitted. \\

        Combining Cases $1$ and $2,$ we see that 
        $\langle D_0,\dots, D_{f-1}\rangle \otimes\bigotimes_{j=0}^{f-1} V_{p-1}^{{\rm Fr}^j} \subset \ker\psi$. \\
        
	Next we show that $\ker\psi\subset \langle D_0,\dots, D_{f-1}\rangle\otimes\bigotimes_{j=0}^{f-1} V_{p-1}^{{\rm Fr}^j}.$ 
	Let 
		\[P\otimes Q  = \sum\limits_{\vec{i}=\vec{0}}^{\vec{r+q-1}}\sum\limits_{\vec{j}=\vec{0}}^{\vec{q-1}}b_{\vec{i},\vec{j}}X_0^{r_0+p-1-i_0}Y_0^{i_0}\left(\prod_{l=1}^{f-1} X_l^{p-1-i_l}Y_l^{i_l}\right)\left(\prod_{l=0}^{f-1}S_l^{p-1-j_l}T_l^{j_l}\right) \in \ker \psi,\]
	where $\vec{i}=(i_0,\dots, i_{f-1}), \vec{j}=(j_0,\dots, j_{f-1})$ and 
        $\sum\limits_{\vec{i}=\vec{0}}^{\vec{r+q-1}}:=\sum\limits_{i_0=0}^{r_0+p-1}\sum\limits_{i_1=0}^{p-1}\dots\sum\limits_{i_{f-1}=0}^{p-1}$,
        $\sum\limits_{\vec{j}=\vec{0}}^{\vec{q-1}}:=\sum\limits_{j_0=0}^{p-1}\dots\sum\limits_{j_{f-1}=0}^{p-1}$.
	By definition of $\psi$, for all $\left( \begin{smallmatrix} a & b \\ c & d \end{smallmatrix} \right) \in G({\mathbb F}_q)$, we have 
	\begin{align}
		\label{after applying psi}
			&\sum\limits_{\vec{i}=\vec{0}}^{\vec{r+q-1}}\sum\limits_{\vec{j}=\vec{0}}^{\vec{q-1}}b_{\vec{i},\vec{j}}\nabla_0^{r_0-2}\left(X_0^{r_0+p-1-i_0}Y_0^{i_0}\right)\left(\prod_{l=1}^{f-1} X_l^{p-1-i_l}Y_l^{i_l}\right)\Big\vert_{(A_\alpha, B_\alpha,\dots, A_\alpha^{p^{f-1}}, B_\alpha^{p^{f-1}})}^{(A_\alpha^{p^f}, B_\alpha^{p^f},\dots, A_\alpha^{p^{2f-1}}, B_\alpha^{p^{2f-1}})} \nonumber \\
			&\qquad \qquad {\hspace{40pt}}\cdot
                          \prod_{l=0}^{f-1}\left(S_l^{p-1-j_l}T_l^{j_l}\Big\vert_{(0, 0)}^{(A_\alpha^{p^{f+l}}, B_\alpha^{p^{f+l}})}\right) = 0.
	\end{align}
	Note, for each $\vec{j}$, 
	\begin{equation}
		\label{jpart}
			\prod_{l=0}^{f-1}\left(S_l^{p-1-j_l}T_l^{j_l}\Big\vert_{(0, 0)}^{(A_\alpha^{p^{f+l}}, B_\alpha^{p^{f+l}})}\right)=A_\alpha^{\sum\limits_{l=0}^{f-1}(p-1-j_l)p^{f+l}}B_\alpha^{\sum\limits_{l=0}^{f-1}j_lp^{f+l}}=A_\alpha^{1-q-\sum\limits_{l=0}^{f-1}j_lp^{f+l}}B_\alpha^{\sum\limits_{l=0}^{f-1}j_lp^{f+l}}.\\
	\end{equation}
	Now by fixing $\vec{j}$ and only considering the sum over $\vec{i}$ in \eqref{after applying psi}, we have
	\begin{align*}
		& \sum\limits_{\vec{i}=\vec{0}}^{\vec{r+q-1}}b_{\vec{i},\vec{j}}\left(   A_\alpha\dfrac{\partial}{\partial X_0}+ B_\alpha\dfrac{\partial}{\partial Y_0} \right)^{r_0-2}\left(X_0^{r_0+p-1-i_0}Y_0^{i_0}\right)\left(\prod_{l=1}^{f-1} X_l^{p-1-i_l}Y_l^{i_l}\right)\Big\vert_{(A_\alpha, B_\alpha,\dots, A_\alpha^{p^{f-1}}, B_\alpha^{p^{f-1}})}^{(A_\alpha^{p^f}, B_\alpha^{p^f},\dots, A_\alpha^{p^{2f-1}}, B_\alpha^{p^{2f-1}})}\\
		&\quad =\sum\limits_{\vec{i}=\vec{0}}^{\vec{r+q-1}}\sum\limits_{k_0=0}^{r_0-2}b_{\vec{i},\vec{j}}\binom{r_0-2}{k_0}[r_0+p-1-i_0]_{r_0-2-k_0}[i_0]_{k_0}A_\alpha^{r_0-2-k_0}B_\alpha^{k_0}\\
		&\qquad \qquad \qquad {\hspace{15pt}}\cdot X_0^{p+1-(i_0-k_0)}Y_0^{i_0-k_0}\left(\prod_{l=1}^{f-1} X_l^{p-1-i_l}Y_l^{i_l}\right)\Big\vert_{(A_\alpha, B_\alpha,\dots, A_\alpha^{p^{f-1}}, B_\alpha^{p^{f-1}})}^{(A_\alpha^{p^f}, B_\alpha^{p^f},\dots, A_\alpha^{p^{2f-1}}, B_\alpha^{p^{2f-1}})},
	\end{align*}
	which by observing 
		$p^{f+1}+p^f-(i_0-k_0)p^f+\sum\limits_{l=1}^{f-1}(p-1-i_l)p^{f+l}=p^{2f}+p^f(1+k_0)-\sum\limits_{l=0}^{f-1}i_lp^{f+l}$,
	and writing
	\begin{eqnarray}
		\label{constant C}
			C_{i_0, k_0}= \binom{r_0-2}{k_0}[r_0+p-1-i_0]_{r_0-2-k_0}[i_0]_{k_0} = (r_0-2)! \binom{r_0+p-1-i_0}{r_0-2-k_0} \binom{i_0}{k_0},
	\end{eqnarray}
 	equals
{\small \begin{align}
                \label{fixed j}
		&\sum\limits_{\vec{i}=\vec{0}}^{\vec{r+q-1}}\sum\limits_{k_0=0}^{r_0-2}b_{\vec{i},\vec{j}}\> C_{i_0, k_0}A_\alpha^{r_0-2-k_0}B_\alpha^{k_0} \notag \\
		&\quad{\hspace{15pt}}\cdot \left(A_\alpha^{p^{2f}+p^f(1+k_0)-\sum\limits_{l=0}^{f-1}i_lp^{f+l}}B_\alpha^{(i_0-k_0)p^f+\sum\limits_{l=1}^{f-1}i_lp^{f+l}}-A_\alpha^{p^f+1+k_0-\sum\limits_{l=0}^{f-1}i_lp^l}B_\alpha^{i_0-k_0+\sum\limits_{l=1}^{f-1}i_lp^l} \right) \notag \\
		&\quad=\sum\limits_{\vec{i}=\vec{0}'}^{\vec{r+q-1}'}b_{\vec{i},\vec{j}} \left( \sum\limits_{k_0=0}^{r_0-2}C_{i_0, k_0}   A_\alpha^{r_0+q-1-k_0(1-q)-q\left( \sum\limits_{l=0}^{f-1}i_lp^l \right) } B_\alpha^{k_0(1-q)+ q\left(     \sum\limits_{l=0}^{f-1}i_lp^l  \right)     }\right) \notag \\
		&\quad{\hspace{15pt}}-\sum\limits_{\vec{i}=\vec{0}'}^{\vec{r+q-1}'}b_{\vec{i},\vec{j}} (r_0-1)!A_\alpha^{r_0+q-1-\sum\limits_{l=0}^{f-1}i_lp^l}
		B_\alpha^{\sum\limits_{l=0}^{f-1}i_lp^l}.
	\end{align}}
	The last equality holds since 
        by \eqref{sum}, we have
 		\[\sum\limits_{k_0=0}^{r_0-2}C_{i_0, k_0}=\sum\limits_{k_0=0}^{r_0-2}\binom{r_0-2}{k_0}[r_0+p-1-i_0]_{r_0-2-k_0}[i_0]_{k_0}=(r_0-1)! \mod p.\]
                Moreover, we have adorned the limits in the last two sums with $'$s to indicate that we drop the
                terms corresponding to $\vec{i}=\vec{0}$ and $\vec{i}=\vec{r+q-1}$. Indeed, if $\vec{i}=\vec{0},$ then $i_0=0$ and 
		\[C_{0, k_0}=\begin{cases}
			(r_0-1)!,&\text{if}~k_0=0,\\
			0, &\text{otherwise}.
		\end{cases}
		\]
	So the term for $\vec{i}=\vec{0}$ in \eqref{fixed j} is 
	\[b_{\vec{0}, \vec{j}}\left((r_0-1)!A_\alpha^{r_0+q-1}-(r_0-1)!A_\alpha^{r_0+q-1}\right)=0.\]
        Similarly, one may check that the term for $\vec{i}=\vec{r_0+q-1}$ is zero.  

  	For notational convenience, set
	\[(*) := \left( \sum\limits_{k_0=0}^{r_0-2}C_{i_0, k_0}   A_\alpha^{r_0+q-1-k_0(1-q)-q\left( \sum\limits_{l=0}^{f-1}i_lp^l \right) } B_\alpha^{k_0(1-q)+ q\left(     \sum\limits_{l=0}^{f-1}i_lp^l  \right)     }\right)-
	(r_0-1)!A_\alpha^{r_0+q-1-\sum\limits_{l=0}^{f-1}i_lp^l}
	B_\alpha^{\sum\limits_{l=0}^{f-1}i_lp^l}.\]
	Then \eqref{fixed j} decomposes as
{\small	\begin{eqnarray}
		\label{expanded sum}
			&&\sum\limits_{i_0=1}^{r_0-1}b_{i_0,0,\dots,0,\vec{j}}\>(*)+\sum\limits_{i_0=p}^{r_0+p-2}b_{i_0,p-1,\dots,p-1,\vec{j}}\>(*)\nonumber \\
			&&\quad +\sum\limits_{i_0=r_0}^{p-1}\sum\limits_{i_1=0}^{p-1}\dots\sum\limits_{i_{f-1}=0}^{p-1}b_{i_0,\dots,i_{f-1},\vec{j}}\>(*) \nonumber \\
			&&\quad+\sum\limits_{t=1}^{f-1}\left(\sum\limits_{i_0=0}^{r_0-1}\sum\limits_{i_1=0}^{0}\dots\sum\limits_{i_{t-1}=0}^{0}\sum\limits_{i_t=1}^{p-1}\sum\limits_{i_{t+1}=0}^{p-1}\dots\sum\limits_{i_{f-1}=0}^{p-1}b_{i_0,0,\dots,0,i_t,\dots,i_{f-1},\vec{j}}\>(*)\right) \nonumber \\
			&&\quad+\sum\limits_{t=1}^{f-1}\left(\sum\limits_{i_0=p}^{r_0+p-1}\sum\limits_{i_1=p-1}^{p-1}\dots\sum\limits_{i_{t-1}=p-1}^{p-1}\sum\limits_{i_t=0}^{p-2}\sum\limits_{i_{t+1}=0}^{p-1}\dots\sum\limits_{i_{f-1}=0}^{p-1}b_{i_0,p-1,\dots,p-1,i_t,\dots,i_{f-1},\vec{j}}\>(*)\right).  
	\end{eqnarray} }
	By taking $i_1=i_2=\dots=i_{f-1}=0$, the coefficient $(*)$ of $b_{i_0,0,\dots,0,\vec{j}}$ in the first sum above is
		\[ \left( \sum\limits_{k_0=0}^{r_0-2}C_{i_0, k_0}   A_\alpha^{r_0+q-1-k_0(1-q)-i_0q} B_\alpha^{k_0(1-q)+ i_0q}\right)-
		(r_0-1)!A_\alpha^{r_0+q-1-i_0}
		B_\alpha^{i_0}.\]
	Note that, by \eqref{constant C}, for $0\leq i_0\leq r_0-1,$ we have
	\begin{eqnarray*}
		C_{i_0, k_0}
		&=&\begin{cases}
		(r_0-2)!i_0,        & \text{if}~k_0=i_0-1,\\
		(r_0-2)!(r_0-1-i_0), & \text{if}~k_0=i_0,\\
		0, & \text{otherwise}.
				\end{cases}
	\end{eqnarray*}
	Thus the coefficient of $b_{i_0,0,\dots,0,\vec{j}}$ becomes
		\[i_0(r_0-2)!A_\alpha^{r_0-i_0}B_\alpha^{i_0+q-1}+\left((r_0-1-i_0)(r_0-2)!-(r_0-1)!\right)A_\alpha^{r_0+q-1-i_0}B_\alpha^{i_0},\]
	which further equals
	\begin{equation}
		\label{first kind 1}
			i_0(r_0-2)!\left(A_\alpha^{r_0-i_0}B_\alpha^{i_0+q-1}-A_\alpha^{r_0+q-1-i_0}B_\alpha^{i_0}\right).
	\end{equation}
	Now use the transformation $i_0\mapsto i_0+p-1$ in the second sum of \eqref{expanded sum}. By taking
        $i_1=i_2=\dots=i_{f-1}=p-1$, the  coefficient $(*)$ of $b_{i_0+p-1,p-1,\dots,p-1,\vec{j}}$ is given by
		\[ \left( \sum\limits_{k_0=0}^{r_0-2}C_{i_0+p-1, k_0}   A_\alpha^{r_0+q-1-k_0(1-q)-q(i_0+q-1)} B_\alpha^{k_0(1-q)+ q(i_0+q-1)}\right)-
			(r_0-1)!A_\alpha^{r_0-i_0}
				B_\alpha^{i_0+q-1}.\]
		For $0\leq i_0\leq r_0-1,$ we have
		\begin{eqnarray*}
		C_{i_0+p-1, k_0}&=&\binom{r_0-2}{k_0}[r_0-i_0]_{r_0-2-k_0}[i_0+p-1]_{k_0}
				\> = \>\begin{cases}
							(r_0-2)! (i_0-1), 	 & \text{if}~k_0=i_0-2,\\
							(r_0-2)! (r_0-i_0), & \text{if}~k_0=i_0-1,\\
								0,      					   &\text{otherwise}.
                       \end{cases}
	\end{eqnarray*}
	Thus the coefficient of $b_{i_0+p-1, p-1, p-1, \dots, p-1, \vec{j}}$ becomes
		\[\left((r_0-2)!(i_0-1)-(r_0-1)!\right) A_\alpha^{r_0-i_0}B_\alpha^{i_0+q-1} + 
			(r_0-2)! (r_0-i_0) A_\alpha^{r_0+q-1-i_0} B_\alpha^{i_0}, \]
	which equals
	\begin{equation}
		\label{first kind 2}
			-(r_0-2)!(r_0-i_0)\left( A_\alpha^{r_0-i_0}B_\alpha^{i_0+q-1} - A_\alpha^{r_0+q-1-i_0} B_\alpha^{i_0} \right).
	\end{equation}
	Finally, using $i_0\mapsto i_0+p$ and $i_t\mapsto i_t-1$ in the fifth sum of \eqref{expanded sum}, and noting
        that $\sum\limits_{l=0}^{f-1}i_lp^l$ changes to 
		\[i_0+p+\sum\limits_{l=1}^{t-1} (p-1) p^l +(i_t-1)p^t + \sum\limits_{l=t+1}^{f-1}i_lp^l=i_0+\sum\limits_{l=t}^{f-1}i_lp^l,\]
	the coefficient $(*)$ of $b_{i_0+p, p-1, \dots, p-1, i_t-1, i_{t+1}, \dots, i_{f-1}, \vec{j} }$ equals
	\begin{eqnarray}
		\label{third kind 2}
			\sum\limits_{k_0=0}^{r_0-2}C_{i_0+p, k_0}   A_\alpha^{r_0+q-1-k_0(1-q)-q\left( i_0 + \sum\limits_{l=t}^{f-1}i_lp^l \right) } B_\alpha^{k_0(1-q)+ q\left(i_0 +  \sum\limits_{l=t}^{f-1}i_lp^l  \right) } 
			-(r_0-1)!A_\alpha^{r_0+q-1-i_0-\sum\limits_{l=t}^{f-1}i_lp^l}
			B_\alpha^{i_0+\sum\limits_{l=t}^{f-1}i_lp^l}
	\end{eqnarray}
	which is exactly the coefficient of $b_{i_0, 0, \dots, 0, i_t, \dots, i_{f-1}, \vec{j}}$ obtained by taking $i_1=i_2=\dots=i_{t-1}=0$ in $(*)$, since
        $C_{i_0+p, k_0}=C_{i_0, k_0} \mod p$ for $0\leq i_0\leq r_0-1.$
	Summarizing, the transformation $i_0\mapsto i_0+p-1$ in the second sum of $\eqref{expanded sum}$ and the
        transformations $i_0\mapsto i_0+p$ and $i_t\mapsto i_t-1$ in the fifth sum
        of $\eqref{expanded sum}$, together with \eqref{first kind 1}, \eqref{first kind 2} and \eqref{third kind 2}, allow us to rewrite $\eqref{expanded sum}$ as
	\begin{eqnarray}
		\label{arranged expanded sum}
			&& \sum_{i_0=1}^{r_0-1} (r_0-2)! \left( i_0b_{i_0, 0, \dots, 0, \vec{j}}-(r_0-i_0) b_{i_0+p-1, p-1, \dots, p-1, \vec{j}}\right) \left(A_\alpha^{r_0-i_0}B_\alpha^{i_0+q-1} - A_\alpha^{r_0+q-1-i_0} B_\alpha^{i_0}\right) \nonumber \\ \nonumber
			&&\quad +\sum\limits_{i_0=r_0}^{p-1}\sum\limits_{i_1=0}^{p-1}\dots\sum\limits_{i_{f-1}=0}^{p-1}b_{i_0,\dots,i_{f-1},\vec{j}}\>(*) \\
			&& \quad + \sum_{t=1}^{f-1} \left(\sum\limits_{i_0=0}^{r_0-1}\sum\limits_{i_t=1}^{p-1}\sum\limits_{i_{t+1}=0}^{p-1}\dots\sum\limits_{i_{f-1}=0}^{p-1}\left( b_{i_0,0,\dots,0,i_t,\dots,i_{f-1},\vec{j}} + b_{i_0+p,p-1,\dots,p-1,i_t-1,\dots,i_{f-1},\vec{j}}\right) (*)\right). 
	\end{eqnarray}
        
	Now, for each 
        $\vec{j}=(j_0, \dots, j_{f-1})$, let $i=\sum_{l=0}^{f-1}i_lp^l$ and $j=\sum_{l=0}^{f-1}j_lp^l$, and set {\small
	\begin{eqnarray*}
		\label{variables}      
		X_{i, j}&=& \begin{cases}
                        i_0b_{i_0, 0, \dots, 0, \vec{j}} - (r_0-i_0) b_{i_0+p-1, p-1, \dots, p-1, \vec{j}}, & \text{if } 1 \leq i_0 \leq r_0-1, i_t = 0 \text{ for } t \neq 0, \\
		   	b_{i_0, \dots, i_{f-1}, \vec{j}}, & \text{if } r_0 \leq i_0 \leq p-1, 0 \leq i_t \leq p-1 \text{ for } t \neq 0,  \\ 
                        b_{i_0,0,\dots,0,i_t,\dots,i_{f-1},\vec{j}} + b_{i_0+p,p-1,\dots,p-1,i_t-1,\dots,i_{f-1},\vec{j}}, & \text{if } 0 \leq i_0 \leq r_0 -1, t \geq 1
                        \text{ smallest s.t. } i_t \neq 0.
	\end{cases}
	\end{eqnarray*}}
      \noindent Note that $1 \leq i \leq q-1$ and $0 \leq j \leq q-1$, so there are $(q-1)q$ variables $X_{i,j}$, with the first kind
      running in the range $1 \leq i = i_0 \leq r_0-1$, and the second and third
      kind running in the range $r_0 \leq i \leq q-1$.

      By \eqref{after applying psi}, and \eqref{jpart}, and \eqref{arranged expanded sum}  but with the variables $X_{i,j}$, we obtain
	\begin{eqnarray*}
		&& \sum_{i=1}^{r_0-1}  \sum_{j=0}^{q-1} (r_0-2)! X_{i, j} \left(A_\alpha^{r_0-i}B_\alpha^{i+q-1} - A_\alpha^{r_0+q-1-i} B_\alpha^{i}\right)\cdot A_\alpha^{1-q-jq}B_{\alpha}^{jq}\\
		&&\quad + \sum_{i=r_0}^{q-1} \sum_{j=0}^{q-1} X_{i, j} \left(\left( \sum\limits_{k_0=0}^{r_0-2}C_{i_0, k_0}   A_\alpha^{r_0+q-1-k_0(1-q)-iq } B_\alpha^{k_0(1-q)+ iq }\right)-
		(r_0-1)!A_\alpha^{r_0+q-1-i}
		B_\alpha^i \right)\\
		&&{\hspace{300pt}} \cdot A_\alpha^{1-q-qj}B_{\alpha}^{qj} = 0,
	\end{eqnarray*}
	which by dividing by $(r_0-2)!$ and setting (cf. \eqref{constant C})
		\[Z_{i_0, k_0}=\dfrac{C_{i_0, k_0}}{(r_0-2)!}=\binom{r_0+p-1-i_0}{r_0-2-k_0} \binom{i_0}{k_0}\] 
	yields the system of equations (that was obtained earlier in \eqref{main} for $q = p$) 
	\begin{eqnarray*}
		\label{arranged expanded sum with jpart}
			&& \sum_{i=1}^{r_0-1}  \sum_{j=0}^{q-1} X_{i, j} \left(A_\alpha^{r_0+1-q-i-jq}B_\alpha^{i+jq+q-1} - A_\alpha^{r_0-i-jq} B_\alpha^{i+jq}\right)\\
			&& \quad + \sum_{i=r_0}^{q-1} \sum_{j=0}^{q-1} X_{i, j} \left(\left( \sum\limits_{k_0=0}^{r_0-2}Z_{i_0, k_0}   A_\alpha^{r_0-k_0(1-q)-(i+j)q } B_\alpha^{k_0(1-q)+ (i+j)q }\right)-
			(r_0-1)A_\alpha^{r_0-i-jq}
			B_\alpha^{i+jq} \right) =0.
	\end{eqnarray*}
        As before, we separate the equations according to the congruence class $1\leq n \leq q-1$ of the sum $i+j$  so that we obtain $q-1$ separate systems
        of equations (each with $q$ distinct variables). Again in order to work in the basis ${\mathcal B}_q$, we convert all flips and flops
        to elements of ${\mathcal B}_q$ using \eqref{flip in general} and \eqref{flop in general}.
        The resulting equations and  corresponding coefficient matrices obtained and the computation using row operations
        to show that these matrices have non-zero determinant are identical to the case of $q = p$ treated earlier. The only real difference 
        is that in the formulas for the determinant (obtained earlier in three cases depending on the relative size of $n$),
        one needs to replace $p$ by $q$ everywhere. We conclude that all $X_{i,j} = 0$.
	That is, for each $\vec{j}$,  we have 
	\begin{itemize}
		\item[(1)] if $1\leq i_0 \leq r_0-1$ and $i_j = 0$ for $j \neq 0$, then
			\[i_0b_{i_0, 0, \dots, 0, \vec{j}}=(r_0-i_0) b_{i_0+p-1, p-1, \dots, p-1, \vec{j}},\]
		\item[(2)] if $r_0\leq i_0 \leq p-1$ and $0\leq i_j \leq p-1$ for $1\leq j\leq f-1,$ then  \[b_{i_0, \dots, i_{f-1}, \vec{j}}=0,\]
		\item[(3)] if $0\leq i_0\leq r_0-1$ and there is a (smallest)  $1\leq t \leq f-1$ with $i_t \neq 0$ (so $i_j= 0$ for $1 \leq j \leq t-1$ and
                  $0 \leq i_{t+1}, \ldots, i_{f-1}\leq p-1),$ then
		\[b_{i_0, 0,\dots, 0, i_t, \dots, i_{f-1}, \vec{j}}=-b_{i_0+p, p-1, \dots, p-1, i_t-1, i_{t+1}, \dots, i_{f-1}, \vec{j}}.\] 
		\end{itemize}
	Then,
	\begin{eqnarray*}
		P\otimes Q	
		&=&  \sum\limits_{\vec{j}=\vec{0}}^{\vec{q-1}}   \left( \sum\limits_{\vec{i}=\vec{0}}^{\vec{r+q-1}}b_{\vec{i},\vec{j}}X_0^{r_0+p-1-i_0}Y_0^{i_0}\prod_{l=1}^{f-1} X_l^{p-1-i_l}Y_l^{i_l} \right) \otimes \prod_{l=0}^{f-1}S_l^{p-1-j_l}T_l^{j_l},
	\end{eqnarray*}
	where each polynomial in the parentheses satisfies $(1)$, $(2)$, $(3)$. 
        So by Lemma \ref{space generated by D_j's}, we conclude $P\otimes Q \in \langle D_0, \dots, D_{f-1} \rangle \otimes \bigotimes_{j=0}^{f-1} V_{p-1}^{{\rm Fr}^j}$, showing $\ker\psi\subset \langle D_0, \dots, D_{f-1} \rangle\otimes\bigotimes_{j=0}^{f-1} V_{p-1}^{{\rm Fr}^j}.$
        We finally have
		\[\ker\psi=\langle D_0, \dots, D_{f-1} \rangle\otimes\bigotimes_{j=0}^{f-1} V_{p-1}^{{\rm Fr}^j}.\]
                By Remark \ref{dimension of the space generated by D_j's}, the dimension of $\langle D_0, \dots, D_{f-1} \rangle$ over 
                ${\mathbb F}_q$ equals $r_0p^{f-1}+1,$
 		so after tensoring with ${\mathbb F}_{q^2}$, 
		\[\dim_{\mathbb{F}_{q^2}} \left(\dfrac{\bigotimes_{j=0}^{f-1}V_{r_j+p-1}^{{\rm Fr}^j}}{\langle D_0,\dots, D_{f-1}\rangle}\right) =(r_0+p)p^{f-1}- r_0p^{f-1}-1 =q-1.\]
	Thus, over ${\mathbb F}_{q^2}$ we have
		$\dim_{\mathbb{F}_{q^2}}\left(\dfrac{\bigotimes_{j=0}^{f-1}V_{r_j+p-1}^{{\rm Fr}^j}}{\langle D_0,\dots, D_{f-1}\rangle}\otimes\bigotimes_{j=0}^{f-1} V_{p-1}^{{\rm Fr}^j} \right)=q(q-1)=\dim_{\mathbb{F}_{q^2}} {\rm ind}_{T(\mathbb{F}_q)}^{G(\mathbb{F}_q)}\omega_{2f}^r$,
                and so $\psi$ must be an isomorphism. This completes the proof of Theorem~\ref{main theorem cuspidal twisted}
                (which is also Theorem~\ref{cuspidal-twisted}).
	\end{proof}

        \vspace{.03cm}
        {\noindent \bf Acknowledgements:} We thank A. Chitrao, C. Khare and S. Varma for useful  discussions. Both authors thank the referee for many useful comments and suggestions. The first author thanks ANU, Canberra for
        its hospitality during March 2024.

	\end{document}